\documentclass[reqno,11pt]{amsart}

\usepackage{amscd}
\usepackage{amsmath}
\usepackage[new]{old-arrows}
\usepackage{amssymb}
\usepackage[american]{babel}
\usepackage{bbm}
\usepackage{bookmark}
\usepackage{cmap} 
\usepackage{dsfont}
\usepackage{enumerate}
\usepackage{epigraph}
\usepackage[mathscr]{euscript}
\usepackage[myheadings]{fullpage}
\usepackage{graphicx}
\usepackage[geometry]{ifsym}
\usepackage{accents}
\usepackage{mathrsfs}
\usepackage{mathtools}
\usepackage{enumitem}
\usepackage{refcount}
\usepackage{cleveref}
\usepackage{setspace}
\usepackage{stmaryrd}
\usepackage{verbatim}
\usepackage[all,2cell]{xy} \UseTwocells
\usepackage{xr}
\usepackage[multiple]{footmisc}
\usepackage{hyperref}

\usepackage{tikz}
\usetikzlibrary{decorations.pathmorphing}
\usetikzlibrary{cd}

\numberwithin{equation}{subsection}

\newtheorem{thm}{Theorem}[subsubsection]
\newtheorem*{thm*}{Theorem}

\newtheorem{cor}[thm]{Corollary}
\newtheorem*{cor*}{Corollary}
\newtheorem{lem}[thm]{Lemma}

\newtheorem{prop}[thm]{Proposition}
\newtheorem{prop-const}[thm]{Proposition-Construction}

\newtheorem{conjecture}{Conjecture}[subsection]
\newtheorem*{conjecture*}{Conjecture}

\newtheorem*{princ*}{Principle}

\theoremstyle{remark}
\newtheorem{rem}[thm]{Remark}

\newtheorem{defin}[thm]{Definition}

\newtheorem{construction}[thm]{Construction}

\newcounter{steps}[thm]
\newtheorem{step}[steps]{Step}

\newcommand{\into}{\hookrightarrow}

\newcommand{\bB}{{\mathbb B}}
\newcommand{\bC}{{\mathbb C}}

\newcommand{\bG}{{\mathbb G}}

\newcommand{\bQ}{{\mathbb Q}}
\newcommand{\bR}{{\mathbb R}}

\newcommand{\bZ}{{\mathbb Z}}

\newcommand{\cC}{{\mathcal C}}

\newcommand{\cF}{{\mathcal F}}
\newcommand{\cG}{{\mathcal G}}

\newcommand{\cL}{{\mathcal L}}

\newcommand{\cO}{{\mathcal O}}
\newcommand{\cP}{{\mathcal P}}

\newcommand{\cW}{{\mathcal W}}

\newcommand{\cZ}{{\mathcal Z}}

\newcommand{\sC}{{\EuScript C}}

\newcommand{\sH}{{\EuScript H}}

\newcommand{\sL}{{\EuScript L}}

\newcommand{\sP}{{\EuScript P}}

\newcommand{\sS}{{\EuScript S}}

\newcommand{\sU}{{\EuScript U}}

\newcommand{\sY}{{\EuScript Y}}

\newcommand{\fM}{{\mathfrak M}}

\newcommand{\fX}{{\mathfrak X}}

\newcommand{\fg}{{\mathfrak g}}

\newcommand{\fm}{{\mathfrak m}}
\newcommand{\fn}{{\mathfrak n}}

\newcommand{\fp}{{\mathfrak p}}

\newcommand{\on}{\operatorname}

\newcommand{\Spec}{\on{Spec}}

\newcommand{\Bun}{\on{Bun}}

\newcommand{\colim}{\on{colim}}

\renewcommand{\lim}{\on{lim}}

\renewcommand{\subset}{\subseteq}

\makeatletter
\newcommand{\biggg}{\bBigg@{4}}
\newcommand{\Biggg}{\bBigg@{5}}
\makeatother

\begin{document}

\frenchspacing

\setlength{\epigraphwidth}{0.4\textwidth}
\renewcommand{\epigraphsize}{\footnotesize}

\title{Local Constancy of the Betti Automorphic Category Over The Moduli of Curves}

\author{Joakim F\ae rgeman and Marius Kj\ae rsgaard}

\address{Institute for Advanced Study, 
School of Mathematics, 
1 Einstein Dr, 
Princeton, NJ 08540}

\email{jfaergeman@ias.edu}

\address{IHES, 35 route de Chartres, Bures-sur-Yvette 91440, France}

\email{kjaersgaard@ihes.fr}

\begin{abstract}
Nadler and Yun conjectured that the automorphic side of the Betti geometric Langlands correspondence forms a local system of categories over the moduli of curves. We prove this local constancy conjecture using purely microlocal methods, independent of the recently established Betti geometric Langlands conjecture. In particular, it follows that the Betti automorphic category depends only on the genus of the curve, verifying a conjecture of Ben-Zvi and Nadler.
\end{abstract}

\maketitle

\tableofcontents

\section{Introduction}

Let $X$ be a connected smooth projective complex curve and $G$ a connected reductive complex algebraic group.

\subsection{Betti geometric Langlands Conjecture} The Betti geometric Langlands conjecture of \cite{ben2018betti} asserts an equivalence of categories:
\begin{equation}\label{eq:BettiGL}
\on{Sh}_{\on{Nilp}}(\on{Bun}_G(X))\simeq \on{IndCoh}_{\on{Nilp}^{\on{Spec}}}(\on{LocSys}^{\on{Betti}}_{\check{G}}(X)).
\end{equation}

\noindent Here, $\on{Bun}_G(X)$ denotes the moduli stack of $G$-bundles on $X$, and $\on{LocSys}^{\on{Betti}}_{\check{G}}(X)$ denotes the moduli stack of topological $\check{G}$-local systems on the analytification of $X$. We refer to \cite{ben2018betti} for more details but note that the left-hand side denotes the category of all sheaves of $\bC$-vector spaces on the analytic stack underlying $\on{Bun}_G(X)$ whose singular support is contained in the global nilpotent cone.

\subsubsection{} Tautologically, the moduli stack $\on{LocSys}^{\on{Betti}}_{\check{G}}(X)$ only depends on the underlying topological space of $X^{\on{an}}$. On the other hand, the algebraic stack $\on{Bun}_G(X)$ is very sensitive to the algebraic structure on $X^{\on{an}}$. As such, the right-hand side of the equivalence is ``algebra on top of topology'', whereas the left-hand side is ``topology on top of algebra''.

One immediate consequence of the equivalence (\ref{eq:BettiGL}) is the following conjecture of Ben-Zvi and Nadler:
\begin{conjecture}\label{c:independence} (\cite[Conjecture 4.14]{ben2018betti})

The category
\[
\on{Sh}_{\on{Nilp}}(\on{Bun}_G(X))
\]

\noindent only depends on the underlying topological surface of $X$.
\end{conjecture}

\noindent As is emphasized immediately after the statement of \cite[Conjecture 4.14]{ben2018betti}, the above conjecture is far from obvious, since the geometry of the global nilpotent cone of $X$ is highly sensitive to the algebraic structure of $X$.

\subsubsection{} The Betti geometric Langlands conjecture, along with its de Rham version, was recently proven in the series of preprints \cite{GLCII}, \cite{GLCII},\cite{GLCIII}, \cite{GLCIV}, \cite{GLCV}. In particular, this proves Conjecture \ref{c:independence}. However, we highlight two unsatisfactory parts of this proof:
\begin{enumerate}
    \item The proof of the Betti geometric Langlands conjecture is proved in conjunction with its de Rham version. In particular, the proof relies heavily on methods that are only available in the de Rham setting, most notably the representation theory of Kac-Moody Lie algebras at critical level, its relation to opers etc.

    \item The statement of Conjecture \ref{c:independence} only concerns the automorphic of automorphic sheaves on $\on{Bun}_G(X)$ and makes no reference to Langlands duality. Yet its proof is given by applying the Betti geometric Langlands conjecture and thus has Langlands duality at its very core.
\end{enumerate}

\noindent As such, it is not particularly clear from the current proof of Conjecture \ref{c:independence} \emph{why} it should be true from a topological point of view.

The goal of the present paper is to give a purely micro-local proof of Conjecture \ref{c:independence}. In particular, it is independent of the geometric Langlands conjecture.

\subsection{Universal automorphic category}

In the paper \cite{nadler2021automorphic}, Nadler and Yun formulated a conjecture for which the independence of $\on{Sh}_{\on{Nilp}}(\on{Bun}_G(X))$ is a natural consequence. Let us review this setting.

\subsubsection{} Let $S$ be a finite type smooth complex scheme, and let $X\to S$ be a family of smooth projective connected curves over $S$. For $s\in S$, we write $X_s$ for the fiber over $S$, which is a smooth projective connected curve.

We may form the stack
\[
\on{Bun}_G(X/S)\to S
\]

\noindent whose fiber at $s$ is $\on{Bun}_G(X_s)$. As a moduli spaces this stack sends a test object $T\to S$ to the space of $G$-bundles on $X\times_S T$. 

\subsubsection{} One of the main constructions of \cite{nadler2021automorphic}, see also \cite{nadler2026global}, is the existence of a closed conic Lagrangian
\[
\on{Nilp}_u\subset T^*(\on{Bun}_G(X/S))
\]

\noindent whose fiber at $s$ is the usual global nilpotent cone $\on{Nilp}(X_s)\subset T^*(\on{Bun}_G(X_s))$. We refer to $\on{Nilp}_u$ as the ``universal global nilpotent cone''.

$\on{Nilp}_u$ has the property that it is non-characteristic over $S$. That is, it contains no non-zero cotangent vector pulled back from the projection $\on{Bun}_G(X/S)\to S$.

One may then consider the category
\[
\on{Sh}_{\on{Nilp}_u}(\on{Bun}_G(X/S))
\]

\noindent of all Betti sheaves on $\on{Bun}_G(X/S)$ whose singular support is contained in $\on{Nilp}_u$. 

Since $\on{Nilp}_u\to S$ is non-characteristic, for any $s\in S$, restricting along $\on{Bun}_G(X_s)\to \on{Bun}_G(X/S)$ induces a functor
\begin{equation}\label{eq:res_intro}
\on{Sh}_{\on{Nilp}_u}(\on{Bun}_G(X/S))\to \on{Sh}_{\on{Nilp}}(\on{Bun}_G(X_s)).
\end{equation}

\subsubsection{} The above also makes sense if $S$ is replaced by a real analytic space, and $\on{Bun}_G(X/S)$ is replaced by its corresponding real analytic stack (see Appendix \ref{S:analyticbases} or \cite[Appendix A]{nadler2021automorphic} for foundational theory of sheaves on real analytic stacks). Nadler and Yun then pose the following conjecture:
\begin{conjecture}\label{c:univ}(\cite[Conjecture 4.4.3]{nadler2021automorphic})

Suppose $S$ is a contractible real analytic manifold. Then the restriction functor
\[
\on{Sh}_{\on{Nilp}_u}(\on{Bun}_G(X/S))\to \on{Sh}_{\on{Nilp}}(\on{Bun}_G(X_s))
\]

\noindent is an equivalence.
    
\end{conjecture}

\subsubsection{} Denote by $\fM_g$ the stack of smooth proper connected holomorphic curves of genus $g$ with its universal family $\fX_g\to \fM_g$. It follows from Conjecture \ref{c:univ} that the assignment
\[
U\mapsto \on{Sh}_{\on{Nilp}_u}(\on{Bun}_G(X_U/U))
\]

\noindent is a locally constant sheaf of categories on $\fM_g$, where $X_U=\fX_g\underset{\fM_g}{\times} U$. In particular, the fiber category 
\[
\on{Sh}_{\on{Nilp}}(\on{Bun}_G(X_s))
\]

\noindent carries an action of the mapping class group $\pi_1(\fM_g^{\on{an}},X_s)$.

\subsubsection{} The main result of this paper is a resolution of Nadler and Yun's conjecture:
\begin{thm}\label{t:univ_intro}

Conjecture \ref{c:univ} is true.
    
\end{thm}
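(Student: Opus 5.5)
We plan to reduce the statement to a general non-characteristic deformation principle, and then check its hypotheses for $\on{Nilp}_u$. The key input will be a relative version of the Kashiwara--Schapira non-characteristic deformation lemma. Suppose $Y\to S$ is a smooth morphism to a contractible real analytic manifold and $\Lambda\subset T^*Y$ is a closed conic subset that is non-characteristic over $S$. Then, \emph{provided one has enough properness}, sheaves with singular support in $\Lambda$ are locally constant along $S$, so restriction to a fiber is an equivalence.

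For $\on{Bun}_G$ this properness fails: the stack is not quasi-compact and the projection is not proper. The first step is therefore to reduce to quasi-compact pieces. Using Harder--Narasimhan truncations $\on{Bun}_G(X/S)^{\le\theta}$, which are open, quasi-compact over $S$ and defined uniformly in families, we want to show two things.
\begin{enumerate}
\item Both sides are limits over $\theta$ of the corresponding categories on the truncations.
\item The truncations are \emph{cotruncative} with respect to $\on{Nilp}_u$.
\end{enumerate}
Cotruncativity means that $!$-extension from $\on{Bun}_G^{\le\theta}$ preserves the nilpotent singular support condition. For a single curve this is the Drinfeld--Gaitsgory/Nadler--Taylor type result. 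We expect its relative version to follow from the same contraction argument via the $\bG_m$-action on strata and hyperbolic localization, because that argument is fiberwise and uniform in $s$.

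Having reduced to a quasi-compact open $U=\on{Bun}_G(X/S)^{\le\theta}$, we must prove that restriction $\on{Sh}_{\on{Nilp}_u}(U)\to\on{Sh}_{\on{Nilp}}(U_s)$ is an equivalence. We argue locally on $S$ first. For a small ball $B\ni s$, we show that every $F\in \on{Sh}_{\on{Nilp}_u}(U_B)$ satisfies $F\simeq \pi^*\pi_*$-type constancy along $B$. Concretely, we want to show that $F\simeq p^*(F|_{U_s})$ for a topological trivialization $U_B\simeq U_s\times B$. To build this identification, we would deform along radial paths in $B$ and apply the non-characteristic deformation lemma to the family of open sets $U_{B_t}$. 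Non-characteristicity of $\on{Nilp}_u$ over $S$ guarantees that the microsupport estimate needed for the boundary $\partial B_t$ holds.

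The properness needed here is replaced by the following: $\on{Nilp}_u$ restricted to $U$ has compact image modulo the stabilizer groups. Equivalently, we can work on a smooth atlas given by level structures and quotient by a reductive group, and the global nilpotent cone is proper over the Hitchin base point $0$. Once constancy over balls is established, full faithfulness and essential surjectivity of restriction over balls follow. Descent along a good cover of the contractible $S$, together with the fact that locally constant sheaves of categories on a contractible space are constant, then gives the theorem for $U$. Passing to the limit over $\theta$ gives the theorem for all of $\on{Bun}_G(X/S)$.

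The main obstacle is the relative deformation lemma on the non-proper, non-separated stack. The difficulty is to control microsupport near the "boundary at infinity" of $U$ in the fibers, where sheaves may propagate. We would first try to use the compactness of $\on{Nilp}_u\cap T^*U$ modulo the fiber-direction conic action to produce a family of vector fields, or Hamiltonian isotopies, that are uniformly transverse to $\on{Nilp}_u$. We would then use Guillermou--Kashiwara--Schapira sheaf quantization to transport sheaves along $S$. Cotruncativity of the Harder--Narasimhan strata is what will keep this transport inside $U$.
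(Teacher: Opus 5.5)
\textbf{The gap.} Your argument rests on a ``relative non-characteristic deformation principle'', and that principle is false for singular conic Lagrangians. It fails even with properness, and even when $\Lambda$ maps injectively to the relative cotangent bundle, which is the extra property $\on{Nilp}_u$ has.

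Here is a counterexample. Take $Y=S^1\times(-\tfrac12,\tfrac12)\to S=(-\tfrac12,\tfrac12)$ and $\Lambda=T^*_{\{x=0\}}Y\cup T^*_{\{x=t^2\}}Y$.
\begin{itemize}
\item $\Lambda$ is a closed conic Lagrangian, non-characteristic over $S$, and injective into $T^*_SY$; moreover $Y\to S$ is proper.
\item The sheaves $\cF=\mathbf{1}_{\{x=0\}}$ and $\cG=\mathbf{1}_{\{x=t^2\}}$ both lie in $\on{Sh}_\Lambda(Y)$.
\item $\mathcal{H}om(\cF,\cG)$ is a skyscraper $e[-1]$ at the origin, so $\on{Hom}_Y(\cF,\cG)\simeq e[-1]$.
\item The fiberwise Hom is $0$ for $t\neq 0$ and $e$ for $t=0$, so restriction to no fiber is fully faithful.
\end{itemize}

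\textbf{Why it fails.} Kashiwara's deformation lemma does give local constancy of $\pi_*\cF$ for each individual $\cF$. Full faithfulness, however, needs $\pi_*\mathcal{H}om(\cF,\cG)$ to be locally constant. The singular support $\on{SS}(\mathcal{H}om(\cF,\cG))$ is only bounded by $\Lambda^{a}\widehat{+}\Lambda$. The limit terms of this bound, at points where branches of $\Lambda$ collide, are characteristic; here $dt$ at the origin is one. Essential surjectivity has the analogous problem.

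\textbf{Your proposed repairs do not help.}
\begin{itemize}
\item A topological trivialization $U_B\simeq U_s\times B$ would not preserve singular support.
\item No smooth trivialization of $\on{Bun}_G(X/S)$ carrying $\on{Nilp}(X_{s'})$ onto $\on{Nilp}(X_s)$ is available.
\item Guillermou--Kashiwara--Schapira transport needs an ambient homogeneous Hamiltonian isotopy realizing the family. Its existence is essentially the conjecture itself, not a consequence of non-characteristicity.
\item The Harder--Narasimhan truncation and cotruncativity step is reasonable, but it only reduces you to quasi-compact opens, where the same problem persists.
\end{itemize}

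\textbf{How the paper proceeds.} This is exactly the difficulty the paper stresses: the components of $\on{Nilp}(X_s)$, their intersections and their singularities genuinely move with the complex structure. So the statement cannot be extracted from the geometry of $\on{Nilp}_u$ alone, and the paper never compares fibers geometrically. Instead it proves Theorem \ref{localconstancyofhom} on explicit generators, by induction on semisimple rank.
\begin{itemize}
\item For $\on{Eis}_{P,!}$ of $\on{Nilp}_u$-sheaves (Proposition \ref{eisnilpotence}), Homs are reduced to the Levi via $\on{CT}_{P,*}$ (Remark \ref{r:CT*}).
\item For Hecke translates of the spectrally projected Poincar\'e sheaf, Homs are Whittaker coefficients. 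These are locally constant (Proposition \ref{Localconstancyofwhittaker}) because the Whittaker functional is a relative microstalk at the Kostant point. That point lies in the smooth locus of $\on{Nilp}_u$, where the intersection with the test Lagrangian is clean. This is the one place where your deformation intuition is valid (Proposition \ref{localcontancyofmicro}).
\item Fiberwise generation by these objects is the theorem of \cite{faergeman2022non}. Together with local constancy on the generators, this yields both full faithfulness and essential surjectivity.
\end{itemize}
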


We emphasize that while Ben-Zvi and Nadler's conjecture that $\on{Sh}_{\on{Nilp}}(\on{Bun}_G(X))$ is independent of the algebraic structure of $X$ follows from the Betti geometric Langlands conjecture, Nadler-Yun's Conjecture \ref{c:univ} does not.

\subsection{Strategy of the proof}\label{S:1.3}

We now briefly outline the idea for the proof of Conjecture \ref{c:univ}.

\subsubsection{} Suppose for all $\cF,\cG\in\on{Sh}_{\on{Nilp}_u}(\on{Bun}_G(X/S))$, one knows that the inner Hom space
\begin{equation}\label{eq:innerhom}
\underline{\on{Hom}}(\cF,\cG)\in\on{Sh}(S)
\end{equation}

\noindent is locally constant, with the stalk at $s\in S$ given by $\on{Hom}(\cF_s,\cG_s)$. When $S$ is contractible (and hence any locally constant sheaf is constant), it implies that the restriction functor
\begin{equation}\label{eq:resintro2}
\on{Sh}_{\on{Nilp}_u}(\on{Bun}_G(X/S))\to \on{Sh}_{\on{Nilp}}(\on{Bun}_G(X_s))
\end{equation}

\noindent is fully faithful. The main idea is to find generators of $\on{Sh}_{\on{Nilp}_u}(\on{Bun}_G(X/S))$ where we know that the inner Hom space is locally constant and, furthermore, that the images of these generators under (\ref{eq:resintro2}) generate the target under colimits.

\subsubsection{} These generators come from two sources:
\begin{itemize}

    \item Geometric Eisenstein series.
    
    \item Beilinson's spectral projector applied to (Hecke translates) of the Poincaré sheaf.

\end{itemize}

We discuss each of these points above in the next two subsections.

\subsection{Geometric Eisenstein series}

A key result is to show that geometric Eisenstein series and constant term functors preserve (universal) nilpotent singular support.

Assume that $S$ is smooth and connected.

\subsubsection{} For a connected component $\mu$ of $\on{Bun}_M(X/S)$, consider the correspondence:
\[\begin{tikzcd}
	& {\mathrm{Bun}_P^{\mu}(X/S)} \\
	{\mathrm{Bun}_M^{\mu}(X/S)} && {\mathrm{Bun}_G(X/S).}
	\arrow["q^{\mu}"', from=1-2, to=2-1]
	\arrow["p^{\mu}", from=1-2, to=2-3]
\end{tikzcd}\]

\noindent We have the constant term and Eisenstein functors:
\[
\on{CT}_P^{\mu}: \on{Sh}(\on{Bun}_G(X/S))\to \on{Sh}(\on{Bun}_M^{\mu}(X/S)),\;\; \cF\mapsto q^{\mu}_!\circ p^{\mu,*}(\cF),
\]
\[
\on{Eis}_P^{\mu}: \on{Sh}(\on{Bun}_M(X/S))\to \on{Sh}(\on{Bun}_G^{\mu}(X/S)),\;\; \cF\mapsto p^{\mu}_!\circ q^{\mu,*}(\cF).
\]

\noindent Define:
\[
\on{CT}_P:=\underset{\mu}{\bigoplus}\on{CT}_{P}^{\mu},
\]
\[
\on{Eis}_P:=\underset{\mu}{\bigoplus}\on{Eis}_{P}^{\mu}.
\]

We prove:
\begin{thm}
The functors $\on{CT}_P, \on{Eis}_P$ preserve universal nilpotent singular support.
\end{thm}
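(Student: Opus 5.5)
The plan is to deduce both statements from standard microlocal estimates for pullbacks and pushforwards, applied to the correspondence $\on{Bun}_M(X/S)\xleftarrow{q}\on{Bun}_P(X/S)\xrightarrow{p}\on{Bun}_G(X/S)$ relative to $S$. This is combined with a direct computation showing that the Lagrangian correspondence induced on cotangent bundles carries $\on{Nilp}_u$ into $\on{Nilp}_u$. Everything can be checked one component $\mu$ at a time, and after restricting to quasi-compact opens, because $\on{Sh}_{\on{Nilp}_u}$ is defined locally and the sums over $\mu$ are locally finite on the target.

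\emph{Step 1: the covector computation.} At a point $(s,\cP_P)$ a covector on $\on{Bun}_G(X/S)$ consists of a Higgs field $\phi\in H^0(X_s,\fg^*_{\cP}\otimes\omega)$ together with a horizontal component in $T^*_sS$. By the construction of $\on{Nilp}_u$ recalled above, this horizontal component is determined by $\phi$ through the Kodaira--Spencer pairing, i.e.\ through the quadratic, Hitchin-type expression in $\phi$. Its pullback along $dp$ is the image of $\phi$ in $H^0(\fp^*_{\cP_P}\otimes\omega)$, with the same $S$-component. A covector on $\on{Bun}_P$ lies in the image of $dq^*$ exactly when it annihilates the fibers of $q$. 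Such a covector comes from $\psi\in H^0(\fm^*\otimes\omega)$ whose image in $\fp^*$ is $\phi|_{\fp}$; in other words, $\phi$ lies in $(\fg/\fn)^*$, so it pairs trivially with $\fn$. If $\phi$ is nilpotent, then its image $\psi$ in $\fm^*\simeq(\fp/\fn)^*$ is nilpotent; the opposite direction, needed for $\on{Eis}$, is the same statement read backwards. The remaining point is that the $S$-components agree. This holds because the Kodaira--Spencer pairing is built from an invariant polynomial, and the restriction of that polynomial to $\fp$ factors through $\fm$. Hence, in both directions, the correspondence sends $\on{Nilp}_u$ into $\on{Nilp}_u$.

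\emph{Step 2: $p^*$ and $q^*$.} The map $q$ is smooth, since $H^2$ vanishes on curves, so $SS(q^*\cF)=dq^*SS(\cF)$. For $p^*$ we use the estimate $SS(p^*\cF)\subset dp^*SS(\cF)$. This requires $p$ to be non-characteristic for $\on{Nilp}_u$, i.e.\ that no nonzero nilpotent $\phi$ with compatible $S$-part is killed by $dp^*$. Alternatively we can work with the closure estimate, which suffices for the bound.

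\emph{Step 3: the $!$-pushforwards, which I expect to be the main obstacle.} Neither $p$ nor $q$ is proper.
\begin{itemize}
\item For $q_!$, I would use the contraction principle. The center $Z(M)$ contains a cocharacter $\bG_m$ that contracts the fibers of $q$, which are vector-bundle stacks, onto the zero section $\on{Bun}_M\to\on{Bun}_P$. For sheaves that are $\bG_m$-monodromic along these fibers, $q_!$ is computed by restriction to that section, up to a shift and twist. This is Braden hyperbolic localization, carried out relative to $S$, so that $\on{CT}_P$ is rewritten as a hyperbolic restriction. Singular support of hyperbolic restriction is then bounded by the same Lagrangian correspondence as in Step 1, using the estimate for the contracting and repelling loci.
\item For $p_!$, I would factor $p$ through the Drinfeld compactification $\overline{\on{Bun}}_P(X/S)$, which is proper over $\on{Bun}_G(X/S)$ on each $\mu$. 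The pushforward along a proper map obeys $SS(\bar p_!\cF)\subset \bar p_*\,(d\bar p^*)^{-1}SS(\cF)$. The extension from $\on{Bun}_P$ to $\overline{\on{Bun}}_P$ requires controlling the singular support of $j_!$ near the boundary strata. I would handle this by stratifying the boundary by defect $\on{Bun}_P\times X^{(\lambda)}$ and checking Step 1 stratum by stratum, with the $X^{(\lambda)}$-directions contributing only covectors that are killed by nilpotence.
\end{itemize}
As a fallback, one can use the adjunction together with the fiberwise statement: since $\on{Nilp}_u$ is non-characteristic over $S$, the fiberwise preservation results for $\on{CT}$ and $\on{Eis}$, together with the compatibility of $q_!p^*$ with base change to fibers, should reduce the universal statement to controlling only the $S$-component computed in Step 1.
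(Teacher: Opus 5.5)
\textbf{There is a genuine gap: the microlocal estimates in your Steps 2 and 3 are not available here.}

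Since $\on{Bun}_P(X/S)$ and $\on{Bun}_G(X/S)$ are smooth over $S$, the kernel of $dp^*$ at $\sP_P$ is identified with $H^0(X_s,(\fg/\fp)^*_{\sP_P}\otimes\Omega^1_{X_s})\simeq H^1(X_s,(\fg/\fp)_{\sP_P})^*$. These Higgs fields are nilpotent, and they lie in $\on{Nilp}_u$, since they are killed under the codifferential to $\on{Bun}_B(X/S)$ for any $B$-reduction of $\sP_P$. So $p$ is characteristic for $\on{Nilp}_u$ at every point where it fails to be smooth. The bound $\on{SS}(p^*\cF)\subset dp^*\on{SS}(\cF)$ is therefore unavailable, and nothing guarantees that the general Kashiwara--Schapira bound $p^{\#}(\on{SS}(\cF))$ stays nilpotent.

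Likewise $q$ is not proper, and the contraction principle $q_!\simeq\iota^!$ is only usable where $q$ is affine, i.e.\ where $H^0(X_s,(\fn_P)_{\sP_M})=0$. Elsewhere the section $\iota$ is not even a monomorphism, because $\iota(\sP_M)$ acquires unipotent automorphisms with Lie algebra $H^0(X_s,(\fn_P)_{\sP_M})$. You also have no singular support bound for the resulting ``hyperbolic restriction''.

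This is exactly why the paper's direct argument (Proposition \ref{p:goodforsmallmu}) only treats $\mu$ sufficiently antidominant on the Harder--Narasimhan-bounded locus, where $p$ is smooth and $q$ is affine; even there it needs an extra estimate for $\iota^!$. The idea you are missing is how to leave that locus. The paper does this in three steps:
\begin{enumerate}
\item It proves a relative Hecke compatibility (Theorem \ref{t:HeckeCT-formula}), via an enhanced Drinfeld--Pl\"ucker structure on $\mathbf{j}^{\on{glob}}_!$.
\item It propagates nilpotence from the antidominant cone to all $\mu$ through Lemma \ref{l:allbutone} and the game of polytopes.
\item It handles the deep Harder--Narasimhan strata of $\on{Bun}_M$ via contractiveness of $j_!$ and induction on parabolics.
\end{enumerate}
Your fallback does not bypass this. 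By Lemma \ref{l:pointwisenilpsuffices} it reduces everything to showing that $\on{CT}_P(\cF)$ is non-characteristic over $S$. But Step 1 only concerns covectors of the Lagrangian correspondence, and transferring it to $\on{SS}(q_!p^*\cF)$ requires precisely the estimates that fail. As the paper stresses, $S$-non-characteristicness of $\on{CT}_P(\cF)$ is not a priori clear.

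\textbf{For $\on{Eis}_P$, your compactification route is the paper's, but its essential input is missing.} You must use $\widetilde{\on{Bun}}_P$, which maps to $\on{Bun}_M$, rather than $\overline{\on{Bun}}_P$, which for $P\neq B$ only maps to $\on{Bun}_{M/[M,M]}$. Only then does the projection formula give $\on{Eis}_!\simeq\tilde p_!(j_!\mathbf{1}\otimes\tilde q^*(-))$ with a kernel independent of $\cF$. The key fact is then Proposition \ref{non-characteristicness}: $\on{SS}(j_!\mathbf{1})$ lies in the union of conormals to the strata $\on{Bun}_P\times_{\on{Bun}_M}\mathcal{H}^{+,\vartheta,\lambda}_M$, so $j_!\mathbf{1}$ is non-characteristic relative to $\on{Bun}_M$.

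``Checking Step 1 stratum by stratum'' gives no control of $\on{SS}(j_!\mathbf{1})$ at boundary points, where non-conormal covectors could a priori occur. The paper obtains this control from the Zastava local models. It uses factorization and the contraction of ${}^r\mathrm{Zast}^{\theta}_{\le\theta}$ onto its central stratum, together with \cite[Prop. 19.4.2]{arinkin2020stack}, by induction on the defect.

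\textbf{The justification in Step 1 is wrong, although its conclusion is right.} Invariant polynomials vanish on nilpotent Higgs fields, and $T^*\on{Bun}_G(X/S)$ has no canonical $T^*_sS$-component. The $S$-part of a point of $\on{Nilp}_u$ is pinned down by a $B$-reduction. Use $B=P\times_M B_M$ for the $\on{Eis}$ direction. For the $\on{CT}$ direction, combine that with the bijectivity of $\on{Nilp}_u\to\on{Nilp}_{\on{rel}}$ (Theorem \ref{the nilpotent cone}).
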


\subsubsection{} When $S=\on{pt}$, the assertion that $\on{Eis}_P$ preserves nilpotent singular support has been folklore for some time. Using the characterization of nilpotent singular support in terms of the Hecke action being locally constant \cite[Thm 14.4.3]{arinkin2020stack}, it follows from the compatibility of the constant term functor with Hecke modifications \cite{braverman2006deformations}, \cite{faergeman2025parabolic} that $\on{CT}_P$ preserves nilpotent singular support (see \cite[\S C.1]{reeves2025geometric} for this line of reasoning).

However, when $S$ is not discrete, this argument does not work (for example, it is not even a priori clear that $\on{CT}_P$ preserves sheaves that are non-characteristic over $S$). In this paper, we give a direct geometric argument that $\on{CT}_P$ preserves (universal) nilpotent singular support, not relying on the characterization of nilpotent sheaves as being Hecke-lisse \cite[Thm 14.4.3]{arinkin2020stack}.

\subsubsection{} Our argument goes roughly as follows:
\begin{itemize}
    \item Prove that $\on{CT}_P^{\mu}$ preserves universal nilpotent singular support for $\mu$ being sufficiently antidominant.

    \item Prove a suitable compatibility between $\on{CT}_P$ and Hecke functors. This is a generalization of \cite{braverman2006deformations}, \cite{faergeman2025parabolic} to the relative setting. Using this and the above point, we make an induction argument on $\mu$.
\end{itemize}

The induction argument is surprisingly subtle. The basic issue is that we are inducting on the dominance of $\mu$ while the Hecke action naturally increases the weight with respect to the root order. This leads to some general combinatorical observations about the root system of $G$ of independent interest, which we briefly describe:

We formulate a one-player game, referred to as \textit{a game of polytopes}, which can be played in any lattice together with a fixed set of lattice polytopes. The ordering of $\mu$ in the induction corresponds to a solution to a particular game of polytopes, constructed from the root system of $G$ and the fundamental Weyl polytopes. We give general conditions for the existence of solutions to the abstract game of polytopes and prove that these conditions are always satisfied in the game obtained from the root system of a reductive group. See \S\ref{s:gameofpolytopes} for more details.

\subsection{Beilinson's spectral projector in the relative setting}

\subsubsection{} Following the proof strategy in \S\ref{S:1.3}, to produce the second class of generators, we need Beilinson's spectral projector in the relative setting.

Let $\on{Sh}_{\on{Nilp}_\mathrm{fib}}(\on{Bun}_G(X/S))$ denote the category of fiberwise nilpotent sheaves; that is, sheaves each of whose fibers along $s\in S$ have nilpotent singular support as a sheaf on $\on{Bun}_G(X_s)$. Generalizing \cite[§15]{arinkin2020stack} to the relative setting, we show that there exists a relative version of Beilinson's spectral projector
\[
P_S: \on{Sh}(\on{Bun}_G(X/S))\to \on{Sh}_{\on{Nilp}_\mathrm{fib}}(\on{Bun}_G(X/S)),
\]

\noindent which is a suitable colimit of Hecke functors over $S$. We prove that this functor sends ind-non-characteristic ind-constructible sheaves to sheaves with universal nilpotent singular support. In this way, we may create objects of the automorphic category $\on{Sh}_{\on{Nilp}_u}(\on{Bun}_G(X/S))$ by applying the projector.

\subsubsection{} We have a natural (relative) geometric Whittaker coefficient functor
\[
\on{coeff}_S: \on{Sh}(\on{Bun}_G(X/S))\to \on{Sh}(S)
\]

\noindent corepresented by the Poincaré sheaf
\[
\on{Poinc}_{S}\in\on{Sh}(\on{Bun}_G(X/S)).
\]

\noindent See \S\ref{ss:poinc} for details. We write
\[
\on{Poinc}_{\on{Nilp},S}:=P_S(\on{Poinc}_{S})\in \on{Sh}_{\on{Nilp}_u}(\on{Bun}_G(X/S))
\]

\noindent for the spectral projector applied to the Poincaré sheaf.

\subsubsection{} The following theorem plays a key role in our argument:
\begin{thm}

The functor
\[
\on{coeff}_S: \on{Sh}_{\on{Nilp}_u}(\on{Bun}_G(X/S))\to \on{Sh}(S)
\]

\noindent satisfies that for a cuspidal sheaf $\cF\in \on{Sh}_{\on{Nilp}_u}(\on{Bun}_G(X/S))$, if for every $V\in\on{Rep}(\check{G})$ we have
\[
\on{coeff}_S(\sH_V(\cF))=0,
\]

\noindent then $\cF=0$.
    
\end{thm}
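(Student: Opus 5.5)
Since $\cF$ has universal nilpotent singular support, and $\on{Nilp}_u$ is non-characteristic over $S$, the formation of $\on{coeff}_S$ should commute with restriction to points: for $s\in S$ with inclusion $i_s:\on{Bun}_G(X_s)\to\on{Bun}_G(X/S)$ there is a base change isomorphism
\[
i_s^*\on{coeff}_S(\sH_V(\cF))\simeq \on{coeff}(\sH_V(i_s^*\cF)),
\]
where $\on{coeff}$ is the usual Whittaker coefficient on $\on{Bun}_G(X_s)$. The plan is to reduce to the absolute case fiberwise. First, we verify that the relative Hecke functors $\sH_V$ commute with $i_s^*$, by proper base change along the relative Hecke stack. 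Second, we check that $\on{coeff}_S$, defined as a pull-push through the relative Drinfeld compactification or through the relative space of Whittaker data, commutes with $i_s^*$. This commutation is clear for the $*$-pullback part. For the $!$-pushforward, we would use that $\on{coeff}_S(\cG)$ is locally constant on $S$ when $\cG\in\on{Sh}_{\on{Nilp}_u}$. That local constancy should follow from the non-characteristic property of the Whittaker correspondence, or from a relative version of the argument that Whittaker coefficients on nilpotent sheaves are computed on a finite-type open substack, where properness over $S$ gives base change. Third, cuspidality is preserved by $i_s^*$, since $\on{CT}_P$ commutes with base change by the same argument. Hence $i_s^*\cF$ is a cuspidal object of $\on{Sh}_{\on{Nilp}}(\on{Bun}_G(X_s))$ all of whose Hecke-translated Whittaker coefficients vanish.

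The absolute input is then the following: on $\on{Sh}_{\on{Nilp}}(\on{Bun}_G(X_s))$, a cuspidal sheaf killed by $\on{coeff}\circ\sH_V$ for all $V$ is zero. To prove it, we would use the spectral action of $\on{QCoh}(\on{LocSys}_{\check G})$ on the nilpotent category from \cite{arinkin2020stack}. The functors $\on{coeff}\circ \sH_V$ jointly recover $\on{Hom}(\on{Poinc}_{\on{Nilp}}\star \cO,\cF)$, viewed as a module over functions on the Betti local system stack. Their vanishing says that $\cF$ is right orthogonal to the subcategory generated by $\on{Poinc}_{\on{Nilp}}$ under the spectral action. For cuspidal $\cF$, the support of $\cF$ is contained in the locus of irreducible local systems, up to the scheme-theoretic considerations. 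On that locus, the vacuum Poincaré object generates, by the Whittaker nondegeneracy of cuspidal sheaves. This is Beilinson's projector argument in \cite{arinkin2020stack}, combined with the classical fact that cuspidal Hecke eigensheaves have nonzero Whittaker coefficient. To avoid invoking Langlands duality, we would instead argue microlocally: a nonzero cuspidal nilpotent sheaf has a singular support component meeting the regular (Kostant) part of the nilpotent cone, and the Whittaker functional detects microstalks there. Hecke translates are needed to move any such component into a position seen by the Poincaré sheaf.

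Finally, the stalks of $\cF$ vanish at every $s$, so $\cF=0$; this uses that $\cF$ is non-characteristic over $S$, so that vanishing on all fibers forces vanishing. The main obstacle I expect is the absolute step: showing that cuspidal nilpotent sheaves are detected by Hecke-translated Whittaker coefficients. Concretely, this means showing that every component of $\on{Nilp}$ carrying the singular support of a cuspidal sheaf can be moved by Hecke operators to meet the regular nilpotent locus, where $\on{coeff}$ computes a microstalk. I would first try to establish this via the microlocal Whittaker description of $\on{coeff}$ on the regular part, using the relative spectral projector $P_S$ to reduce to the Poincaré-generated subcategory.
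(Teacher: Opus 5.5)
Your skeleton is the paper's: commute $\sH_V$ and $\on{CT}_P$ past $i_s^*$, reduce to the fibers $X_s$, and apply the absolute nonvanishing theorem. The gap is the one step that carries the relative content, namely $i_s^*\on{coeff}_S(\cG)\simeq\on{coeff}(i_s^*\cG)$ for $\cG\in\on{Sh}_{\on{Nilp}_u}(\Bun_G(X/S))$. You do not establish it, and the mechanisms you suggest would fail.

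Since $\on{coeff}_S$ is corepresented by the Poincaré sheaf, $\on{coeff}_S(\cG)=\Psi_*\mathcal{H}om(\on{Poinc}_!,\cG)$. By adjunction this is $(\Psi\circ a)_*\mathcal{H}om(\varphi^*\exp_!,a^!\cG)$ on $\Bun_N^{\check\rho(\Omega)}(X/S)/\bG_m$. So it is built from a $!$-pullback along the non-smooth map $a$, an internal Hom, and a $*$-pushforward along a non-proper map, and none of these commutes with $i_s^*$ in general. A $*$-pull/$!$-push model would satisfy base change for free, but it is not the functor in the statement.

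Each of your proposed fixes fails:
\begin{itemize}
\item Properness is not available even on finite-type pieces, since $\Bun_N^{\check\rho(\Omega)}(X/S)\to S$ is not proper.
\item Non-characteristicness of $\on{Poinc}_!$ and of $\cG$ separately does not control $\Psi_*\mathcal{H}om(\on{Poinc}_!,\cG)$.
\item Local constancy of $\on{coeff}_S(\cG)$ would not by itself identify its stalk at $s$ with $\on{coeff}(i_s^*\cG)$; that identification is exactly the base change you need.
\end{itemize}

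The paper proves this as Proposition~\ref{Localconstancyofwhittaker}, and the argument is microlocal:
\begin{enumerate}
\item The $\bG_m$-contraction turns $\on{coeff}_S$ into vanishing cycles of $a^!\cF$ along $\varphi$ at the section $\check\rho(\Omega)$ (Proposition~\ref{whittakerisvanishing}).
\item After adding level structure along a section, the contraction principle (Proposition~\ref{contractionprinciplevanishing}) rewrites this as $i^*\Gamma_{\{F\ge 0\}}$ for a function $F$ on a neighborhood. The coordinates are chosen so that $\Gamma_{dF}$ meets $\on{Nilp}_u$ cleanly, exactly along the Kostant section, and transversally in each fiber. This uses that regular nilpotent Higgs fields are smooth points of $\on{Nilp}_u\to S$, together with the Nadler--Taylor dimension count.
\item Proposition~\ref{localcontancyofmicro} shows that such a relative microstalk is a local system whose stalks are the fiberwise microstalks. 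The key point is that $\Gamma_{\{F\ge 0\}}\cF$ is non-characteristic for $\Bun_G(X_s)\to\Bun_G(X/S)$ near the section.
\end{enumerate}
An argument of this kind is the missing idea.

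Conversely, the step you single out as the main obstacle is not one. The absolute statement is exactly \cite[Thm.~A]{faergeman2022non}, which the paper simply invokes once the fiberwise reduction is in place. Your own sketches of it would not stand:
\begin{itemize}
\item The spectral argument is circular: it invokes Whittaker nondegeneracy of cuspidal sheaves in order to conclude it.
\item Nonvanishing of Whittaker coefficients of cuspidal objects is not a classical fact beyond $GL_n$; it is precisely the theorem of \cite{faergeman2022non}.
\item The microlocal version only restates the needed claim, that after Hecke translation a nonzero cuspidal nilpotent sheaf has the Kostant component in its singular support. Loc.\ cit.\ proves this via an index formula for $\on{coeff}$ and singular support estimates for cuspidal sheaves.
\end{itemize}

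The rest of your reduction is fine. $\sH_V$ and $\on{CT}_P$ commute with $i_s^*$ by base change for $!$-pushforward, so $i_s^*\cF$ is cuspidal. And $i_s^*\cF=0$ for all $s$ already says that all stalks of $\cF$ vanish; non-characteristicness plays no role there.
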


The above theorem is a generalization of the main result of \cite{faergeman2022non} to the relative setting. We prove this result by reducing it to the case $\on{S}=\on{pt}$ where we may apply \emph{loc.cit}.

\subsubsection{} Proving the local constancy property of inner Hom of automorphic sheaves (\ref{eq:innerhom}) ultimately relies on the following result:

\begin{thm}
    The functor 
    \[\on{coeff}_S: \on{Sh}_{\on{Nilp}_u}(\on{Bun}_G(X/S))\to \on{Sh}(S)\]
    factors through the subcategory of locally constant sheaves $\on{Sh}_\mathbf{0}(S)$.
\end{thm}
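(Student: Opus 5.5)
We prove that $\on{coeff}_S(\cF)$ is locally constant by bounding its singular support. The natural tool is the estimate $\on{SS}(f_!\cG)\subset f_\infty(\on{SS}(\cG))$ for a pushforward along a proper (or suitably controlled) map, together with the standard estimate for pullback and tensor product. These reduce the claim to a non-characteristic statement about the universal nilpotent cone and the Whittaker data.

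\textbf{Step 1: set up the coefficient functor.} Write $\on{coeff}_S$ via the relative Whittaker correspondence. Let $\on{Bun}_N^{\omega}(X/S)$ be the stack of $N$-bundles twisted by $\rho(\omega_{X/S})$, relative over $S$. It carries the additive character $\chi:\on{Bun}_N^{\omega}(X/S)\to\bG_a$, whose Betti incarnation is the exponential $D$-module replaced by a suitable sheaf, e.g. via the Fourier--Sato / Artin--Schreier-type kernel used in \S\ref{ss:poinc}. There are maps $\pi:\on{Bun}_N^{\omega}(X/S)\to S$ and $p:\on{Bun}_N^{\omega}(X/S)\to\on{Bun}_G(X/S)$. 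Then
\[
\on{coeff}_S(\cF)=\pi_!\bigl(p^*\cF\otimes\chi^*\cL\bigr),
\]
possibly after a shift. Since $\pi$ is not proper, we first truncate. Following the case $S=\on{pt}$ treated in \cite{faergeman2022non}, restrict to the relative Drinfeld compactification, or to quasi-compact opens. On these, the standard argument (cf. Beilinson/Arinkin et al.) shows that the $!$-pushforward agrees with $*$-pushforward along a proper map. For the cleanest route, we express $\on{coeff}_S$ as $\on{Hom}(\on{Poinc}_S,-)$ relative over $S$, and compute it via a compactified correspondence on which the structure map to $S$ is proper.

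\textbf{Step 2: the microlocal estimate.} We need the following. For $\xi\in T^*_sS$ nonzero, and any covector on $\on{Bun}_N^\omega(X/S)$ of the form $dp^*(\eta)+d\chi^*(\text{char. covector})$ with $\eta\in\on{Nilp}_u$, this covector is never equal to $d\pi^*(\xi)$. Fiberwise over $s$ this is exactly the statement underlying the case $S=\on{pt}$: $\on{Nilp}$ is disjoint from the Whittaker conormal shifted by $d\chi$. This is the "nilpotent vs.\ regular principal nilpotent" transversality, since $d\chi$ pairs to a regular nilpotent Higgs field away from the nilpotent cone. In the relative situation, the extra horizontal directions are governed by the non-characteristic property of $\on{Nilp}_u$ over $S$. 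We also use that $\on{Bun}_N^\omega(X/S)\to S$ and the character are defined over $S$ uniformly, so that $d\chi$ has no component along $S$ beyond what is forced by the Kodaira--Spencer pairing. Combining these, every covector in $\on{SS}(p^*\cF\otimes\chi^*\cL)$ that is conormal to the fibers of $\pi$ must be zero. Hence $\on{SS}(\on{coeff}_S\cF)\subset 0_S$, i.e.\ the coefficient is locally constant.

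\textbf{Main obstacle.} The hardest step is the relative transversality in Step~2. We must rule out a covector $\eta\in\on{Nilp}_u$ at a point of $\on{Bun}_G(X_s)$ whose pullback to $\on{Bun}_N^\omega$, shifted by $d\chi$, equals $d\pi^*\xi$ with $\xi\ne0$. My approach is to decompose $\eta$ into a fiberwise Higgs field $\phi$ plus a horizontal part determined by Nadler--Yun's construction of $\on{Nilp}_u$, which involves the Kodaira--Spencer pairing $\on{tr}(\phi^2)$-type term against $H^1(T_{X_s})$. The fiberwise component forces $\phi+f$ to vanish on $\fn$-directions, where $f$ is the principal nilpotent coming from $d\chi$. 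With $\phi$ nilpotent this forces $\phi$ to be in the $B$-orbit of $-f$ modulo $\fn$. The horizontal part then equals a quadratic expression in $\phi+f$ restricted appropriately, which vanishes, giving $\xi=0$.

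A secondary obstacle is the non-properness of $\pi$. We handle this using cuspidal/Eisenstein truncation: since Eisenstein series preserve universal nilpotent singular support (the preceding theorem) and their coefficients are computed by lower-rank analogues, we can induct on semisimple rank for the Eisenstein part and restrict to cuspidal $\cF$, whose support is quasi-compact so that properness holds on the relevant piece.
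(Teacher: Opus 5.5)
There is a genuine gap. Your argument rests on the inference ``the integrand is non-characteristic over $S$, hence its pushforward to $S$ is locally constant.'' That inference needs $\pi$ to be proper on supports, and $\pi:\Bun_N^{\check\rho(\Omega)}(X/S)\to S$ is not. Its fibers have affine directions: for $G=SL_2$ the fiber is $H^1(X_s,\Omega)/H^0(X_s,\Omega)\simeq\mathbb{A}^1\times B\bG_a^{g}$, with $\chi$ the coordinate on $\mathbb{A}^1$. Non-characteristicness alone does not suffice for non-proper pushforwards. For example, on $\bR_x\times\bR_s$ the sheaf $\mathbf{1}_{\{xs=1\}}$ is non-characteristic over $\bR_s$, yet its pushforward to $\bR_s$ fails to be locally constant at $s=0$.

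Your proposed remedies do not address this. The Drinfeld compactification compactifies $p$, not $\pi$. Cuspidality constrains the support in $\Bun_G$, but $\Bun_N^{\check\rho(\Omega)}(X/S)$ is already of finite type; the non-properness lies along the fibers of $\pi$. Compactifying $\pi$ would require controlling singular support at the boundary, which is exactly where the character lives.

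Step 2 also does not make sense for Betti sheaves as written. Singular supports are conic, so no character sheaf has singular support along the graph of $d\chi$. In the paper the Whittaker functional is $\Psi_*\mathcal{H}om(\on{Poinc}_!,-)$, built from the $\bG_m$-monodromic sheaf $j_*\mathbf{1}[-1]$ on $\mathbb{A}^1_S/\bG_m$. It is not of the form $\pi_!(p^*\cF\otimes\chi^*\cL)$.

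The missing idea is to use the $\bG_m$-action through $\check\rho$, which contracts $\Bun_N^{\check\rho(\Omega)}(X/S)$ onto the section $\check\rho(\Omega)$. For monodromic sheaves this turns the non-proper pushforward into restriction to that section, giving $\on{Whit}_{X/S}\simeq\Phi_{\varphi,\check\rho(\Omega)}a^!$ (Proposition \ref{whittakerisvanishing}). The problem thereby localizes to one point per fiber.

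Even then, you cannot simply estimate $\on{SS}(a^!\cF)$. The map $a$ is characteristic for $\on{Nilp}_u$: nonzero nilpotent Higgs fields valued in $\fn$ give covectors of $\on{Nilp}_u$ killed by the codifferential of $a$. The paper therefore proceeds in four steps:
\begin{enumerate}
\item It rigidifies to $\Bun_G^{\Omega,nx}(X/S)$.
\item It uses the contraction principle for vanishing cycles (Proposition \ref{contractionprinciplevanishing}) to replace $\Phi_{\varphi}a^!$ by $\Phi_F$, where $F$ is a function on the ambient space.
\item It chooses coordinates so that $\Gamma_{dF}$ meets $\on{Nilp}_u^n$ exactly in the Kostant covector $\rho$, transversally on each fiber (Proposition \ref{p:Extending test function}).
\item It proves that a relative microstalk at a point where the Lagrangian is smooth over $S$ and maps isomorphically to $T^*_S$ is a local system (Proposition \ref{localcontancyofmicro}, via $\mu\mathrm{hom}(\mathbf{1}_{\{F\ge 0\}},\cF)$ and clean intersections).
\end{enumerate}
Your regular-nilpotent transversality observation is the right geometric input. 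In the paper, however, it is used only at the single covector $\rho$, not as a global non-characteristicness statement feeding a pushforward estimate.
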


We prove this by interpreting $\on{coeff}_S$ as a relative micro-stalk. This is a relative version of \cite{nadler2022whittaker}. Following ideas of \cite{guillermou2015quantizationconiclagrangiansubmanifolds}, we prove a general criterion for when a relative micro-stalk of a sheaf forms local systems and verify it in the case of the universal nilpotent cone and the Whittaker functional. The geometric idea is that moving the base of a microstalk a little inside the smooth locus of the singular support does not change the microstalk.

\subsubsection{} Combining the above two theorems shows that the sheaf obtained by applying Beilinson's spectral projector to $\on{Poinc}_S$ has the property that: 1) its inner hom with any other nilpotent sheaf is a local system on $S$, and 2) any nilpotent sheaf that is right orthogonal to it comes from geometric Eisenstein series.

\subsubsection{} We emphasize that in proving Conjecture \ref{c:univ}, we ultimately rely on the result of \cite{faergeman2022non} stating that the geometric Whittaker coefficient functor (up to Hecke translates at a point) does not kill any non-zero nilpotent cuspidal sheaf. This result was a key input in the proof of the geometric Langlands conjecture. That being said, the proof of this result is purely micro-local in the sense that the proof is given by 1) proving an index formula for the Whittaker coefficient functor, 2) estimating the singular support of nilpotent cuspidal sheaves. That is, in proving Conjecture \ref{c:univ}, we do not make use of, say, the Beilinson-Drinfeld machinery of factorization homology and Kac-Moody localization which is cruical in so many places of the proof of the geometric Langlands conjecture. 

\subsection{Additional results}

The previous two subsections explain the main results that go into proving Conjecture \ref{c:univ}. In this section, we list some additional results we prove of independent interest.

\subsubsection{A claim of Drinfeld}

Fix a smooth projective curve $X$ of genus $g\geq 2$. Denote by 
\[
\on{Bun}_G^{\on{vs}}(X)\subset \on{Bun}_G(X)
\]

\noindent the open subset consisting of very stable $G$-bundles; i.e., bundles $\sP_G$ satisfying that any nilpotent Higgs field $\phi\in \Gamma(X,\fg^*_{\sP_G}\otimes\Omega^1_X)$ is zero. Alternatively, the very stable locus is the complement of the image of the projection
\[
\on{Nilp}_X\backslash 0_{\on{Bun}_G(X)}\to \on{Bun}_G(X).
\]

\noindent It is known that very stable bundles are stable.

\subsubsection{} We let $\on{Wob}(X):=\on{Bun}_G(X)\backslash \on{Bun}_G(X)^{\on{vs}}$ denote the complement of the locus of very stable bundles.

In \cite[Remarque 3.6 (ii)]{laumon1988analogue} (which concerns $G=GL_n$), Laumon states that Drinfeld announced in a letter to Deligne that $\on{Wob}(X)$ is of pure codimension one. Since then, a proof for all reductive groups $G$ has not appeared. For $G=GL_2$, a proof appeared in \cite{pal2021wobbly}, and for $G=GL_n$, a proof appeared in \cite{pal2022conjecture}. Both proofs rely on explicit algebro-geometric methods.

In \S\ref{s:codim1}, we give a short proof of this assertion:
\begin{thm}\label{t:pureofcodim11}

$\on{Wob}(X)$ is of pure codimension one.
    
\end{thm}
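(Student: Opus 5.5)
The plan is to argue microlocally and to reduce the statement to a local incompatibility between nilpotent sheaves and conormals of high-codimension strata. Write $\pi\colon \on{Nilp}_X\to \on{Bun}_G(X)$ for the projection. First, $\on{Wob}(X)=\pi(\on{Nilp}_X\backslash 0_{\on{Bun}_G(X)})$ is closed, since $\on{Nilp}_X$ is a closed conic subset. Second, since $\on{Nilp}_X$ is a conic Lagrangian, every irreducible component $\Lambda$ other than the zero section is the closure of a conormal bundle $T^*_Z\on{Bun}_G(X)$, where $Z=\overline{\pi(\Lambda)}$ is irreducible. Hence $\on{Wob}(X)$ is the union of these closures $Z$. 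By definition $\on{Bun}_G^{\on{vs}}(X)$ is nonempty for $g\geq 2$, so each $Z$ has codimension $\geq 1$, and it suffices to exclude $\on{codim} Z\geq 2$ for an irreducible component $Z$ of $\on{Wob}(X)$.

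Suppose, for contradiction, that some component $Z$ has codimension $c\geq 2$. Choose an open $U\subset \on{Bun}_G(X)$ meeting $Z$ with the following properties: $Z\cap U$ is smooth, and $\on{Nilp}_X|_U=T^*_UU\cup T^*_{Z\cap U}U$. Such a $U$ exists because the other components of $\on{Wob}(X)$ have closed images not containing the generic point of $Z$. Over $U$, any $\cF\in\on{Sh}_{\on{Nilp}}(\on{Bun}_G(X))$ has singular support in $T^*_UU\cup T^*_{Z\cap U}U$. By the standard local normal form for sheaves microsupported on a smooth conormal, after shrinking $U$ to a contractible neighbourhood of a point of $Z$, $\cF|_U$ is an extension of a constant sheaf by $i_*$ of a constant sheaf on $Z\cap U$. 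Here $i\colon Z\cap U\into U$, and the local form uses that $U\backslash Z$ is locally simply connected, since $c\geq 2$.

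Next I produce a nilpotent sheaf whose singular support genuinely contains $T^*_Z$ and which is nevertheless forced to be lisse near generic points of $Z$. For the first requirement I use that the microstalk at a smooth point $\xi\in T^*_{Z\cap U}U$ is corepresentable on $\on{Sh}_{\on{Nilp}}(\on{Bun}_G(X))$. Concretely, one can apply Beilinson's spectral projector (in its $S=\on{pt}$ form, recalled above) to a microlocal skyscraper at $\xi$, which yields a nonzero nilpotent sheaf $\cF_\xi$ with nonvanishing microstalk at $\xi$. For the second requirement I use the Hecke-lisse characterization of nilpotent sheaves \cite[Thm 14.4.3]{arinkin2020stack}. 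By local constancy in the point $x\in X$, Hecke translates $\sH_V(\cF_\xi)$ at varying $x$ are locally constant. A Hecke modification at a generic point $x$ moves a generic point of $Z$ off $Z$: the correspondence has fibers of positive dimension, and $Z$ has codimension $\geq 2$ while being preserved by no Hecke operator. From this I aim to deduce that the $i_*$-summand of $\cF_\xi|_U$ must vanish, contradicting the nonvanishing microstalk at $\xi$.

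The main obstacle is this last step. I must show rigorously that a sheaf microsupported on $T^*_Z\cup T^*_U$ with $\on{codim} Z\geq 2$ cannot survive the Hecke-lisse condition, that is, that skyscraper-type summands supported on $Z$ are incompatible with local constancy of Hecke translates. If the Hecke argument proves unwieldy, my fallback is purely microlocal. I would use the Lagrangian intersection and index formula for $\on{coeff}$ from \cite{faergeman2022non} together with the cuspidality and generation results above to show that every component of $\on{Nilp}_X$ meets the zero section in codimension one. This would force $\dim Z=\dim\on{Bun}_G(X)-1$.
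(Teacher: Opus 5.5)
\textbf{The gap.} The decisive step, that the $i_*$-part of $\cF_\xi|_U$ must vanish by Hecke-lisseness, has no mechanism behind it. The heuristic you give cannot be the right one.

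\begin{itemize}
\item The Hecke-lisse characterization you invoke is \emph{equivalent} to $\on{SS}\subset\on{Nilp}_X$. So it cannot exclude any microlocal behaviour that $\on{Nilp}_X$ itself allows.
\item The reasoning ``Hecke modifications move generic points of $Z$ off $Z$, and $\on{codim} Z\geq 2$'' applies verbatim to Harder--Narasimhan strata. For $G=GL_2$, for instance, the unstable strata have codimension $\geq g\geq 2$. Yet the conormal to such a stratum lies in $\on{Nilp}_X$: it consists of covectors killed by $T^*\on{Bun}_G\to T^*\on{Bun}_B$ at the canonical $B$-reduction. Its closure is a component of $\on{Nilp}_X$ by dimension. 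And $\on{Eis}_!$ of the constant sheaf on $\on{Bun}_T^{\lambda}$, with $\lambda$ regular dominant (so $\on{Bun}_B^{\lambda}\to\on{Bun}_G(X)$ is an immersion onto the stratum), is a nilpotent sheaf with nonzero microstalk along that conormal.
\item The same example shows your fallback target is false as stated. Not every component of $\on{Nilp}_X$ meets the zero section in codimension one; only components lying over irreducible components of $\on{Wob}(X)$ can be controlled.
\item There is a problem upstream too. The spectral projector applied to a microlocal skyscraper gives an object corepresenting $\mu_\xi$ on $\on{Sh}_{\on{Nilp}}$. That object is nonzero only if some nilpotent sheaf already has nonzero microstalk at $\xi$, which is a nontrivial existence statement.
\item Even granting such an $\cF_\xi$, a non-simple nilpotent sheaf may contain a constituent supported on $\overline{Z}$. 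So no local contradiction is available.
\end{itemize}

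\textbf{The missing idea.} The paper uses a single \emph{irreducible} sheaf with full nilpotent singular support. By \cite{beilinson1991quantization}, the Hecke eigen-D-module $\cF$ attached to a $\check{G}$-local system with an oper structure has $\on{SS}(\cF)=\on{Nilp}_X$. By \cite{faergeman2022non}, $\cF$ lies in the heart and is irreducible on each connected component. It is lisse on the dense open $j\colon \on{Bun}_G^{\on{vs}}(X)\into\on{Bun}_G(X)$, so irreducibility gives $\cF\simeq j_{!*}\sL$ for a flat bundle $\sL$.

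Now suppose a component $\cW$ of $\on{Wob}(X)$ had codimension $\geq 2$, and let $\cW'$ be the union of the other components. Then $\on{Bun}_G(X)\backslash\cW'$ is smooth and contains $\on{Bun}_G^{\on{vs}}(X)$, with complement $\cW\backslash\cW'$ of codimension $\geq 2$. Hence $\sL$ extends across it, and $\cF$ is lisse on $\on{Bun}_G(X)\backslash\cW'$. But every point of $\cW\backslash\cW'$ carries a nonzero nilpotent covector, and these lie in $\on{SS}(\cF)=\on{Nilp}_X$, which is a contradiction.

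Your reductions are fine: $\on{Wob}(X)$ is the union of the images of the non-zero-section components, and there is an open $U$ over the generic point of $Z$ with $\on{Nilp}_X|_U=T^*_UU\cup T^*_{Z\cap U}U$. The argument would close immediately if you fed in this irreducible $\cF$ in place of $\cF_\xi$. Irreducibility is exactly what rules out a constituent supported on $\overline{Z}$.
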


\begin{rem}
The proof is essentially an immediate consequence of the existence (due to Beilinson-Drinfeld \cite{beilinson1991quantization}) of a D-module on $\on{Bun}_G(X)$ whose singular support is the entire global nilpotent cone, and the results of \cite{faergeman2022non} ensuring that this sheaf is in the heart of the t-structure and irreducible on each connected component of $\on{Bun}_G(X)$.
\end{rem}

\begin{rem}
Using our main theorem, the natural generalization can be proven for $\on{Bun}_G(X/S)$ for a family of smooth projective complex curves $X\to S$, see Remark \ref{r:generalization} for details.
\end{rem}

\subsection{Structure of the paper}

The reader who wants to get an overview of the proof of the local constancy of the automorphic category may jump directly to Section \ref{s:prfmainthm}.

The paper is divided into three parts. The first part of the paper concerns Eisenstein and constant term functors in the family setting. The second part proves the local constancy of the Whittaker functional. The third part concerns Beilinson's spectral projector in families.

\subsection{AI Declaration}

No AI tools were used in this research project.

\subsection{Acknowledgements}

We thank Dustin Clausen, David Nadler, Sam Raskin and Zhiwei Yun for useful discussions.

Special thanks to Dennis Gaitsgory for very helpful discussions regarding preservation of nilpotent singular support under Eisenstein and constant term functors.

\section{Notation and conventions}

For conventions regarding real analytic spaces and real analytic stacks, see Appendix \ref{S:analyticbases}.

\subsubsection{Curves} Let $X\to S$ be a family of smooth proper holomorphic curves over a real analytic manifold $S$. We assume that the genus of $X/S$ is at least $1$.

\subsubsection{Lie theory conventions} Let $G$ be a connected reductive algebraic group over the complex numbers $\bC$. We assume throughout that the commutator subgroup $[G,G]$ is simply-connected.

Let $\Lambda$ denote the cocharacter lattice of $T$. For a parabolic subgroup $P\subset G$ with Levi quotient $M$, we write $\Lambda_{G,P}$ for the quotient of $\Lambda$ by the coroot lattice of $M$. Let $\Lambda_{G,P}^{\on{pos}}\subset \Lambda_{G,P}$ be the subset spanned by non-negative linear combinations of simple roots that do not lie in $M$.

We write $\Gamma_G$ for the set of vertices of the Dynkin diagram of $G$. Similarly, we write $\Gamma_M$ for the set of vertices of the Dynkin diagram for a Levi $M$. 

We fix an algebraically closed field $e$ of characteristic $0$, the Langlands dual group $\check{G}$ will be defined over $e$.

\subsubsection{Categorical conventions} We use the language of higher category theory and higher algebra in the sense of \cite{lurie2009higher}, \cite{lurie2017higher}, \cite{gaitsgory2019study}. For an algebraically closed field $e$ of characeristic zero, we denote by $\on{Pr}_e^L$ the category of presentable $e$-linear categories in which the morphisms are colimit-preserving functors.

For a category $\cC$ equipped with a t-structure, we denote by $\cC^{\heartsuit}$ the heart of the t-structure.

\subsubsection{Derived algebraic geometry conventions} Our derived algebraic geometry conventions follow those of \cite{gaitsgory2019study}. In particular, we will freely use the language of prestacks in the sense of \emph{loc.cit}.

Throughout, all terminology is derived. For example, our schemes and stacks are assumed to be derived. Moreover, when we talk about the category of quasi-coherent sheaves or sheaves of $\bC$-vector spaces on a stack, it will be in the derived sense.

\subsubsection{Sheaf-theoretic conventions}

We follow the sheaf theoretic conventions of \cite[Appendix A]{nadler2021automorphic}. That is, for a real analytic stack $\sY$, we denote by $\on{Sh}(\sY)$ the category of all complexes of $e$-vector spaces on $\sY$ in the sense of \emph{loc.cit}. Here, $e$ denotes an algebraically closed field of characteirstic zero, though of as our coefficient field. Similarly, we refer to \emph{loc.cit} for conventions regarding singular support.

For a sheaf $\cF\in \on{Sh}(Y)$, we write $\on{SS}(\cF)$ for its singular support. By definition, this is a subset of $T^*\sY$, the (underlying classical stack of the) cotangent stack of $\sY$.

For a sheaf $\cF\in\on{Sh}(\sY)$ and a map $\sY\to Z$, we say that $\cF$ is \emph{non-characteristic over} $Z$ if the image of the codifferential map
\[
T^*Z\underset{Z}{\times} \sY\to T^*\sY
\]

\noindent intersects $\on{SS}(\cF)$ in the zero-section. Similarly, we may talk about subsets of $T^*\sY$ that are non-characteristic over $Z$.

\part{}

\section{Automorphic Sheaves in Families}

\subsection{Definition and basic properties}

The category of automorphic sheaves for a family of curves $X/S$, defined by Nadler and Yun \cite{nadler2021automorphic} is a subcategory of sheaves on $\Bun_G(X/S)$ subject to a nilpotence condition on their microsupport. In this section we will recall their basic properties, and give a characterization of the nilpotence condition in terms of Hecke modifications.

\subsubsection{}\label{ss:nilpudef} Let $\mathrm{Nilp}_\mathrm{rel}\subseteq T^*_S\Bun_G(X/S)$ denote the relative nilpotent cone, that is the closed conical substack of the relative cotangent stack $T^*_S\Bun_G(X/S)$ given by the the preimage of the zero-section under the relative Hitchin fibration 
\[h_S:T^*_S\Bun_G(X/S)\to \mathrm{Hitch}_G(X/S).\]
Let $p:T^*\Bun_G(X/S)\to T^*_S\Bun_G(X/S)$ denote the projection from the absolute cotangent stack to the relative cotangent stack. 

\begin{thm}[Nadler-Yun]\label{the nilpotent cone} There exists an $S$-non-characteristic, closed, conic Lagrangian subset $\mathrm{Nilp}_u\subset T^*\Bun_G(X/S)$, which on field-valued points maps bijectively to $\mathrm{Nilp}_\mathrm{rel}$ under $p$.
\end{thm}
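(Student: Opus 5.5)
The plan is to construct $\mathrm{Nilp}_u$ as the singular support of a suitable family of sheaves, or more directly via a Hamiltonian/Lagrangian correspondence argument. The approach follows Nadler--Yun. Work locally on $S$. Since $S$ is a real analytic manifold, any point has a neighborhood over which the family $X\to S$ is $C^\infty$-trivial: there is a fiberwise diffeomorphism $X\simeq \Sigma\times S$ to a fixed surface $\Sigma$. The complex structures then vary, giving a map from $S$ to Teichmüller space. Pulling back, it suffices to treat $S$ an open subset of Teichmüller space (or of a finite-dimensional smooth slice of Beltrami differentials). The uniqueness statement, bijectivity onto $\mathrm{Nilp}_{\mathrm{rel}}$ on points, will let the local constructions glue, since a closed conic subset with that property over each point is unique once it is required to be Lagrangian and non-characteristic.

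\textbf{Step 1 (lift covectors).} Identify $T^*\Bun_G(X/S)$ at a point $(s,\sP)$ as an extension
\[
0\to T^*_sS\to T^*_{(s,\sP)}\Bun_G(X/S)\to T^*_{\sP}\Bun_G(X_s)\to 0 .
\]
The obstruction to a canonical splitting is the deformation of complex structure. A Higgs field $\phi\in\Gamma(X_s,\fg^*_{\sP}\otimes\Omega^1)$ and a Beltrami differential $\mu\in H^1(X_s,T_{X_s})$ pair to a quadratic-type expression $\langle \phi,\phi\rangle\cdot\mu$. More precisely, the variation of $\Bun_G$ in the $\mu$ direction is governed by the Atiyah class, and the first-order change of the pairing with $\phi$ is $\int \on{Tr}(\phi^2)\mu$, i.e. the quadratic Hitchin component. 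For nilpotent $\phi$ all invariant polynomials of $\phi$ vanish. So the natural candidate lift, obtained via a $C^\infty$ trivialization of $\sP$ and the $\bar\partial$-operator description $\Bun_G=\{\bar\partial\text{-operators}\}/\mathcal G$, is independent of choices to first order. This suggests defining $\mathrm{Nilp}_u$ as the set of covectors $\xi$ with $p(\xi)\in\mathrm{Nilp}_{\mathrm{rel}}$ and with $T^*S$-component equal to $(\mu\mapsto \int_{X_s}\on{Tr}(\phi^2)\mu)$, which is $0$, in the chart given by the $\bar\partial$-operator/Beltrami description.

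\textbf{Step 2 (Lagrangian and non-characteristic).} Realize $T^*\Bun_G(X/S)$ as a Hamiltonian reduction of the cotangent bundle of pairs (complex structure, $\bar\partial$-operator) by the gauge group and diffeomorphisms isotopic to the identity. Then $\mathrm{Nilp}_u$ is the reduction of the zero level of the combined moment map intersected with the nilpotence condition. Lagrangianity follows fiberwise from Laumon/Ginzburg, where $\mathrm{Nilp}$ is Lagrangian in $T^*\Bun_G(X_s)$. The dimension then adds $\dim S$ from the base. Isotropy follows because the defining equations are moment-map level sets of a coisotropic reduction, restricted to an isotropic subvariety. Closedness and conicity are clear. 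Non-characteristicity is automatic, since the only covectors over a point with $p(\xi)=0$ correspond to $\phi=0$, and the formula forces the $T^*S$-part to vanish. Bijectivity with $\mathrm{Nilp}_{\mathrm{rel}}$ holds by construction.

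The main obstacle is Step 1: making the lift well-defined on the stack, independent of the trivialization, and real analytic rather than merely pointwise. I would first verify invariance under gauge transformations and under isotopies of $\Sigma$, using that the moment map for $\on{Diff}$ is exactly $\phi\mapsto\on{Tr}(\phi\otimes\phi)$-type quadratic differentials. I would then check closedness of the resulting subset in the analytic topology via the properness of the relative Hitchin map.
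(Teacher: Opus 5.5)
There is a genuine gap in Step~1, and Step~2 inherits it.

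The extension $0\to T^*_sS\to T^*_{(s,\sP)}\Bun_G(X/S)\to T^*_{\sP}\Bun_G(X_s)\to 0$ has no canonical splitting, and the $\bar\partial$-operator/Beltrami chart does not supply one. To move a $\bar\partial$-operator along a Beltrami differential $\mu$ you must choose a $(1,0)$-connection, i.e.\ a smooth splitting $T_{X_s}\to\mathrm{At}(\sP)$ of the Atiyah sequence. Changing this choice by $\beta\in\Omega^{1,0}(X_s,\fg_{\sP})$ changes the $T^*S$-component of a fixed covector over $\phi$ by $\mu\mapsto\int_{X_s}\langle\phi,\beta\rangle\mu$. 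This is \emph{linear} in $\phi$. For any nonzero $\phi$, nilpotent or not, it is nonzero for suitable $\beta$, since $\phi\colon\fg_{\sP}\to\Omega$ is surjective on a dense open set. The quadratic expression $\int\on{Tr}(\phi^2)\mu$ is the quadratic Hitchin Hamiltonian (the symbol of Hitchin's heat operator), not the transition law between charts. So its vanishing on $\mathrm{Nilp}_{\mathrm{rel}}$ does not make your lift canonical.

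Consequently, ``$p(\xi)\in\mathrm{Nilp}_{\mathrm{rel}}$ and zero $T^*S$-component in the chart'' is a chart-dependent set. By the very uniqueness you invoke for gluing (cf.\ Lemma~\ref{l:pointwisenilpsuffices}), at most one such section can be Lagrangian, so for a general chart your set is not isotropic. Step~2 does not repair this. Reducing the zero level of the full moment map just reproduces $T^*\Bun_G(X/S)$. Imposing nilpotence of $\phi$ then gives $p^{-1}(\mathrm{Nilp}_{\mathrm{rel}})$, which is coisotropic of dimension $\dim\Bun_G(X/S)+\dim S$. No isotropic subvariety upstairs actually appears in your argument. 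The desired Lagrangian is one specific section inside $p^{-1}(\mathrm{Nilp}_{\mathrm{rel}})$, and you never construct it.

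The missing idea is to use $B$-reductions. This is how Nadler--Yun define $\mathrm{Nilp}_u$, and how the paper describes it right after the statement. $\mathrm{Nilp}_u$ is the image of the conormal to $\Bun_B(X/S)\to\Bun_G(X/S)$: the covectors at $\sP_G$ that die under the codifferential to $T^*_{\sP_B}\Bun_B(X/S)$ for some $B$-reduction $\sP_B$. Concretely, a $\phi$ valued in $(\fg/\mathfrak{b})^*_{\sP_B}\otimes\Omega$ lifts canonically through $\mathrm{At}(\sP_G)\to\mathrm{At}(\sP_G)/\mathrm{At}(\sP_B)\simeq(\fg/\mathfrak{b})_{\sP_B}$.

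With this definition two properties come for free:
\begin{itemize}
\item Isotropy is automatic, since the Liouville form vanishes on the image of the conormal of a map.
\item Non-characteristicity holds because $\Bun_B(X/S)\to S$ is smooth, so a nonzero covector pulled back from $S$ cannot vanish on $\Bun_B(X/S)$.
\end{itemize}
The real content is the bijection with $\mathrm{Nilp}_{\mathrm{rel}}$, which has two parts:
\begin{itemize}
\item Existence of $\sP_B$: choose a Borel containing $\phi$ at the generic point of $X_s$, and extend by properness of $\sP_G/B\to X_s$.
\item Independence of the lift from $\sP_B$: two admissible reductions are generically in constant relative position $w$, so they locally admit a common reduction to $B\cap{}^wB$. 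Its Atiyah algebroid surjects onto $T_{X_s}$ and is killed by both lifts, and both lifts restrict to $\phi$ on $\fg_{\sP}$. Hence they agree on a dense open subset of $X_s$, and therefore everywhere.
\end{itemize}
Closedness still needs a separate properness argument of the same kind.

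Your reduction to Teichm\"uller space and your gluing by uniqueness are fine; what is missing is the local construction itself.
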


Concretely, $\on{Nilp}_u$ is given by transporting the $0$-section along the codifferential of the map $\on{Bun}_B(X/S)\to \on{Bun}_G(X/S)$. That is, an element $\xi\in T^*_{\sP_G}\on{Bun}_G(X/S)$ lies in $\on{Nilp}_{u,G}$ if and only if there exists a $B$-reduction $\sP_B$ of $\sP_G$ such that $\xi$ is killed under the codifferential
\[
T^*_{\sP_G}\on{Bun}_G(X/S)\to T^*_{\sP_B}\on{Bun}_B(X/S).
\]

We refer to \cite{nadler2021automorphic}, \cite{nadler2026global} for further details about the construction of $\on{Nilp}_u$.

\begin{rem}
    Nadler and Yun's construction only provides a moduli description of $\mathrm{Nilp}_u$ on the level of field-valued points. It would be preferable to have a definition of the universal nilpotent cone that is closer to the definition of the relative nilpotent cone  in terms of a universal lift of parts of the relative Hitchin fibration. In particular, this should give a true moduli description of $\mathrm{Nilp}_u$.
\end{rem}

Using this construction, Nadler and Yun give the following definition:

\begin{defin}
    The category of \emph{automorphic sheaves} for $X/S$ is the subcategory
    \[\mathrm{Sh}_{\mathrm{Nilp}_u}(\Bun_G(X/S))\subset \mathrm{Sh}(\Bun_G(X/S))\]
    of sheaves with singular support contained in $\mathrm{Nilp}_u$.
\end{defin}

\begin{rem}

We warn that Nadler and Yun do not use the term 'automorphic sheaves' in their paper to refer to the category of sheaves with singular support contained in $\on{Nilp}_u$. However, we find this terminology convenient.

\end{rem}

\subsubsection{} Any map of real analytic manifolds $i:S'\to S$ induces a map on the moduli spaces of $G$-bundles
$i_{S'}:\mathrm{Bun}_G(X\times_S S'/S')\to \Bun_G(X/S)$. Note that the $S$-non-characteristicness of $\mathrm{Nilp}_u$ implies that the pullback map
\[i_{S'}^*: \mathrm{Sh}(\Bun_G(X/S))\to \mathrm{Sh}(\mathrm{Bun}_G(X\times_S S'/S'))\]
preserves the subcategories of automorphic sheaves. Thus the categories of families of automorphic sheaves assemble into a sheaf of presentable stable categories over $\mathfrak{M}_g$, the moduli stack of smooth proper complex connected genus $g$ curves, with values $\mathrm{Pr}^L_e$.

In particular, it follows that for any $\mathcal{F}\in \mathrm{Sh}_{\mathrm{Nilp}_u}(\Bun_G(X/S))$ and $s\in S$, the sheaf $i^*\mathcal{F}\in \mathrm{Sh}(\Bun_G(X_s))$ has nilpotent singular support.

We will now prove that this property and non-characteristicness characterize automorphic sheaves:

\begin{lem}\label{l:pointwisenilpsuffices}
   Let $\mathcal{F}\in \mathrm{Sh}(\Bun_G(X/S))$ be a weakly analytically constructible sheaf that is non-characteristic over $S$. Suppose that for all points $s\in S$, the sheaf $i_s^*\mathcal{F}\in \mathrm{Sh}(\Bun_G(X_s))$ has nilpotent singular support. Then $SS(\mathcal{F})\subseteq \mathrm{Nilp}_u$.
\end{lem}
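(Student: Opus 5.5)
The plan is to compare $\mathrm{SS}(\mathcal{F})$ with $\mathrm{Nilp}_u$ through the projection $p:T^*\Bun_G(X/S)\to T^*_S\Bun_G(X/S)$. By Theorem \ref{the nilpotent cone}, $p$ restricted to $\mathrm{Nilp}_u$ is a bijection onto $\mathrm{Nilp}_{\mathrm{rel}}$ on field-valued points. The key input is the standard non-characteristic pullback estimate. Since $\mathcal{F}$ is non-characteristic over $S$, the map $i_s$ is non-characteristic for $\mathcal{F}$, so
\[
\mathrm{SS}(i_s^*\mathcal{F}) = i_s^\sharp\,\mathrm{SS}(\mathcal{F}) = \{\, di_s^*(\xi) : \xi\in \mathrm{SS}(\mathcal{F})|_{\Bun_G(X_s)} \,\}.
\]
The inclusion $\subseteq$ is the general estimate. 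I expect equality to hold for weakly constructible sheaves pulled back along a non-characteristic smooth-fiber inclusion. Here $di_s^*$ is exactly the fiber at $s$ of $p$.

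First, I would reduce to an atlas. Singular support on the real analytic stack is defined smooth-locally, so I pick a smooth chart $U\to\Bun_G(X/S)$ over $S$; all of $\mathrm{Nilp}_u$, $p$ and non-characteristicness are compatible with this pullback. On $U$, a submersion to $S$, the transversal slice $U_s=U\times_S\{s\}$ carries the identity $\mathrm{SS}(\mathcal{F}|_{U_s})=i_s^\sharp\mathrm{SS}(\mathcal{F})$. One direction is Kashiwara–Schapira, Prop.~5.4.13. For the equality, I would use that $\mathcal{F}$ is locally a product in the $S$-direction up to microlocal data: locally $U\cong U_s\times B$ with $B$ a ball in $S$. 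Non-characteristicness then lets one propagate along $B$ (KS Prop.~5.4.14 / the non-characteristic deformation lemma), so $\mathrm{SS}(\mathcal{F}|_{U_s})\supseteq i_s^\sharp\mathrm{SS}(\mathcal{F})$. For the lemma, however, only a weaker statement is needed: $p(\mathrm{SS}(\mathcal{F}))\subseteq \mathrm{Nilp}_{\mathrm{rel}}$. This follows if every $\xi\in\mathrm{SS}(\mathcal{F})$ over $s$ has $di_s^*\xi\in\mathrm{SS}(i_s^*\mathcal{F})\subseteq\mathrm{Nilp}(X_s)$.

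Second, knowing $p(\mathrm{SS}(\mathcal{F}))\subseteq\mathrm{Nilp}_{\mathrm{rel}}$, I need to lift this to $\mathrm{SS}(\mathcal{F})\subseteq\mathrm{Nilp}_u$. The fibers of $p$ are torsors under the pullback of $T^*S$, so $p^{-1}(\mathrm{Nilp}_{\mathrm{rel}})$ is an affine bundle over $\mathrm{Nilp}_{\mathrm{rel}}$ containing $\mathrm{Nilp}_u$ as a section (set-theoretically). I would argue by dimension and involutivity. Since $\mathcal{F}$ is weakly constructible, $\mathrm{SS}(\mathcal{F})$ is a closed conic subanalytic isotropic-closed coisotropic set, and $\mathrm{SS}(\mathcal{F})\subseteq p^{-1}(\mathrm{Nilp}_{\mathrm{rel}})$, which has dimension $\dim\mathrm{Nilp}_u+\dim S$. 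A coisotropic set contained in $p^{-1}(\mathrm{Nilp}_{\mathrm{rel}})$ must be Lagrangian on each component. I would then show that a conic Lagrangian in $p^{-1}(\mathrm{Nilp}_{\mathrm{rel}})$ that is non-characteristic over $S$ is contained in $\mathrm{Nilp}_u$. Concretely, I would check that at smooth points the tangent space of such a Lagrangian must be the symplectic orthogonal of itself. That orthogonal contains the conormal direction forced by $\mathrm{Nilp}_u$ being Lagrangian, which pins the $T^*S$-component. Equivalently, the Hamiltonian flows of functions pulled back from $S$ are tangent to $p^{-1}(\mathrm{Nilp}_{\mathrm{rel}})$ and translate along the $T^*S$-fibers. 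So any Lagrangian component is a union of $\mathrm{Nilp}_u$-components or is swept in the $T^*S$ direction. The second option contradicts non-characteristicness, because conicity combined with translation would reach vectors of the form $(0,\eta)$.

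The main obstacle is this second step: making rigorous that coisotropy plus non-characteristicness forces the $T^*S$-component to agree with the one prescribed by $\mathrm{Nilp}_u$. A cleaner route I would try first uses the construction of $\mathrm{Nilp}_u$ via $\Bun_B(X/S)$. For $\xi\in\mathrm{SS}(\mathcal{F})$ with $p(\xi)$ nilpotent, choose the $B$-reduction witnessing nilpotence. Then compare $\xi$ with the unique lift $\xi_u\in\mathrm{Nilp}_u$; the difference $\xi-\xi_u$ is pulled back from $T^*S$. To rule out a nonzero difference, I would use that $\mathrm{SS}(\mathcal{F})$ is closed under the involutivity condition of KS Thm.~6.5.4. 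Applied with $\mathrm{Nilp}_u$ Lagrangian, this shows that $\mathrm{SS}(\mathcal{F})\cap p^{-1}(p(\xi))$, if it were to contain two points, contains the segment joining them. Conicity then lets me subtract off and produce a nonzero point of $\mathrm{SS}(\mathcal{F})$ in the image of $T^*S$, which is a contradiction.
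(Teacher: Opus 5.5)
The gap is in your second step, which is the real content of the lemma. Neither route you sketch closes it.

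The Hamiltonian-flow dichotomy is asserted, not derived. That dichotomy says a conic Lagrangian inside $p^{-1}(\mathrm{Nilp}_{\mathrm{rel}})$ is either a union of components of $\mathrm{Nilp}_u$ or swept out in the $T^*S$-direction. Invariance of $p^{-1}(\mathrm{Nilp}_{\mathrm{rel}})$ under translation by $T^*S$ does not by itself tell you that either alternative holds.

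The ``cleaner route'' fails in three places:
\begin{enumerate}
\item It presupposes $\xi_u\in\mathrm{SS}(\mathcal{F})$, which is essentially what you are trying to prove.
\item Kashiwara--Schapira's involutivity theorem gives no convexity along the fibers of $p$.
\item Singular supports are stable under positive scaling, not under differences, so nothing lets you ``subtract off''.
\end{enumerate}
For a concrete example, take $\mathbb{R}^2\to\mathbb{R}$, $(x,y)\mapsto y$, and the sheaf $\mathbf{1}_{\{y=x\}}\oplus\mathbf{1}_{\{y=-x\}}$. It is non-characteristic, and its singular support contains $dx+dy$ and $dx-dy$ at the origin, which have the same image under $p$. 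It does not contain $dx$. So a non-characteristic singular support can meet a $p$-fiber in several points, and no pointwise argument of this kind can force $\xi=\xi_u$.

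The missing idea, which is how the paper argues, works component by component. An irreducible closed conic analytic Lagrangian $L'\subset T^*M$ is the closure of the conormal bundle of its projection $\pi(L')\subset M$. So it is determined by its base, and $p$ covers the identity of $M$.

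The argument then runs as follows:
\begin{enumerate}
\item Non-characteristicness makes $p|_{L'}$ proper and generically injective.
\item Hence $p(L')$ is a closed subset of $\mathrm{Nilp}_{\mathrm{rel}}$ of full dimension $\dim M$.
\item So $p(L')=p(\Lambda')$ for some irreducible component $\Lambda'$ of $\mathrm{Nilp}_u$.
\item Then $\pi(L')=\pi(\Lambda')$, and therefore $L'=\Lambda'$.
\end{enumerate}
This needs no involutivity, no Hamiltonian flows, and not even the bijectivity of $p$ on $\mathrm{Nilp}_u$.

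As for your first step, the paper enters the reduction directly with the inclusion $p(\mathrm{SS}(\mathcal{F}))\subseteq\mathrm{Nilp}_{\mathrm{rel}}$. Do not try to obtain it from an equality $\mathrm{SS}(i_s^*\mathcal{F})=i_s^\sharp\mathrm{SS}(\mathcal{F})$. The sufficient condition you state is exactly the reverse inclusion, and non-characteristicness does not imply it for $\mathbb{R}$-constructible sheaves. Take the same projection and $W=\{y\ge -x,\ y>x\}$. The sheaf $\mathbf{1}_W$ is non-characteristic and restricts to zero on $\{y=0\}$, yet its singular support contains $dx$ at the origin.
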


\begin{proof}
    The assumption of $\cF$ ensures that the singular support of $\cF$ is a union of closures of conormal bundles to locally closed substacks of $\on{Bun}_G(X/S)$. As such, it suffices to prove that if $L$ is a non-characteristic closed conic analytic Lagrangian subset of $ T^*\Bun_G(X/S)$ such that its image is contained in $\mathrm{Nilp}_\mathrm{rel}$, then $L$ is contained in $\mathrm{Nilp}_u$. The statement is smooth local on $\Bun_G(X/S)$, hence we may reduce to proving the following statement about real analytic manifolds.

    Let $M\to S$ be a submersion of real analytic manifolds. Suppose $\Lambda$ and $L$ are $S$-non-characteristic closed conic Lagrangian analytic subset of $T^*M$. Let $p:T^*M\to T^*_{S} M$ be the projection to the relative cotangent space. Suppose $p(L)\subset p(\Lambda)$. We want to prove that $L\subset \Lambda$.

    Let $L'$ be an irreducible component of $L$. Since $L'$ is closed and conic, its image under $p$ is closed. As $L$ and $\Lambda$ are Lagrangian and $S$-non-characteristic, it follows that $\dim p(L')=\dim L=\dim \Lambda=\dim p(\Lambda)$. This implies that $p(L')\subset p(\Lambda)$ is an irreducible component of $p(\Lambda)$. Let $\Lambda'$ be an irreducible component of $\Lambda$ such that $p(\Lambda')=p(L')$.

    Since $L'$ is a closed, conic, analytic Lagrangian in $T^*M$, $L'$ is contained in the conormal cone of its image in $M$. Since $L'$ is irreducible and has the same dimension as the conormal cone, they must be equal as closed subsets. Similarly, $\Lambda'$ is the conormal cone to its image in $M$. Since $p(\Lambda')=p(L')$, it follows that the images in $M$ are equal and hence $L'=\Lambda'$.   
\end{proof}

\begin{rem}

The above proof essentially amounts to establishing a uniqueness of liftings of relative Lagrangians. This observation appeared independently in the recent preprint \cite{nadler2026global}.
    
\end{rem}

\subsubsection{} From the above lemma above combined with the results of \cite{arinkin2020stack} and \cite{nadler2021automorphic}, we now obtain a Hecke-characterization for non-characteristic weakly analytically constructible sheaves on $\on{Bun}_G(X/S)$ to have singular supported contained in $\on{Nilp}_u$.

For $V\in\on{Rep}(\check{G})$, denote by $\sH_V$ the natural global Hecke functor
\[
\sH_V: \on{Sh}(\on{Bun}_G(X/S))\to \on{Sh}(\on{Bun}_G(X/S)\underset{S}{\times}X).
\]

\noindent We denote by $T^*\on{Bun}_G(X/S)\underset{S}{\times} 0_X\subset T^*(\on{Bun}_G(X/S)\underset{S}{\times} X)$ the closed subset given by the image of the codifferential of the projection map
\[
\on{Bun}_G(X/S)\underset{S}{\times} X\to \on{Bun}_G(X/S).
\]
\begin{thm}\label{t:Heckechar}
Let $\cF\in \on{Sh}(\on{Bun}_G(X/S))$ be weakly analytically constructible and non-characteristic relative to $S$. Then $\on{SS}(\cF)$ is contained in $\on{Nilp}_u$ if and only if
\[
\on{SS}(\sH_V(\cF))\subset T^*\on{Bun}_G(X/S)\underset{S}{\times} 0_X.
\]
\end{thm}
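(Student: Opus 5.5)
By Lemma \ref{l:pointwisenilpsuffices}, since $\cF$ is weakly analytically constructible and non-characteristic over $S$, the condition $\on{SS}(\cF)\subset\on{Nilp}_u$ is equivalent to asking that for every $s\in S$ the restriction $i_s^*\cF$ have nilpotent singular support on $\on{Bun}_G(X_s)$. The plan is to turn the relative Hecke condition into the same kind of pointwise condition. Then both sides of the theorem become fiberwise statements, and we can compare them using the absolute Hecke characterization of \cite[Thm 14.4.3]{arinkin2020stack}. That theorem says a sheaf on $\on{Bun}_G(X_s)$ has nilpotent singular support if and only if $\on{SS}(\sH_V(\cG))\subset T^*\on{Bun}_G(X_s)\times 0_{X_s}$ for all $V$. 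Its constructible form is also in \cite{nadler2021automorphic}.

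The first step is base change. The Hecke functor is given by a correspondence $\on{Bun}_G(X/S)\leftarrow\on{Hecke}(X/S)\to\on{Bun}_G(X/S)\times_S X$, which is proper on the right and smooth-locally a product on the left. Proper base change therefore gives $i_s^*\sH_V(\cF)\simeq\sH_V(i_s^*\cF)$, where $i_s$ also denotes the inclusion $\on{Bun}_G(X_s)\times X_s\to\on{Bun}_G(X/S)\times_S X$.

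The second step is to check non-characteristicity of $\sH_V(\cF)$ over $S$. The locus $\on{Hecke}(X/S)$ is smooth-locally a product over $S$, and the left map is smooth relative to $S$. Hence the standard estimate $\on{SS}(p_!q^*\cF)\subset p_\ast q^\ast$-transport of $\on{SS}(\cF)$ shows that non-characteristicity of $\cF$ propagates to $\sH_V(\cF)$. The needed properness of $p$ holds on each connected component, for each $V$.

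With these two facts, the direction "$\Rightarrow$" goes as follows. Assume $\on{SS}(\cF)\subset\on{Nilp}_u$. Then each $i_s^*\cF$ is nilpotent, so by the absolute result $\on{SS}(i_s^*\sH_V\cF)\subset T^*\on{Bun}_G(X_s)\times 0_{X_s}$ for every $s$. Since $\sH_V(\cF)$ is non-characteristic over $S$ and weakly constructible, the fiberwise statement lifts. We apply the argument of Lemma \ref{l:pointwisenilpsuffices} to $\Lambda:=T^*\on{Bun}_G(X/S)\times_S 0_X$ intersected with its non-characteristic part. Concretely, a non-characteristic conic Lagrangian component of $\on{SS}(\sH_V\cF)$ is determined by its projection to the relative cotangent bundle. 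That projection lies in the relative conormal of the $X$-direction, so the component is the conormal of a substack pulled back from $\on{Bun}_G(X/S)$ up to $S$-directions, and hence lies in $T^*\on{Bun}_G(X/S)\times_S 0_X$.

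For "$\Leftarrow$", assume the Hecke condition. Restricting along $i_s$ is non-characteristic, so $\on{SS}(i_s^*\sH_V\cF)$ is contained in the image of $\on{SS}(\sH_V\cF)$ under restriction. That image lies in $T^*\on{Bun}_G(X_s)\times 0_{X_s}$. The absolute theorem then gives nilpotence of $i_s^*\cF$, and Lemma \ref{l:pointwisenilpsuffices} concludes.

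I expect the main obstacle to be the lifting step in "$\Rightarrow$", namely the uniqueness-of-lifts argument for the target Lagrangian $T^*\on{Bun}_G(X/S)\times_S 0_X$. This set is not itself contained in a non-characteristic region: it contains covectors from $S$. The first thing I would try is to intersect with the non-characteristic part and argue componentwise, as in the proof of Lemma \ref{l:pointwisenilpsuffices}. A secondary concern is that the absolute Hecke characterization of \cite{arinkin2020stack} is stated in the algebraic or constructible setting, so its use for weakly analytically constructible sheaves on the fibers should be justified via \cite{nadler2021automorphic}.
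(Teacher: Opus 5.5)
Your $\Leftarrow$ direction is the paper's argument. You restrict to fibers using base change and non-characteristicity of $\sH_V(\cF)$, apply the absolute theorem of \cite{arinkin2020stack}, and conclude with Lemma \ref{l:pointwisenilpsuffices}.

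For $\Rightarrow$ the paper does not reduce to fibers; it cites \cite{nadler2021automorphic}. Your route instead makes the relative statement a formal consequence of the absolute one. The cost is that you need $\sH_V(\cF)$ to be weakly constructible and non-characteristic over $S$, which you need for $\Leftarrow$ anyway.

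The obstacle you anticipate is illusory. The uniqueness-of-lifts argument of Lemma \ref{l:pointwisenilpsuffices} is neither needed nor applicable: $T^*\on{Bun}_G(X/S)\times_S 0_X$ is coisotropic, not Lagrangian, so the dimension count fails. That set is the annihilator of the vertical tangent bundle of $\on{Bun}_G(X/S)\times_S X\to\on{Bun}_G(X/S)$, and this vertical bundle is tangent to the fibers over $S$. Hence a covector lies in it if and only if its restriction to the fiber over $s$ lies in $T^*\on{Bun}_G(X_s)\times 0_{X_s}$. The fact that the set contains all covectors from $S$ is exactly what makes the lift automatic.

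The real input is that covectors of $\on{SS}(\sH_V\cF)$ over $s$ restrict into $\on{SS}(i_s^*\sH_V\cF)$. This can fail at individual points even for non-characteristic sheaves. For example, $\mathbf{1}_{\{-|t|\le x<|t|\}}$ on $\bR^2$ is non-characteristic over $\bR_t$ and vanishes on $\{t=0\}$. Yet its singular support contains covectors over the origin with nonzero $dx$-component.

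So argue on a dense open part of each irreducible component. There the component is the conormal of a stratum submersive over $S$, and non-characteristic restriction preserves microlocal stalks. Then conclude using that $T^*\on{Bun}_G(X/S)\times_S 0_X$ is closed. This is the same input left implicit in the proof of Lemma \ref{l:pointwisenilpsuffices}.
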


\begin{proof}
The 'only if' direction is \cite{nadler2021automorphic}. When $S=\on{pt}$, the 'if' direction is \cite[Thm. 14.4.4]{arinkin2020stack}. Thus, the theorem follows from Lemma \ref{l:pointwisenilpsuffices} above.
    
\end{proof}

\subsubsection{} Suppose now that $S$ is a complex manifold. In this case, we also have the following variant of Lemma \ref{l:pointwisenilpsuffices} in the ind-constructible sheaf context (see Appendix \ref{A:non-char} for sheaf contexts). In the ind-constructible context, a sheaf is said to be ind-$S$-non-characteristic if it can be written as a filtered colimit of $S$-non-characteristic constructible sheaves.

\begin{lem}\label{criterion for universal nilpotence ind-version}
    Let $\mathcal{F}\in \mathrm{Sh}(\Bun_G(X/S))$ be an ind-$S$-non-characteristic ind-constructible sheaf. If, for all points $s\in S$, the sheaf $i^*_s\mathcal{F}$ has nilpotent singular support, then $SS(\mathcal{F})\subset \mathrm{Nilp}_u$. 
\end{lem}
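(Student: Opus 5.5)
We reduce to the constructible case, Lemma \ref{l:pointwisenilpsuffices}. Write $\cF\simeq\colim_i\cF_i$ as a filtered colimit of constructible, $S$-non-characteristic sheaves $\cF_i$. The naive approach fails: the hypothesis is on $i_s^*\cF=\colim_i i_s^*\cF_i$, not on the individual $i_s^*\cF_i$, so we cannot apply the constructible lemma to each $\cF_i$. Instead we replace the $\cF_i$ by sheaves that do satisfy the hypothesis.

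\textbf{Step 1: a constructible characterization of singular support.} In the ind-constructible sheaf context of Appendix \ref{A:non-char}, the singular support of an ind-constructible sheaf is the closure of the union of the singular supports of the constructible pieces in any presentation. Equivalently, $\on{SS}(\cF)\subset\Lambda$ for a closed conic $\Lambda$ iff $\cF$ lies in the ind-completion of the constructible sheaves with singular support in $\Lambda$. So it suffices to exhibit a presentation of $\cF$ by constructible sheaves with singular support in $\on{Nilp}_u$.

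\textbf{Step 2: projecting the pieces.} Work locally on a quasi-compact open of $\on{Bun}_G(X/S)$ and over small opens of $S$. Since $S$ is a complex manifold and everything is complex analytic, each $\cF_i$ is $\bC$-constructible with respect to some complex analytic stratification. Fix such a stratification $\Sigma$, fine enough that $\on{Nilp}_u$ is a union of conormals to strata and that all strata are transverse to the fibers of $\Bun_G(X/S)\to S$ (which is possible, by the $S$-non-characteristicity of $\on{Nilp}_u$). Consider the category of $\Sigma$-constructible, $S$-non-characteristic sheaves. Its ind-completion is compactly generated, and a sheaf in it is ind-constructible with respect to $\Sigma$.

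We then apply a right adjoint (or localization) $\on{Sh}_{\Sigma}\to\on{Sh}_{\Sigma,\on{Nilp}_u}$ and show it is the identity on $\cF$, with each piece landing in constructible sheaves. The fiberwise condition can be tested on fibers: by non-characteristicity, $i_s^*$ commutes with the relevant colimits and with the formation of the stratified pieces. Concretely, the fiber of $i_s^*$ of the cofiber of $\on{Nilp}_u$-projection vanishes for each $s$, hence the cofiber vanishes.

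\textbf{Step 3: the main obstacle.} The main obstacle is justifying that $\cF$ admits a presentation by pieces that are constructible with respect to a \emph{single} stratification $\Sigma$ adapted to $\on{Nilp}_u$, and that membership in $\on{Sh}_{\on{Nilp}_u}$ can be checked after $i_s^*$ for all $s$. The first attempt is a dimension argument mirroring the proof of Lemma \ref{l:pointwisenilpsuffices}. Any conic Lagrangian component $L'$ of $\on{SS}(\cF_i)$ not contained in $\on{Nilp}_u$ is $S$-non-characteristic, so by that lemma's argument $p(L')\not\subset\on{Nilp}_{\on{rel}}$. Restricting to a generic fiber $s$ then produces a microlocal contribution of $i_s^*\cF_i$ outside $\on{Nilp}$. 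That contribution must be cancelled in the colimit.

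The remaining task is to rule out such cancellation. We do this by evaluating microstalks at a generic point of $p(L')$ away from $\on{Nilp}_{\on{rel}}$. Microstalks commute with filtered colimits, and by non-characteristicity they are compatible with $i_s^*$. We can therefore discard the part of each $\cF_i$ microsupported off $\on{Nilp}_u$, for instance via the cone of the map to a microlocal cutoff. This yields the desired presentation of $\cF$ by constructible sheaves with singular support in $\on{Nilp}_u$.
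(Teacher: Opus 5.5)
There is a genuine gap: Steps 2 and 3 never actually produce constructible pieces with singular support in $\mathrm{Nilp}_u$.
\begin{itemize}
\item An ind-constructible sheaf need not be constructible for a single stratification $\Sigma$.
\item No localization $\mathrm{Sh}_\Sigma\to\mathrm{Sh}_{\Sigma,\mathrm{Nilp}_u}$ commuting with $i_s^*$ is available. The natural candidate is Beilinson's spectral projector, and in the paper its landing in $\mathrm{Nilp}_u$ is deduced from this very lemma, so using it here would be circular.
\item There is no ``microlocal cutoff'' functor for a general conic Lagrangian, let alone one functorial in $i$ and recovering $\mathcal{F}$ in the colimit.
\item Cancellation in the colimit cannot be ``ruled out'': it genuinely occurs.
\item Step 1 is wrong as stated. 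Singular support of an ind-constructible sheaf is not the closure of the union over an arbitrary presentation; a filtered colimit along zero maps is zero. Also, membership in $\mathrm{Ind}$ of constructible $\Lambda$-sheaves is only known to imply, not to be equivalent to, $\mathrm{SS}\subset\Lambda$ in general.
\end{itemize}

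The missing idea is to use the actual definition of singular support in the ind-constructible context. There, $\mathrm{SS}(\mathcal{F})\subset\Lambda$ means that every constructible perverse subsheaf $\mathcal{F}_0\hookrightarrow{}^p\mathcal{H}^n\mathcal{F}$ satisfies $\mathrm{SS}(\mathcal{F}_0)\subset\Lambda$. These subobjects are exactly the right pieces, for the following reasons.
\begin{enumerate}
\item Since ${}^p\mathcal{H}^n\mathcal{F}\simeq\colim_i{}^p\mathcal{H}^n\mathcal{F}_i$, compactness makes $\mathcal{F}_0$ a perverse subsheaf of some ${}^p\mathcal{H}^n\mathcal{F}_i$. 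Hence $\mathcal{F}_0$ is $S$-non-characteristic.
\item On $S$-non-characteristic sheaves, $i_s^*$ agrees with $i_s^!$ up to shift, so suitably shifted it is perverse t-exact; it also commutes with filtered colimits. Therefore $i_s^*\mathcal{F}_0$ is a perverse subsheaf of ${}^p\mathcal{H}^n(i_s^*\mathcal{F})$.
\item The hypothesis, read through the same definition, then gives $\mathrm{SS}(i_s^*\mathcal{F}_0)\subset\mathrm{Nilp}$ for every $s$.
\item Lemma \ref{l:pointwisenilpsuffices} applied to the constructible, $S$-non-characteristic sheaf $\mathcal{F}_0$ finishes the proof.
\end{enumerate}
The cancellation problem you identified disappears because one tests subobjects of the colimit's perverse cohomology rather than terms of a presentation. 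Your microstalk idea could only be made to work once it is applied to such subobjects, using exactness of microstalks on perverse sheaves, and not via a cutoff of the $\mathcal{F}_i$.
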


\begin{proof}
    By definition we have to prove that if $n\in \mathbb{Z}$ and $\mathcal{F}_0\hookrightarrow {}^p\mathcal{H}^n\mathcal{F}$ is a constructible subsheaf, then $SS(\mathcal{F}_0)\subset \mathrm{Nilp}_u$. Since $\mathcal{F}$ is ind-$S$-non-characteristic, it follows that $\mathcal{H}^n\mathcal{F}$ is a filtered colimit of non-characteristic sheaves. Thus, the map $\mathcal{F}_0\into {}^p\mathcal{H}^n\mathcal{F}$ factors through one of the terms in the colimit and hence $\mathcal{F}_0$ is also $S$-non-characteristic. By Lemma \ref{l:pointwisenilpsuffices}, it suffices to prove that $i^*_s\mathcal{F}_0$ has nilpotent singular support for each $s\in S$.

    Write $\mathcal{F}$ as a filtered colimit $\mathcal{F}\simeq \colim \mathcal{F}_i$, where $\mathcal{F}_i$ is $S$-non-characteristic constructible. Thus we have an isomorphism
    \begin{align*}
       i_s^* {}^p\mathcal{H}^n\mathcal{F}\simeq i_s^* {}^p\mathcal{H}^n \colim \mathcal{F}_i \simeq \colim i_s^* {}^p\mathcal{H}^n\mathcal{F}_i \simeq \colim  {}^p\mathcal{H}^ni_s^*\mathcal{F}_i \simeq {}^p\mathcal{H}^ni_s^*\mathcal{F}
    \end{align*}

    \noindent up to a shift. As such, $i^*_s\mathcal{F}_0$ is a perverse subsheaf of ${}^p\mathcal{H}^ni_s^*\mathcal{F}$. By assumption ${}^p\mathcal{H}^ni_s^*\mathcal{F}$ has nilpotent singular support, hence $i^*_s\mathcal{F}_0$ has nilpotent singular support.
\end{proof}

\section{Geometric Eisenstein Series}

Let $P\subset G$ be a parabolic subgroup and $M$ be its Levi quotient. In this paper all parabolics are assumed to contain our fixed Borel subgroup $B\subset G$.

We have the correspondence of stacks
\[
\begin{tikzcd}
       & \Bun_P \arrow[ld, "p"'] \arrow[rd, "q"] &        \\
\Bun_G &                                         & \Bun_M
\end{tikzcd}\]
Geometric Eisenstein series is the functor $\mathrm{Eis}_!=p_!q^*:\mathrm{Sh} (\Bun_M)\to \mathrm{Sh}(\Bun_G)$. 
\subsection{Preservation of nilpotent singular support under Eisenstein series}
In this section we prove that geometric Eisenstein series preserves sheaves with universal nilpotent singular support.

\begin{prop}\label{eisnilpotence}
    If $\mathcal{F}\in \mathrm{Sh}_{\mathrm{Nilp}_u}(\Bun_M)$, then $\mathrm{Eis}_!\mathcal{F}\in \mathrm{Sh}_{\mathrm{Nilp}_u}(\Bun_G)$. 
\end{prop}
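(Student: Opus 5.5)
The plan is to estimate $\on{SS}(\on{Eis}_!\cF)=\on{SS}(p_!q^*\cF)$ directly via the microlocal functoriality of pullback and pushforward. The key input is the description of $\on{Nilp}_u$ recalled after Theorem \ref{the nilpotent cone}: a covector $\xi$ at $\sP_G$ lies in $\on{Nilp}_{u,G}$ if and only if it is killed under the codifferential along $\on{Bun}_B(X/S)\to\on{Bun}_G(X/S)$ for some $B$-reduction of $\sP_G$. The same description, with $B_M=B\cap M$ (the image of $B$ in $M$) in place of $B$, describes $\on{Nilp}_{u,M}$. We will work one connected component $\on{Bun}_P^\mu(X/S)$ at a time, so that $p^\mu$ is representable and of finite type.

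\emph{Step 1 (pullback).} The map $q$ is smooth, so
\[
\on{SS}(q^*\cF)=dq^*\bigl(\on{SS}(\cF)\bigr)\subset dq^*(\on{Nilp}_{u,M}).
\]

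\emph{Step 2 (the linear-algebra core).} Suppose we have $\xi\in T^*_{\sP_G}\on{Bun}_G(X/S)$, a $P$-reduction $\sP_P$ of $\sP_G$, and $\eta\in\on{Nilp}_{u,M}$ at $\sP_M=q(\sP_P)$ with $dp^*\xi=dq^*\eta$. We show that $\xi\in\on{Nilp}_{u,G}$. Choose a $B_M$-reduction of $\sP_M$ along which $\eta$ dies. Its preimage under $P\to M$ is a $B$-reduction $\sP_B$ of $\sP_P$, and hence of $\sP_G$. The square formed by $\on{Bun}_B\to\on{Bun}_{B_M}$, $\on{Bun}_B\to\on{Bun}_P$, $q$, and $\on{Bun}_{B_M}\to\on{Bun}_M$ commutes. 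Therefore the restriction of $\xi$ to $T^*_{\sP_B}\on{Bun}_B$ equals the pullback of $\eta|_{\on{Bun}_{B_M}}$, which is $0$. So $\xi\in\on{Nilp}_{u,G}$.

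Step 2 handles the naive estimate
\[
\on{SS}(p_!\cG)\subset\{\xi : dp^*\xi\in\on{SS}(\cG)\},
\]
which holds whenever $p$ is proper on the support of $\cG$. Non-characteristicity over $S$ is then automatic, since $\on{Nilp}_{u,G}$ is $S$-non-characteristic by Theorem \ref{the nilpotent cone}.

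\emph{Step 3 (non-properness; the main obstacle).} The map $p^\mu$ is not proper. My first attempt is to factor it as $\on{Bun}_P^\mu\overset{j}{\into}\widetilde{\on{Bun}}_P^\mu\overset{\tilde p}{\to}\on{Bun}_G$ through Drinfeld's compactification, which is proper over $\on{Bun}_G$, and to bound $\on{SS}(j_!q^*\cF)$. Since $\tilde p$ is proper, this bound can then be pushed forward as in Step 2. Covectors in $\on{SS}(j_!q^*\cF)$ at boundary points are limits of covectors killed by dual codifferentials. A point of $\widetilde{\on{Bun}}_P$ is a generalized $P$-reduction, namely an honest $P$-reduction of $\sP_G$ modified by a defect divisor. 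I would rerun the argument of Step 2 with the induced generalized $B$-reductions, and then use that $\on{Nilp}_{u,G}$ is closed and conic, which passes to the limit.

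If this limiting analysis becomes unwieldy, the fallback is different. First reduce to weakly analytically constructible $\cF$, for instance by writing $\cF$ as a colimit and noting that $\on{Eis}_!$ commutes with colimits and that the relevant singular-support conditions are closed under colimits of the constructible pieces. Then show non-characteristicity over $S$ by the same correspondence argument, which is insensitive to the boundary because the $S$-direction is transverse. Finally invoke Lemma \ref{l:pointwisenilpsuffices}: base change identifies $i_s^*\on{Eis}_!\cF$ with $\on{Eis}_!(i_s^*\cF)$ on $\on{Bun}_G(X_s)$, which reduces everything to the case $S=\on{pt}$. In that case the folklore Hecke argument via Theorem \ref{t:Heckechar} and the Braverman--Gaitsgory compatibility of $\on{Eis}$ with Hecke functors applies.
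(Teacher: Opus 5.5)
\textbf{There is a genuine gap.} Your Steps 1 and 2 are correct. Step 2 is essentially the last paragraph of the paper's argument, restricted to the open stratum $\Bun_P\subset\widetilde{\Bun}_P$.

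The gap is exactly where you place the main obstacle: you never control the singular support that extension by zero creates along the boundary of Drinfeld's compactification. Your claim that covectors of $\on{SS}(j_!q^*\cF)$ at boundary points are limits of interior covectors is false already for $j:(0,\infty)\into\mathbb{R}$. There $\on{SS}(j_!\mathbf{1})$ contains a nonzero half-line of covectors at $0$, although $\on{SS}(\mathbf{1}_{(0,\infty)})$ is the zero section. So $j_!$ produces new boundary covectors, and the whole difficulty is to show they are harmless.

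The fallback does not avoid this. The assertion that $S$-non-characteristicity is ``insensitive to the boundary because the $S$-direction is transverse'' is unproved. Transversality of the defect strata to the fibers over $S$ helps only once you know the new covectors are conormal to those strata. The paper stresses, in the analogous situation for $\on{CT}_P$, that such $S$-non-characteristicity is not a priori clear. So the fallback rests on the same missing estimate. Once that estimate is available, the direct argument already finishes, and the reduction to $S=\on{pt}$ via Lemma \ref{l:pointwisenilpsuffices} buys nothing.

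The missing ingredient is Proposition \ref{non-characteristicness}. It says $\on{SS}(j_!\mathbf{1})$ on $\widetilde{\Bun}_P$ lies in the union of the conormal bundles to the strata $\Bun_P\times_{\Bun_M}\mathcal{H}^{+,\vartheta,\lambda}_M$. These strata are smooth over $X^\vartheta\times_S\Bun_M$, so $j_!\mathbf{1}$ is non-characteristic relative to $\Bun_M$. The paper proves this by induction on $\theta$, using three inputs:
\begin{itemize}
\item the Zastava spaces ${}^r\mathrm{Zast}$ as smooth local charts;
\item their factorization, which reduces the estimate to ${}^r\mathrm{Zast}^{\theta}_{\le\theta}$;
\item the contraction principle \cite[Prop.~19.4.2]{arinkin2020stack} for the $\mathbb{G}_m$-contraction onto the deepest stratum, with $L^+M$-equivariance showing the projection is a locally trivial fibration on strata.
\end{itemize}

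Granting this, the proof finishes as follows. Write $\on{Eis}_!\cF\simeq\tilde p_!(j_!\mathbf{1}\otimes\tilde q^*\cF)$. Non-characteristicity gives $\on{SS}(j_!\mathbf{1}\otimes\tilde q^*\cF)\subset\on{SS}(j_!\mathbf{1})+d\tilde q(\on{SS}(\cF))$. Properness of $\tilde p$ pulls a covector $\xi$ back to such a sum at a point of some stratum. The $\on{SS}(j_!\mathbf{1})$-part dies on restriction to that stratum. What remains is the pullback of a nilpotent covector along the Hecke-modified map to $\Bun_M$.

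This is where your Step 2 enters, but only generically on the curve, since the two $M$-bundles agree only away from the defect. One therefore gets that $\xi$ is generically nilpotent, hence nilpotent, rather than an honest $B$-reduction killing $\xi$. Your ``generalized $B$-reductions'' would have to be handled this way.
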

The proof is based on analyzing the singularities of the Drinfeld compactification using the \emph{interplay principle} between Drinfeld's compactification and Zastava spaces. We refer to \cite{braverman2002intersection} for definitions and basic properties about these stacks. We remark that the definitions in \emph{loc.cit} carry over to the relative setting. More precisely, every moduli space over $S$ we will encounter is constructed in the same way (see Appendix \ref{ss:B.101}): it is defined when $S=\fM_g$ is the moduli stack of smooth proper complex curves of genus $g$, and then we simply apply base-change for any real analytic manifold $S$ mapping to (the real analytic stack underlying) $\fM_g$.

We will suppress $X/S$ in the notation of the moduli stacks of bundles in this section. Moreover, in the case when $S$ is scheme and $\sY\to S$ is an algebraic stack (resp. scheme) over $S$, we will often still refer to the base change of $\sY$ along a real analytic manifold $S'\to S$ as a stack (resp. scheme) over $S'$

\subsubsection{} Let $\widetilde{\on{\Bun}}_P$ denote Drinfeld's compactification, which fits in the following diagram
\[
\begin{tikzcd}
       & \widetilde{\on{\Bun}}_P \arrow[ld, "\tilde{p}"'] \arrow[rd, "\tilde{q}"] &        \\
\Bun_G & \Bun_P \arrow[l, "p"'] \arrow[r, "q"] \arrow[u, "j"]   & \Bun_M
\end{tikzcd}\]
where $\tilde{p}$ is proper and $j$ is an open embedding. By the projection formula there is an equivalence
\[\mathrm{Eis}_!\simeq \tilde{p}_!(j_! \mathbf{1}\otimes \tilde{q}^*(-)).\]

\subsubsection{Stratifying the Drinfeld compactification} We define a stratification of $\widetilde{\on{\Bun}}_P$ into locally closed substacks mapping submersively to $S$. For $\theta\in \Lambda_{G,P}^\mathrm{pos}$, let $\widetilde{\on{\Bun}}_{P{= \theta}}$ denote the locally closed substack of the Drinfeld compactification given by fixing the degree of the defect divisor to be $\theta$. There is an equivalence of stacks
\[\widetilde{\on{\Bun}}_{P{= \theta}}\simeq \Bun_P\times_{\Bun_M} \mathcal{H}^{+,\theta}_M,\]
see \cite[\S 3.1]{braverman2002intersection} for the definition of $\mathcal{H}^{+,\theta}_M$ (where it is denoted $\mathrm{Mod}_{\mathrm{Bun}_M}^{+,\theta}$).

For $\theta\in \Lambda_{G,P}$, let $X^{\theta}$ denote the scheme parameterizing a point $s\in S$ and a $\Lambda_{G,P}$-colored divisor of $X_s$ of total degree $\theta$. For each partition $\vartheta$ of $\theta$ given by $\theta=\sum_{i\in I}n_i \theta_i$, for distinct $\theta_i\in \Lambda_{G,P}^\mathrm{pos}\setminus\{0\}$, let $X^\vartheta$ denote the partially symmetrized product $\prod_{i\in I} (X/S)^{(n_i)}$. Let $X^{\vartheta,\circ}$ denote the complement of the big diagonal; i.e., the space where all coordinates are pairwise disjoint.

We have a stratification of $X^\theta$ given by $X^{\vartheta,\circ}\hookrightarrow X^\theta$ for each partition $\vartheta$ of $\theta$. Pulling back along the map $\mathcal{H}^{+,\theta}_M\to X^\theta$, we obtain a stratification  $\mathcal{H}^{+,\vartheta}_M\to \mathcal{H}^{+,\theta}_M$. 

Each $\mathcal{H}^{+,\vartheta}_M$ embeds into the Hecke stack over $X^\vartheta$. Pulling back the stratification by relative position gives a stratification of  $\mathcal{H}^{+,\vartheta}_M$ into locally closed substacks $\mathcal{H}^{+,\vartheta,\lambda}_M$ that are smooth over $X^\vartheta$. Thus we obtain a stratification of $\widetilde{\on{\Bun}}_P$ into locally closed substacks $\Bun_P\times_{\Bun_M} \mathcal{H}^{+,\vartheta,\lambda}_M$ that are smooth over $X^\vartheta\times_S \Bun_M$.

\subsubsection{A singular support estimate} For each $\theta\in \Lambda_{G,P}^\mathrm{pos}$, let $j_{\le \theta}:\Bun_P\to\widetilde{\on{\Bun}}_{P{\le \theta}}$ denote the inclusion of $\Bun_P$ to the open union of strata with defect divisor at most $\theta$.

\begin{prop}\label{non-characteristicness}
    The singular support of the sheaf $j_{\le \theta !}\mathbf{1}$ on $\widetilde{\on{\Bun}}_{P{\le \theta}}$ is contained in the union of the conormal bundles of the strata $\Bun_P\times_{\Bun_M} \mathcal{H}^{+,\vartheta,\lambda}_M$. In particular, $j_{\le \theta !}\mathbf{1}$ is non-characteristic relative to $\Bun_M$.
\end{prop}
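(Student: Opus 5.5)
The claim is local and reduces to a statement about the $!$-extension of the constant sheaf from the open stratum. I would establish it by showing that $j_{\le\theta!}\mathbf{1}$ is constructible with respect to the stratification by the $\Bun_P\times_{\Bun_M}\mathcal{H}^{+,\vartheta,\lambda}_M$, and that this stratification satisfies the Whitney/Thom regularity needed to get the conormal bound.

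First I would reduce to a local model via the interplay principle of \cite{braverman2002intersection}, adapted to the relative setting as in Appendix \ref{ss:B.101}. Smooth locally on $\widetilde{\Bun}_{P\le\theta}$, and compatibly with the maps to $\Bun_M$, the Drinfeld compactification is modeled on the relative Zastava space $\mathcal{Z}^{\theta'}\to X^{\theta'}$ (for $\theta'\le\theta$, over $S$). The precise form needed is that $\widetilde{\Bun}_P$ and $\Bun_M\times_S \mathcal{Z}$ have a common smooth cover, under which:
\begin{itemize}
\item the strata $\Bun_P\times_{\Bun_M}\mathcal{H}^{+,\vartheta,\lambda}_M$ correspond to the product of $\Bun_M$ with the analogous defect strata of $\mathcal{Z}$;
\item the open stratum corresponds to the open stratum.
\end{itemize}
Since singular support is compatible with smooth pullback, and conormals to strata pull back to conormals to strata, it suffices to prove the claim for $j_!\mathbf{1}$ on the Zastava side.

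On the Zastava side I would use factorization. Over $X^{\vartheta,\circ}$, the Zastava space factors as a product of the ``one-point'' Zastava spaces $\mathcal{Z}^{\theta_i}$ restricted away from diagonals. The sheaf $j_!\mathbf{1}$ factors accordingly, so it suffices to understand the fiber over the point stratum. There, the $\mathbb{G}_m$-action contracting $\mathcal{Z}^{\theta}$ onto the central fiber (via $Z(M)$ acting by a dominant cocharacter), together with local triviality along $X$ in the relative curve direction, shows that $j_!\mathbf{1}$ is locally constant along each stratum $\mathcal{H}^{+,\vartheta,\lambda}_M$. Indeed the stratification is by orbits of the (relative) jet group translations and the $\mathbb{G}_m$ action, which act transitively enough on each stratum and preserve $j_!\mathbf{1}$. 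Constructibility along this stratification gives that $\on{SS}(j_{\le\theta!}\mathbf{1})$ lies in the union of closures of conormals. This is still the union of conormals provided the stratification is Whitney, which I would verify by the same equivariance: the homogeneity of each stratum under the jet group action, combined with analyticity, gives Whitney's condition via the curve selection lemma, or by refining and noting the refinement is unnecessary thanks to homogeneity.

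Finally, non-characteristicness relative to $\Bun_M$ follows because each stratum $\Bun_P\times_{\Bun_M}\mathcal{H}^{+,\vartheta,\lambda}_M$ maps submersively to $\Bun_M$, being smooth over $X^\vartheta\times_S\Bun_M$. A covector pulled back from $\Bun_M$ that is conormal to a stratum is therefore zero. For the closures this needs the strata to map submersively even at limit points, which holds by the Whitney condition.

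I expect the main obstacle to be the Whitney/homogeneity step. One must check that the symmetries in the relative-over-$S$ situation act transitively on strata while preserving the map to $\Bun_M$, since the curve itself varies. I would first try to handle this by working étale-locally on $S$ with the relative jet groupoid of $X/S$.
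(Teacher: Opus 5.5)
Your constructibility step has no content. The sheaf $j_{\le\theta!}\mathbf{1}$ is $\mathbf{1}$ on $\Bun_P$ and $0$ on every other stratum, so it is tautologically constructible along the defect stratification. For a stratification not known to be Whitney (or $\mu$-) regular, constructibility gives no bound in terms of the conormals of its strata. So everything rests on your regularity step, and the argument offered there fails.

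The strata are \emph{not} homogeneous under the symmetries you list. Take $G=SL_2$, $P=B$. The local Zastava of degree $3$ is $\{(P,Q)\}$ with $P$ monic of degree $3$ and $\deg Q<3$; its divisor is the roots of $P$ and its defect is $\gcd(P,Q)$. The point $(z^3,z^2)$ lies over the point stratum $3\cdot 0$, with defect $2\cdot 0$. Near it, the stratum $\{\gcd(P,Q)=(z-a)^2\}$ is smooth of dimension $3$, parametrized by $(z-a)^2(z-b)$ and $c(z-a)^2$. The jet-group action, the contracting $\mathbb{G}_m$ and translations along $X$ all act on $X^{(3)}$ either trivially or through automorphisms of the curve. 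Hence they preserve the proper closed collision locus $\{b=a\}$ of this stratum and cannot move $(z^3,z^2)$ to a generic point of it. So the argument ``the bad set is closed, nowhere dense and invariant, hence empty'' gives nothing at such points. Your reduction to ``the fiber over the point stratum'' does not help, because these collision points lie exactly over the point stratum. The only strata that really are homogeneous (under $L^+M$, via Beauville--Laszlo) are those on which the whole divisor is defect, i.e. the strata of $\mathcal{Y}_2$ in the paper.

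The paper never proves any Whitney property; it inducts on $\theta$ using two ingredients your plan lacks.

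First, at a point $\tilde{\sP}_P$ of $\widetilde{\Bun}_P$ it \emph{chooses} the $P^-$-reduction transversal to $\tilde{\sP}_P$ at the defect points. The resulting chart point then lies in the factorized open $({}^r\mathrm{Zast}^{\lambda}_{\le\lambda}\times_{\Bun_M}{}^r\mathrm{Zast}^{\theta'-\lambda}_{\le 0})|_{\mathrm{dis}}$, with all of its defect in the first factor (Lemma \ref{goingupanddown}). This is what makes collision points irrelevant.

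Second, at the deepest stratum it uses the $\mathbb{G}_m$-contraction of ${}^r\mathrm{Zast}^{\theta'}_{\le\theta'}$ onto ${}^r\mathrm{Zast}^{\theta'}_{=\theta'}$ together with the microlocal estimate \cite[Prop. 19.4.2]{arinkin2020stack}. This propagates the inductive bound from ${}^r\mathrm{Zast}^{\theta'}_{<\theta'}$. $L^+M$-homogeneity enters only to show that the contraction map is locally trivial over the strata of the target (Lemma \ref{contractionstep}).

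If you want to keep a stratification-theoretic route, you would at least need the first ingredient, so that every point becomes a deepest-stratum point of some chart where a transitive symmetry group genuinely exists. Even then the generic-Whitney-plus-invariance argument would have to be carried out there; as written, the Whitney step is unsupported.
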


\begin{rem}
    Note that $\widetilde{\on{\Bun}}_{P,{\le \theta}}$ is a singular stack, see \cite[App. E.6]{arinkin2020stack} for the definition of singular support in this context.
\end{rem}

\begin{rem}
    In the case of a single curve, this estimate appeared in \cite{lin2022poincareseriesmiraculousduality} Section 5.3. However, the proof we present is different from that of \emph{loc.cit} even when $S$ is a point.
\end{rem}

The proposition immediately implies the analogous statement for the full Drinfeld compactification. It is formulated as is for the purpose of proving it by induction on $\theta$.

\begin{proof}[Proof of Proposition \ref{eisnilpotence} assuming Proposition \ref{non-characteristicness}.]

We start by estimating the singular support of $j_!\mathbf{1}\otimes \tilde{q}^*\mathcal{F}$. 
Consider the map
\[\widetilde{\Bun}_P\overset{(\mathrm{id},\tilde{q})}{\to} \widetilde{\Bun}_P \times \Bun_M\]
and the sheaf $j_!\mathbf{1} \boxtimes \mathcal{F}$, so that $j_!\mathbf{1}\otimes \tilde{q}^*\mathcal{F}\simeq (\mathrm{id},\tilde{q})^*(j_!\mathbf{1} \boxtimes \mathcal{F})$. The singular support of $j_!\mathbf{1} \boxtimes \mathcal{F}$ is equal to $SS(j_!\mathbf{1})\times SS(\mathcal{F})$. Since $j_!\mathbf{1}$ is non-characteristic over $\Bun_M$, it follows that $(\mathrm{id},\tilde{q})$ is non-characteristic with respect to $SS(j_!\mathbf{1})\times SS(\mathcal{F})$. Thus the singular support of $j_!\mathbf{1}\otimes \tilde{q}^*\mathcal{F}$ is contained in the pullback of the cone $SS(j_!\mathbf{1})\times SS(\mathcal{F})$ along $(\mathrm{id},\tilde{q})$. Equivalently, the singular support of $j_!\mathbf{1}\otimes \tilde{q}^*\mathcal{F}$ is contained in the point-wise sum $SS(j_!\mathbf{1})+d\tilde{q}SS(\mathcal{F})$.

Since $\tilde{p}$ is proper, if $(\sP_G,v)$ is a point of $T^*\Bun_G(X/S)$ in the singular support of $\mathrm{Eis}_!\mathcal{F}\simeq \tilde{p}_!(j_!\mathbf{1}\otimes \tilde{q}^*\mathcal{F})$, then there is a point $\tilde{\sP}_P$ on $\widetilde{\on{\Bun}}_P$ such that $(\sP_G,v)$ pulls back to a cotangent vector $(\tilde{\sP}_P,w+w')$ in $SS(j_!\mathbf{1})+d\tilde{q}SS(\mathcal{F})$. Assume $\tilde{\sP}_P$ lies in the stratum $\Bun_P\times_{\Bun_M} \mathcal{H}^{+,\vartheta,\lambda}_M$ of $\widetilde{\on{\Bun}}_P$. By Proposition \ref{non-characteristicness} the pullback of $(\tilde{\sP}_P,w)$ to $\Bun_P\times_{\Bun_M} \mathcal{H}^{+,\vartheta,\lambda}_M$ is $0$. Thus the pullback of $(\tilde{\sP}_P,w+w')$ to $\Bun_P\times_{\Bun_M} \mathcal{H}^{+,\vartheta,\lambda}_M$ is pulled back from a nilpotent cotangent vector along the map $\Bun_P\times_{\Bun_M} \mathcal{H}^{+,\vartheta,\lambda}_M\to \Bun_M$ given by the Hecke modification. Let $\sP_P$ denote the image of $\tilde{\sP}_P$ under the projection $\Bun_P\times_{\Bun_M} \mathcal{H}^{+,\vartheta,\lambda}_M\to \Bun_P$. The pullback of $(\sP_G,v)$ to a cotangent vector of $\Bun_P$ at $\sP_P$ is generically on $X_s$ equal to the pullback of a nilpotent cotangent vector of $\Bun_M$ along the map $\Bun_P\to \Bun_M$. Thus we conclude that $v$ is generically nilpotent and hence nilpotent.

\end{proof}

\subsubsection{Zastava spaces} For any $\lambda\in \Lambda_{G,P}$, let $\Bun_M^\lambda$ denote the component of bundles of degree $\lambda$ and $\widetilde{\on{\Bun}}_{P\le \theta}^\lambda$ be the preimage of $\Bun_M^\lambda$ under the map $\widetilde{\Bun}_{P\le \theta}\to \Bun_M$. Recall the following definition of Zastava spaces.

\begin{defin}
    For $\theta,\theta'\in \Lambda_{G,P}^\mathrm{pos}$ with $\theta'\ge \theta$, and $\theta''\in \Lambda_{G,P}$, let
    \[{}^{\theta''}\mathrm{Zast}^{\theta'}_{\le \theta}\subseteq \widetilde{\on{\Bun}}_{P\le \theta}^{\theta'+\theta''}\times_{\Bun_G}\Bun_{P^-}^{\theta''}\]
    denote the open substack classifying points where the generic $P$-reduction is transversal to the $P^-$-reduction. Let $\mathrm{Zast}^{\theta'}_{\le \theta}$ denote the disjoint union over all $\theta''$. 
\end{defin}

Note that there is a contraction map $\mathrm{Zast}^{\theta'}_{\le \theta}\to \mathcal{H}_M^{\theta',+}$ and in particular a map to $X^{\theta'}$, which we refer to as the divisor. See \cite[5.1]{braverman2002intersection} for a construction.

Let $\mathfrak{n}_P$ denote the Lie algebra of the unipotent radical of $P$. Following \cite{braverman2023intersectioncohomologydrinfeldscompactifications}, define $\Bun_M^r\subset \Bun_M$ to be the open substack classifying $M$-bundles $\sP_M$ such that $H^1(X,\mathcal{U}_{\sP_M})=0$ for all $M$-modules $\mathcal{U}$ appearing as a subquotient of $\mathfrak{n}_P$. Let $\Bun_{P^-}^r$ denote the preimage of $\Bun_M^r$ under the map $\Bun_{P^-}\to \Bun_M$. 

\begin{lem}
    The map $\Bun_{P^-}^r\to \Bun_G$ is smooth.
\end{lem}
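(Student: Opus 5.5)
We will prove smoothness of $\Bun_{P^-}^r\to \Bun_G$ by deformation theory, using that both stacks are smooth over $S$ and that the map is an $S$-map. So it suffices to show that the relative tangent complex of $\Bun_{P^-}^r\to\Bun_G$ has cohomology only in degree $\le 0$, i.e. that the map on relative tangent complexes over $S$ is surjective on $H^1$. Equivalently, working fiberwise, it suffices to check the criterion for each curve $X_s$: a map of $S$-smooth stacks is smooth if it is smooth on each fiber over $S$ (fiberwise criterion of smoothness for flat families, here applied in the real analytic base-changed setting from $\mathfrak{M}_g$, where we may first prove the statement over $\mathfrak{M}_g$ algebraically and then base change).

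Fix $s$ and a point $\sP_{P^-}\in\Bun_{P^-}^r(X_s)$ with induced $G$-bundle $\sP_G$ and $M$-bundle $\sP_M$. The tangent complex of $\Bun_{P^-}$ at this point is $R\Gamma(X_s,\mathfrak{p}^-_{\sP_{P^-}})[1]$ and that of $\Bun_G$ is $R\Gamma(X_s,\fg_{\sP_G})[1]$. The cone of the map is governed by the quotient $\fg/\mathfrak{p}^-\simeq \fn_P$ twisted by $\sP_{P^-}$, so the relative tangent complex is $R\Gamma(X_s,(\fg/\mathfrak{p}^-)_{\sP_{P^-}})$ shifted so that its $H^1$ is the obstruction. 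Thus smoothness reduces to the vanishing
\[
H^1\bigl(X_s,(\fg/\mathfrak{p}^-)_{\sP_{P^-}}\bigr)=0.
\]

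The $P^-$-module $\fg/\mathfrak{p}^-$ has a filtration (e.g. by the grading via a central cocharacter of $M$ defining $P$) whose associated graded pieces are $M$-modules, with the unipotent radical of $P^-$ acting trivially on the associated graded. These graded pieces are exactly the $M$-subquotients of $\fn_P$. Hence $(\fg/\mathfrak{p}^-)_{\sP_{P^-}}$ has a filtration with graded pieces $\mathcal{U}_{\sP_M}$ for $\mathcal{U}$ subquotients of $\fn_P$; by the defining condition of $\Bun_M^r$ each has vanishing $H^1$, and since $H^2=0$ on a curve, the long exact sequences give vanishing of $H^1$ of the whole bundle by induction on the filtration.

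The main point requiring care is the first step: passing from fiberwise vanishing to smoothness of the map of stacks over a real analytic base. I would handle it by observing that $\Bun_{P^-}^r$ and the map are defined over $\mathfrak{M}_g$ (as in Appendix \ref{ss:B.101}), proving smoothness there via the vanishing of the relative obstruction space (the argument above works verbatim for the relative tangent complex of $\Bun_{P^-}(X/S)\to\Bun_G(X/S)$, computed as $R\Gamma$ along the fibers of $X/S$), and then noting that smoothness is preserved by base change along $S\to\mathfrak{M}_g$.
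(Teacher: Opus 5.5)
Your proposal is correct and follows the same route as the paper. Both arguments use smoothness of $\Bun_{P^-}^r$ and $\Bun_G$ over $S$ to reduce to a single curve $X_s$. At that point the paper simply cites \cite[Lemma 3.7]{braverman2023intersectioncohomologydrinfeldscompactifications}, whereas you prove the single-curve statement yourself: you show the vanishing of $H^1(X_s,(\fg/\fp^-)_{\sP_{P^-}})$ from the defining condition of $\Bun_M^r$ by filtering $\fg/\fp^-$. This is the standard proof of the cited fact, and it parallels Step 1 of the proof of Proposition \ref{p:goodforsmallmu}.
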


\begin{proof}
    Since both stacks are smooth over $S$, the statement reduces to the relative case where it follows from \cite[Lemma 3.7]{braverman2023intersectioncohomologydrinfeldscompactifications}.
\end{proof}

\begin{defin}
    Let ${}^r\mathrm{Zast}_{\le \theta}^{\theta'}$ to be the preimage of $\Bun_{P^-}^r$ under the map $\mathrm{Zast}_{\le \theta}^{\theta'}\to \Bun_{P^-}$.
\end{defin}

By the Lemma above, the map ${}^r\mathrm{Zast}_{\le \theta}^{\theta'}\to \widetilde{\on{\Bun}}_P$ is smooth. In fact, by the results of \cite{drinfeld1995B-structuresonG-bundles}, the Zastava spaces smooth locally model the Drinfeld compactification in the sense that any point of the Drinfeld compactification can be lifted to a point of ${}^r\mathrm{Zast}_{\le \theta}^{\theta'}$.

Since the map ${}^r\mathrm{Zast}_{\le \theta}^{\theta'}\to\widetilde{\on{\Bun}}_P$ is smooth, pulling back the stratification of the Drinfeld compactification, we obtain a stratification of ${}^r\mathrm{Zast}_{\le \theta}^{\theta'}$ into locally closed substacks $\mathcal{Z}_i$ that are smooth over $\on{Bun}_M$.

\subsubsection{Factorization property} For a decomposition $\theta'=\theta'_1+\theta'_2$ with $\theta'_1,\theta'_2\in \Lambda_{G,P}^\mathrm{pos}$, let $(X^{\theta'_1}\times_S X^{\theta'_2})^\mathrm{dis}$ denote the open subscheme where the two divisors are disjoint. There is a natural isomorphism recording the factorization property of the Zastava space under the divisor map
\[({}^r\mathrm{Zast}^{\theta_1'}_{\le \theta'_1}\times_{\Bun_M} {}^r\mathrm{Zast}_{\le \theta'_2}^{\theta'_2})\times_{X^{\theta'}}(X^{\theta'_1}\times_S X^{\theta'_2})^\mathrm{dis}\simeq{}^r\mathrm{Zast}^{\theta'}_{\le \theta'}\times_{X^{\theta'}}(X^{\theta'_1}\times_S X^{\theta'_2})^\mathrm{dis}.\]
The factorization is with respect to the map from Zastava space to $\Bun_M$ given by the map $\Bun_{P^-}\to \Bun_M$.

\subsubsection{The interplay principle} We prove Proposition \ref{non-characteristicness} by using the interplay principle to pass back and forth between $\widetilde{\on{\Bun}}_P$ and the Zastava spaces. To do this we introduce the following proposition, which is the Zastava analogue of Proposition \ref{non-characteristicness}.

Let $j_{Z\le \theta}^{\theta'}:{}^r\mathrm{Zast}^{\theta'}_{\le 0}\to {}^r\mathrm{Zast}^{\theta'}_{\le \theta}$ denote the open inclusion and $j_{Z< \theta}^{\theta'}:{}^r\mathrm{Zast}^{\theta'}_{\le 0}\to {}^r\mathrm{Zast}^{\theta'}_{< \theta}$ .

 \begin{prop}\label{non-char Zastava}
Let $\theta,\theta'\in \Lambda_{G,P}^\mathrm{pos}$ with $\theta'\ge \theta$ be given. The sheaf $j_{Z\le \theta!}^{\theta'}\mathbf{1}$ on ${}^r\mathrm{Zast}^{\theta'}_{\le \theta}$ has singular support contained in the union of the conormal bundles of the strata $\mathcal{Z}_i$. In particular, it is non-characteristic relative to $\Bun_M$. 
 \end{prop}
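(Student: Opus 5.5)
We argue by a simultaneous induction on $\theta$ with respect to the partial order on $\Lambda_{G,P}^{\mathrm{pos}}$, proving Proposition \ref{non-char Zastava} for $\theta$ and Proposition \ref{non-characteristicness} for $\theta$ together. This is the interplay principle. For $\theta=0$ both sheaves are the constant sheaf on a stack smooth over $\Bun_M$, so the claim is clear.

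The passage from Zastava to Drinfeld is formal. The map ${}^r\mathrm{Zast}^{\theta'}_{\le\theta}\to\widetilde{\Bun}_{P\le\theta}$ is smooth, its images cover $\widetilde{\Bun}_{P\le\theta}$ as $\theta'$ varies, the strata $\mathcal{Z}_i$ are the preimages of the Drinfeld strata, and $j^{\theta'}_{Z\le\theta!}\mathbf{1}$ is the $*$-pullback of $j_{\le\theta!}\mathbf{1}$. Singular support is smooth local and compatible with smooth pullback of conormal bundles. Hence the Zastava statement for all $\theta'\ge\theta$ implies the Drinfeld statement for $\theta$. Conversely, the inductive hypothesis for $\theta_1<\theta$ gives the Zastava statement on ${}^r\mathrm{Zast}^{\theta'}_{<\theta}$ for every $\theta'$.

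The core step is the Zastava statement for $(\theta,\theta')$ when it is known for all smaller defects. We cover ${}^r\mathrm{Zast}^{\theta'}_{\le\theta}$ by two kinds of open subsets.

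\emph{Points whose divisor in $X^{\theta'}$ has at least two distinct support points.} Locally in the divisor, such a point lies in the image of the factorization isomorphism from $({}^r\mathrm{Zast}^{\theta'_1}\times_{\Bun_M}{}^r\mathrm{Zast}^{\theta'_2})\times_{X^{\theta'}}(X^{\theta'_1}\times_S X^{\theta'_2})^{\mathrm{dis}}$, with $\theta'_1,\theta'_2$ both nonzero. The defect then splits as $\theta_1+\theta_2$ with $\theta_i\le\theta'_i$ and $\theta_i<\theta$. The sheaf $j_!\mathbf{1}$ restricts to the restriction of a fibre product of the two $j_!\mathbf{1}$'s over $\Bun_M$. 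Each factor is non-characteristic over $\Bun_M$ by induction, so the fibre-product singular-support estimate (the same non-characteristic pullback argument used in the proof of Proposition \ref{eisnilpotence}) shows that the singular support lies in the sum of the conormals of the product strata. These are the conormals of the strata $\mathcal{Z}_i$. Restricting to an open subset of the base $X^{\theta'}$ does not affect this.

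\emph{Points whose divisor is concentrated at a single point $x\cdot\theta'$.} Here we use the contracting $\mathbb{G}_m$-action, for a cocharacter of $Z(M)$ that is dominant regular for $P$, which acts fibrewise over $\Bun_M$ and the base. Its fixed locus is the section $\mathcal{H}^{+,\theta'}_M$-type central stratum $\mathfrak{s}^{\theta'}$, and it contracts the fibres over the main diagonal $X\to X^{\theta'}$ onto $\mathfrak{s}^{\theta'}$.

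We must treat the locus over the diagonal where the defect equals $\theta$ exactly, that is, the closed complement of ${}^r\mathrm{Zast}^{\theta'}_{<\theta}$. Write $\mathcal{G}=j_{<\theta!}\mathbf{1}$, which is known by induction. Then $j_{\le\theta!}\mathbf{1}=\iota_!\mathcal{G}$ for the open $\iota:{}^r\mathrm{Zast}_{<\theta}\to{}^r\mathrm{Zast}_{\le\theta}$, and we need an estimate for $SS(\iota_!\mathcal{G})$ along the new strata. The new strata are $\mathbb{G}_m$-stable, and $\mathcal{G}$ is $\mathbb{G}_m$-equivariant and constructible with respect to a stratification that is $\mathbb{G}_m$-stable, smooth over $X^\vartheta\times_S\Bun_M$, and satisfies Whitney-type conditions. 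Relative to $\Bun_M$ and $S$, I would try to show that the extension across the new closed strata adds only their conormals. The approach is to deduce from the contraction that the pair (total space, strata) is locally a cone over the contracted locus, uniformly in the base. Then $SS(\iota_!\mathcal{G})$ is conic for the induced action, and the tangency properties of the cone structure force it into $\bigcup T^*_{\mathcal{Z}_i}$.

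\textbf{Main obstacle.} This conic extension step is where I expect the difficulty. Equivariance alone does not bound singular support. We need a genuine local product structure, \emph{Zastava $\simeq$ central fibre $\times$ base}, over $X\times_S\Bun_M$. To obtain it, I would first try to use the factorization structure together with the translation by points of $X$, mimicking Braverman–Finkelberg–Gaitsgory–Mirković's local triviality of the central fibre in families. Failing that, I would prove Whitney regularity of the stratification directly and invoke the standard estimate for sheaves constructible with respect to a Whitney stratification.
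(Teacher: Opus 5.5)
The decisive step, extending across the closed locus where the defect is exactly $\theta$, is not proved. You list candidate strategies (cone structure, local triviality of the central fibre, Whitney regularity) but do not carry any of them out.

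Worse, for $\theta<\theta'$ the contraction route cannot work as you set it up. The $\mathbb{G}_m$-action contracts ${}^r\mathrm{Zast}^{\theta'}_{\le\theta'}$ onto ${}^r\mathrm{Zast}^{\theta'}_{=\theta'}$. So a point over the diagonal with divisor $\theta'\cdot x$ and defect $\theta\cdot x$ is not a fixed point, and its limit has defect $\theta'$, which lies outside ${}^r\mathrm{Zast}^{\theta'}_{\le\theta}$. The new strata are therefore not the vertex of any cone structure on this open.

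Your off-diagonal step also has a slip. If all the defect sits at one of the two support points, that factor has defect exactly $\theta$ (it lives on ${}^r\mathrm{Zast}^{\theta'_1}_{\le\theta}$ with $\theta'_1<\theta'$), so $\theta_i<\theta$ fails. An auxiliary induction on $\theta'$ repairs the off-diagonal points, but it leaves exactly the diagonal points with defect $\theta<\theta'$.

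The missing idea is how the paper exploits the interplay principle (Lemma \ref{goingupanddown}). It proves the Zastava statement directly only for $\theta'=\theta$. It then deduces the Drinfeld statement for $\theta$ by lifting a point $\tilde{\sP}_P$ of $\widetilde{\Bun}_{P\le\theta}$ to Zastava using a $P^-$-reduction chosen \emph{transversal to $\tilde{\sP}_P$ at its defect points} (with $M$-bundle in $\Bun_M^r$). The Zastava divisor then splits disjointly into the defect part $\lambda\le\theta$ and a defect-free part. Hence the point lies in the factorization image of $({}^r\mathrm{Zast}^{\lambda}_{\le\lambda}\times_{\Bun_M}{}^r\mathrm{Zast}^{\theta'-\lambda}_{\le 0})|_{\mathrm{dis}}$, where only the case $(\lambda,\lambda)$ and the induction hypothesis enter. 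All $\theta'\ge\theta$ then follow by smooth pullback, so the problematic diagonal points never need to be treated on Zastava at all.

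For $\theta'=\theta$ you are right that equivariance alone does not bound singular support. But neither Whitney regularity nor a product decomposition of the whole Zastava space is needed. No diagonal/off-diagonal split is needed either, since the contraction exists over all of $X^{\theta'}$.

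Lemma \ref{contractionstep} applies \cite[Prop.~19.4.2]{arinkin2020stack} to $\pi\colon{}^r\mathrm{Zast}^{\theta'}_{\le\theta'}\to{}^r\mathrm{Zast}^{\theta'}_{=\theta'}$. If $\xi\in SS(j^{\theta'}_{Z\le\theta'!}\mathbf{1})$ at a fixed point $z$, then its tangential component $\xi^0$, pulled back along $\pi$ to some $z'\in\pi^{-1}(z)$ off the fixed locus, lies in $SS(j^{\theta'}_{Z<\theta'!}\mathbf{1})$. By induction it therefore vanishes on the stratum $\mathcal{Z}'\ni z'$.

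The conormal condition at $z\in\mathcal{Z}$ only involves $\xi^0|_{T_z\mathcal{Z}}$. So it suffices that $\pi\colon\mathcal{Z}'\cap\pi^{-1}(\mathcal{Z})\to\mathcal{Z}$ be a locally trivial fibration. The paper obtains this from a Beauville--Laszlo trivialization of the $M$-bundle near the divisor over $\Bun_M\times_S X^{\vartheta',\circ}$, together with the transitivity of the $L^+M$-action on the strata of the fibre. Thus the only local triviality required is that of $\pi$ along strata, not that of the pair (total space, strata).
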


 The main step in the proof is the following lemma, from which we will conclude Proposition \ref{non-characteristicness} by induction on $\theta$.

\begin{lem}\label{goingupanddown}
    Let $\theta\in  \Lambda_{G,P}^\mathrm{pos}$ be given. If Proposition \ref{non-characteristicness} is true for all $\lambda<\theta$, then the following statements are equivalent.
   \begin{enumerate}
       \item[i)] Proposition \ref{non-characteristicness} is true for $\theta$.
       \item[ii)] Proposition \ref{non-char Zastava} is true for $\theta$ and any $\theta'\ge \theta$.
       \item[iii)] Proposition \ref{non-char Zastava} is true for $\theta$ and $\theta'=\theta$.
   \end{enumerate}
\end{lem}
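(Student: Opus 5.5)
I will prove the cycle of implications i) $\Rightarrow$ ii) $\Rightarrow$ iii) $\Rightarrow$ i). Two tools do all the work. The first is the smooth map $\pi:{}^r\mathrm{Zast}^{\theta'}_{\le \theta}\to \widetilde{\on{\Bun}}_{P\le\theta}$. The second is the factorization isomorphism of Zastava spaces. Since $\pi$ is smooth, singular support is compatible with pullback: $\on{SS}(\pi^*\cF)=d\pi^{*}\on{SS}(\cF)$, and the estimate can be detected after pullback at every point in the image. Since $\pi$ is also open, the same equality holds over the open image. The open embeddings are compatible: $\pi^*j_{\le\theta!}\mathbf{1}\simeq j^{\theta'}_{Z\le\theta!}\mathbf{1}$ by base change, because the preimage of $\Bun_P$ is ${}^r\mathrm{Zast}^{\theta'}_{\le 0}$. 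The stratification $\{\mathcal{Z}_i\}$ is by definition the pullback of the strata of $\widetilde{\on{\Bun}}_P$. Hence the pullback of the union of conormals of the strata is the union of conormals of the $\mathcal{Z}_i$. In both directions, the "in particular" clause follows because the strata are smooth over $\Bun_M$, so their conormals contain no nonzero covector pulled back from $\Bun_M$.

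\emph{i) $\Rightarrow$ ii).} This is immediate from the pullback formula. \emph{ii) $\Rightarrow$ iii).} This is trivial.

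\emph{iii) $\Rightarrow$ i).} This is the main step. By the induction hypothesis, Proposition \ref{non-characteristicness} holds on $\widetilde{\on{\Bun}}_{P\le\lambda}$ for all $\lambda<\theta$. The union of these opens is $\widetilde{\on{\Bun}}_{P<\theta}$, and the restriction of $j_{\le\theta!}\mathbf{1}$ to it is $j_{<\theta!}\mathbf{1}$. So the estimate already holds away from the closed stratum $\widetilde{\on{\Bun}}_{P=\theta}$, and it remains to check points $x$ on that stratum. By Drinfeld's result, any such $x$ lifts to a point $z\in{}^r\mathrm{Zast}^{\theta'}_{\le\theta}$ for some $\theta'\ge\theta$. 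It then suffices to verify the Zastava estimate near $z$, that is, ii) near $z$.

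To reduce from $\theta'$ to $\theta'=\theta$, I use factorization. The divisor of $z$ has total degree $\theta'$. Its defect part sits at the points of the defect divisor, which has degree $\theta$. Write the divisor of $z$ as $D_1+D_2$, where $D_1$ is the component supported at the defect points and $D_2$ is the remainder. In general $D_1$ has degree $\theta_1'\ge\theta$ and $D_2$ lies in the locus with no defect.

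\begin{itemize}
\item If $\theta_1'<\theta'$, the decomposition is nontrivial. Factorization identifies a neighborhood of $z$ with a product ${}^r\mathrm{Zast}^{\theta_1'}_{\le\theta}\times_{\Bun_M}{}^r\mathrm{Zast}^{\theta_2'}_{\le 0}$ over the disjoint locus. The second factor carries the constant sheaf, and the product is taken over $\Bun_M$. The sheaves involved are non-characteristic over $\Bun_M$, the second trivially so. So the singular support of the product is controlled by the external product estimate of the first factor together with the smoothness of the second. This reduces the claim to $\theta_1'$. The fiber product over $\Bun_M$ is exactly where non-characteristicness over $\Bun_M$ is needed.
\item If the divisor of $z$ is concentrated at the defect, I use a different reduction. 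Either the defect is a single point of full degree, or the defect splits across several points; in the latter case factorization again reduces to smaller $\theta$ or to the statement for the pieces. Moreover, the part of $z$'s divisor not absorbed into the defect can be moved off by the $\theta'\ge\theta$ room. After these reductions, one lands in ${}^r\mathrm{Zast}^{\theta}_{\le\theta}$, where iii) applies.
\end{itemize}

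I expect the main obstacle to be this bookkeeping in iii) $\Rightarrow$ i). Specifically, one must show that every point of the deepest stratum, lifted to Zastava, is locally isomorphic (after factorization) to a point of ${}^r\mathrm{Zast}^{\theta}_{\le\theta}$ times a smooth factor. One must also check that the singular support estimate, including the description via conormals of the strata, is compatible with these factorization isomorphisms. I would handle the second point by noting that the strata $\mathcal{Z}_i$ are themselves defined by the divisor data, so they factorize as products of strata.
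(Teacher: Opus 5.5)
Your steps i) $\Rightarrow$ ii) $\Rightarrow$ iii), the reduction to the closed stratum via the induction hypothesis, and the factorization step in your first bullet are fine. The gap is in the second bullet of iii) $\Rightarrow$ i).

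You work with an arbitrary lift $z$ of $x$, so the Zastava divisor at a defect point can strictly exceed the defect there. Concretely, suppose $x$ has defect $\theta\cdot y$ at a single point $y$, and $z$ has divisor $\theta_1'\cdot y$ with $\theta_1'>\theta$. Then factorization splits nothing off, since everything is supported at $y$. What you would need at $z$ is Proposition \ref{non-char Zastava} for the pair $(\theta,\theta_1')$, which is statement ii), not iii). For $\theta$ itself there is no induction hypothesis to fall back on. In the paper's overall induction, the pair $(\theta,\theta_1')$ with $\theta_1'>\theta$ only becomes available after i) is proved for $\theta$, so the argument is circular at this point. The sentence ``the part of $z$'s divisor not absorbed into the defect can be moved off by the $\theta'\ge\theta$ room'' describes no valid operation on a fixed point $z$. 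Your first bullet does not help either, because it reduces to $(\theta,\theta_1')$ and lands in the same case.

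The missing idea is to choose the lift rather than handle all lifts. Smooth pullback detects singular support at every point of the image, so one good lift of $x$ suffices. Choose the $P^-$-reduction $\sP_{P^-}$ of $\sP_G$ transversal to the generalized $P$-reduction at the defect points of $x$, with induced $M$-bundle in $\Bun_M^r$. Then near the defect points the Zastava divisor equals the defect divisor, of some degree $\lambda\le\theta$. The rest of the divisor is disjoint from these points and carries no defect. Factorization then places $z$ in $({}^r\mathrm{Zast}^{\lambda}_{\le\lambda}\times_{\Bun_M}{}^r\mathrm{Zast}^{\theta'-\lambda}_{\le 0})|_{\mathrm{dis}}$. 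There iii) handles $\lambda=\theta$, and the induction hypothesis, through i) $\Rightarrow$ ii) for $\lambda$, handles $\lambda<\theta$. This is the paper's argument, and it makes your case analysis unnecessary. Your proposal should also state the existence of such a transversal $P^-$-reduction as the geometric input it relies on; the paper simply asserts this choice.
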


\begin{proof}
    $i)\Rightarrow ii)$. Since the map $\Bun_{P^-}^r\to \Bun_G$ is smooth, it follows that the map $\pi:{}^r\mathrm{Zast}^{\theta'}_{\le \theta}\to \widetilde{\Bun}_P^{\le \theta}$ is smooth. Since $\pi^* j_{\le \theta !}\mathbf{1}\simeq j_{Z\le \theta !}\mathbf{1}$ it follows that the singular support is contained in the conormal bundle of the pullback stratification. 

    $ii)\Rightarrow iii)$. This implication is immediate.

    $iii)\Rightarrow i)$. Let $\tilde{\sP}_P$ be a point of $\widetilde{\on{\Bun}}_P^{\le \theta}$ and $v$ be a cotangent vector at $\tilde{\sP}_P$. We want to prove that if $v$ is in the singular support of $j_{\le \theta!}\mathbf{1}$ and $\tilde{\sP}_P$ is a point of $\Bun_P\times_{\Bun_M} \mathcal{H}^{+,\vartheta,\lambda}_M$, then $v$ maps to $0$ in the cotangent space of $\Bun_P\times_{\Bun_M} \mathcal{H}^{+,\vartheta,\lambda}_M$. Let $\sP_G$ denote the image of $\tilde{\sP}_P$ in $\Bun_G$. Choose a $P^-$-reduction $\sP_{P^-}$ of $\sP_G$, such that $\sP_{P^-}$ is transversal to $\tilde{\sP}_P$ at the defect points of $\tilde{\sP}_P$. We can further choose $\sP_{P^-}$ with induced $M$-bundle in $\Bun_M^r$. Thus $(\tilde{\sP}_P,\sP_{P^-})$ defines a point of ${}^r\mathrm{Zast}_{\le \theta }^{\theta'}$. By the assumption that $\sP_{P^-}$ is transversal to $\tilde{\sP}_P$ at the defect points of the latter reduction and the factorization property of Zastava spaces, it follows that $(\tilde{\sP}_P,\sP_{P^-})$ defines a point of the open subscheme
    \[({}^r\mathrm{Zast}^{\lambda}_{\le \lambda}\times_{\Bun_M} {}^r\mathrm{Zast}_{\le 0}^{\theta'-\lambda})|_\mathrm{dis}\to {}^r\mathrm{Zast}^{\theta'}_{\le \theta},\]
    for some $\lambda\in \Lambda_{G,P}^\mathrm{pos}$ with $\lambda\le \theta$. By $iii)$ and the induction hypothesis, it follows that $j^{\theta'}_{Z\le \theta !}\mathbf{1}$ restricted to this open subscheme has singular support in the pullback stratification. The following composition is submersive
    
\[{}^r\mathrm{Zast}^{\lambda}_{\le \lambda}\times_{\Bun_M} {}^r\mathrm{Zast}_{\le 0}^{\theta'-\lambda}\to {}^r\mathrm{Zast}^{\theta'}_{\le \theta}\to \widetilde{\Bun}_P.^{\le \theta}\]
Since $\tilde{\sP}_P$ is in the image of the composition, it follows that $(\tilde{\sP}_P,v)$ restricted to $\Bun_P\times_{\Bun_M} \mathcal{H}^{+,\vartheta,\lambda}_M$ vanishes after pullback along a smooth morphism. Hence the cotangent vector $(\tilde{\sP}_P,v)$ already vanishes when restricted to the stratum $\Bun_P\times_{\Bun_M} \mathcal{H}^{+,\vartheta,\lambda}_M$.
\end{proof}

To finish the induction argument, it suffices to prove the following.

\begin{lem}\label{contractionstep}
    Let $\theta'\in \Lambda_{G,P}^\mathrm{pos}$ be given. If Proposition \ref{non-char Zastava} is true for $(\theta,\theta')$ for any $\theta<\theta'$, then Proposition \ref{non-char Zastava} is true for $(\theta,\theta')$.
\end{lem}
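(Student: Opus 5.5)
The plan is to prove the statement for $\theta=\theta'$ by a contraction argument on the Zastava space ${}^r\mathrm{Zast}^{\theta'}_{\le\theta'}$. I expect this to be the real content of the lemma; for $\theta<\theta'$ the conclusion is the hypothesis itself. The idea is to reduce, using the hypothesis and factorization, to the deepest stratum, where the defect has degree exactly $\theta'$ and is concentrated at a single point of $X_s$. That stratum is then handled by the $\bG_m$-contraction of Zastava space.

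\textbf{Step 1: removing everything except the deepest stratum.} Let $z$ be a point of ${}^r\mathrm{Zast}^{\theta'}_{\le\theta'}$. There are three cases.
\begin{enumerate}
\item[(a)] If the defect of $z$ has total degree $<\theta'$, then $z$ lies in the open substack ${}^r\mathrm{Zast}^{\theta'}_{\le\theta}$ for some $\theta<\theta'$. The restriction of $j^{\theta'}_{Z\le\theta'!}\mathbf{1}$ to that open substack is $j^{\theta'}_{Z\le\theta!}\mathbf{1}$, and the claim near $z$ is the hypothesis for $(\theta,\theta')$.
\item[(b)] If the divisor of $z$ is not supported at a single point, the factorization isomorphism identifies a neighbourhood of $z$ with an open subset of ${}^r\mathrm{Zast}^{\theta'_1}_{\le\theta'_1}\times_{\Bun_M}{}^r\mathrm{Zast}^{\theta'_2}_{\le\theta'_2}$ with $\theta'_i<\theta'$. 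Under this identification the sheaf becomes the external product of the corresponding $j_!\mathbf{1}$'s and the strata become products of strata. The estimate for each factor is Proposition \ref{non-char Zastava} at $(\theta'_i,\theta'_i)$. I would obtain this from the ambient induction on $\theta$ in Lemma \ref{goingupanddown}: Proposition \ref{non-characteristicness} is known for all smaller parameters, so implication $i)\Rightarrow ii)$ applies.
\item[(c)] Factors that are non-characteristic over $\Bun_M$ and have singular support in conormals of strata have external product with the same property, because the fiber product over $\Bun_M$ is transversal.
\end{enumerate}
So it remains to treat points of the closed stratum $\mathcal{Z}_0$ on which the defect equals $\theta'\cdot x$ for a single point $x$.

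\textbf{Step 2: the contraction.} Choose a cocharacter $\gamma$ of $Z(M)$ that is dominant and regular with respect to $P$. Its action contracts ${}^r\mathrm{Zast}^{\theta'}_{\le\theta'}$, relative to $\Bun_M$ and $X^{\theta'}$, onto the section of full defect. Over the main diagonal $X\subset X^{\theta'}$ this fixed locus is exactly $\mathcal{Z}_0$, and it is smooth over $X\times_S\Bun_M^r$. The sheaf $j_!\mathbf{1}$ is $\bG_m$-equivariant, so $\on{SS}(j_!\mathbf{1})$ is $\bG_m$-stable. Let $\xi\in\on{SS}(j_!\mathbf{1})$ be a covector at a point of $\mathcal{Z}_0$. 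I would decompose $T\mathcal{Z}$ along $\mathcal{Z}_0$ into $\bG_m$-weights. Tangent directions along $\mathcal{Z}_0$ have weight $0$ and the contracted normal directions have positive weight.

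\textbf{Step 3: the main obstacle, $\xi|_{T\mathcal{Z}_0}=0$.} I expect this to be the hardest step. First I would try the following. By Step 1, $\on{SS}(j_!\mathbf{1})$ is contained in the union of $T^*_{\mathcal{Z}_0}$ and the closures of the conormals $T^*_{\mathcal{Z}_i}$ of the non-central strata. By equivariance and conicity, $\xi$ is then a limit of covectors conormal to $\bG_m$-stable strata whose closures contain $\mathcal{Z}_0$. The contraction should give Whitney-type regularity along $\mathcal{Z}_0$. The reason is that each non-central stratum is, locally over $\mathcal{Z}_0$, the product of $\mathcal{Z}_0$ with a cone in the normal directions: the $\bG_m$-action together with factorization over $X$ trivializes the family along $\mathcal{Z}_0$. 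Such limits then annihilate $T\mathcal{Z}_0$.

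Two finer points need care. The first is the translation along the point $x\in X_s$; I would handle it by the factorization/translation symmetry in the curve direction, which is smooth over $S$. The second is that $\mathcal{Z}_0\to X\times_S\Bun_M$ is a submersion, so the vanishing along $\mathcal{Z}_0$ yields non-characteristicness over $\Bun_M$.

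If the Whitney argument is too soft, the fallback is hyperbolic localization. Using Braden's theorem, I would compute the microlocal stalk of $j_!\mathbf{1}$ at a covector $\xi$ that is nonzero on $T\mathcal{Z}_0$. Such a $\xi$ has weight $0$, so the restriction to the attractor is constant along $\mathcal{Z}_0$, and this should force that microlocal stalk to vanish.
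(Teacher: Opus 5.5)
Your Step 3, which you correctly identify as the whole content of the lemma, is not a proof. The assertion that, by Step 1, $\on{SS}(j_!\mathbf{1})$ lies in $T^*_{\mathcal{Z}_0}\cup\bigcup_i\overline{T^*_{\mathcal{Z}_i}}$ does not follow. Step 1 only controls the singular support over the open complement of $\mathcal{Z}_0$. A $!$-extension can acquire new components over the closed locus, e.g.\ conormals $T^*_W$ to proper subvarieties $W\subsetneq\mathcal{Z}_0$, and these are not limits of anything over the complement. Ruling them out is exactly what must be shown.

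A Whitney or $\mu$-regularity argument (constructibility with respect to a $\mu$-stratification, as in \cite{kashiwara1990sheaves}) could do this. But it first needs a genuine local product structure of the space, the stratification and the sheaf along $\mathcal{Z}_0$, and the sources you invoke cannot supply one. The $\bG_m$-action is trivial on its fixed locus, so it gives no homogeneity along $\mathcal{Z}_0$. Factorization only splits off disjoint points, so it says nothing along the single-point locus. Moreover, for $P\neq B$ the single-point full-defect locus is not one smooth stratum. It is a union of strata indexed by relative positions $\lambda$, with fibers of affine-Grassmannian type. Homogeneity along these requires the arc group $L^+M$ acting by change of trivialization, which never enters your argument. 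The hyperbolic-localization fallback presupposes the same homogeneity (``constant along $\mathcal{Z}_0$'') and does not explain how a vanishing microstalk would follow.

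The missing ingredients are the two used in the paper. The first is the contraction estimate \cite[Proposition 19.4.2]{arinkin2020stack} for $\pi\colon{}^r\mathrm{Zast}^{\theta'}_{\le\theta'}\to{}^r\mathrm{Zast}^{\theta'}_{=\theta'}$. If $\xi\in\on{SS}(j^{\theta'}_{Z\le\theta'!}\mathbf{1})$ sits at $z$ in the fixed locus and $\xi^0$ is its image in the cotangent space of ${}^r\mathrm{Zast}^{\theta'}_{=\theta'}$, then $\pi^*\xi^0$ lies in $\on{SS}(j^{\theta'}_{Z<\theta'!}\mathbf{1})$ at some $z'$ with $\pi(z')=z$. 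By the hypothesis it therefore vanishes on the stratum $\mathcal{Z}'\ni z'$.

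The second is local triviality. Beauville--Laszlo gluing over $\Bun_M\times_S X^{\vartheta',\circ}$, together with transitivity of $L^+M$ on the strata of the fiber of the fixed locus, shows that $\mathcal{Z}'\cap\pi^{-1}\mathcal{Z}\to\mathcal{Z}$ is a locally trivial fibration. Hence $\xi^0|_{T\mathcal{Z}}=0$, i.e.\ $\xi\in T^*_{\mathcal{Z}}$.

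This treats all points of full defect $\theta'$, for every partition $\vartheta'$, at once and uses only the stated hypothesis. Your factorization reduction in Step 1(b) then becomes unnecessary. As written, it needs Proposition \ref{non-char Zastava} at $(\theta'_i,\theta'_i)$, which comes from the ambient induction around Lemma \ref{goingupanddown} rather than from the lemma's hypothesis.
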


\begin{proof}
    Recall that ${}^r\mathrm{Zast}^{\theta'}_{\le \theta'}$ contracts onto ${}^r\mathrm{Zast}^{\theta'}_{= \theta'}$, see \cite[5.1]{braverman2002intersection}. Let $i:{}^r\mathrm{Zast}^{\theta'}_{= \theta'}\to {}^r\mathrm{Zast}^{\theta'}_{\le \theta'}$ denote the inclusion and $\pi:{}^r\mathrm{Zast}^{\theta'}_{\le \theta'}\to {}^r\mathrm{Zast}^{\theta'}_{= \theta'}$ denote the projection. We apply \cite[Proposition 19.4.2]{arinkin2020stack} to estimate the singular support of $j_{Z\le \theta' !}\mathbf{1}$ in terms of $SS(j_{Z< \theta' !}\mathbf{1})$. Let $\xi$ be a cotangent vector of ${}^r\mathrm{Zast}^{\theta'}_{\le \theta'}$ at a point $z$ in ${}^r\mathrm{Zast}^{\theta'}_{= \theta'}$ which is contained in the singular support of $j_{Z\le \theta' !}\mathbf{1}$. Let $\xi^0$ be the image of $\xi$ in the cotangent space of ${}^r\mathrm{Zast}^{\theta'}_{= \theta'}$. By \cite[Proposition 19.4.2]{arinkin2020stack}  there exist a point $z'$ of ${}^r\mathrm{Zast}^{\theta'}_{< \theta'}$ which maps to $z$ under $\pi$, such that the pullback of $\xi^0$ to a cotangent vector $\xi'$ at $z'$ is in $SS(j_{Z<\theta'!}\mathbf{1})$.

    Let $\mathcal{Z}$ denote the stratum of ${}^r\mathrm{Zast}^{\theta'}_{= \theta'}$ containing $z$ and $\mathcal{Z}'$ be the stratum of ${}^r\mathrm{Zast}^{\theta'}_{< \theta'}$ which contains $z'$. We claim that the map $\mathcal{Z}'\cap \pi^{-1}\mathcal{Z}\to \mathcal{Z}$ is étale locally a trivial fibration. To see this, let $\vartheta'$ denote the partition of the defect divisor at the point $z$. Consider the diagram
    \[
\begin{tikzcd}
{{}^r\mathrm{Zast}^{\theta'}_{\le \theta'}\times_{X^{\theta'}} X^{\vartheta',\circ}} \arrow[rr, "\pi"] \arrow[rd] &                                               & {{}^r\mathrm{Zast}^{\theta'}_{= \theta'}\times_{X^{\theta'}} X^{\vartheta',\circ}} \arrow[ld] \\
                                                                                                                            & {\mathrm{Bun}_M\times_S X^{\vartheta',\circ}} &                                                                                                        
\end{tikzcd}\]
    For any point of ${\mathrm{Bun}_M\times_S X^{\vartheta',\circ}}$ we may find an open neighborhood $U$ and choose a trivialization of the formal neighborhood of the divisor determined by $X^{\vartheta',\circ}$ and a trivialization of the universal $M$-bundle over these formal neighborhoods over $U$. By Beauville-Laszlo gluing, the vertical morphisms in the diagram above are fiber bundles and this choice determines a local trivialization above $U$. Let $\mathcal{Y}_1$ denote the fiber of the leftmost map and $\mathcal{Y}_2$ denote the fiber of the rightmost map. With respect to this trivialization, the morphism $\pi$ is induced by a map $\pi':\mathcal{Y}_1\to \mathcal{Y}_2$, and the stratifications of source and target are induced from stratifications of $\mathcal{Y}_1$ and $\mathcal{Y}_2$. The map $\pi'$ is equivariant for the commuting $L^+M$-actions on $\mathcal{Y}_1$ and $\mathcal{Y}_2$ given by changing the trivialization of the $M$-bundle at each point of the divisor. These actions act transitively on the strata of $\mathcal{Y}_2$. Since $L^+M$ is a pro-smooth group scheme, this implies that the $\pi'$ a locally trivial fibration after restricting to the strata of $\mathcal{Y}_2$. Hence $\pi$ is also a locally trivial fibration on the strata.

    By the hypothesis, the cotangent vector $\xi'$ maps to $0$ when restricted to $\mathcal{Z}'$ and hence in particular when restricted to $\mathcal{Z}'\cap \pi^{-1}\mathcal{Z}$. Thus $\xi^0$ restricted to $\mathcal{Z}$ pulls back to $0$ under a locally trivial fibration, hence $\xi^0$ maps to $0$ when restricted $\mathcal{Z}$. This concludes that $\xi$ is in the conormal bundle of the stratum $\mathcal{Z}$. 
\end{proof}

Combining the lemma we finish the proof by induction on $\theta\in \Lambda_{G,P}^\mathrm{pos}$.

\begin{proof}[Proof of Proposition \ref{non-characteristicness}]
    For $\theta_0=0$, $\widetilde{\on{\Bun}}_P^{\le \theta_0}=\Bun_P$ and $j_{\le \theta_0!}\mathbf{1}\simeq \mathbf{1}$. Thus the singular support estimate is true in this case. 

    Let $\theta_0\in \Lambda_{G,P}^\mathrm{pos}$ be given and suppose Proposition \ref{non-characteristicness} is true for all $\theta<\theta_0$. By applying Lemma \ref{goingupanddown} it follows that Proposition \ref{non-char Zastava} is true for $\theta'=\theta_0$ and any $\theta<\theta_0$. Using Lemma \ref{contractionstep} this implies that Proposition \ref{non-char Zastava} is true for $\theta'=\theta_0$ and any $\theta\le \theta_0$. Finally the induction hypothesis and Lemma \ref{goingupanddown} implies that Proposition \ref{non-characteristicness} is true for $\theta=\theta_0$.
\end{proof}

This also concludes the proof of Proposition \ref{non-char Zastava}.

\section{Constant term functors}

In this section, we prove that constant term functors preserve nilpotent singular support.

We continue to assume that we are given a family of holomorphic curves $X\to S$ over a real analytic manifold $S$.

\subsection{Preservation of nilpotent singular support under constant term functors}

Fix a parabolic subgroup $P\subset G$ with Levi quotient $M$ and unipotent radical $N_P$. Recall that $[G,G]$ is simply-connected by assumption so that $\mu\in \Lambda_{G,P}=\pi_1^{\on{alg}}(M)\simeq \pi_0(\on{Bun}_M(X/S))\simeq\pi_0(\on{Bun}_P(X/S))$. We write $\on{Bun}_M^{\mu}(X/S)$ for the corresponding connected component of $\on{Bun}_M(X/S)$.

\subsubsection{} Consider the correspondence:
\[\begin{tikzcd}
	& {\mathrm{Bun}_P^{\mu}(X/S)} \\
	{\mathrm{Bun}_M^{\mu}(X/S)} && {\mathrm{Bun}_G(X/S).}
	\arrow["q^{\mu}"', from=1-2, to=2-1]
	\arrow["p^{\mu}", from=1-2, to=2-3]
\end{tikzcd}\]

\noindent We have the constant term functor:
\[
\on{CT}_P^{\mu}: \on{Sh}(\on{Bun}_G(X/S))\to \on{Sh}(\on{Bun}_M^{\mu}(X/S)),\;\; \cF\mapsto q^{\mu}_!\circ p^{\mu,*}(\cF).
\]

\noindent Let
\[
\on{CT}_P=\underset{\mu\in \Lambda_{G,P}}{\bigoplus}\on{CT}_{P}^{\mu}.
\]

The goal of this section is to prove that $\on{CT}_P$ preserves universal nilpotent singular support:
\begin{thm}\label{t:csttermpreservesnilp}

The constant term functor $\on{CT}_P$ restricts to a functor
\[
\on{CT}_P: \on{Sh}_{\on{Nilp}_{u,G}}(\on{Bun}_G(X/S))\to \on{Sh}_{\on{Nilp}_{u,M}}(\on{Bun}_M(X/S)).
\]
    
\end{thm}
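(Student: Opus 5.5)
The argument follows the outline in the introduction. First we treat sufficiently antidominant components $\mu$ directly by geometry. Then we propagate to all $\mu$ by induction, using a relative compatibility of $\on{CT}_P$ with Hecke functors together with Theorem \ref{t:Heckechar}. It suffices to work with weakly analytically constructible $\cF$ (or ind-versions, via Lemma \ref{criterion for universal nilpotence ind-version}). By Lemma \ref{l:pointwisenilpsuffices} and Theorem \ref{t:Heckechar}, to show $\on{CT}_P^\mu(\cF)\in\on{Sh}_{\on{Nilp}_{u,M}}$ we must prove two things: (a) $\on{CT}^\mu_P(\cF)$ is non-characteristic over $S$, and (b) $\sH^M_{V}(\on{CT}^\mu_P\cF)$ has singular support in $T^*\on{Bun}_M(X/S)\times_S 0_X$ for every $V\in\on{Rep}(\check M)$.

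\textbf{Step 1 (antidominant $\mu$).} Fix $\mu$ deep in the antidominant chamber relative to $P$. Then $H^1(X_s,(\fn_P)_{\sP_M})=0$ for all $\sP_M\in\on{Bun}_M^\mu$, so $q^\mu$ is smooth and $p^\mu$ is smooth (the $H^1$ of $\fg/\fp$-twists vanishes, as for $\on{Bun}^r_{P^-}$ above, with the roles of $P,P^-$ swapped). Hence $q^\mu$ is a smooth, fibrewise affine-space (vector-bundle-stack) fibration, and $q^\mu_!$ of a sheaf that is pulled back along a smooth map can be estimated by the standard formula for smooth pullback and proper-type pushforward. The covector bound comes from the codifferential: a covector $\eta$ on $\on{Bun}_M$ lies in $\on{SS}(q^\mu_!p^{\mu,*}\cF)$ only if $dq^{\mu}(\eta)=dp^\mu(\xi)$ for some $\xi\in\on{Nilp}_{u,G}$. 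We use the description of $\on{Nilp}_u$ via $B$-reductions (Theorem \ref{the nilpotent cone} and the remark after it). A covector on $\on{Bun}_G$ killed on some $\on{Bun}_B$, pulled to $\on{Bun}_P$, yields one killed on a $B_M$-reduction of the Levi bundle. Generic nilpotence then gives $\eta\in\on{Nilp}_{u,M}$. Non-characteristicness over $S$ follows the same way, since $\on{Nilp}_{u,G}$ is $S$-non-characteristic. The fibres of $q^\mu$ are not compact, so to justify the pushforward estimate I would use the $\bG_m$-contraction of $\on{Bun}_P\to\on{Bun}_M$ (scaling $N_P$ via the center of $M$) and hyperbolic localization / \cite[Proposition 19.4.2]{arinkin2020stack}.

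\textbf{Step 2 (Hecke compatibility).} We prove a relative version of \cite{braverman2006deformations}, \cite{faergeman2025parabolic}. For $V\in\on{Rep}(\check G)$, the sheaf $\on{CT}_P\circ\sH^G_V(\cF)$ carries a filtration, over $\on{Bun}_M(X/S)\times_S X$, whose associated graded is $\bigoplus\sH^M_{V_i}\circ\on{CT}_P(\cF)$, where the $V_i$ are the $\check M$-constituents of $\on{Res}V$. This filtration is indexed by $\Lambda_{G,P}$-degree and shifts the component $\mu\mapsto\mu+\lambda_i$. The construction is the global Mirković–Vilonen/semi-infinite orbit stratification of $\on{Gr}_G$ over $X/S$, and it is local in $S$, so the point case proof carries over. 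Since $\sH^G_V\cF$ is lisse along $X$ (Theorem \ref{t:Heckechar}), $\on{CT}_P\sH^G_V\cF$ is lisse along $X$ wherever $\on{CT}$ is under control. Lisse-ness along $X$ passes to graded pieces of a finite filtration once the other pieces are known to be lisse.

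\textbf{Step 3 (induction, the main obstacle).} Suppose (a),(b) hold for $\on{CT}^{\mu'}$ with $\mu'$ earlier in some ordering. Choose $V$ with a highest $\check M$-constituent $V_0$ (e.g. $V_0$ a character, which makes $\sH^M_{V_0}$ a translation $\on{Bun}^{\mu'}_M\to\on{Bun}_M^{\mu}$). All other constituents land on components $\mu'+\lambda_i$ that are already handled. Then $\sH^M_{V_0}\on{CT}^{\mu'}(\cF)$, and hence $\on{CT}^{\mu}(\cF)$, is lisse along $X$. Non-characteristicness over $S$ is obtained similarly, since extensions and Hecke translates of $S$-non-characteristic sheaves with lisse $X$-direction stay controlled. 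The difficulty is choosing the ordering. Hecke functors raise weight in the root order, while the base case is antidominant, so one must show that every $\mu$ is reached by a sequence of moves in which all the other weights of $\on{Res}V$ (lying in Weyl polytopes) have already been handled. This is the "game of polytopes". I would first try ordering by a generic linear functional pairing positively with $\check\rho$ on $\Lambda_{G,P}$, combined with minuscule/fundamental $V$ so the polytopes are as small as possible. I expect verifying that a valid solution exists for every root system to be the hardest step.
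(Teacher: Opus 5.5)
Your overall architecture (antidominant base case, a relative Hecke/constant-term filtration, and a combinatorial induction) matches the paper. However, two of your steps have genuine gaps.

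\textbf{Step 1 fails once $M$ is not a torus.} For $\mu$ deep in the antidominant chamber, the vanishings that actually hold are $H^1(X_s,(\fg/\fp)_{\sP_M})=0$, which gives smoothness of $p^\mu$, and $H^0(X_s,(\fn_P)_{\sP_M})=0$, which makes $q^\mu$ affine with closed section $\iota^\mu$, so that $q^\mu_!\simeq\iota^{\mu,!}$ and \cite[Prop.~19.4.2]{arinkin2020stack} apply. By contrast, $H^1((\fn_P)_{\sP_M})$ is large, and $q^\mu$ is always smooth.

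More seriously, neither vanishing holds on all of $\on{Bun}_M^\mu(X/S)$. The component contains arbitrarily unstable $M$-bundles: for $GL_2\times GL_1\subset GL_3$, take $\cO(a)\oplus\cO(d-a)$ with $a\gg 0$. So no choice of $\mu$ makes $p^\mu$ smooth on the whole component.

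This is why Proposition~\ref{p:goodforsmallmu} only controls $j^*\circ\on{CT}^\mu_P$ on the truncation $\on{Bun}_M^\mu(X/S)^{(\underset{M_{\on{ad}}}{\leq}(2g-2)\rho_M)}$. Your plan has no mechanism for the unstable complement. The paper handles it separately:
\begin{enumerate}
\item $j_!$ from the truncation preserves $\on{Nilp}_{u,M}$ (Theorem~\ref{t:relagkrrv}).
\item On each Harder--Narasimhan stratum $\lambda$ of the complement, one passes to the parabolic $R\subset M$ cut out by $\langle\lambda,\alpha\rangle\le 2g-2$. The stratum is then identified with a locus in $\on{Bun}_R$ lying over $\on{Bun}_L^{(\lambda)}$ with unipotent-gerbe fibres (Lemmas~\ref{l:incisan=} and \ref{l:(1)}).
\item One concludes by induction on parabolics via $\on{CT}^M_R\circ\on{CT}_P\simeq\on{CT}_{R'}$ (Corollary~\ref{c:laststep}).
\end{enumerate}
Without this, your starting cone $C_0$ is only established for $P=B$.

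\textbf{Step 3 does not close as written.} Theorem~\ref{t:Heckechar} needs $\sH^M_W$ of the unknown piece to be lisse along $X$ for \emph{every} $W\in\on{Rep}(\check M)$. The filtration gives this only for the single constituent $V_0$. Already for $M=T$ and $V_0$ a character $\lambda$, this only forces $\langle\xi,\lambda\rangle=0$ for $\xi\in\on{SS}$, not $\xi=0$. In addition, your $S$-non-characteristicness claim is precisely what the paper notes is not available a priori in families.

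The paper never uses the Hecke-lisse criterion here. Instead it proceeds as follows:
\begin{enumerate}
\item It uses Hecke functors $V_\sigma\star-$ along a section, which preserve $\on{Nilp}_u$.
\item It requires the component $v$ of the total sheaf $\on{CT}^v_P(V_\sigma\star\cF)$ to be already handled; this is why $v$ must be coloured in a move.
\item It applies the triangle inequality for singular support, with the closed cone $\on{Nilp}_{u,M}$, directly to the filtration of Theorem~\ref{t:HeckeCT-formula}.
\item It recovers $\on{CT}^{v-\mu'}_P$ as a direct summand of $(V^{\mu'}_{\check M})^\vee\star V^{\mu'}_{\check M}\star\on{CT}^{v-\mu'}_P$ (Lemma~\ref{l:allbutone}).
\end{enumerate}
This gives your (a) and (b) simultaneously.

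\textbf{Your proposed ordering is not well founded.} Ordering by a single linear functional fails in rank $\ge 2$, because its sublevel sets leave the open cone $C_0$. The paper instead shows that the fundamental-weight polytopes are jointly round (Proposition~\ref{weightsarejointlyround}). It then inducts on the distance to the centre of a large ball contained in $C_0$ (Propositions~\ref{round and sufficiently large} and \ref{round and conic}).
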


\begin{rem}\label{r:CT*}

There is also the functor $\on{CT}_{P,*}$ right adjoint to $\on{Eis}_{P,!}$, defined as a $!$-pullback followed by a $*$-pushforward. The same proof as that of \ref{t:csttermpreservesnilp} shows that $\on{CT}_{P,*}$ preserves universal nilpotent singular support.\footnote{Alternatively, we may apply the second adjunction equivalence $\on{CT}_{P,*}\simeq \on{CT}_{P^-,!}$.}
    
\end{rem}

\subsubsection{} Let $\Lambda^{\bQ}=\bQ\underset{\bZ}{\otimes}\Lambda$ and write $\Lambda_G^{+,\bQ}$ for the subset of $G$-dominant rational coweights; i.e., those rational coweights that pair non-negatively for each positive simple root of $G$. We similarly write $\Lambda_M^{+,\bQ}$ for the set of $M$-dominant rational coweights when $M$ is a Levi of $G$.

\subsubsection{} For $\lambda_1,\lambda_2\in \Lambda^{\bQ}$, we write $\lambda_1\underset{G}{\leq} \lambda_2$ if $\lambda_2-\lambda_1=\underset{i\in\Gamma_G}{\sum}c_i\check{\alpha}_i$ for $c_i\in\bQ_{\geq 0}$. Denote by $G_{\on{ad}}$ the adjoint quotient of $G$. We write $\lambda_1\underset{G_{\on{ad}}}{\leq} \lambda_2$ if $\overline{\lambda}_1\underset{G_{\on{ad}}}{\leq}\overline{\lambda}_2$, where $\overline{\lambda}_i$ is the projection of $\lambda_i$ under the map
\[
\Lambda_G^{\bQ}\to \Lambda^{\bQ}_{G_{\on{ad}}}.
\]

\subsubsection{Harder-Narasimhan strata} Given $\lambda\in \Lambda^{+,\bQ}$, we have a corresponding Harder-Narasimhan stratum
\[
\on{Bun}_G(X/S)^{(\lambda)}\subset \on{Bun}_G(X/S).
\]

\noindent The set of $\Lambda_G^{+,\bQ}$ such that $\on{Bun}_G(X/S)^{(\lambda)}\neq\emptyset$ is discrete in $\Lambda\otimes\bR$. We refer to \cite[\S 7]{drinfeld2015compact} for details and definitions. We note that \emph{loc.cit} works with the non-relative case $S=\on{pt}$, but the definitions extend to the relative situation.\footnote{We note that the generalization of the constructions and results of \cite[\S 7]{drinfeld2015compact} to the relative situation has been used in the literature before, see e.g. \cite[\S 5.2]{gaitsgory2025geometric}.}

For $\lambda\in \Lambda^{\bQ}$, we write
\[
\on{Bun}_G(X/S)^{(\leq\lambda)}
\]

\noindent for the open substack given by the union of the HN-strata corresponding to rational dominant coweights $\mu\underset{G}{\leq} \lambda$. We sometimes also write
\[
\on{Bun}_G(X/S)^{(\leq\lambda)}=\on{Bun}_G(X/S)^{(\underset{G}{\leq}\lambda)}
\]

\noindent to emphasize that the root-order comes from $G$. We let
\[
\on{Bun}_G(X/S)^{(\underset{G_{\on{ad}}}{\leq}\lambda)}
\]

\noindent be the union of HN-strata $\on{Bun}_G(X/S)^{(\mu)}$ corresponding to $\mu\in\Lambda_G^{+,\bQ}$ satisfying $\mu\underset{G_{\on{ad}}}{\leq}\lambda$.

We use similar notation for the HN-strata on $\on{Bun}_M(X/S)$.

\subsubsection{} For an element $\mu\in \pi_1(M)=\Lambda_{G,P}$ and $\lambda\in \Lambda_M^{+,\bQ}$, we write
\[
j_{(\underset{M_{\on{ad}}}{\leq} \lambda),\mu}: \on{Bun}_M^{\mu}(X/S)^{(\underset{M_{\on{ad}}}{\leq}\lambda)}\into \on{Bun}_M(X/S)
\]

\noindent for the intersection of $\on{Bun}_M(X/S)^{(\underset{M_{\on{ad}}}{\leq}\lambda)}$ and $\on{Bun}_M^{\mu}(X/S)$. Let
\[
\on{Bun}_P^{\mu}(X/S)^{(\underset{M_{\on{ad}}}{\leq}\lambda)}\subset \on{Bun}_P(X/S)
\]

\noindent be the preimage of $\on{Bun}_M^{\mu}(X/S)^{(\underset{M_{\on{ad}}}{\leq}\lambda)}$ under $\on{Bun}_P(X/S)\to \on{Bun}_M(X/S)$. We also denote by $j_{(\underset{M_{\on{ad}}}{\leq} \lambda),\mu}$ the inclusion of $\on{Bun}_P^{\mu}(X/S)^{(\underset{M_{\on{ad}}}{\leq}\lambda)}$ into $\on{Bun}_P^{\mu}(X/S)$.

\subsubsection{Rational decomposition}

Denote by $\Lambda_{Z_0(M)}$ the coweight lattice of $Z_0(M)$, the neutral component of the center of $M$. The canonical map
\[
\Lambda_{Z_0(M)}\to \Lambda\to \Lambda_{G,P}
\]

\noindent is an isomorphism on rationalizations:
\[
\Lambda_{Z_0(M)}^{\bQ}\simeq \Lambda_{G,P}^{\bQ}.
\]

As such, we get a decomposition
\begin{equation}\label{eq:decompoflattice}
\Lambda^{\bQ}\simeq \Lambda_{G,P}^{\bQ}\oplus \bQ\Phi_M\simeq \Lambda_{Z_0(M)}^{\bQ}\oplus \bQ\Phi_M,
\end{equation}

\noindent where $\Phi_M$ denotes the set of roots in $M$. By embedding $ \Lambda_{G,P}^{\bQ}$ into $\Lambda^{\bQ}$, we may talk about the pairing
\[
\langle \beta,\mu\rangle\in \bQ
\]

\noindent for every $\mu\in \Lambda_{G,P}^{\bQ}$ and rational weight $\beta\in \check{\Lambda}^{\bQ}$. By construction, $\langle \mu,\beta\rangle=0$ whenever $\beta$ is a root occuring in $\fm=\on{Lie}(M)$.

\begin{prop}\label{p:goodforsmallmu}
There exists some $N\gg0$ such that for any $\mu\in \Lambda_{G,P}$ satisfying $\langle \mu,\alpha\rangle < -N$ for every positive simple root $\alpha\in \Gamma_G-\Gamma_M$, the functor
\[
j_{(\underset{M_{\on{ad}}}{\leq} (2g-2)\rho_M),\mu}^*\circ \on{CT}_P^{\mu}: \on{Sh}_{\on{Nilp}_{u,G}}(\on{Bun}_G(X/S))\to \on{Sh}(\on{Bun}_M^{\mu}(X/S)^{(\underset{M_{\on{ad}}}{\leq} (2g-2)\rho_M)})
\]

\noindent lands in $\on{Sh}_{\on{Nilp}_{u,M}}(\on{Bun}_M^{\mu}(X/S)^{(\underset{M_{\on{ad}}}{\leq}(2g-2)\rho_M)})$.
\end{prop}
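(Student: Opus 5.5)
The plan is to prove smoothness of $p^{\mu}$, then compute $q^{\mu}_!$ as a pushforward along affine-space fibers, and reduce the singular support estimate to a vanishing of Higgs-field components. We work over the open locus $\on{Bun}_P^{\mu}(X/S)^{(\underset{M_{\on{ad}}}{\leq}(2g-2)\rho_M)}$ throughout. Use the $\Lambda_{Z_0(M)}$-grading $\fg=\bigoplus_k \fg_k$, with $\fn_P=\bigoplus_{k>0}\fg_k$ and $\fn_P^-=\bigoplus_{k<0}\fg_k$. On this locus the HN type of the $M$-bundle is bounded, so the degrees of all subquotients of $(\fg_k)_{\sP_M}$ are controlled by $k\cdot\langle\mu,\cdot\rangle$ up to a constant depending only on $g$ and $G$. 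This is uniform in $s\in S$, since the bound $(2g-2)\rho_M$ and the discreteness of HN types are relative statements.

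\textbf{Step 1: smoothness of $p^{\mu}$.} The relative tangent complex of $p^{\mu}$ is governed by $H^\bullet(X_s,(\fg/\fp)_{\sP_P})$, and $\fg/\fp$ is filtered by the $(\fg_k)_{\sP_M}$ with $k<0$. Pairing weights of $\fn_P^-$ with $\mu$ gives values $>N$. Hence for $N\gg0$ every subquotient has slope $>2g-2$, so $H^1$ vanishes and $p^{\mu}$ is smooth on our locus. Consequently $p^{\mu,*}$ is non-characteristic, and
\[
\on{SS}(p^{\mu,*}\cF)\subset dp^{\mu}\bigl(\on{SS}(\cF)\bigr)\subset dp^{\mu}(\on{Nilp}_{u,G}).
\]
Dually, for $k>0$ we get $H^0(X_s,(\fg_k)_{\sP_M})=0$. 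Thus $q^{\mu}$ is, smooth locally on $\on{Bun}_M$, a fibration whose fibers are the unipotent iterated extensions of the affine spaces $H^1(X_s,(\fg_k)_{\sP_M})$, which are contractible.

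\textbf{Step 2: constancy along the fibers of $q^{\mu}$.} This is the main obstacle. A covector in $dp^{\mu}(\on{Nilp}_{u,G})$ at $\sP_P$ is the image of a nilpotent Higgs field $\phi\in H^0(X_s,\fg_{\sP_G}\otimes\omega)$ in $H^0((\fg/\fn_P)_{\sP_P}\otimes\omega)\simeq H^0(\fp^-_{\sP_P}\otimes\omega)$. Its restriction to vertical directions is the $\fn_P^-$-component. I want this component to vanish on our locus; that is, $dp^{\mu}(\on{Nilp}_{u,G})$ should meet the conormal to the fibers of $q^{\mu}$ only in the zero section. I would first try to argue via the $B$-reduction description of $\on{Nilp}_u$ recalled after Theorem \ref{the nilpotent cone}. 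Write $\phi$ as lying in the nilradical of some $B$-reduction $\sP_B$. Decompose $\phi$ by the $k$-grading relative to the $P$-reduction, and look at the most negative nonzero component $\phi_{-k}$, a section of $(\fg_{-k})_{\sP_M}\otimes\omega$. Then compare the relative position of $\sP_B$ and $\sP_P$ generically. Very antidominant $\mu$ combined with the HN bound should force $\sP_B$ to be generically compatible with $\sP_P$, i.e. contained in it. From this I would conclude $\phi\in H^0(\fp\otimes\omega)$, so the $\fn_P^-$-part is zero. This is the delicate degree/relative-position estimate, and it is where the specific bound $(2g-2)\rho_M$ and the constant $N$ must enter.

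\textbf{Step 3: computing $q^{\mu}_!$ and concluding.} Given Step 2, $p^{\mu,*}\cF$ is locally constant along the contractible fibers of $q^{\mu}$, hence pulled back from $\on{Bun}_M$. Then $q^{\mu}_!p^{\mu,*}\cF$ is a shift of a sheaf $\cG$ with $q^{\mu,*}\cG\simeq p^{\mu,*}\cF$, and $dq^{\mu}\bigl(\on{SS}(\cG)\bigr)=\on{SS}(p^{\mu,*}\cF)$. So a covector $\eta\in \on{SS}(\cG)$ comes from a nilpotent $\phi\in H^0(\fp_{\sP_P}\otimes\omega)$ whose image in $\fm$ is $\eta$. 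The image of a nilpotent element of $\fp$ in $\fm$ is nilpotent, so $\eta$ is fiberwise nilpotent. The same computation shows $\cG$ is non-characteristic over $S$, since $\on{Nilp}_{u,G}$ is. For a weakly constructible $\cF$, Lemma \ref{l:pointwisenilpsuffices} then gives $\on{SS}(\cG)\subset\on{Nilp}_{u,M}$. The general case follows by writing $\cF$ as a colimit, as in Lemma \ref{criterion for universal nilpotence ind-version}, using that $\on{CT}_P^{\mu}$ commutes with colimits.
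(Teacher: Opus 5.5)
\textbf{There is a genuine gap in Step~2, and Step~3 depends on it.}

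Step~1 is fine and matches the paper. The claim in Step~2 is false: nilpotent covectors pulled back to $\on{Bun}_P^{\mu}(X/S)$ generally have nonzero $\fg/\fp$-component. Your relative-position heuristic also runs backwards.

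Take $S=\on{pt}$, $G=SL_2$, $P=B$. A point of $\on{Bun}_B^{\mu}$ is a line subbundle $\mathcal{L}\subset E$ with $\deg\mathcal{L}\ll 0$. The vertical component of $\phi$ is the composite $\mathcal{L}\to E\xrightarrow{\phi}E\otimes\omega\to (E/\mathcal{L})\otimes\omega$.
\begin{itemize}
\item Suppose $\phi\neq 0$ is nilpotent with kernel $K$. Then $\phi$ factors through a nonzero map $E/K\simeq K^{-1}\to K\otimes\omega$, so $\deg K\geq 1-g>\deg\mathcal{L}$.
\item Hence $\mathcal{L}\neq K$. The vertical component is then a composite $\mathcal{L}\to E/K\to K\otimes\omega\to (E/\mathcal{L})\otimes\omega$ of three nonzero maps of line bundles, so it is nonzero.
\item Such points exist over any wobbly $E$, since $E$ has line subbundles of every sufficiently negative degree.
\end{itemize}
So in this example every nonzero nilpotent covector has nonzero vertical part. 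The $B$-reduction attached to $\phi$ is never generically compatible with $\sP_P$.

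Conceptually: for $P=B$ the HN condition is vacuous and $\on{Nilp}_{u,T}$ is the zero section. Your claim would then put every nilpotent $\phi$ in $H^0(\fn_{\sP_B}\otimes\omega)=0$. That would make $p^{\mu,*}\cF$ locally constant for every nilpotent $\cF$, and hence, since $p^\mu$ is smooth, $\cF$ locally constant on the image of $p^\mu$. This is absurd. (Incidentally, your phrase ``meets the conormal to the fibers only in the zero section'' says the opposite of ``the $\fn_P^-$-component vanishes''; in the example it is the former that holds.) Consequently $p^{\mu,*}\cF$ does not descend along $q^\mu$, and Step~3 collapses.

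\textbf{The missing idea} is that descent is never needed. The paper uses the $\bG_m$-action, through a cocharacter of $Z_0(M)$, which contracts $\on{Bun}_P^{\mu}$ onto $\on{Bun}_M^{\mu}$.
\begin{enumerate}
\item $p^{\mu,*}\cF$ is $\bG_m$-equivariant, because $p^\mu$ is equivariant for the trivial action on $\on{Bun}_G$.
\item Your vanishing $H^0(X_s,(\fn_P)_{\sP_M})=0$ makes $q^{\mu}$ affine. So the section $\iota^{\mu}$ is a closed embedding, and the contraction principle gives $q^{\mu}_!\simeq\iota^{\mu,!}$ on monodromic sheaves.
\item Estimate the singular support of $\iota^{\mu,!}$ via the triangle $\iota_!\iota^!\to\on{id}\to j_*j^*$ and \cite[Prop.~19.4.2]{arinkin2020stack}. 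This shows: if $\eta\in\on{SS}(q^{\mu}_!p^{\mu,*}\cF)$ at $\sP_M$, then $dq^{\mu}(\eta)$ lies in $\on{SS}(p^{\mu,*}\cF)\subset dp^{\mu}(\on{Nilp}_{u,G})$ at some point of the fiber over $\sP_M$.
\end{enumerate}
So only those nilpotent $\phi$ whose $\fg/\fp$-component happens to vanish ever contribute; the others are invisible after $q^{\mu}_!$ because of monodromicity. For the contributing ones your Step~3 reasoning does go through:
\begin{itemize}
\item the image in $\fm$ of a nilpotent element of $\fp$ is nilpotent;
\item injectivity of $dp^{\mu}$ together with non-characteristicness of $\on{Nilp}_{u,G}$ gives $S$-non-characteristicness;
\item Lemma~\ref{l:pointwisenilpsuffices} then finishes the argument.
\end{itemize}
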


\begin{proof}

\step 

First, we claim that the map
\[
\on{Bun}_P(X/S)^{(\underset{M_{\on{ad}}}{\leq} (2g-2)\rho_M),\mu}\to \on{Bun}_G(X/S)
\]

\noindent is smooth. It suffices to show that the map of tangent spaces is surjective. For $(s,\sP_G)$, where $s\in S$ and $\sP_P\in \on{Bun}_P(X/S)$ with induced $G$-bundle $\sP_G$, it follows by reduction to $S=\on{pt}$ using the smoothness of the source and target over $S$, that we have:
\[
\on{coKer}\big(H^0(T_{(s,\sP_P)} \on{Bun}_P(X/S))\to H^0(T_{(s,\sP_G)} \on{Bun}_G(X/S))\big)\simeq H^1(X_s,(\fg/\fp)_{\sP_P}).
\]

The right-hand side has a filtration whose associated graded identifies with $H^1(X_s,(\fg/\fp)_{\sP_M})$. Thus, it suffices to show that the latter vanishes whenever $\sP_M$ lives over some Harder-Narasimhan stratum $\on{Bun}_M(X/S)^{(\lambda)}$ with $\lambda\in \Lambda_M^{+,\bQ}$ $M$-dominant and $\lambda\underset{M_{\on{ad}}}{\leq} (2g-2)\rho_M$.

Let $R$ be the Harder-Narasimhan parabolic with Levi $L$ associated to $\lambda$; that is, the simple roots of $L$ are those simple roots $\alpha$ of $M$ that satisfy $\langle \alpha,\lambda\rangle=0$. Then $\sP_M$ has a canonical $R$-reduction $\sP_R$ with induced semistable $L$-bundle $\sP_L$ of degree $\lambda$. The bundle $(\fg/\fp)_{\sP_M}$ has a filtration with associated graded identified with $(\fg/\fp)_{\sP_L}$. Thus, it suffices to show that $H^1(X_s,(\fg/\fp)_{\sP_L})=0$.

By \cite[Rmk. 10.3.5]{drinfeld2015compact}, it suffices to show that 
\begin{equation}\label{eq:ineq}
\langle \beta,\lambda\rangle < 2-2g
\end{equation}

\noindent for all positive roots $\beta$ not in $M$. Since $\on{Bun}_M(X/S)^{(\lambda)}$ lies in $\on{Bun}_M^{\mu}(X/S)$, it follows that $\mu$ and $\lambda$ have the same images along the projection
\[
\Lambda^{\bQ}\to \Lambda_{G,P}^{\bQ}.
\]

In terms of the decomposition (\ref{eq:decompoflattice}), we may write
\[
\lambda=\mu+\nu
\]

\noindent for some $\nu\in \bQ\Phi_M$. Since $\mu$ is orthogonal to the roots in $M$, it follows that $\nu$ is $M$-dominant. Thus, we may write
\[
\nu=\underset{i\in\Gamma_M}{\sum} c_i\alpha_i,
\]

\noindent with $c_i\geq 0$. Since $\lambda\underset{M_{\on{ad}}}{\leq} (2g-2)\rho_M$, we have $\nu \underset{M}{\leq} (2g-2)\rho_M$. Thus $0\leq c_i\leq c'$ for some bound $c'$ independent of $\mu$. It follows that $\langle \nu,\beta\rangle$, as $\beta$ ranges over all roots in $G$, is bounded independently of $\mu$. Choosing $N$ large enough thus gives the vanishing of $H^1(X_s,(\fg/\fp)_{\sP_L})$.

Define $\on{Nilp}_{u,P}$ to be the set-theoretic image of the codifferential
\[
dp:\on{Nilp}_{u,G}\underset{\on{Bun}_G(X/S)}{\times} \on{Bun}_P(X/S)\to T^*\on{Bun}_P(X/S).
\]

\noindent We say that an element $(\sP_P,\phi)\in T^*\on{Bun}_P(X/S)$ is nilpotent if it lies in $\on{Nilp}_{u,P}$. By smoothness, $\on{Nilp}_{u,P}$ is a closed conic Lagrangian over $\on{Bun}_P(X/S)^{(\underset{M_{\on{ad}}}{\leq} (2g-2)\rho_M),\mu}$, which is moreover non-characteristic over $S$.

As such, $(j_{(\underset{M_{\on{ad}}}{\leq} (2g-2)\rho_M),\mu}\circ p)^{*}$ gives a functor
\[
(j_{(\underset{M_{\on{ad}}}{\leq} (2g-2)\rho_M),\mu}\circ p)^{*}: \on{Sh}_{\on{Nilp}_{u,G}}(\on{Bun}_G(X/S))\to \on{Sh}_{\on{Nilp}_{u,P}}(\on{Bun}_P^{\mu}(X/S)^{(\underset{M_{\on{ad}}}{\leq}(2g-2)\rho_M)}).
\]

\step 

By Step 1, it suffices to show that $q_!^{\mu}$ induces a functor
\[
q_!^{\mu}: \on{Sh}_{\on{Nilp}_{u,P}}(\on{Bun}_P^{\mu}(X/S)^{(\underset{M_{\on{ad}}}{\leq}(2g-2)\rho_M)})\to \on{Sh}_{\on{Nilp}_{u,M}}(\on{Bun}_M^{\mu}(X/S)^{(\underset{M_{\on{ad}}}{\leq}(2g-2)\rho_M)}).
\]

\noindent There is a (trivial) $\bG_m$-action on $\on{Bun}_P(X/S)$ contracting it onto $\on{Bun}_M(X/S)$. Using \cite[Rmk. 10.3.5]{drinfeld2015compact}, the argument in Step 1 shows that choosing $\mu$ sufficiently anti-dominant, we have $H^0(X_s,({\fn}_P)_{\sP_M})=0$ for any $\sP_M\in \on{Bun}_M^{\mu}(X/S)^{(\underset{M_{\on{ad}}}{\leq}(2g-2)\rho_M)}$. This implies that
\[
q^{\mu}: \on{Bun}_P^{\mu}(X/S)^{(\underset{M_{\on{ad}}}{\leq}(2g-2)\rho_M)}\to \on{Bun}_M^{\mu}(X/S)^{(\underset{M_{\on{ad}}}{\leq}(2g-2)\rho_M)}
\]

\noindent is affine and separated (see \cite[Prop. 11.1.4 (c)]{drinfeld2015compact} whose proof readily extends to the relative setting). In particular, the natural map
\[
\iota^{\mu}: \on{Bun}_M^{\mu}(X/S)^{(\underset{M_{\on{ad}}}{\leq}(2g-2)\rho_M)}\to \on{Bun}_P^{\mu}(X/S)^{(\underset{M_{\on{ad}}}{\leq}(2g-2)\rho_M)}
\]

\noindent induced by $M\into P$ is a section of $q^{\mu}$ and hence a closed embedding. Denote by $j^{\mu}: \sU\into \on{Bun}_P^{\mu}(X/S)^{(\underset{M_{\on{ad}}}{\leq}(2g-2)\rho_M)}$ the complement of $\iota^{\mu}$. By the contraction principle, we have a natural isomorphism:
\[
q^{\mu}_!\simeq \iota^{\mu,!}: \on{Sh}_{\on{Nilp}_{u,P}}(\on{Bun}_P^{\mu}(X/S)^{(\underset{M_{\on{ad}}}{\leq}(2g-2)\rho_M)})\to \on{Sh}_{\on{Nilp}_{u,M}}(\on{Bun}_M^{\mu}(X/S)^{(\underset{M_{\on{ad}}}{\leq}(2g-2)\rho_M)}),
\]

\noindent since any object of the left-hand side is automatically $\bG_m$-monodromic. By the triangle
\[
\iota^{\mu}_!\circ \iota^{\mu,!}\to \on{id}\to j^{\mu}_*\circ j^{\mu,*},
\]

\noindent it suffices to show that $j^{\mu}_*$ restricts to a functor
\[
j^{\mu}_*: \on{Sh}_{\on{Nilp}_{u,P}}(\sU^{\mu})\to \on{Sh}_{\on{Nilp}_{u,P}}(\on{Bun}_P^{\mu}(X/S)^{(\underset{M_{\on{ad}}}{\leq}(2g-2)\rho_M)}).
\]

\noindent We claim this follows from \cite[Prop. 19.4.2]{arinkin2020stack}. Indeed, \emph{loc.cit} implies that for any $\cF\in \on{Sh}_{\on{Nilp}_{u,P}}(\sU^{\mu})$ and covector $\xi_P\in \on{SS}(j^ {\mu}_*(\cF))$, its component $\xi_P^0\in T^*\on{Bun}_M(X/S)$ maps to $\on{Nilp}_{u,P}$ under the codifferential induced by $\on{Bun}_P^{\mu}(X/S)\to \on{Bun}_M^{\mu}(X/S)$. By S-non-characteristicness of $\on{Nilp}_{u,P}$ over $\on{Bun}_P^{\mu}(X/S)^{(\underset{M_{\on{ad}}}{\leq}(2g-2)\rho_M)}$, it follows that $\on{SS}(j^{\mu}_*(\cF))$ is non-characteristic over $S$. Moreover, it is clear by the above condition on $\xi_P^0$ that $\xi_P$ is pointwise nilpotent; i.e., defines an element of $\on{Nilp}_{u,P}\underset{S}{\times} \lbrace s \rbrace$. That $\xi_P\in\on{Nilp}_{u,P}$ now follows from the proof of Lemma \ref{l:pointwisenilpsuffices}.

\end{proof}

\subsection{Compatibility between constant term functors and Hecke operators}

We would like to extend the statement of Proposition \ref{p:goodforsmallmu} to all $\mu\in \Lambda_{G,P}$. For this, we need a certain compatibility between constant term functors and Hecke functors, which we formulate for any parabolic $P$.

\subsubsection{} Assume that we have a section $\sigma: S\to X$ of the structure map $X\to S$. Given a representation $V\in \on{Rep}(\check{G})$, we get a Hecke functor:
\[
V_{\sigma}\star -: \on{Sh}(\on{Bun}_G(X/S))\to \on{Sh}(\on{Bun}_G(X/S))
\]

\noindent such that for each $s\in S$, $*$-pulling back along $s\to S$ recovers the usual Hecke operator at $\sigma(s)$:
\[
V_{\sigma(s)}\star-: \on{Sh}(\on{Bun}_G(X_s))\to \on{Sh}(\on{Bun}_G(X_s)).
\]

\noindent We denote by $\Lambda_M^+$ the set of coweights that are dominant for $M$. For $\nu\in \Lambda_M^+$, we denote by $V_{\check{M}}^{\nu}$ the corresponding highest weight representation for $\check{M}$.

\subsubsection{} The goal of this section is to prove the following result:

\begin{thm}\label{t:HeckeCT-formula}
The functor
\[
\on{CT}_P(V_{\sigma}\star-): \on{Sh}(\on{Bun}_G(X/S))\to \on{Sh}(\on{Bun}_M(X/S))
\]

\noindent has a canonical filtration whose associated graded identifies with
\[
\underset{\nu\in\Lambda_M^+}{\bigoplus} (V_{\check{M},\sigma}^{\nu}\star \on{CT}_P(-))\otimes \on{Hom}_{\check{M}}(V_{\check{M}}^{\nu},V)[-\langle 2(\rho_G-\rho_M),\nu\rangle].
\]
\end{thm}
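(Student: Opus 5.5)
We would follow the strategy of Braverman--Gaitsgory \cite{braverman2006deformations} and \cite{faergeman2025parabolic}, carried out over the base $S$ using the section $\sigma$. First we write $\on{CT}_P(V_\sigma\star -)$ as a pull-push along a single correspondence. Let $\sH_{G,\sigma}$ be the Hecke stack over $S$ of pairs $(\sP_G,\sP'_G)$ together with an isomorphism away from $\sigma$, and let $\on{Sat}(V)$ be the corresponding spherical sheaf, the relative version of the Satake sheaf. By base change,
\[
\on{CT}_P(V_\sigma\star\cF)\simeq (q\circ h')_!\,(h^*\cF\otimes \on{Sat}(V)|_{Z}),
\qquad Z:=\on{Bun}_P(X/S)\underset{\on{Bun}_G(X/S)}{\times}\sH_{G,\sigma}.
\]
Here $Z$ parametrizes a $P$-bundle $\sP'_P$ together with a $G$-bundle $\sP_G$ modified from the induced $\sP'_G$ at $\sigma(s)$. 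Because $\on{Sat}(V)$ is ULA over $S$ and the formation of $Z$ commutes with base change along $S$, every construction below is compatible with restriction to points $s\in S$.

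Next we stratify $Z$ by the relative position of the $P$-structure with respect to the modification. Using the Iwasawa decomposition, $\sP_G$ acquires a generic $P$-reduction, which extends uniquely to a $P$-reduction $\sP_P$ because $G/P$ is proper. The relative position at $\sigma$ of $(\sP_P,\sP'_P)$, together with the induced $M$-modification, then gives a locally closed stratification of $Z$. Its strata are indexed by $\nu\in\Lambda^+_M$, which records the $M$-relative position, and each stratum $Z^\nu$ maps to $\on{Bun}_P(X/S)\underset{\on{Bun}_M(X/S)}{\times}\sH^{\nu}_{M,\sigma}$. Concretely, this is the pullback to the global setting of the semi-infinite decomposition of $\on{Gr}_{G,\sigma}$ by $N_P(K)M(O)$-orbits, written $S_P^\nu\cap \on{Gr}_G$, taken relative to $S$ via Beauville--Laszlo over a local trivialization of the formal neighbourhood of $\sigma$. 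Ordering the strata by $\nu$ produces a filtration of the pushforward; this filtration is the canonical filtration of the statement.

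On each stratum, the fiber of $Z^\nu\to \on{Bun}_P\times_{\on{Bun}_M}\sH^\nu_M$ is an $N_P$-type affine fibration. Integrating $\on{Sat}(V)$ along this fibration is the local hyperbolic restriction, that is, the geometric constant term $\on{CT}^{\on{loc}}_P:\on{Sph}_G\to\on{Sph}_M$ on the fibers. By Braverman--Gaitsgory/Mirković--Vilonen, this functor sends $\on{Sat}(V)$ to $\on{Sat}_M(\on{Res}^{\check G}_{\check M}V)$, shifted component-wise by $\langle 2(\rho_G-\rho_M),\nu\rangle$. It remains to see that this identification holds in families over $S$ and does not depend on the chosen trivialization. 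For this we use the $L^+M$-equivariance of the local statement and descend along the torsor of formal coordinates, using that $\on{Aut}(O)$ acts trivially on the relevant cohomology because it is connected and the weight functors are rigid. The graded piece on $Z^\nu$ then becomes $V^\nu_{\check M,\sigma}\star\on{CT}_P(\cF)$ tensored with the multiplicity space $\on{Hom}_{\check M}(V^\nu_{\check M},V)$, with the stated shift, since $\on{Sat}_M(\on{Res} V)\simeq\bigoplus_\nu \on{Sat}_M(V^\nu_{\check M})\otimes\on{Hom}(V^\nu_{\check M},V)$ by semisimplicity.

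The main obstacle is the last step: proving that the local hyperbolic restriction of the relative Satake sheaf is canonically the stated direct sum, uniformly in $s$. This requires descending the trivialization-dependent local computation to a global statement over $S$ without introducing monodromy, and checking that the filtration does not depend on the choices. We would handle it by working over the $L^+M\rtimes\on{Aut}(O)$-torsor of formal coordinates along $\sigma$ and appealing to the canonicity of the Mirković--Vilonen weight functors, so that the argument reduces fiberwise to the absolute case $S=\on{pt}$ of \cite{faergeman2025parabolic}.
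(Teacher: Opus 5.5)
\textbf{Gap: the strata are indexed by the wrong set.} You stratify $Z$ by the $M$-relative position $\nu\in\Lambda^+_M$, i.e.\ by $N_P(K)M(O)$-orbits, so that $Z^\nu$ maps to the locally closed piece $\sH^\nu_{M,\sigma}$ of relative position exactly $\nu$. Integrating along the fibers, the graded piece from $Z^\nu$ is the Hecke functor whose kernel is the $*$-restriction of $\on{CT}^{\on{loc}}_P(\on{Sat}(V))$ to the single orbit $\on{Gr}^\nu_M$, extended by zero.

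That is not $V^\nu_{\check M,\sigma}\star-$, whose kernel is $\on{IC}_{\overline{\on{Gr}}^\nu_M}$ on the whole closure. The piece from $Z^\nu$ contains the stalks along $\on{Gr}^\nu_M$ of every summand $\on{IC}_{\overline{\on{Gr}}^{\nu'}_M}$ with $\nu'>\nu$. Meanwhile the boundary stalks of $\on{IC}_{\overline{\on{Gr}}^\nu_M}$ land in strata for smaller weights.

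The case $P=G$ already shows this. Take $V$ irreducible of highest weight $\lambda$. Your piece at $\nu=\lambda$ has kernel the extension by zero from the open orbit, not the IC sheaf. For $\nu<\lambda$ your piece consists of the non-zero stalks of $\on{IC}_{\overline{\on{Gr}}^\lambda}$, although you claim it vanishes. Semisimplicity of $\on{Sat}_M(\on{Res}V)$ cannot repair this: it splits the sheaf on $\on{Gr}_M$, not its restrictions to individual orbits. The obstacle you single out, uniformity in $s$ via the coordinate torsor, is comparatively harmless. The paper handles the same issue in Step 1 of Lemma \ref{l:unitmap}.

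\textbf{The fix.} Stratify more coarsely, only by $\theta\in\Lambda_{G,P}=\pi_0(\on{Gr}_{M})$. These strata are the $N_P(K)M(K)$-orbits $S^\theta_P$, or equivalently the connected components of the $P$-Hecke stack, which the saturation map identifies with the strata. Then:
\begin{itemize}
\item $Z^\theta$ maps onto all of $\on{Bun}_P\times_{\on{Bun}_M}\sH^\theta_{M,\sigma}$.
\item Fiber integration produces $\on{CT}^{\on{loc}}_P(\on{Sat}(V))$ restricted to the whole component $\on{Gr}^\theta_M$.
\item The Mirkovi\'c--Vilonen/Braverman--Gaitsgory identification, together with semisimplicity of the $\on{Aut}(O)$-equivariant Satake category of $M$, gives $\bigoplus_{\nu\mapsto\theta}\on{IC}_{\overline{\on{Gr}}^\nu_M}\otimes\on{Hom}_{\check M}(V^\nu_{\check M},V)$. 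The shift is $[-\langle 2(\rho_G-\rho_M),\nu\rangle]$, which depends only on $\theta$.
\end{itemize}
So the decomposition over $\nu$ happens inside each graded piece, not across strata.

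\textbf{Comparison with the paper.} With this correction your argument works, and it is a genuinely different, more elementary route. The paper never stratifies the Hecke correspondence. Instead it puts an enhanced Drinfeld--Pl\"ucker structure on $\mathbf{j}^{\on{glob}}_!$ over $\widetilde{\on{Bun}}_{P,\infty\cdot\sigma}(X/S)$. It does this by identifying $\on{ind}(\delta_\sigma)$ with $\mathbf{j}^{\on{loc}}_!$ through a fiberwise reduction to \cite{faergeman2025parabolic}, and then passing from local to global via Lemma \ref{l:univhom}. The filtration is then read off from $C^\bullet(\check\fn_P,V)\otimes_{\Omega(\check\fn_P)}c\simeq c\star V$. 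That route also records the extension data of the filtration, which yours does not. Only the filtration itself is used later, in Lemma \ref{l:allbutone}.
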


\begin{rem}
Up to the shifts appearing in the formula, this is exactly the formula one would expect from looking at parabolic restriction of representations. Indeed, pull-push along the correspondence
\[\begin{tikzcd}
	{\bB \check{P}} && {\bB\check{G}} \\
	\\
	{\bB \check{M}}
	\arrow[from=1-1, to=1-3]
	\arrow[from=1-1, to=3-1]
\end{tikzcd}\]

\noindent gives a functor $\on{Rep}(\check{G})\to \on{Rep}(\check{M})$. The fact that any $\check{G}$-representation $V$ has a filtration as a $\check{P}$-representation whose associated graded coincides with the restriction of $\on{Res}^{\check{G}}_{\check{M}}(V)$ along $\check{P}\to \check{M}$ gives the analogue of Theorem \ref{t:HeckeCT-formula} in this case.
\end{rem}

\subsubsection{} To prove Theorem \ref{t:HeckeCT-formula}, we use the formalism of Drinfeld-Plücker structures and enhanced Drinfeld-Plücker structures following \cite[§4]{faergeman2025parabolic}. We refer to \emph{loc.cit} for details.

Suppose we are given a category $\sC\in \mathrm{Pr}^L_e$ which is a $(\on{Rep}(\check{M}),\on{Rep}(\check{G}))$-bimodule category.

\subsubsection{Drinfeld-Plücker structures}

Let $\overline{\check{N}_P\backslash \check{G}}$ be the basic affine space; i.e., the affinization of $\check{N}_P\backslash \check{G}$. We may consider the corresponding algebra of functions on the basic affine space
\[
\cO(\overline{\check{N}_P\backslash \check{G}})\in \on{Rep}(\check{M})\otimes \on{Rep}(\check{G}).
\]

\noindent We let
\[
\on{DrPl}_{\check{M},\check{G}}(\sC):=\cO(\overline{\check{N}_P\backslash \check{G}})\on{-mod}(\sC).
\]

\noindent For an object $c\in \sC$, we refer to a lift of c to an object of $\on{DrPl}_{\check{M},\check{G}}(\sC)$ as a Drinfeld-Plücker structure on $c$. Concretely, this amounts to a family of maps
\[
V^{\check{N}_P}\star c\to c\star V,\;\; V\in \on{Rep}(\check{G})
\]

\noindent satisfying the Drinfeld-Plücker identities. We remark here that by the invariants $(-)^{\check{N}_P}$ appearing above, we mean non-derived invariants.

\subsubsection{Enhanced Drinfeld-Plücker structures} Consider the commutative dg-algebra
\[
\cO(\check{N}_P\backslash \check{G}):=R\Gamma(\check{N}_P\backslash \check{G}, \cO)\in \on{Rep}(\check{M})\otimes \on{Rep}(\check{G}).
\]

\noindent We let
\[
\on{EnhDrPl}_{\check{M},\check{G}}(\sC):=\cO(\check{N}_P\backslash \check{G})\on{-mod}(\sC).
\]

\noindent For an object $c\in \sC$, we refer to a lift of $c$ to an object of the above category as an enhanced Drinfeld-Plücker structure on $c$. 

\subsubsection{}\label{ss:ind+oblv} The algebra map $\cO(\overline{\check{N}_P\backslash \check{G}})\to \cO(\check{N}_P\backslash \check{G})$ induces a fully faithful forgetful functor
\[
\on{oblv}_{\on{DrPl}_{\check{M},\check{G}}}^{\on{EnhDrPl}_{\check{M},\check{G}}}: \on{EnhDrPl}_{\check{M},\check{G}}(\sC)\to \on{DrPl}_{\check{M},\check{G}}(\sC)
\]

\noindent with left adjoint
\[
\on{ind}_{\on{DrPl}_{\check{M},\check{G}}}^{\on{EnhDrPl}_{\check{M},\check{G}}}: \on{DrPl}_{\check{M},\check{G}}(\sC)\to \on{EnhDrPl}_{\check{M},\check{G}}(\sC),\;\; c\mapsto \cO(\check{N}_P\backslash \check{G})\underset{\cO(\overline{\check{N}_P\backslash \check{G}})}{\otimes} c.
\]
\subsubsection{} Let
\[
\Omega(\check{\fn}_P):=C^{\bullet}(\check{\fn}_P,\bC)\in\on{Rep}(\check{M})
\]

\noindent be the cohomological Chevalley complex for $\check{\fn}$, considered as a commutative algebra object in $\on{Rep}(\check{M})$. It is shown in \cite[§4.2]{faergeman2025parabolic} that for $c\in\on{EnhDrPl}_{\check{M},\check{G}}(\sC)$ (with underlying object also denoted by $c$), then from the canonical algebra map
\[
\Omega(\check{\fn}_P)\to \cO(\check{N}_P\backslash \check{G}),
\]

\noindent $c$ is equipped with a module structure for $\Omega(\check{\fn}_P)$, and in addition, has a family of canonical isomorphisms:
\[
C^{\bullet}(\check{\fn}_P,V)\underset{\Omega(\check{\fn}_P)}{\star} c\simeq c\star V, \;\; V\in \on{Rep}(\check{G}).
\]

\noindent It follows that for $c$ equipped with an enhanced Drinfeld-Plücker structure, the object 
\[
c\star V
\]

\noindent carries a filtration with associated graded
\[
\underset{\nu\in\Lambda_M^+}{\bigoplus} (V_{\check{M}}^{\nu}\star c)\otimes \on{Hom}_{\check{M}}(V_{\check{M}}^{\nu},V).
\]

To prove Theorem \ref{t:HeckeCT-formula}, it thus remains to equip the suitable kernel realizing $\on{CT}_P$ with an enhanced Drinfeld-Plücker structure.

\subsubsection{} The section $\sigma: S\to X$ defines an action
\[
\on{Rep}(\check{G})\curvearrowright \on{Sh}(\on{Bun}_G(X/S)),
\]

\noindent and similarly an action
\[
\on{Rep}(\check{M})\curvearrowright \on{Sh}(\on{Bun}_M(X/S)).
\]

\subsubsection{Plücker data with poles: absolute case} For $X$ a smooth projective curve over $\bC$ with a point $x\in X$, recall the prestack
\[
\widetilde{\on{Bun}}_{P,\infty\cdot x}
\]

\noindent considered in \cite[§5.2]{faergeman2025parabolic}.\footnote{In \emph{loc.cit}, the stack $\widetilde{\on{Bun}}_{P,\on{pol}}$ was considered. By definition, the stack $\widetilde{\on{Bun}}_{P,\infty\cdot x}$ is the fiber over $x$ of the map $\widetilde{\on{Bun}}_{P,\on{pol}}\to \on{Ran}$.} Concretely, for an affine test scheme $T$, the groupoid $\widetilde{\on{Bun}}_{P,\infty\cdot x}(T)$ classifies the data $(\sP_G,\sP_M,\kappa)$, where $\sP_G$ (resp. $\sP_M$) is a $G$-bundle (resp. $M$-bundle) on $T\times X$, and $\kappa$ is a collection of injective meromorphic maps of coherent sheaves
\[
V^{N_P}_{\sP_M}\to V_{\sP_G}
\]

\noindent defined for each $G$-representation $V$, satisfying the Plücker relations, and such that these maps are defined and non-degenerate away from $T\times x\subset T\times X$. Here, $V_{\sP_G}$ is the vector bundle given by the twist of $\sP_G$ and $V$ (and similarly for $\sP_M$ and $V^{N_P}$), and non-degenerate means that the map is an injective map of vector bundles (i.e., the cokernel is flat on $T\times X$).

The main property of $\widetilde{\on{Bun}}_{P,\infty\cdot x}$ is that it carries commuting Hecke actions for $M$ and $G$. That is, $\on{Sh}(\widetilde{\on{Bun}}_{P,\infty\cdot x})$ is naturally a $(\on{Rep}(\check{M}),\on{Rep}(\check{G}))$-bimodule category.

\subsubsection{Plücker data with poles: relative case}

Let us assume again that $X\to S$ is a family of proper holomorphic curves with a section $\sigma$ and $S$ is a real analytic manifold. We have an analogous prestack
\[
\widetilde{\on{Bun}}_{P,\infty\cdot \sigma}(X/S).
\]

\noindent As in \S\ref{ss:B.101}, it suffices to define the prestack when $S$ is a complex scheme. In this case, for an affine test scheme $T$, the groupoid $\widetilde{\on{Bun}}_{P,\infty\cdot \sigma}(X/S)(T)$ classifies the data $(s,\sP_G,\sP_M,\kappa)$, where:
\begin{itemize}
    \item $s: T \to S$ is a map;
    \item $\sP_G$ is a $G$-bundle on $T\underset{S}{\times} X$ and $\sP_M$ is an $M$-bundle on $T\underset{S}{\times} X$;
    \item $\kappa$ is a collection of injective meromorphic maps of coherent sheaves for each $G$-representation $V$:
  \[
  V^{N_P}_{\sP_M}\to V_{\sP_G},
  \]

  \noindent satisfying the Plücker relations and which are defined and non-degenerate away from the image of the map $T\to T\underset{S}{\times} X$ induced by $\sigma$.
\end{itemize}

By construction, $\widetilde{\on{Bun}}_{P,\infty\cdot \sigma}(X/S)$ carry Hecke modifications for $M$ and $G$ along $\sigma$, and so
\[
\on{Sh}(\widetilde{\on{Bun}}_{P,\infty\cdot \sigma}(X/S))
\]

\noindent is naturally a $(\on{Rep}(\check{M}),\on{Rep}(\check{G}))$-bimodule category.

\subsubsection{}

We have a canonical open embedding
\[
j_{\on{pol}}: \on{Bun}_P(X/S)\into \widetilde{\on{Bun}}_{P,\infty\cdot \sigma}(X/S).
\]

\noindent We define
\[
\mathbf{j}_!^{\on{glob}}:=j_{\on{pol},!}(\on{IC}_{\on{Bun}_P(X/S)/S})\in \on{Sh}(\widetilde{\on{Bun}}_{P,\infty\cdot \sigma}(X/S)),
\]

\noindent where $\on{IC}_{\on{Bun}_P(X/S)/S}$ is the sheaf whose $*$-fiber along $\on{Bun}_P(X_s)=\lbrace s\rbrace\underset{S}{\times} \on{Bun}_P(X/S)\to \on{Bun}_P(X/S)$ is $\on{IC}_{\on{Bun}_P(X_s)}$.

We will prove:
\begin{thm}\label{t:enhdrplonj}
$\mathbf{j}_!^{\on{glob}}$ carries a canonical enhanced Drinfeld-Plücker structure.
\end{thm}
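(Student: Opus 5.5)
We will follow the strategy of \cite[§5]{faergeman2025parabolic}, where the absolute version (over a point, at a fixed $x\in X$) is established, and verify that each step carries over to the family $X\to S$ with section $\sigma$. The argument has two stages. First we equip $\mathbf{j}_!^{\on{glob}}$ with an ordinary Drinfeld-Plücker structure. Then we show that this structure is "enhanced", i.e. that the induced object $\on{ind}_{\on{DrPl}_{\check{M},\check{G}}}^{\on{EnhDrPl}_{\check{M},\check{G}}}(\mathbf{j}_!^{\on{glob}})$ maps isomorphically back to $\mathbf{j}_!^{\on{glob}}$. By the full faithfulness of $\on{oblv}$ from \S\ref{ss:ind+oblv}, this is equivalent to the module structure descending along $\cO(\overline{\check{N}_P\backslash \check{G}})\to \cO(\check{N}_P\backslash \check{G})$.

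\textbf{Step 1: Drinfeld-Plücker structure.} For $V\in\on{Rep}(\check{G})$ we need maps
\[
V^{\check{N}_P}\star_\sigma \mathbf{j}_!^{\on{glob}}\to \mathbf{j}_!^{\on{glob}}\star_\sigma V
\]
satisfying the Plücker relations. Following loc.cit., we construct them via the geometric Satake equivalence along $\sigma$. The relative Beilinson-Drinfeld affine Grassmannians $\on{Gr}_{G,\sigma}\to S$ and $\on{Gr}_{M,\sigma}\to S$ are, étale locally on $S$, trivial families; one gets such a trivialization from a local coordinate near $\sigma$. Hence the relative Satake sheaves are $S$-families of the absolute ones, and the relative Satake functors remain monoidal. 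The two Hecke actions on $\widetilde{\on{Bun}}_{P,\infty\cdot\sigma}(X/S)$ are then related through the semi-infinite orbit $S^\nu_P\cap\on{Gr}_G^\lambda$ correspondence, which is again an $S$-family of the absolute one. The required maps come from the canonical map
\[
V^{\check{N}_P}\to \on{CT}^{\on{loc}}(\on{Sat}(V))
\]
at the level of local Hecke kernels, applied to $j_{\on{pol},!}$. Compatibility with the Plücker relations is checked fiberwise. This is legitimate because $\on{IC}_{\on{Bun}_P(X/S)/S}$ and all the kernels are ULA over $S$ (they are pulled back from smooth families), so the maps and relations are determined by their $*$-restrictions to the fibers over $s\in S$. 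On each fiber we are exactly in the absolute situation of loc.cit.

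\textbf{Step 2: enhancement.} In loc.cit. the enhancement is deduced from a computation: $\mathbf{j}_!^{\on{glob}}$ is identified with a colimit
\[
\underset{\lambda}{\colim}\; V^{\lambda,\check{N}_P}_{\sigma}\star \mathbf{j}_!\star V^{-\lambda},
\]
with the colimit taken over dominant $\lambda$. This presents $\mathbf{j}_!^{\on{glob}}$ as the pullback of $!$-extension from the open locus where $\kappa$ has poles but no zeros. The identification reflects that the Hecke-localized module over $\cO(\overline{\check{N}_P\backslash\check{G}})$, obtained by inverting the highest-weight vectors, is precisely $\cO(\check{N}_P\backslash\check{G})$ in the derived sense. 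Said differently, the canonical map $\on{ind}(\mathbf{j}_!^{\on{glob}})\to\mathbf{j}_!^{\on{glob}}$ is an isomorphism because of the following local computation: on each stratum of $\widetilde{\on{Bun}}_{P,\infty\cdot\sigma}$ (indexed by the defect at $\sigma$), the relevant stalks are computed by the cohomology of the semi-infinite intersections $S^\nu_P\cap\on{Gr}^\lambda_G$, and these are known to give $C^\bullet(\check{\fn}_P,V)$.

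We will reproduce this argument in families. The colimit presentation is a statement about $!$-extensions along an open embedding and Hecke functors. Both commute with $*$-restriction to fibers $s\in S$, by base change for the proper Hecke correspondences and by ULA-ness of $j_{\on{pol},!}\on{IC}$ over $S$. Therefore a map of $S$-ULA sheaves that is an isomorphism on each fiber is an isomorphism. This reduces the isomorphism $\on{ind}(\mathbf{j}_!^{\on{glob}})\simeq\mathbf{j}_!^{\on{glob}}$ to the absolute case, where it holds by loc.cit.

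\textbf{Main obstacle.} We expect the hardest point to be the ULA / base-change input for the ind-stack $\widetilde{\on{Bun}}_{P,\infty\cdot\sigma}(X/S)$. We must show that $\mathbf{j}_!^{\on{glob}}$ and its Hecke translates are ULA over $S$ and that $*$-restriction to fibers commutes with the (infinite) colimits and $!$-pushforwards involved. We would attack this by exhibiting a stratification of $\widetilde{\on{Bun}}_{P,\infty\cdot\sigma}(X/S)$ by the defect at $\sigma$ in which each stratum is, étale locally over $S$, a product of the fiber with $S$. To get this, trivialize the formal disc around $\sigma$ and use Beauville-Laszlo gluing, as in the proof of Lemma \ref{contractionstep}; this gives local acyclicity stratum by stratum.
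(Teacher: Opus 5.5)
\textbf{There is a genuine gap, and it is in your Step 1.}

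Your Step 2 is sound in principle, and for a simpler reason than you give. The functor $*$-restriction to a fiber commutes with $j_{\on{pol},!}$, with the Hecke functors along $\sigma$, and with colimits, by base change alone. No ULA input is needed, so the ``main obstacle'' you single out is not the real one. Hence $*$-restriction also commutes with $\on{ind}_{\on{DrPl}_{\check{M},\check{G}}}^{\on{EnhDrPl}_{\check{M},\check{G}}}$, and stalks are jointly conservative. So \emph{if} you had a Drinfeld--Pl\"ucker structure on $\mathbf{j}_!^{\on{glob}}$ over $S$ restricting on each fiber to the one underlying Theorem \ref{t:FH}, the unit $\mathbf{j}_!^{\on{glob}}\to \on{oblv}\circ\on{ind}(\mathbf{j}_!^{\on{glob}})$ would be an isomorphism. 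Note the direction: there is no canonical map $\on{ind}(c)\to c$ before you know $c$ is enhanced.

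In Step 1 you do not say how a map of local kernels produces morphisms $V^{\check{N}_P}\star\mathbf{j}_!^{\on{glob}}\to \mathbf{j}_!^{\on{glob}}\star V$ of sheaves on $\widetilde{\on{Bun}}_{P,\infty\cdot\sigma}(X/S)$. You then propose to verify the Pl\"ucker relations fiberwise (and, implicitly, the higher coherences of an $\cO(\overline{\check{N}_P\backslash \check{G}})$-module), on the grounds that maps between ULA sheaves are determined by their fibers. That is false. Restriction to fibers detects whether a given map is an isomorphism, not whether two maps agree. Already for a constant family, $\on{Hom}(\mathbf{1}_S\boxtimes A_0,\mathbf{1}_S\boxtimes B_0)$ has summands $H^k(S)\otimes \on{Hom}(A_0,B_0[-k])$, and those with $k>0$ vanish on every fiber. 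For a non-constant family over a contractible $S$, injectivity of $\on{Hom}(A,B)\to\on{Hom}(A_s,B_s)$ is a local constancy statement of exactly the kind the paper has to work hard for. For the same reason you have no control over whether your fiberwise structures coincide with those of \cite{faergeman2025parabolic}, which Step 2 needs.

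\textbf{The missing idea} is to construct structure only where it can be built directly, and to use fibers only to test that already-constructed maps are isomorphisms. The paper proceeds as follows.
\begin{enumerate}
\item It builds the Drinfeld--Pl\"ucker structure over $S$ on the local object $\delta_{\sigma}\in \on{Sh}(\cL^+_{\sigma}M\backslash \on{Gr}_{G,\sigma})$, for the twisted $\on{Rep}(\check{M})$-action (Lemma \ref{l:drpl}). It is $\delta_{\sigma}$, not $\mathbf{j}_!^{\on{loc}}$, that carries the directly constructible structure.
\item It constructs over $S$ a map $\mathbf{j}_!^{\on{loc}}\to \on{ind}(\delta_{\sigma})$ and checks only fiberwise that it is an isomorphism (Lemma \ref{l:unitmap}, Corollary \ref{c:enhdrpl}). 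Thus $\mathbf{j}_!^{\on{loc}}$ is enhanced as an induced module.
\item This structure is transported along the Hecke-compatible pull-push $r_!\circ \on{pr}^!$. One has $r_!\on{pr}^!(\mathbf{j}_!^{\on{loc}})\simeq \mathbf{j}_!^{\on{glob}}[\on{dim}\on{Bun}_P]$ by base change and universal homological contractibility of $\pi$ (Lemma \ref{l:univhom}, Proposition \ref{p:loctoglob}).
\item The $\mu$-dependent part $\langle 2(\rho_G-\rho_M),\mu\rangle$ of the shift converts the twisted $\check{M}$-action into the usual one.
\end{enumerate}

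Your account of the absolute case is also inaccurate. Inverting highest-weight vectors in $\cO(\overline{\check{N}_P\backslash\check{G}})$ gives a classical localization. That cannot produce the derived $\cO(\check{N}_P\backslash\check{G})=R\Gamma(\check{N}_P\backslash\check{G},\cO)$, which in general has higher cohomology.
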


\begin{rem}
A natural Ran-enhancement of the above theorem holds with essentially the same proof. Since we will not need this, however, we will not formulate it.
\end{rem}

\subsubsection{} We claim that Theorem \ref{t:enhdrplonj} implies Theorem \ref{t:HeckeCT-formula}. Indeed, write
\[
p_{\on{pol}}: \widetilde{\on{Bun}}_{P,\infty\cdot \sigma}(X/S)\to \on{Bun}_G(X/S),
\]
\[
q_{\on{pol}}: \widetilde{\on{Bun}}_{P,\infty\cdot \sigma}(X/S)\to \on{Bun}_M(X/S)
\]

\noindent for the canonical projection maps. Then we have a canonical natural isomorphism:
\[
\on{CT}_P\simeq q_{\on{pol},!}\big (\mathbf{j}_!^{\on{glob}}\overset{*}{\otimes} p_{\on{pol}}^*)[-\on{dim} \on{Bun}_P]: \on{Sh}(\on{Bun}_G(X/S))\to\on{Sh}(\on{Bun}_M(X/S)),
\]

\noindent and the functor $p_{\on{pol}}^*$ (resp. $q_{\on{pol},!}$) is $\on{Rep}(\check{G})$-linear (resp. $\on{Rep}(\check{M})$-linear). Here, $\on{dim} \on{Bun}_P$ denotes the locally constant function on $\on{Bun}_P(X/S)$ giving the relative dimension over $S$. Concretely, for a connected component $\mu\in \Lambda_{G,P}$, we have:
\[
(\on{dim} \on{Bun}_P)(\mu)=\on{dim}_S \on{Bun}_M(X/S)+\on{dim}_S\on{Bun}_{N_P}(X/S)+\langle 2(\rho_G-\rho_M),\mu\rangle.
\]

\subsubsection{}

When $S=\on{pt}$ is a point, we remind that Theorem \ref{t:enhdrplonj} was proved in the case in \cite{braverman2006deformations} for $P=B$ a Borel, and for a general parabolic in \cite{faergeman2025parabolic}:
\begin{thm}\label{t:FH}[\cite{faergeman2025parabolic}, Thm 1.5.5.1]

For $s\in S$ a geometric point, the sheaf
\[
j_{s,!}(\on{IC}_{\on{Bun}_P}(X_s))\in \on{Sh}(\widetilde{\on{Bun}}_{P,\infty\cdot \sigma(s)})
\]

\noindent carries a canonical enhanced Drinfeld Plücker structure for the Hecke actions at $\sigma(s)\in X$.

\end{thm}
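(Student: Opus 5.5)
The plan is to follow the approach of \cite{braverman2006deformations} and \cite{faergeman2025parabolic}. First I would build the plain Drinfeld--Plücker structure geometrically. Then I would reduce the enhancement to a statement about the local model at $\sigma(s)$, that is, about semi-infinite orbits in the affine Grassmannian, where geometric Satake applies.

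\emph{Step 1 (plain structure).} For $V\in\on{Rep}(\check{G})$ of highest weight $\lambda$, consider the Hecke correspondence at $x=\sigma(s)$ acting on $\widetilde{\on{Bun}}_{P,\infty\cdot x}$. Modifying $\sP_G$ at $x$ does not change the meromorphic Plücker maps $\kappa$, and the same holds for modifications of $\sP_M$. So the Hecke stacks for $M$ and for $G$ over $\widetilde{\on{Bun}}_{P,\infty\cdot x}$ contain a common closed substack, cut out by the condition that the relative position of the $G$-modification dominates that of the $M$-modification. Restricting the kernel $j_!(\on{IC})\boxtimes \on{Sat}(V)$ to this substack gives the maps $V^{\check{N}_P}\star j_!\to j_!\star V$. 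The Plücker identities follow from the factorization of $\overline{\check{N}_P\backslash\check{G}}$ in terms of highest weight vectors, together with compatibility of these maps with convolution. This gives an object of $\on{DrPl}_{\check{M},\check{G}}$.

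\emph{Step 2 (reduction to the local model).} It remains to show that $j_!(\on{IC})$ lies in the essential image of $\on{oblv}^{\on{EnhDrPl}}_{\on{DrPl}}$ from \S\ref{ss:ind+oblv}. Equivalently, the counit-type map $\cO(\check{N}_P\backslash\check{G})\otimes_{\cO(\overline{\check{N}_P\backslash\check{G}})} j_!\to j_!$ should be an isomorphism. Near $x$, the prestack $\widetilde{\on{Bun}}_{P,\infty\cdot x}$ is smooth-locally modelled on $\on{Gr}_{G,x}\times \on{Gr}_{M,x}$ stratified by $N_P(K)$-orbits, via Beauville--Laszlo gluing. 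Under this model $j_!$ corresponds to the $!$-extension $\Delta^0$ of the constant sheaf on the $N_P(K)M(O)$-orbit $S^0_P$, equipped with its Hecke actions. Both the plain and the enhanced structures are compatible with this smooth pullback. The problem therefore reduces to showing that $\Delta^0$, as a $(\on{Rep}(\check{M}),\on{Rep}(\check{G}))$-bimodule object in sheaves on the semi-infinite Grassmannian, is an $\cO(\check{N}_P\backslash\check{G})$-module.

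\emph{Step 3 (the local computation, main obstacle).} I would first analyse $\Delta^0\star V$ through the Mirković--Vilonen type intersections of $S^{\nu}_P$ with $\on{Gr}^{\lambda}$. Their top cohomology realizes $\on{Res}^{\check{G}}_{\check{M}}V$, and the stalks of $\Delta^0\star V$ carry a filtration by $\check{M}$-weights. The identification $C^{\bullet}(\check{\fn}_P,V)\otimes_{\Omega(\check{\fn}_P)}\Delta^0\simeq \Delta^0\star V$ would then come from matching this filtration with the Koszul/Chevalley filtration. The key input is the derived Satake-type statement that $\on{Ext}$'s between semi-infinite $!$-extensions compute $\check{\fn}_P$-cohomology. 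For $P=B$ this is the Arkhipov--Bezrukavnikov--Ginzburg type computation; for general $P$ one uses the Levi version in \cite{faergeman2025parabolic}.

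I expect the hardest point to be verifying that the higher (non-degree-zero) parts of $\cO(\check{N}_P\backslash\check{G})$, i.e.\ its cohomology beyond the affinization, act compatibly. I would handle this by reducing, via the parabolic Zastava factorization, to the case where all the defect is concentrated at $x$, and then invoking purity and parity of the relevant stalks to make the filtration split canonically.
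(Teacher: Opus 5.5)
The gap is in Step~3, which is where all the content lies. An enhanced Drinfeld--Plücker structure is a module structure over the dg-algebra $\cO(\check{N}_P\backslash\check{G})$, and this algebra has non-zero higher cohomology.

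\begin{itemize}
\item The isomorphisms $C^{\bullet}(\check{\fn}_P,V)\otimes_{\Omega(\check{\fn}_P)}c\simeq c\star V$ are a \emph{consequence} of such a structure. They already presuppose an $\Omega(\check{\fn}_P)$-action on $c$, so matching filtrations one representation at a time cannot produce it.
\item Splitting the filtration is the wrong goal. Its extension classes are governed by the action of the generators $\check{\fn}_P^*\subset\Omega(\check{\fn}_P)$ on $c$, and they are non-zero. I believe a check already for $G=PGL_2$, $P=B$ and $V$ the standard representation shows this: $\Delta^0\star V$ should be a non-split extension of a shift of $\Delta^{-\varpi}$ by a shift of $\Delta^{\varpi}$, since its costalk at $t^{-\varpi}$ comes out one-dimensional, whereas that of the split sum is three-dimensional.
\item Your Step~2 criterion is the right kind of statement, because membership in the essential image of the fully faithful functor $\on{oblv}$ is a property rather than a structure. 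But the relevant map is the unit $c\to\cO(\check{N}_P\backslash\check{G})\otimes_{\cO(\overline{\check{N}_P\backslash\check{G}})}c$; there is no natural map in the direction you wrote. Checking that this unit is an isomorphism requires actually computing an induction, which your plan never does.
\end{itemize}

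The missing idea is to induce from the delta sheaf $\delta$ at the unit point of $\cL^+M\backslash\on{Gr}_{G,x}$, instead of trying to verify something about $j_!$. The sheaf $\delta$ carries a Drinfeld--Plücker structure (cf.\ Lemma~\ref{l:drpl}), so $\on{ind}(\delta)$ is enhanced for free. The real local theorem is then $\on{ind}(\delta)\simeq j_!(\omega_{\on{Gr}_{N_P}})$, compatibly with the unit map. This is your $\Delta^0$, but with the dualizing sheaf rather than the constant sheaf; see Lemma~\ref{l:unitmap} and Corollary~\ref{c:enhdrpl}, which reduce to the pointwise statement in \cite{faergeman2025parabolic}.

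The structure is then \emph{transported} to the global object by the bimodule functor $r_!\circ\on{pr}^!$ along $\cL^+M\backslash\on{Gr}_{G,x}\xleftarrow{\on{pr}}{}_{\on{Bun}_M}\on{Gr}_{G,x}\xrightarrow{r}\widetilde{\on{Bun}}_{P,\infty\cdot x}$. This is also where your Step~2 is inaccurate: $\widetilde{\on{Bun}}_{P,\infty\cdot x}$ is not smooth-locally a Hecke-compatible local model. The map $r$ has infinite-dimensional fibres, namely the splittings to $M$ of the $P$-reduction over $X\setminus x$. The identification $r_!\on{pr}^!(j_!\omega_{\on{Gr}_{N_P}})\simeq j_!(\omega_{\on{Bun}_P})$ rests on the universal homological contractibility of ${}_{\on{Bun}_M}\on{Gr}_{P,x}\to\on{Bun}_P$ (Lemma~\ref{l:univhom}, Proposition~\ref{p:loctoglob}).

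Finally, $\omega_{\on{Bun}_P}$ and $\on{IC}_{\on{Bun}_P}$ differ by a shift whose dependence on the component $\mu$ is $\langle 2(\rho_G-\rho_M),\mu\rangle$. This is why the local $\on{Rep}(\check{M})$-action has to be twisted as in \S\ref{ss:twistedaction} before you arrive at the statement for the untwisted Hecke actions.
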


\subsection{Enhanced Drinfeld-Plücker structure: local story}

In this subsection, we prove a local analogue of Theorem \ref{t:enhdrplonj}. We do so by reducing to the case when $S=\on{pt}$, which is the content of \cite[Prop. 4.5.4.1]{faergeman2025parabolic}.

\subsubsection{} Let $\on{Gr}_{G,\sigma}$ be the affine Grassmannian associated to $\sigma$. To define $\on{Gr}_{G,\sigma}$, it again suffices to consider the case of $S$ algebraic in which case $\on{Gr}_{G,\sigma}(T)$ classifies a map $T\to S$, a $G$-bundle $\sP_G$ on $T\underset{S}{\times}X$ and a trivialization $\alpha$ away from $\sigma: T\to T\underset{S}{\times}X$.

Let $\cL^+_{\sigma}M\to S$ be the group over $S$ given by the relative arc group associated to $\sigma$ satisfying that its fiber over a geometric point $s\in S$ is the usual arc group $\cL^+_{\sigma(s)}M$. Concretely, when $S$ is algebraic, $\cL^+_{\sigma}M(T)$ classifies a map $s:T\to S$ and a map $(T\underset{S}{\times} X)^{\widehat{}}_T\to M$, where $(T\underset{S}{\times} X)^{\widehat{}}_T$ denotes the formal completion of $T\underset{S}{\times} X$ along the graph of $\sigma \circ s: T\to X$ over $S$.

\subsubsection{} The category
\[
\on{Sh}(\cL^+_{\sigma}M\backslash \on{Gr}_{G,\sigma})
\]

\noindent carries commuting Hecke actions for $M$ and $G$; as such it carries commuting actions for $\on{Rep}(\check{M})$ and $\on{Rep}(\check{G})$.

\subsubsection{Twisted Action}\label{ss:twistedaction}
Recall the following automorphism of $\on{Rep}(\check{M})$ considered in \cite[§4.3.1]{faergeman2025parabolic}. The lattice $\Lambda_{G,P}$ coincides with the character lattice of $Z(\check{M})$, the center of $\check{M}$ (which is connected because $[G,G]$ is simply-connected by assumption). As such, we have a canonical decomposition
\[
\on{Rep}(\check{M})\simeq\underset{\theta\in \Lambda_{G,P}}{\bigoplus} \on{Rep}(\check{M})^{\theta}.
\]

\noindent We let $\sS^{\theta}$ denote the endofunctor
\[
\on{Rep}(\check{M})^{\theta}\to \on{Rep}(\check{M})^{\theta},\;\; V\mapsto V[-2\langle \rho_G-\rho_M,\theta\rangle].
\]

\noindent Here, $\rho_G$ (resp. $\rho_M$) denotes the half-sum of the positive roots of $G$ (resp. $M$). This induces a symmetric monoidal autoequivalence
\[
\sS:=\underset{\theta\in\Lambda_{G,P}}{\bigoplus} \sS^{\theta}:\on{Rep}(\check{M})\to \on{Rep}(\check{M}).
\]

\subsubsection{} Consider the twisted Hecke action for $M$:
\begin{equation}\label{eq:twHaction}
\on{Rep}(\check{M})\xrightarrow{\sS} \on{Rep}(\check{M})^{\theta}\curvearrowright \on{Sh}(\cL^+_{\sigma}M\backslash \on{Gr}_{G,\sigma}),
\end{equation}

\noindent where the action $\curvearrowright$ denotes the usual Hecke action via geometric Satake. Henceforth, we will consider $\on{Sh}(\cL^+_{\sigma}M\backslash \on{Gr}_{G,\sigma})$ as a $(\on{Rep}(\check{M}),\on{Rep}(\check{G}))$-bimodule category, where the action of $\on{Rep}(\check{G})$ is the usual Hecke action for $G$, and the action of $\on{Rep}(\check{M})$ is the twisted action (\ref{eq:twHaction}).\footnote{The reason for considering the twisted action is that it is necessary to make Lemma \ref{l:drpl} true.} As such, we may talk about Drinfeld-Plücker structures and enhanced Drinfeld-Plücker structures in the above sense for objects in $\on{Sh}(\cL^+_{\sigma}M\backslash \on{Gr}_{G,\sigma})$ with respect to this bimodule structure.

\subsubsection{}
The unit section induces a closed embedding:
\[
i_{\sigma}: \bB \cL^+_{\sigma}M:=\cL^+_{\sigma}M\backslash S \into \cL^+_{\sigma}M\backslash \on{Gr}_{G,\sigma}.
\]

\noindent Denote by $\omega_{\bB \cL^+_{\sigma}M/S}$ the $!$-pullback of the constant sheaf on $S$ along the structure map $\bB \cL^+_{\sigma}M\to S$. Let
\[
\delta_{\sigma}:=i_{\sigma,!}(\omega_{\bB \cL^+_{\sigma}M/S})\in \on{Sh}(\cL^+_{\sigma}M\backslash \on{Gr}_{G,\sigma}).
\]

The same proof as in \cite[Appendix B]{faergeman2025parabolic} shows that:
\begin{lem}\label{l:drpl}

The sheaf $\delta_{\sigma}$ is equipped with a canonical Drinfeld-Plücker structure.
\end{lem}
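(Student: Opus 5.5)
The plan is to construct the Drinfeld-Plücker structure on $\delta_{\sigma}$ directly, as a family of maps
\[
(V^{\check{N}_P})_{\sigma}\star \delta_{\sigma}\to \delta_{\sigma}\star V_{\sigma},\qquad V\in\on{Rep}(\check{G}),
\]
where the left action is the twisted one \eqref{eq:twHaction}. We then check the Plücker identities. Whenever a statement is a property of a map of sheaves rather than extra data, we reduce it to fibers over geometric points $s\in S$; there it is exactly \cite[Appendix B]{faergeman2025parabolic}.

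First we identify both sides. Since $\delta_\sigma$ is the $!$-pushforward of $\omega_{\bB\cL^+_\sigma M/S}$ from the unit section, $\delta_{\sigma}\star V_\sigma$ is the relative Satake sheaf $\on{Sat}_{G,\sigma}(V)$ on $\on{Gr}_{G,\sigma}$, viewed $\cL^+_\sigma M$-equivariantly. Likewise, $(V^{\check N_P})_\sigma\star\delta_\sigma$ is the pushforward of $\sS(\on{Sat}_{M,\sigma}(V^{\check N_P}))$ along $\on{Gr}_{M,\sigma}\to\on{Gr}_{G,\sigma}$. Relative geometric Satake over $S$ holds because $\on{Gr}_{G,\sigma}\to S$ is étale-locally on $S$ (after trivializing the formal disc along $\sigma$) isomorphic to $S\times\on{Gr}_G$, compatibly with $\cL^+_\sigma M\simeq S\times\cL^+M$. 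So these sheaves are locally constant along $S$, and their fibers are the absolute ones.

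Next we build the maps via hyperbolic localization. Let $S^{P}_\sigma\subset\on{Gr}_{G,\sigma}$ be the relative semi-infinite orbits, i.e.\ the attractors of the $\bG_m$-action through a cocharacter of $Z_0(M)$ that is dominant regular in the $P$ direction. Let $\on{Gr}_{M,\sigma}$ be the fixed locus. The relative version of Braden's theorem / the Beilinson--Drinfeld--Mirković--Vilonen description of $\on{Res}^{\check G}_{\check M}$ then gives a canonical isomorphism of the hyperbolic restriction of $\on{Sat}_{G,\sigma}(V)$ with $\sS(\on{Sat}_{M,\sigma}(V|_{\check M}))$. Here $\sS$ accounts precisely for the shift $2\langle\rho_G-\rho_M,\theta\rangle$ on the $\theta$-component. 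The summand $V^{\check N_P}\subset V|_{\check M}$ corresponds to the top-dimensional parts of the intersections of $S^P_\sigma$ with the $G$-orbits, namely the attractor of the closed $M$-orbits. Adjunction then yields the required map: we use the unit for $(i^*,i_*)$ or the counit for $(i_!,i^!)$ along $\on{Gr}_{M,\sigma}\to S^P_\sigma\to\on{Gr}_{G,\sigma}$. All of these constructions are functorial in families over $S$, since they are built from functors on $S$-families of ind-schemes with $\bG_m$-actions over $S$.

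Finally we verify the Plücker relations. The relations are commutativity of diagrams
\[
(V^{\check N_P}\otimes W^{\check N_P})\star\delta_\sigma\to \delta_\sigma\star(V\otimes W)
\]
compatible with $V^{\check N_P}\otimes W^{\check N_P}\to (V\otimes W)^{\check N_P}$, together with the unit condition. These are equalities of maps between sheaves that are locally constant over $S$ and concentrated in a single perverse degree (up to the shift); indeed hyperbolic restriction is $t$-exact by Mirković--Vilonen. Such maps form a sheaf of sets over $S$, so equality can be checked on fibers at geometric points $s\in S$. By base change for hyperbolic localization, the fiber at $s$ is the absolute construction at $\sigma(s)$, where \cite[Appendix B]{faergeman2025parabolic} applies. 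Canonicity follows similarly: the data live in a heart, so there are no higher coherences to supply.

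The main obstacle is making sure the relative construction really is compatible with base change to points. This requires that hyperbolic localization commute with $*$-restriction along $s\to S$ for the non-smooth family $\on{Gr}_{G,\sigma}\to S$, and that the relative Satake sheaves be ULA over $S$. I would handle both by the étale-local trivialization of $(\on{Gr}_{G,\sigma},\bG_m\text{-action},\cL^+_\sigma M)$ over $S$. This reduces everything to the external product of a constant sheaf on $S$ with the absolute situation, for which base change is automatic.
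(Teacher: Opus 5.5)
Your plan is viable, but it is organized differently from the paper, and as written it leaves two points unsupported. The paper gives no independent argument. It asserts that the construction and verification of \cite[Appendix B]{faergeman2025parabolic} run verbatim over $S$, so the relative structure restricts at each $s$ to the FH structure by construction. You instead build the structure maps directly in the family (relative Satake, relative hyperbolic localization along a cocharacter of $Z_0(M)$, adjunction) and then check the Pl\"ucker identities on fibers. This is the same ``check at points'' strategy the paper uses for Lemma \ref{l:unitmap} and Corollary \ref{c:enhdrpl}, and it spares you from re-running FH in families.

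\textbf{First gap: matching FH on fibers.} Invoking FH on fibers is legitimate only if your maps at $s$ \emph{are} FH's maps at $\sigma(s)$, and you assert this without knowing FH's construction. For irreducible $V$ of highest weight $\lambda$, the degree-$0$ Hom from $\sS(V^{\check{N}_P})\star\delta$ to $\delta\star V$ is a line. So an independently constructed family agrees with FH's only up to scalars $c_\lambda$. The Pl\"ucker identities are sensitive to these scalars; for instance they force $c_{\lambda+\mu}=c_\lambda c_\mu$. This identification is also needed downstream, because Lemma \ref{l:unitmap} feeds $i_s^*$ of this structure into FH's pointwise results. You would have to either show that FH's maps are your hyperbolic-localization maps, or prove the identities directly from monoidality of the relative hyperbolic restriction. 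The simplest fix is to transport FH's structure along local trivializations $\cL^+_\sigma M\backslash \on{Gr}_{G,\sigma}\simeq S\times \cL^+M\backslash \on{Gr}_G$. You can then glue using discreteness of the space of structures and connectedness of the group of coordinate changes.

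\textbf{Second gap: the discreteness claim.} The statement ``the data live in a heart'' is false. The twist $\sS$ shifts $\sS(V^{\check{N}_P})\star\delta_\sigma$ by $[-2\langle\rho_G-\rho_M,\theta\rangle]$, which moves it off the perverse heart. What you actually need is that $\on{Hom}(\sS(W)\star\delta,\delta\star V)$ has no negative cohomology.
\begin{itemize}
\item By adjunction along $\iota:\on{Gr}_M\to\on{Gr}_G$, this says $\iota^!\on{Sat}_G(V)[2\langle\rho_G-\rho_M,\theta\rangle]\in {}^pD^{\geq 0}$ on $\on{Gr}^\theta_M$.
\item This follows from a dimension count on the strata $\on{Gr}^\mu_M\subset\on{Gr}^{\mu^+}_G$, which gives perverse degrees $\geq\langle 2\rho_G,\mu^+-\mu\rangle\geq 0$.
\end{itemize}

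\textbf{The construction of the maps.} The same adjunction pins down your construction, which is currently vague.
\begin{itemize}
\item The maps must go \emph{into} $\delta_\sigma\star V$, so only the counit option works; the unit for $(i^*,i_*)$ points the wrong way.
\item $\iota^!\on{Sat}_G(V)$ is not a hyperbolic restriction in general. For $V$ of highest weight $\lambda$, it agrees with $q_!i^*$ only on the extremal component $\theta=\bar\lambda$. There the attractor meets $\overline{\on{Gr}}{}^\lambda_G$ in an open subset, and this is what makes your identification with $\sS\on{Sat}_M(V^{\check{N}_P})$ work.
\end{itemize}
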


\subsubsection{} Consider the sheaf
\[
\on{ind}_{\on{DrPl}_{\check{M},\check{G}}}^{\on{EnhDrPl}_{\check{M},\check{G}}}(\delta_{\sigma})\in \on{Sh}(\cL^+_{\sigma}M\backslash \on{Gr}_{G,\sigma}),
\]

\noindent cf. §\ref{ss:ind+oblv}. We want to give an explicit description of this sheaf.

Consider $\on{Gr}_{N_P,\sigma}$, the affine Grassmannian for $N_P$ along $\sigma$ along with its locally closed embedding
\[
j: \on{Gr}_{N_P,\sigma}\into \on{Gr}_{G,\sigma.}
\]

\noindent Let $\omega_{\on{Gr}_{N_P,\sigma}/S}$ be the sheaf obtained by $!$-pullback of the constant sheaf on $S$ along the structure map $\on{Gr}_{N_P,\sigma}\to S$. Note that it is simply a shift of the dualizing sheaf on $\on{Gr}_{N_P,\sigma}$.

We let
\[
\mathbf{j}_!^{\on{loc}}:= j_!(\omega_{\on{Gr}_{N_P,\sigma}/S}).
\]

It naturally defines an object of $\on{Sh}(\cL^+_{\sigma}M\backslash \on{Gr}_{G,\sigma})$.

\begin{lem}\label{l:unitmap}

We have a canonical equivalence
\[
j^!(\on{ind}_{\on{DrPl}_{\check{M},\check{G}}}^{\on{EnhDrPl}_{\check{M},\check{G}}}(\delta_{\sigma}))\simeq \omega_{\on{Gr}_{N_P,\sigma}/S}
\]

\noindent in such a way that the natural map
\[
\delta_{\sigma}\to j_!(\omega_{\on{Gr}_{N_P,\sigma}/S})\simeq j_!\circ j^!(\on{ind}_{\on{DrPl}_{\check{M},\check{G}}}^{\on{EnhDrPl}_{\check{M},\check{G}}}(\delta_{\sigma}))\to \on{ind}_{\on{DrPl}_{\check{M},\check{G}}}^{\on{EnhDrPl}_{\check{M},\check{G}}}(\delta_{\sigma})
\]

\noindent is the unit map coming from the $(\on{ind}_{\on{DrPl}_{\check{M},\check{G}}}^{\on{EnhDrPl}_{\check{M},\check{G}}},\on{oblv}_{\on{DrPl}_{\check{M},\check{G}}}^{\on{EnhDrPl}_{\check{M},\check{G}}})$-adjunction.
\end{lem}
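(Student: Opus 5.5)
Throughout, write $\on{ind}$ for $\on{ind}_{\on{DrPl}_{\check{M},\check{G}}}^{\on{EnhDrPl}_{\check{M},\check{G}}}$. The plan is to reduce Lemma \ref{l:unitmap} to the absolute statement \cite[Prop. 4.5.4.1]{faergeman2025parabolic} by base change along points of $S$. The first step is to make $\on{ind}(\delta_{\sigma})$ explicit. By \S\ref{ss:ind+oblv}, it is the relative tensor product $\cO(\check{N}_P\backslash \check{G})\underset{\cO(\overline{\check{N}_P\backslash \check{G}})}{\otimes}\delta_{\sigma}$. We compute it by a bar resolution, as a geometric realization of the terms
\[
\cO(\check{N}_P\backslash \check{G})\otimes \cO(\overline{\check{N}_P\backslash \check{G}})^{\otimes n}\star \delta_{\sigma}.
\]
Each term is a colimit of objects of the form $V^{\check{M}}_{\sigma}\star \delta_{\sigma}\star V_{\sigma}$, for the twisted $\check{M}$-action and the $\check{G}$-action. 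These are convolutions of relative Satake sheaves along $\sigma$.

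The key structural input is that all these operations commute with $*$-restriction to fibers $s\in S$. The relative Hecke functors along $\sigma$ restrict to the Hecke functors at $\sigma(s)$. Moreover, $\on{Gr}_{G,\sigma}\to S$ is, locally on $S$, a trivial family: after choosing a local coordinate near $\sigma$ one gets $\on{Gr}_{G,\sigma}\simeq \on{Gr}_G\times S$, compatibly with $\cL^+_\sigma M$ and $\on{Gr}_{N_P,\sigma}$. Under this identification, $\delta_\sigma$, $\mathbf{j}^{\on{loc}}_!$ and relative Satake objects are exterior products of the absolute ones with the constant sheaf on $S$, up to the normalizations chosen via $\omega_{-/S}$. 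So locally on $S$, the whole Drinfeld--Plücker formalism is obtained by applying the functor $-\boxtimes e_S$ to the absolute one, and this functor is symmetric monoidal and compatible with both actions. In particular, $\on{ind}(\delta_\sigma)\simeq \on{ind}(\delta)\boxtimes e_S$ locally, where $\delta$ is the absolute object.

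With this in hand, \cite[Prop. 4.5.4.1]{faergeman2025parabolic} gives the equivalence $j^!\on{ind}(\delta)\simeq \omega_{\on{Gr}_{N_P}}$ and the identification of the unit map. Applying $-\boxtimes e_S$, and using $j^!(A\boxtimes e_S)\simeq j^!A\boxtimes e_S$ for the product embedding, gives the statement of Lemma \ref{l:unitmap} locally on $S$. It remains to glue. To make the equivalence canonical and independent of the trivialization, we construct the map globally. The unit $\delta_\sigma\to \on{ind}(\delta_\sigma)$ is global. Adjunction, together with the fact that $\delta_\sigma$ is supported on the unit section inside $\on{Gr}_{N_P,\sigma}$, produces a map $\omega_{\on{Gr}_{N_P,\sigma}/S}\to j^!\on{ind}(\delta_\sigma)$. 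We use the canonical global map and check that it is an isomorphism locally, which is fiberwise via the absolute result. Alternatively, one checks it after $*$-restriction to each $s$, using base change for the locally trivial family. This avoids any gluing of choices.

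The main obstacle is producing this global map $\omega_{\on{Gr}_{N_P,\sigma}/S}\to j^!\on{ind}(\delta_\sigma)$ canonically. In the absolute case, this arises from the $\cL N_P$-equivariance of $\on{ind}(\delta)$ together with the twisting by $\sS$. I would first try to reproduce the argument of \cite[Prop. 4.5.4.1]{faergeman2025parabolic} verbatim over $S$, using the relative loop group $\cL_\sigma N_P$. Its orbits on $\on{Gr}_{G,\sigma}$ are the relative semi-infinite orbits, which are smooth over $S$, so the argument should go through without change. The shifts from $\sS$ match the dimensions $\langle 2(\rho_G-\rho_M),\theta\rangle$ fiberwise, and this is all that is needed.
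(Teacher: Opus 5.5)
Your main line is correct, and it takes a genuinely different route from the paper. The paper never trivializes the family. With $\cF:=j^!\on{ind}(\delta_\sigma)$, $p:\on{Gr}_{N_P,\sigma}\to S$ and $i$ the unit section, its Step 1 proves $\cF\simeq p^!i^!\cF$ by checking that $i^!\cF\to p_!\cF$ is an isomorphism after $i_s^*$. This uses two facts, so that $*$- and $!$-restrictions to fibers can be compared:
\begin{enumerate}
\item $\cF$ is ind-non-characteristic over $S$;
\item $i^!\cF$ is a colimit of lisse sheaves, because everything is smooth-pulled back from $\frac{M[[z]]\backslash G((z))/G[[z]]}{\on{Aut}^n(\bC[[z]])}$.
\end{enumerate}
Step 2 then builds $\omega_{\on{Gr}_{N_P,\sigma}/S}\simeq p^!p_!\delta_\sigma\to p^!p_!\cF\simeq\cF$ from the unit and checks it after $i_s^!$ against \cite{faergeman2025parabolic}.

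Your local choice of a coordinate along $\sigma$ is the same geometric input, used head-on. Over a trivializing open $U\subset S$ the whole bimodule situation is the smooth pullback of the absolute one. So $\on{ind}(\delta_\sigma)|_U$ is the pullback of $\on{ind}(\delta)$, and the lemma over $U$ is literally the absolute statement pulled back. You need no non-characteristicness and no fiber comparison, though you should check that the Drinfeld--Pl\"ucker structure of Lemma \ref{l:drpl} is the pullback of the absolute one. The price is a gluing step, whereas the paper's maps are canonical from the outset.

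That gluing step is where your write-up does not work as stated. Adjunction and the support of $\delta_\sigma$ only give a map from $\delta_\sigma$ (viewed on $\on{Gr}_{N_P,\sigma}$) to $\cF$. Since $\omega_{\on{Gr}_{N_P,\sigma}/S}$ receives a map from $\delta_\sigma$ rather than mapping to it, this does not extend to a map $\omega_{\on{Gr}_{N_P,\sigma}/S}\to\cF$. To extend it you must already know that $\cF$ is $!$-pulled back from $S$, which is exactly the paper's Step 1.

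You do not need the $\cL_\sigma N_P$-equivariance detour, however. Your own local result closes the gap:
\begin{enumerate}
\item The unit $\cF\to p^!p_!\cF$ is canonical and global.
\item Over each trivializing $U$ it is an isomorphism, since $\cF|_U\simeq p^!e_U$ and $p_!p^!\simeq\on{id}$ by contracting $\on{Gr}_{N_P}$ onto its unit point. Hence it is an isomorphism globally.
\item Define the comparison map as in the paper's Step 2 and check that it is an isomorphism on each $U$.
\end{enumerate}
Equivalently, the same contraction makes restriction $\on{Map}(\omega_{\on{Gr}_{N_P,\sigma}/S},p^!G)\to\on{Map}(\delta_\sigma,p^!G)$ an equivalence. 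So your local identifications compatible with the unit form a contractible space and glue without choices.
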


\begin{proof}
The analogue of Lemma \ref{l:unitmap} in the pointwise case (i.e., when $S=\on{pt}$) was proven in \cite[Lemma 4.4.8.1 + Prop. 4.5.4.1]{faergeman2025parabolic}. We are going to make repeated use of this in what follows.

For simplicity, we write $\cF:=j^!\circ\on{ind}_{\on{DrPl}_{\check{M},\check{G}}}^{\on{EnhDrPl}_{\check{M},\check{G}}}(\delta_{\sigma})$. It follows from the pointwise case that $\on{ind}_{\on{DrPl}_{\check{M},\check{G}}}^{\on{EnhDrPl}_{\check{M},\check{G}}}(\delta_{\sigma})$ is supported on $\overline{\on{Gr}}_{N_P,\sigma}$, the closure of $\on{Gr}_{N_P,\sigma}$ in $\on{Gr}_{G,\sigma}$; and hence $\on{Gr}_{N_P,\sigma}$ is open in the support of $\on{ind}_{\on{DrPl}_{\check{M},\check{G}}}^{\on{EnhDrPl}_{\check{M},\check{G}}}(\delta_{\sigma})$. In particular, since the latter sheaf is ind-non-characteristic over $S$ (being a colimit of Hecke functors for $M$ and $G$ applied to the non-characteristic sheaf $\delta_{\sigma}$), $\cF$ itself is ind-non-characteristic over $S$.

\step Consider the two maps
\[
p:\on{Gr}_{N_P,\sigma}\to S,\;\; i:S\to \on{Gr}_{N_P,\sigma}
\]

\noindent given by the structure map and the unit section, respectively. We first claim that there is a canonical isomorphism $\cF\simeq p^!i^!(\cF)$.

Applying $i^!$ to the unit map $\cF\to p^!p_!(\cF)$, we get a map
\[
\phi: i^!(\cF)\to p_!(\cF).
\]

\noindent We claim that $\phi$ is an isomorphism. We may check this by checking that $i_s^*\circ \phi: i_s^*\circ i^!(\cF)\to i_s^*\circ p_!(\cF)$ is an isomorphism for all $s\in S$, where $i_s: \lbrace s\rbrace\to S$ is a geometric point.

Note that $i^!(\cF)$ is a colimit of lisse sheaves on $S$. Indeed, by definition, the sheaf $\on{ind}_{\on{DrPl}_{\check{M},\check{G}}}^{\on{EnhDrPl}_{\check{M},\check{G}}}(\delta_{\sigma})$ is a colimit of Hecke functors for $M$ and $G$ applied to $\delta_{\sigma}$. In the notation of \cite[\S 4.4.5]{nadler2021automorphic}, this implies that $\on{ind}_{\on{DrPl}_{\check{M},\check{G}}}^{\on{EnhDrPl}_{\check{M},\check{G}}}(\delta_{\sigma})$ is a colimit of sheaves $*$-pulled back along
\[
\cL^+_{\sigma}M\backslash \on{Gr}_{G,\sigma}\to \frac{M[[z]]\backslash G((z))/G[[z]]}{\on{Aut}(\bC[[z]])}.
\]

\noindent Any sheaf on the right-hand side with compact support on $G((z))/G[[z]]$ descends to $\frac{M[[z]]\backslash G((z))/G[[z]]}{\on{Aut}^n(\bC[[z]])}$, where $\on{Aut}^n(\bC[[z]])$ denotes the smooth group scheme of finite type of automorphisms of $\on{Spec}(\bC[[t]]/(t^n))$. Since the map $\cL^+_{\sigma}M\backslash \on{Gr}_{G,\sigma}\to \frac{M[[z]]\backslash G((z))/G[[z]]}{\on{Aut}^n(\bC[[z]])}$ is smooth, and in particular $*$-restriction coincides with $!$-restriction up to a shift, this proves that $i^!(\cF)$ is a colimit of lisse sheaves. In particular, $i_s^*\circ i^!(\cF)\simeq i_s^!\circ i^!(\cF)[2\on{dim}S]\simeq i^!\circ i_s^!(\cF)[2\on{dim}S]$. Combining with the fact that $\cF$ is non-characteristic over $S$, so that $i_s^*(\cF)\simeq i_s^!(\cF)[2\on{dim}S]$, the pointwise case of \cite{faergeman2025parabolic} implies that $i_s^*\circ \phi: i_s^*\circ i^!(\cF)\to i_s^*\circ p_!(\cF)$ is an isomorphism.

Since $\phi$ is an isomorphism, we get a map
\[
\psi: \cF\to p^!p_!(\cF)\simeq p^!i^!(\cF).
\]

\noindent From the pointwise case, we again know that $i_s^!\circ \psi$ is an isomorphism for all $s\in S$, and hence $\psi$ is an isomorphism.

\step

Note that the unit map $\delta_{\sigma}\to \on{ind}_{\on{DrPl}_{\check{M},\check{G}}}^{\on{EnhDrPl}_{\check{M},\check{G}}}(\delta_{\sigma})$ factors through a map
\begin{equation}\label{eq:natmap1}
\delta_{\sigma}\to \cF.
\end{equation}

\noindent By the above step, (\ref{eq:natmap1}) further factors as:
\[
\delta_{\sigma}\to p^!p_!(\delta_{\sigma})\simeq \omega_{\on{Gr}_{N_P,\sigma}/S}\to \cF.
\]

\noindent As above, to check that the induced map $\omega_{\on{Gr}_{N_P,\sigma}/S}\to \cF$ is an isomorphism, we may do so after applying $i_s^!$ for all $s\in S$ where it follows from \cite{faergeman2025parabolic}.
\end{proof}

\subsubsection{} By Lemma \ref{l:unitmap}, we get a natural map
\begin{equation}\label{eq:mainmap}
\mathbf{j}_!^{\on{loc}}\to\on{ind}_{\on{DrPl}_{\check{M},\check{G}}}^{\on{EnhDrPl}_{\check{M},\check{G}}}(\delta_{\sigma}).
\end{equation}

\begin{cor}\label{c:enhdrpl}

The map (\ref{eq:mainmap}) is an isomorphism.
\end{cor}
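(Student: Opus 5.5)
The map (\ref{eq:mainmap}) is the composite
\[
\mathbf{j}_!^{\on{loc}}=j_!(\omega_{\on{Gr}_{N_P,\sigma}/S})\simeq j_!\circ j^!\big(\on{ind}_{\on{DrPl}_{\check{M},\check{G}}}^{\on{EnhDrPl}_{\check{M},\check{G}}}(\delta_{\sigma})\big)\to \on{ind}_{\on{DrPl}_{\check{M},\check{G}}}^{\on{EnhDrPl}_{\check{M},\check{G}}}(\delta_{\sigma}),
\]
where the first arrow is the equivalence of Lemma \ref{l:unitmap}. I will show it is an isomorphism by reducing to the pointwise statement \cite[Prop. 4.5.4.1]{faergeman2025parabolic}, in the same way as in the proof of Lemma \ref{l:unitmap}. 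Concretely, set $\cG:=\on{Cone}\big(\mathbf{j}_!^{\on{loc}}\to \on{ind}(\delta_\sigma)\big)$. It then suffices to prove that $i_s^*\cG=0$ for every geometric point $i_s:\lbrace s\rbrace\to S$. Indeed, the family of functors $i_s^*$, $s\in S$, is jointly conservative on $\on{Sh}(\cL^+_{\sigma}M\backslash \on{Gr}_{G,\sigma})$, since stalks can be computed fiberwise over $S$.

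\textbf{Compatibility with base change.} The first step is to check that both sides are compatible with $*$-restriction to fibers, up to the common shift $[-2\dim S]$ relating $i_s^*$ and $i_s^!$ on $S$-non-characteristic objects.

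For $\on{ind}(\delta_\sigma)$: by construction it is $\cO(\check{N}_P\backslash\check{G})\otimes_{\cO(\overline{\check{N}_P\backslash\check{G}})}\delta_\sigma$. This is a bar-type colimit of Hecke functors for $M$ and $G$ along $\sigma$ applied to $\delta_\sigma$. Each such Hecke functor commutes with $i_s^*$, because the relative Hecke functors restrict to the absolute ones at $\sigma(s)$. The Drinfeld-Plücker structure of Lemma \ref{l:drpl} also restricts to the pointwise Drinfeld-Plücker structure on $\delta_{\sigma(s)}$; as in \cite[Appendix B]{faergeman2025parabolic}, the maps are defined uniformly in families. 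Hence $i_s^*\on{ind}(\delta_\sigma)\simeq \on{ind}(\delta_{\sigma(s)})$ up to shift.

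For $\mathbf{j}_!^{\on{loc}}$: the embedding $j$ is compatible with base change to $s$, and $\omega_{\on{Gr}_{N_P,\sigma}/S}$ restricts to $\omega_{\on{Gr}_{N_P,\sigma(s)}}$ up to shift. Base change for $j_!$ therefore gives $i_s^*\mathbf{j}_!^{\on{loc}}\simeq \mathbf{j}_!^{\on{loc},s}$.

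\textbf{Identification of the fiberwise map.} Next I must show that $i_s^*$ of the map (\ref{eq:mainmap}) is the pointwise map of \cite{faergeman2025parabolic}. By Lemma \ref{l:unitmap}, (\ref{eq:mainmap}) is characterized as the map extending the unit $\delta_\sigma\to\on{ind}(\delta_\sigma)$ through $\delta_\sigma\to\mathbf{j}_!^{\on{loc}}$. The pointwise map in \emph{loc.cit.} is characterized in the same way. Both the unit and the identification of $j^!\on{ind}(\delta_\sigma)$ from Step 2 of Lemma \ref{l:unitmap} restrict to their pointwise versions, so $i_s^*$ of (\ref{eq:mainmap}) agrees with the pointwise map. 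That map is an isomorphism by \cite[Prop. 4.5.4.1]{faergeman2025parabolic}, so $i_s^*\cG=0$ for all $s$.

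\textbf{Main obstacle.} The delicate point is the base change compatibility for $\on{ind}(\delta_\sigma)$, which is an infinite colimit over $\on{Rep}(\check{G})$-actions. I would handle it as in Step 1 of Lemma \ref{l:unitmap}. There, $\on{ind}(\delta_\sigma)$ was shown to be a colimit of sheaves $*$-pulled back along the smooth map to $\frac{M[[z]]\backslash G((z))/G[[z]]}{\on{Aut}^n(\bC[[z]])}$. For such sheaves $i_s^*$ and $i_s^!$ agree up to the shift $[2\dim S]$, and both functors commute with colimits. Hence $*$-restriction commutes with the relevant colimits and tensor products, and the identification $i_s^*\on{ind}(\delta_\sigma)\simeq\on{ind}(\delta_{\sigma(s)})$ follows.
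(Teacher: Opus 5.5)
Your proposal is correct and is essentially the paper's argument: since $i_s^*$ commutes with colimits and with the Hecke functors for $M$ and $G$ (by base change), it carries (\ref{eq:mainmap}) to the pointwise map, which is an isomorphism by \cite[Prop. 4.5.4.1]{faergeman2025parabolic}. The detour through $i_s^!$ in your last paragraph is unnecessary, and $i_s^!$ does not commute with arbitrary colimits anyway; $i_s^*$ alone already commutes with the bar-type colimit defining $\on{ind}(\delta_\sigma)$.
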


\begin{proof}

Using that $i_s^*$ commutes with Hecke functors for $M$ and $G$, we may check that (\ref{eq:mainmap}) is an isomorphism after applying $i_s^*$ for all $s\in S$. Here it follows from \cite[Prop. 4.5.4.1]{faergeman2025parabolic}
    
\end{proof}

\subsection{Enhanced Drinfeld-Plücker structure: global story}

In this subsection, we prove Theorem \ref{t:enhdrplonj}.

\subsubsection{} Consider the map
\[
\on{Bun}_M(X/S)\to \bB \cL^+_{\sigma}M
\]

\noindent given by restricting an $M$-bundle along the formal completion of the section $\sigma: S\to X$. We form the prestack
\[
{}_{\on{Bun}_M}\on{Gr}_{G,\sigma}:=\on{Bun}_M(X/S)\underset{\bB \cL^+_{\sigma}M}{\times} \cL^+_{\sigma}M\backslash \on{Gr}_{G,\sigma.}
\]

\noindent Concretely when $S$ is algebraic, for a test scheme $T$, ${}_{\on{Bun}_M}\on{Gr}_{G,\sigma}(T)$ classifies the data $(s,\sP_G,\sP_M,\alpha)$, where $s:T\to S$ is a map, $\sP_G$ (resp. $\sP_M$) is a $G$-bundle (resp. $M$-bundle) on $T\underset{S}{\times} X$, and $\alpha$ is an identification of $\sP_M\overset{M}{\times}G$ and $\sP_G$ away from the graph of $\sigma\circ s: T\to X$.

Note that we have a natural map
\[
r:{}_{\on{Bun}_M}\on{Gr}_{G,\sigma}\to \widetilde{\on{Bun}}_{P,\infty\cdot \sigma}(X/S)
\]

\noindent sending $(s,\sP_G,\sP_M,\alpha)$ to the corresponding meromorphic Plücker data.

Observe that ${}_{\on{Bun}_M}\on{Gr}_{G,\sigma}$ carries Hecke modifications for $M$ and $G$, and $r$ commutes with these modifications.

\subsubsection{} Let
\[
\on{pr}: {}_{\on{Bun}_M}\on{Gr}_{G,\sigma}\to \cL^+_{\sigma}M\backslash \on{Gr}_{G,\sigma}
\]

\noindent be the projection and set
\[
{}_{\on{Bun}_M}\mathbf{j}_!^{\on{loc}}:=\on{pr}^!(\mathbf{j}_!^{\on{loc}})\in\on{Sh}({}_{\on{Bun}_M}\on{Gr}_{G,\sigma}).
\]

\noindent It is equipped with an enhanced Drinfeld-Plücker structure by Corollary \ref{c:enhdrpl}.

\subsubsection{} Similarly, we let
\[
{}_{\on{Bun}_M}\on{Gr}_{P,\sigma}:=\on{Bun}_M(X/S)\underset{\bB \cL_{\sigma}^+M}{\times} \cL_{\sigma}^+M\backslash \on{Gr}_{N_P,\sigma.}
\]

\noindent It has a natural map
\[
{}_{\on{Bun}_M}j: {}_{\on{Bun}_M}\on{Gr}_{P,\sigma}\to {}_{\on{Bun}_M}\on{Gr}_{G,\sigma}.
\]

\subsubsection{} Let $\omega_{{}_{\on{Bun}_M}\on{Gr}_{P,\sigma}/S}$ be the $!$-pullback of the constant sheaf on $S$ along the projection ${}_{\on{Bun}_M}\on{Gr}_{P,\sigma}\to S$. Note that
\begin{equation}\label{eq:kindaobv}
{}_{\on{Bun}_M}j_!(\omega_{{}_{\on{Bun}_M}\on{Gr}_{P,\sigma}/S})\simeq {}_{\on{Bun}_M}\mathbf{j}_!^{\on{loc}}.
\end{equation}

Indeed, the map $\on{pr}$ is pro-smooth, and so we may reduce the validity of base-change to a finite-dimensional one as in Step 1 of the proof of Lemma \ref{l:unitmap}.

\subsubsection{} We have a natural map
\[
\pi: {}_{\on{Bun}_M}\on{Gr}_{P,\sigma}\to \on{Bun}_P(X/S)
\]

\noindent sitting in a commutative diagram
\[\begin{tikzcd}
	{{}_{\mathrm{Bun}_M}\mathrm{Gr}_{P,\sigma}} && {{}_{\mathrm{Bun}_M}\mathrm{Gr}_{G,\sigma}} \\
	\\
	{\mathrm{Bun}_P(X/S)} && {\widetilde{\mathrm{Bun}}_{P,\infty\cdot \sigma}(X/S).}
	\arrow["{}_{\on{Bun}_M}j", from=1-1, to=1-3]
	\arrow["\pi"', from=1-1, to=3-1]
	\arrow["r", from=1-3, to=3-3]
	\arrow["{j_{\mathrm{pol}}}"', from=3-1, to=3-3]
\end{tikzcd}\]

\subsubsection{} The same proof as \cite[Corollary 5.2.8.3]{faergeman2025parabolic} shows that:

\begin{lem}\label{l:univhom}

The map $\pi$ is universally homologically contractible. That is, for a constructible sheaf $\cF\in \on{Sh}(\on{Bun}_P(X/S))$, the counit map
\[
\pi_!\pi^!(\cF)\to \cF
\]

\noindent is an isomorphism.
    
\end{lem}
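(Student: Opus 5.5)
The plan is to describe the fibers of $\pi$ explicitly and then reduce to the contractibility of a unipotent ind-scheme, following \cite[Corollary 5.2.8.3]{faergeman2025parabolic} but carrying the base $S$ along.

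First I will show that $\pi$ is the quotient map for a group action. A point of ${}_{\on{Bun}_M}\on{Gr}_{P,\sigma}$ over $s\in S$ is an $M$-bundle $\sP_M$ on $X_s$ together with a point of $\on{Gr}_{N_P,\sigma(s)}$ twisted by $\sP_M|_{\widehat{\sigma(s)}}$. By Beauville--Laszlo gluing, this is the same as a $P$-bundle $\sP_P$ with induced $M$-bundle $\sP_M$, together with a trivialization of the $N_P$-part of $\sP_P$ away from $\sigma(s)$ relative to the reduction $\sP_M\to \sP_P$ via $M\into P$. Thus the fiber of $\pi$ over $\sP_P\in\on{Bun}_P(X_s)$ is a torsor over the group ind-scheme
\[
\on{Maps}(X_s-\sigma(s),(N_P)_{\sP_M}),
\]
and this description works in families over $\on{Bun}_P(X/S)$. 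So $\pi$ is the quotient of ${}_{\on{Bun}_M}\on{Gr}_{P,\sigma}$ by the relative group ind-scheme $\mathcal{N}:=\on{Maps}((X/S)-\sigma,(N_P)_{\sP_M})$ over $\on{Bun}_M(X/S)$, whenever such a quotient exists as a prestack.

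Next I will show that $\mathcal{N}$ is homologically contractible over its base. It has a filtration by normal subgroups coming from the lower central series of $N_P$, whose subquotients are vector group ind-schemes $\on{Maps}(X_s-\sigma(s),\mathcal{U}_{\sP_M})$ for subquotients $\mathcal{U}$ of $\fn_P$. Each of these is a filtered union, taken over the pole order at $\sigma$, of vector bundles over $\on{Bun}_M(X/S)$ once the pole order is large, because $H^1$ vanishes for large twists by $\sigma$. Pullback along a vector bundle is fully faithful and $\pi_!\pi^!\to\on{id}$ is an isomorphism for such maps. Passing to the colimit over pole orders uses that $!$-pullback is compatible with filtered colimits of closed embeddings. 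This shows that $\mathcal{N}$, and therefore any $\mathcal{N}$-torsor, is universally homologically contractible.

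Then I will deduce the claim for $\pi$. Locally on $\on{Bun}_P(X/S)$, in the smooth topology, the torsor has a section, so $\pi$ becomes a base change of $\mathcal{N}\to\on{Bun}_M(X/S)$. Universal homological contractibility is smooth-local on the target and is stable under base change, since $!$-pullback satisfies base change with $!$-pushforward. Alternatively, as elsewhere in the paper, I can reduce to fibers: both $\pi_!\pi^!(\cF)$ and $\cF$ are compatible with $*$-restriction to $\{s\}$ up to shift, because $\pi$ is pro-smooth relative to $S$. So it suffices to check the isomorphism after applying $i_s^*$, and then it is exactly \cite[Corollary 5.2.8.3]{faergeman2025parabolic}.

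The main obstacle is the base-change step in this fiberwise reduction. One has to justify that $i_s^*\pi_!\pi^!\simeq \pi_{s,!}\pi_s^!i_s^*$ for constructible $\cF$, even though $\pi$ is ind-infinite-type and $\cF$ need not be non-characteristic. I would first try the direct torsor argument above, since it avoids the fiberwise reduction entirely. I would fall back on the fiberwise route only for sheaves where base change is clear.
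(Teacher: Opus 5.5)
Your direct torsor argument is correct, and it is essentially what the paper intends. The paper gives no proof beyond saying that the argument of \cite[Corollary 5.2.8.3]{faergeman2025parabolic} carries over to families, and you supply that family version: $\pi$ is a torsor under the relative ind-group $\on{Maps}((X/S)-\sigma,(N_P)_{\sP_M})$. This torsor is trivial over every affine $T$, because $X_T-\sigma$ is affine and $N_P$ is unipotent, so the relevant nonabelian $H^1$ vanishes; you should state this instead of just asserting smooth-local sections. Over quasi-compact opens of $\on{Bun}_M(X/S)$, where a uniform pole-order bound exists, the ind-group is an increasing union of vector bundles, for which the counit is an isomorphism.

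The fiberwise fallback is unnecessary, and its justification is off as stated: $\pi$ is locally an ind-vector bundle, not pro-smooth, and no compatibility of $\cF$ itself with $*$-restriction is available in general. Nothing in your main argument depends on it.
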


\begin{prop}\label{p:loctoglob}

We have a canonical isomorphism:
\[
r_!({}_{\on{Bun}_M}\mathbf{j}_!^{\on{loc}})\simeq \mathbf{j}_!^{\on{glob}}[\on{dim}\on{Bun}_P].
\]
    
\end{prop}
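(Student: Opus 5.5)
The plan is to compute $r_!({}_{\on{Bun}_M}\mathbf{j}_!^{\on{loc}})$ by pushing it around the commutative square relating ${}_{\on{Bun}_M}\on{Gr}_{P,\sigma}$, ${}_{\on{Bun}_M}\on{Gr}_{G,\sigma}$, $\on{Bun}_P(X/S)$ and $\widetilde{\on{Bun}}_{P,\infty\cdot\sigma}(X/S)$.

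First, by (\ref{eq:kindaobv}) we have ${}_{\on{Bun}_M}\mathbf{j}_!^{\on{loc}}\simeq {}_{\on{Bun}_M}j_!(\omega_{{}_{\on{Bun}_M}\on{Gr}_{P,\sigma}/S})$. Commutativity $r\circ {}_{\on{Bun}_M}j = j_{\on{pol}}\circ\pi$ then gives
\[
r_!({}_{\on{Bun}_M}\mathbf{j}_!^{\on{loc}})\simeq j_{\on{pol},!}\,\pi_!(\omega_{{}_{\on{Bun}_M}\on{Gr}_{P,\sigma}/S}).
\]
So it suffices to identify $\pi_!(\omega_{{}_{\on{Bun}_M}\on{Gr}_{P,\sigma}/S})$ with $\on{IC}_{\on{Bun}_P(X/S)/S}[\on{dim}\on{Bun}_P]$.

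Second, I would rewrite the relative dualizing sheaf on the source as a $!$-pullback along $\pi$. Let $\omega_{\on{Bun}_P(X/S)/S}$ be the $!$-pullback of the constant sheaf on $S$ along $\on{Bun}_P(X/S)\to S$. Functoriality of $!$-pullback gives $\omega_{{}_{\on{Bun}_M}\on{Gr}_{P,\sigma}/S}\simeq \pi^!(\omega_{\on{Bun}_P(X/S)/S})$. Since $\on{Bun}_P(X/S)\to S$ is smooth of relative dimension $\on{dim}\on{Bun}_P$ (a locally constant function), we have
\[
\omega_{\on{Bun}_P(X/S)/S}\simeq \underline{e}[2\on{dim}\on{Bun}_P]\simeq \on{IC}_{\on{Bun}_P(X/S)/S}[\on{dim}\on{Bun}_P],
\]
with the normalization that the $*$-fibers of $\on{IC}_{\on{Bun}_P(X/S)/S}$ are $\on{IC}_{\on{Bun}_P(X_s)}=\underline{e}[\on{dim}]$. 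This sheaf is constructible, so Lemma \ref{l:univhom} applies and gives $\pi_!\pi^!(\omega_{\on{Bun}_P(X/S)/S})\simeq \omega_{\on{Bun}_P(X/S)/S}$. Applying $j_{\on{pol},!}$ yields the claimed isomorphism $r_!({}_{\on{Bun}_M}\mathbf{j}_!^{\on{loc}})\simeq \mathbf{j}_!^{\on{glob}}[\on{dim}\on{Bun}_P]$.

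The main points needing care are foundational rather than conceptual. First, the prestacks ${}_{\on{Bun}_M}\on{Gr}_{G,\sigma}$ are infinite-dimensional and not locally of finite type, so $!$-pushforward and base change must be justified. I would handle this as in Step 1 of Lemma \ref{l:unitmap}: write $\on{Gr}_{N_P,\sigma}$ as a colimit of finite-dimensional closed pieces, and replace $\cL^+_\sigma M$ by finite-dimensional quotients, which act through smooth maps. Second, the identification $\omega\simeq\pi^!\omega$ must be checked on each piece of the colimit, compatibly with the transition maps. I expect this compatibility, together with verifying that Lemma \ref{l:univhom} applies to $\omega_{\on{Bun}_P(X/S)/S}$, to be the only real obstacle. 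If needed, fiberwise checking via $i_s^*$ (using non-characteristicness over $S$, as in Lemma \ref{l:unitmap}) reduces the identification to the pointwise statement of \cite{faergeman2025parabolic}.
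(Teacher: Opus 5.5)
Your proposal is correct and follows the paper's proof. The paper combines (\ref{eq:kindaobv}) with the commutative square, rewrites $\mathbf{j}_!^{\on{glob}}[\on{dim}\on{Bun}_P]$ as $j_{\on{pol},!}(\omega_{\on{Bun}_P(X/S)/S})$, and uses Lemma \ref{l:univhom} to obtain $\pi_!(\omega_{{}_{\on{Bun}_M}\on{Gr}_{P,\sigma}/S})\simeq\omega_{\on{Bun}_P(X/S)/S}$; you additionally spell out the implicit steps $\omega_{{}_{\on{Bun}_M}\on{Gr}_{P,\sigma}/S}\simeq\pi^!\omega_{\on{Bun}_P(X/S)/S}$ and $\omega_{\on{Bun}_P(X/S)/S}\simeq\on{IC}_{\on{Bun}_P(X/S)/S}[\on{dim}\on{Bun}_P]$, and the fiberwise fallback you mention is not needed.
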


\begin{proof}

Note that $\mathbf{j}_!^{\on{glob}}[\on{dim}\on{Bun}_P]\simeq j_{\on{pol},!}(\omega_{\on{Bun}_P(X/S)/S})$, where $\omega_{\on{Bun}_P(X/S)/S}$ denotes the $!$-pullback of the constant sheaf on $S$ along the projection $\on{Bun}_P(X/S)\to S$.

By Lemma \ref{l:univhom}, we have:
\[
\pi_!(\omega_{{}_{\on{Bun}_M}\on{Gr}_{P,\sigma}/S})\simeq \omega_{\on{Bun}_P(X/S)/S}.
\]

\noindent Combining with (\ref{eq:kindaobv}), we get the desired assertion.
    
\end{proof}

\emph{Proof of Theorem \ref{t:enhdrplonj}}\\

Since ${}_{\on{Bun}_M}\mathbf{j}_!^{\on{loc}}$ is equipped with an enhanced Drinfeld-Plücker structure, by Proposition \ref{p:loctoglob}, so is $\mathbf{j}_!^{\on{glob}}[\on{dim}\on{Bun}_P]$. We remind that these are enhanced Drinfeld-Plücker structures with respect to the twisted action of $\on{Rep}(\check{M})$, cf. §\ref{ss:twistedaction}.

Recall that for $\mu\in \Lambda_{G,P}$:
\[
(\on{dim} \on{Bun}_P)(\mu)=\on{dim}_S \on{Bun}_M(X/S)+\on{dim}_S\on{Bun}_{N_P}(X/S)+\langle 2(\rho_G-\rho_M),\mu\rangle.
\]

\noindent Unwinding the definitions, this means that $\mathbf{j}_!^{\on{glob}}[\on{dim}_S \on{Bun}_M(X/S)+\on{dim}_S\on{Bun}_{N_P}(X/S)]$ is equipped with an enhanced Drinfeld-Plücker structure for the usual Hecke action of $M$ and $G$ on $\on{Sh}(\widetilde{\on{Bun}}_{P,\infty\cdot \sigma})$. Since $\on{dim}_S \on{Bun}_M(X/S)+\on{dim}_S\on{Bun}_{N_P}(X/S)$ is a constant (i.e., independent of $\mu\in \Lambda_{G,P}$), $\mathbf{j}_!^{\on{glob}}$ also carries an enhanced Drinfeld-Plücker structure.

\qed

\subsection{Reduction to polytopes} The purpose of the next two subsections is to prove the following reduction step:

\begin{prop}\label{nilpotenceofCT}
    Let $N$ be a positive integer and suppose that for all $\mu\in \Lambda_{G,P}$ satisfying $\langle \mu,\alpha\rangle < -N$, the functor $\mathrm{CT}_P^\mu$ preserves universal nilpotent singular support. Then $\on{CT}_P$ preserves universal nilpotent singular support.
\end{prop}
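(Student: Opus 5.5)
The plan is to induct on $\mu$, propagating nilpotence from very antidominant components to all components using the Hecke compatibility of Theorem \ref{t:HeckeCT-formula}. Singular support is local on $S$, and $X\to S$ is a submersion, so after shrinking $S$ we may assume there is a section $\sigma: S\to X$. The argument rests on four facts:
\begin{itemize}
\item the Hecke functors $V_\sigma\star-$ for $G$ and $V^{\nu}_{\check M,\sigma}\star-$ for $M$ preserve universal nilpotent singular support (the ``only if'' direction of Theorem \ref{t:Heckechar}, due to \cite{nadler2021automorphic});
\item $\on{Sh}_{\on{Nilp}_{u,M}}$ is closed under extensions, cones, shifts and direct summands;
\item $V^{\nu}_{\check M,\sigma}\star-$ maps $\on{Sh}(\on{Bun}_M^{\theta})$ to $\on{Sh}(\on{Bun}_M^{\theta+\bar\nu})$, where $\bar\nu$ is the image of $\nu$ in $\Lambda_{G,P}$ (its central character);
\item $V^{\nu}_{\check M}$ is invertible up to a direct summand: $\cG$ is a summand of $(V^{\nu}_{\check M})^*_\sigma\star V^{\nu}_{\check M,\sigma}\star\cG$, since the trivial representation is a summand of $(V^\nu_{\check M})^*\otimes V^\nu_{\check M}$.
\end{itemize}
Consequently, if $V^{\nu}_{\check M,\sigma}\star\cG$ is nilpotent, then so is $\cG$.

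The inductive step runs as follows. Fix $\cF\in\on{Sh}_{\on{Nilp}_{u,G}}(\on{Bun}_G(X/S))$, and suppose $\on{CT}^{\mu'}_P(\cF)$ is already known to be nilpotent for all $\mu'$ in some set $A\subset\Lambda_{G,P}$. Initially $A$ is the antidominant region of the hypothesis. Take $\mu_0\notin A$. Choose $V\in\on{Rep}(\check G)$ and a target component $\mu\in A$. Restricting Theorem \ref{t:HeckeCT-formula} to the component $\mu$, $\on{CT}^{\mu}_P(V_\sigma\star\cF)$ is filtered with graded pieces
\[
V^{\nu}_{\check M,\sigma}\star\on{CT}^{\mu-\bar\nu}_P(\cF)
\]
(up to shift and multiplicity), where $\nu$ ranges over the $\check M$-highest weights occurring in $V$. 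The left-hand side is nilpotent because $V_\sigma\star\cF$ is nilpotent and $\mu\in A$. We want exactly one graded piece to involve $\mu_0$, i.e.\ $\mu-\bar\nu_0=\mu_0$, and every other $\mu-\bar\nu$ to lie in $A$. Then the other pieces are nilpotent, so $V^{\nu_0}_{\check M,\sigma}\star\on{CT}^{\mu_0}_P(\cF)$ is nilpotent, being obtained from nilpotent objects by cones. Since $\nu_0$ may occur with multiplicity, we take a summand. By the invertibility argument above, $\on{CT}^{\mu_0}_P(\cF)$ is then nilpotent, and we add $\mu_0$ to $A$.

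The main obstacle is the combinatorics of choosing $V$, $\mu$ and the ordering of the $\mu_0$'s so that this always works. The images in $\Lambda_{G,P}$ of the $\check M$-highest weights of $V$ form the lattice points of a polytope, essentially the projection of a Weyl polytope. We need $\bar\nu_0$ to be a vertex of that polytope such that, after translating by $\mu$, every other point lands in $A$. For this I would take $V$ of highest weight $\lambda$ for $\check G$, and choose $\lambda$ dominant and deep in the interior, so that its projection is the extreme point of the polytope in the direction of increasing antidominance. I would then order the $\mu_0$'s, for instance by a linear functional that pairs positively with $\rho$ on $\Lambda_{G,P}$, so that the remaining points of $\mu-(\text{polytope})$ are strictly more antidominant than $\mu_0$ and hence already in $A$. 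One must also check that this enumeration eventually exhausts all of $\Lambda_{G,P}$, including directions where $\langle\mu,\alpha\rangle$ is large for some simple roots $\alpha\in\Gamma_G-\Gamma_M$ and small for others. I expect this to require the abstract ``game of polytopes'' formulation and a careful choice of $\lambda$ depending on which simple roots are violated. I would first try to prove it one simple root at a time, inducting on the number of coordinates $\langle\mu_0,\alpha\rangle\ge -N$.
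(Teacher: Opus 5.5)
Your inductive step is the paper's Lemma \ref{l:allbutone}: the filtration of Theorem \ref{t:HeckeCT-formula}, the all-but-one argument via cones, and the summand trick. One bookkeeping fix is needed. $A$ must be the set of $\mu$ for which $\on{CT}^{\mu}_P$ preserves universal nilpotent singular support on all of $\on{Sh}_{\on{Nilp}_{u,G}}(\on{Bun}_G(X/S))$, not a set attached to a fixed $\cF$. The reason is that the left-hand side you use is $\on{CT}^{\mu}_P$ applied to $V_\sigma\star\cF$, not to $\cF$.

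The genuine gap is the combinatorics, which is the actual content of this proposition, and the concrete scheme you sketch does not work. Take $P=B$ and $G$ with connected Dynkin diagram of rank $\geq 2$. In your convention $\mu_0=\mu-\bar\nu_0$, the only vertex for which all other points of the move are root-order below $\mu_0$ is the lowest weight $\bar\nu_0=w_0\lambda$. Since $s_iw_0\lambda$ is also a weight of $V$, the move at $\mu_0$ requires the component
$\mu_0+w_0\lambda-s_iw_0\lambda=\mu_0-c\,\check\alpha_i$, where $c=-\langle w_0\lambda,\alpha_i\rangle>0$ for $\lambda$ regular. For $j$ adjacent to $i$ this gives $\langle\mu_0-c\check\alpha_i,\alpha_j\rangle>\langle\mu_0,\alpha_j\rangle$. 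Hence $\mu_0-nc\check\alpha_i$, $n\geq 0$, is an infinite chain of requirements. If $\langle\mu_0,\alpha_j\rangle\geq -N$, no point of this chain lies in the base region $\{\langle\mu,\alpha\rangle<-N\}$. For $G=SL_3$ this means your scheme colours no new point at all, and taking $\lambda$ deeper only increases $c$. The underlying problem is that "lower in the root order", or smaller value of a linear functional, does not move a point toward the base region. In rank $\geq 2$, every sublevel set of a linear functional contains infinitely many points outside it, so there is no base case for that ordering.

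The missing idea is that both the Hecke functor and the vertex must depend on the direction of $\mu_0$, and the induction must run over a proper function rather than a linear one.
\begin{enumerate}
\item The paper takes the finite family $\mathrm{conv}(P_\omega)$, $\omega$ fundamental, and proves it is \emph{jointly round}: every nonzero functional on $\Lambda_{G,P}\otimes\bR$ has a unique maximum on some $P_\omega$ (Proposition \ref{weightsarejointlyround}, a length induction on $W\omega$). The reductive case reduces to the adjoint one because the hypothesis places no constraint in the central directions.
\item Compactness of the unit sphere gives a uniform angle bound. This yields a radius $d$ with the following property: every lattice point $v$ at distance $d'\geq d$ from the centre of a ball of radius $d$ inside the antidominant cone is the unique radially extremal vertex of some translate of some $P_\omega$. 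All other points of that translate lie in the open ball of radius $d'$.
\item Induction on this distance, which takes discrete values on the lattice, then reaches every point (Propositions \ref{round and sufficiently large} and \ref{round and conic}).
\end{enumerate}
Your fallback, inducting on the number of coordinates with $\langle\mu_0,\alpha\rangle\geq -N$, has the same defect. A move shifts every coordinate by a bounded amount and can turn a good coordinate bad, so that induction is not evidently well-founded either.
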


\subsubsection{} We will prove the above proposition by induction on $\mu\in \Lambda_{G,P}$. However, exactly what to induct on is not obvious and will ultimately correspond to a solution of a particular \textit{game of polytopes}, which will be described in the next section. We will think of $\Lambda_{G,P}$ as an integral lattice in $V:=\Lambda_{G,P}\otimes_\mathbb{Z} \mathbb{R}$.

Let $C_0$ denote the set of elements $\mu$ in $\Lambda_{G,P}$ where $CT_P^{\mu}$ preserves universal nilpotent singular support. By Proposition \ref{p:goodforsmallmu}, $C_0$ is the integral points of an open affine cone in $V$. The set $C_0$ will serve as the base case of our induction. 

The induction step will be based on a certain set of lattice polytopes in $V$. For each $\nu\in \Lambda_G^+$, let $V^\nu$ be the corresponding irreducible representation of $\check{G}$ with highest weight $\nu$. Let $P_\nu'$ be the image of the set of highest weights for $V^\nu |_{\check{M}}$ under the map\footnote{We add the inversion map to make the statement of Lemma \ref{l:allbutone} nicer.}
\[\Lambda_M^+\to \Lambda_{G,P}\overset{\lambda\mapsto-\lambda}{\to} \Lambda_{G,P}.\]
Let $P_\nu=P_\nu'\cup\{0\}$ in $\Lambda_{G,P}$, and for any $v\in \Lambda_{G,P}$ let $P_\nu+v$, respectively $P_\nu'+v$, denote the translation of the set $P_\nu$, respectively $P_\nu'$, by the vector $v$. The idea of the induction step is as follows: 

\begin{lem}\label{l:allbutone}
    Suppose $\mathrm{CT}_P^\mu$ preserves universal nilpotent singular support for all $\mu\in P_\nu+v$ except possibly one element $\mu_0\ne v$. Then $\mathrm{CT}_P^{\mu_0}$ preserves universal nilpotent singular support.
\end{lem}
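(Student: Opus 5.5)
The plan is to extract $\on{CT}_P^{\mu_0}$ from $\on{CT}_P^{v}$ applied to a Hecke translate, using the filtration of Theorem \ref{t:HeckeCT-formula}. Fix $\cF\in \on{Sh}_{\on{Nilp}_{u,G}}(\on{Bun}_G(X/S))$. Singular support is local on $S$, and since $X\to S$ is a submersion it admits sections locally. We may therefore assume that $X\to S$ has a section $\sigma$. Take $V=V^\nu$ and consider $\cF':=V^\nu_\sigma\star\cF$. By the ``only if'' direction of Theorem \ref{t:Heckechar} (Nadler--Yun), Hecke functors along a section preserve universal nilpotent singular support, so $\cF'\in\on{Sh}_{\on{Nilp}_{u,G}}(\on{Bun}_G(X/S))$. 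Since $v\in P_\nu+v$ and $v\neq\mu_0$, the hypothesis gives that $\on{CT}_P^{v}(\cF')$ has singular support in $\on{Nilp}_{u,M}$.

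Next, decompose this object by Theorem \ref{t:HeckeCT-formula}, restricted to the connected component $\on{Bun}_M^{v}(X/S)$. The Hecke functor $V^{\lambda}_{\check M,\sigma}\star-$ shifts connected components by the image $\bar\lambda\in\Lambda_{G,P}$ of $\lambda$, namely the central character of $V^\lambda_{\check M}$ on $Z(\check M)$. The first step is to fix the sign convention so that the pieces landing in component $v$ are exactly
\[
V^{\lambda}_{\check M,\sigma}\star \on{CT}_P^{v+\bar\lambda'}(\cF)\otimes\on{Hom}_{\check M}(V^\lambda_{\check M},V^\nu)[\text{shift}],
\]
where $v+\bar\lambda'$ ranges over $P'_\nu+v$. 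The inversion $\lambda\mapsto-\lambda$ in the definition of $P'_\nu$ is there precisely to make this match. I would also check that the parts of the filtration on $\on{Bun}_M^v$ are finitely many: they are indexed by the finitely many $\check M$-highest weights occurring in $V^\nu|_{\check M}$.

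By hypothesis, every graded piece with source component $\neq\mu_0$ is nilpotent, since Hecke functors for $M$ preserve $\on{Nilp}_{u,M}$, again by Theorem \ref{t:Heckechar} applied to $M$. Because $\on{Sh}_{\on{Nilp}_{u,M}}$ is a stable subcategory, a two-out-of-three argument applies. The part of the finite filtration below the $\mu_0$-pieces is nilpotent, being an extension of nilpotent pieces, and so is the quotient above them. Hence the subquotient collecting the $\mu_0$-pieces is nilpotent. This subquotient contains, as a direct summand, $V^\lambda_{\check M,\sigma}\star\on{CT}_P^{\mu_0}(\cF)$ up to shift, for some $\lambda$ with $\on{Hom}_{\check M}(V^\lambda_{\check M},V^\nu)\neq0$. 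Here we use that $\mu_0\in P'_\nu+v$, since $\mu_0\neq v$. We must also handle the fact that several $\lambda$ may share the same image $\bar\lambda$. The filtration may not split between them, so I would either order the filtration so that the pieces with equal $\bar\lambda$ are adjacent, or simply note that the whole $\mu_0$-part is nilpotent and any single summand of a nilpotent object is nilpotent.

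Finally, apply the Hecke functor for the dual $(V^\lambda_{\check M})^*$ along $\sigma$. Then $\on{CT}_P^{\mu_0}(\cF)$ is a direct summand of $(V^{\lambda}_{\check M})^*_\sigma\star V^\lambda_{\check M,\sigma}\star\on{CT}^{\mu_0}_P(\cF)$, because the trivial representation is a summand of $(V^\lambda)^*\otimes V^\lambda$ and the Hecke action is monoidal. The latter object is nilpotent, so $\on{CT}^{\mu_0}_P(\cF)\in\on{Sh}_{\on{Nilp}_{u,M}}$.

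I expect the main obstacle to be the bookkeeping in the middle step. One has to verify precisely that the graded pieces of Theorem \ref{t:HeckeCT-formula} on component $v$ come exactly from the components in $P'_\nu+v$, with the correct sign and correct treatment of $0$. One also has to verify that the filtration is finite and compatible with the decomposition into components, so that the two-out-of-three argument is legitimate.
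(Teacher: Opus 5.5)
Your proposal is correct and follows the paper's argument: apply Theorem~\ref{t:HeckeCT-formula} to $\on{CT}_P^{v}(V^\nu_\sigma\star-)$, use the triangle inequality for singular support along the finite filtration to isolate the $\mu_0$-contribution, and recover $\on{CT}_P^{\mu_0}$ as a direct summand after applying the dual $\check M$-Hecke functor. Your extra care about several $\check M$-highest weights with the same image $\mu_0$ in $\Lambda_{G,P}$ addresses a point the paper passes over, and it is harmless: the filtration can be taken indexed by the $Z(\check M)$-weights of $V^\nu$ (as in its construction via $C^{\bullet}(\check{\fn}_P,V)$), so all those pieces lie in a single graded step that is a genuine direct sum.
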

\begin{proof}
    If $\mathcal{F}$ is a sheaf with a filtration with associated graded pieces $\mathcal{F}_i$ and $\mathcal{F}$ and all of the sheaves $\mathcal{F}_i$ except at most one has singular support in a cone $N$, then by the triangle inequalities for singular support in a fiber sequence, the remaining sheaf has singular support in $N$. Since $\mu_0\neq v$ and Hecke operators preserve nilpotent singular support, it follows that $\mathrm{CT}_P^v(V^\nu\star-)$ preserves nilpotent singular support. Applying Theorem \ref{t:HeckeCT-formula} to $\mathrm{CT}_P^v(V^\nu\star-)$, we obtain a filtration whose associated graded, up to a shift and a non-zero multiplicity, are
    \[V_{\check{M}}^{\mu'}\star \mathrm{CT}_P^{v-\mu'},\]
    for $\mu'\in P_{\nu}'$. Since Hecke operators preserve universal nilpotent singular support, it follows that all associated graded functors except at most one preserve universal nilpotent singular support. On the other hand, if one of the $V_M^{\mu'}\star \mathrm{CT}_P^{v-\mu'}$ preserves nilptent singular support, then so does $\mathrm{CT}_P^{v-\mu'}$, as it is a summand of 
    \[(V_{\check{M}}^{\mu'})^\vee\star V_{\check{M}}^{\mu'}\star \mathrm{CT}_P^{v-\mu'},\] 
    which preserves universal nilpotent singular support. Thus using the triangle inequalities, it follows that $\mathrm{CT}_P^{\mu_0}$ preserves universal nilpotent singular support.
\end{proof}

Thus starting from $C_0$ we can inductively prove Proposition \ref{nilpotenceofCT} for more and more $\mu$ by choosing the right $v$ and $\nu$ so that Proposition \ref{nilpotenceofCT} is already known for all elements of $P_\nu+v$ except one element of $P_\nu'+v$, thus concluding the statement for one more element of $\Lambda_{G,P}$. Continuing like that, the problem is then reduced to proving that all elements of $\Lambda_{G,P}$ are eventually reached by such a sequence of moves starting from $C_0$. This leads us to consider the abstract setup explained in the next subsection.

\subsubsection{} To prove that every element of $\Lambda_{G,P}$ is eventually reached this way, we need the following input. In the next section we will prove that this is sufficient, see Proposition \ref{round and conic}.

\begin{prop} \label{weightsarejointlyround}
Suppose $G$ is semi-simple. For each functional $f:V=\Lambda_{G,P}\otimes_\mathbb{Z}\mathbb{R}\to \mathbb{R}$, there exist a fundamental weight $\omega$ such that $f$ has a unique maximum on the set $P_\omega$.
\end{prop}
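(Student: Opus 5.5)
The plan is to translate the statement into a question about which vertex of the Weyl polytope of $V^{\omega}$ maximizes a linear functional. The basic observation is that $P'_\omega$ equals minus the image in $\Lambda_{G,P}$ of the set of \emph{all} weights of $V^{\omega}$, not only of the $\check{M}$-highest weights. Indeed, the projection $\Lambda\to\Lambda_{G,P}$ is the quotient by the coroot lattice of $M$. Every weight of an irreducible $\check{M}$-constituent $V_{\check{M}}^{\lambda}$ differs from $\lambda$ by an element of that coroot lattice, so it has the same image as $\lambda$ (equivalently, it has the same $Z(\check{M})$-character). Hence
\[
P_\omega=\{0\}\cup\{-\overline{\lambda}\;:\;\lambda \text{ a weight of } V^{\omega}\},
\]
where $\overline{\lambda}$ denotes the image in $\Lambda_{G,P}$.

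Next I pull the functional back. Given $f:V\to\bR$, which I take to be nonzero (for $f=0$ uniqueness of a maximum is meaningless), I compose with $\Lambda^{\bR}\to V$ and with $-1$. This gives a nonzero linear functional $F$ on $\Lambda^{\bR}$ that vanishes on $\bQ\Phi_M$, with $f(-\overline{\lambda})=F(\lambda)$. Thus I must find a fundamental weight $\omega$ such that $F$ restricted to the weights of $V^{\omega}$ attains a strictly positive maximum, and every weight attaining this maximum value has the same image in $V$. The positivity handles the competing point $0$.

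I choose $\omega$ as follows. Since $G$ is semisimple, $F$ is a nonzero element of the dual, so its dominant Weyl conjugate $\zeta=w^{-1}F$ is nonzero. Hence there is a simple index $i$ with $\langle \zeta,\check{\alpha}_i\rangle>0$ (pairing of $\zeta$ against the simple coroot, in whichever convention the paper's weights/coweights use). I take $\omega=\omega_i$, the corresponding fundamental weight, and I make two claims.

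\textbf{(a) The maximum is strictly positive.} The weights of $V^{\omega_i}$ lie in the Weyl polytope $\on{conv}(W\omega_i)$, and the maximum of $F$ over it equals $\langle\zeta,\omega_i\rangle$. Writing $\zeta$ as a non-negative rational combination of simple roots, its coefficients are strictly positive on the simple factor containing $i$. Therefore $\langle\zeta,\omega_i\rangle>0$.

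\textbf{(b) The maximizing face is a single vertex.} The face of $\on{conv}(W\omega_i)$ on which $F$ is maximized is $w\cdot\on{conv}(W_L\omega_i)$, where $W_L$ is the stabilizer of $\zeta$, generated by simple reflections $s_j$ with $\langle\zeta,\check{\alpha}_j\rangle=0$. Since $i$ is not among these $j$, $W_L$ fixes $\omega_i$, so the face is the single vertex $w\omega_i$. This vertex is an extremal weight of $V^{\omega_i}$, so it occurs in the set.

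Consequently every other weight $\lambda$ satisfies $F(\lambda)<F(w\omega_i)$. It follows that $f$ attains its maximum on $P_{\omega_i}$ exactly at $-\overline{w\omega_i}$, with value $>0=f(0)$, and this maximum is unique.

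The main obstacle I expect is bookkeeping rather than substance. The pairing conventions (coweights versus weights of $\check{G}$, and the sign from the inversion map $\lambda\mapsto-\lambda$) must be matched so that "dominant conjugate" and "fundamental weight" are compatible. One must also check that no other weight has the same $F$-value; this follows precisely because the maximizing face is a vertex. The semisimplicity hypothesis is used exactly to guarantee $\zeta\neq0$ for nonzero $f$.
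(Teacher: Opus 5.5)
Your argument is correct and is essentially the paper's proof. Both identify $P'_\omega$ with minus the image in $\Lambda_{G,P}$ of all weights of $V^{\omega}$, pull $f$ back to $\Lambda^{\bR}$, make it dominant by a Weyl conjugation, and take $\omega=\omega_i$ for a simple index on which the dominant conjugate pairs positively; this is equivalent to the paper's condition that every root orthogonal to $v$ is orthogonal to $\omega$. The only differences are these: the paper proves by induction on $\ell(w)$ that the maximum over $W\omega$ is attained only at the vertex, where you quote the face description $w\cdot\on{conv}(W_\zeta\omega_i)$ of the Weyl polytope; and the paper leaves the extra point $0\in P_\omega$ and the sign implicit, whereas you handle them explicitly via positivity of the maximum.
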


\begin{proof}
    First assume that $P=B$, in this case $\Lambda_{G,P}=\mathbb{X}_\bullet(T)$.  Write $f$ as $\langle v,-\rangle$ for some real coweight $v\in \mathbb{X}_\bullet(T)\otimes_{\mathbb{Z}} \mathbb{R}$. By Weyl group invariance of the inner product, we may assume that $v$ lies in the cone of dominant coweights. By semisimplicity of $G$, choose a fundamental weight $\omega$ such that if $\langle \alpha,v\rangle=0$, for some root $\alpha$, then $\langle \omega,\alpha^\vee\rangle=0$. Since the weights of $V^{\omega}$ are contained in the convex hull of $W\omega$, it suffices to prove that $f$ has a unique maximum on the Weyl orbit of $\omega$. Suppose $\langle \omega,v\rangle =\langle w\cdot \omega,v\rangle$ for some $w\in W$. We have to prove that $w\cdot\omega=\omega$. We will prove the statement by induction on the length of $w$.

    If $l(w)=0$, the statement is trivial. For the induction step, suppose the statement is true for all $w'$ with $l(w')< l(w)$. Choose a simple reflection $s_\alpha$ such that $w=w's_\alpha$, for $\alpha$ a simple root and $w'$ a weyl element with $l(w')=l(w)-1$. Then
    \[0=\langle \omega-w\cdot \omega,v\rangle=\langle \omega-w'\cdot \omega,v\rangle  + \langle w'\cdot \omega -w\cdot \omega,v\rangle=\langle \omega-w'\cdot \omega,v\rangle  + \langle \omega,\alpha^\vee \rangle \langle  w'\cdot \alpha ,v\rangle\]
    Since $\omega$ is dominant, $w'\cdot \omega\le \omega$, thus since $v$ is dominant it follows that the first term is non-negative. By \cite[Prop. 4.4.6]{bjorner2005combinatorics}, the root $w'\cdot \alpha$ is positive and thus the second term is also non-negative. From this we conclude that $\langle \omega-w'\cdot \omega,v\rangle =0$ and $\langle \omega,\alpha^\vee \rangle \langle  w'\cdot \alpha ,v\rangle=0$. From the induction hypothesis, we conclude that $\omega=w'\cdot \omega$. Moreover, either $\langle \omega,\alpha^\vee \rangle=0$ or $\langle  w'\cdot \alpha ,v\rangle=0$.  If $\langle  w'\cdot \alpha ,v\rangle=0$, then by the assumption on $\omega$, it follows that $0=\langle \omega, w'\cdot \alpha^\vee \rangle=\langle \omega,\alpha^\vee \rangle$. In any case, we conclude that $0=\langle \omega,\alpha^\vee\rangle$. Hence
    \[ \omega- w\cdot \omega =  \langle \omega,\alpha^\vee\rangle w'\cdot \alpha =0, \]
which finishes the induction.

For a general parabolic $P$ consider the map $\Lambda\to \Lambda_{G,P}$. Let $P_{\omega,B}\subset \Lambda$ denote the set of all weights of the irreducible highest weight representation $V^\omega$. Then $P_\omega\subset \Lambda_{G,P}$ is the image of $P_{\omega,B}$ under the quotient map. Thus the statement for a general parabolic reduces to the statement for a Borel subgroup, by pulling back the functional along the quotient map.
    
\end{proof}

\subsection{A game of polytopes}\label{s:gameofpolytopes}

\subsubsection{} Let $V$ be a finite dimensional real vector space and $L$ be an integral lattice in $V$.

\begin{defin}
    A \textit{lattice polytope} in $(V,L)$ is the convex hull of a finite subset of $L$.
\end{defin}

Suppose $\mathcal{P}$ is a finite collection of lattice polytopes in $(V,L)$. Given this we will play the following one-player game: 
\begin{itemize}
    \item A state of the game is a subset $C$ of $L$ consisting of vertices that have been “coloured”.
    \item Given a state $C$, a move consists of a polytope $P\in \mathcal{P}$ and an element $v\in L$ such that all lattice elements of $v+P$ except one vertex $x\neq v$, is contained in $C$. This move brings you to the new state $C\cup \{x\}$.
\end{itemize}

Given a state $C_0$, we say that a lattice point $x$ can be coloured starting from $C_0$ if there exists a sequence of moves and states starting from the state $C_0$ 
\[C_0\overset{(P_0,v_0)}{\to} C_1 \overset{(P_1,v_1)}{\to}\cdots \overset{(P_{n-1},v_{n-1})}{\to} C_n\]
such that $x\in C_n$. We are interested in finding sufficient conditions on $C_0$ such that all lattice points can be coloured from $C_0$.

\begin{defin}
    The set $\mathcal{P}$ is called \textit{jointly round} if for all non-zero functionals $f:V\to \mathbb{R}$, there exists a polytope $P\in \mathcal{P}$ such that the function $f|_{P}$ has a unique maximum. We refer to the point realizing the maximum as the extremal point of $P$ in the direction of $f$.
\end{defin}

By an open cone in $V$, we mean an open subset $\sigma$ stable under $\bR_{>0}$. Our goal is to prove the following.

\begin{prop}\label{round and conic}
    Let $V$ be a finite dimensional real vector space, $L$ a lattice in $V$ and $\mathcal{P}$ a finite set of lattice polytopes. If $\mathcal{P}$ is jointly round and $C_0$ contains $\sigma\cap L$, where $\sigma$  is an open cone of $V$, then every point of $L$ can be coloured starting from $C_0$.
\end{prop}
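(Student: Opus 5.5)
The plan is to grow the coloured region outward from the cone $\sigma$ by ``sweeping'' in the direction of a functional, using joint roundness to supply the sweeping moves. Fix the finite set $\mathcal{P}$. For each non-zero functional $f$, joint roundness gives some $P_f\in\mathcal{P}$ on which $f$ has a unique maximum $x_f$. By finiteness of $\mathcal{P}$ and openness of the unique-maximum condition, we may fix a finite cover of the unit sphere of $V^*$ by open sets $U_1,\dots,U_k$ with the following property. For each $j$ there is one polytope $P_j$ and one vertex $x_j\in P_j$ which is the unique $f$-maximum for all $f\in U_j$. Moreover the gap
\[
\epsilon_j:=\inf_{f\in \overline{U}_j,\ \|f\|=1}\Big(f(x_j)-\max_{y\in P_j\cap L,\,y\neq x_j} f(y)\Big)
\]
is positive. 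Let $D$ be the maximal diameter of the polytopes in $\mathcal{P}$. One point must be checked separately: in a move we need the new vertex to differ from $v$. We therefore translate $P_j$ so that its extremal vertex lands on the target point, with $v$ the corresponding translate; this is harmless as long as $x_j$ is not the distinguished point, and in the application, Proposition \ref{weightsarejointlyround}, we may need to replace $f$ by a nearby functional to arrange this. I would verify this detail first.

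\textbf{Local sweeping lemma.} Let $S$ be the set of colourable points, and let $f\in U_j$. Suppose $S$ contains every lattice point $y$ with $f(y)<c$ that lies in some region $R$, and let $R'\subset R$ be the set of points at distance more than $Nd$ from $V\setminus R$. Then $S$ contains every lattice point $x\in R'$ with $f(x)<c+N\epsilon_j$. To prove this, colour $x$ by the move $(P_j,\ x-x_j)$. Every other lattice point $y$ of that translate satisfies $f(y)\le f(x)-\epsilon_j$ and lies within distance $D$ of $x$. We then recurse on these points. Each step lowers $f$ by at least $\epsilon_j$, so the recursion tree has depth at most $N$; it stays inside $R$, and it bottoms out at points with $f<c$, which are coloured by hypothesis. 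This well-foundedness argument is what makes it unnecessary to choose a global ordering of the lattice.

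\textbf{Globalisation.} Let $\Sigma\subset S^{n-1}$ be the set of directions $d$ for which $S$ contains $L\cap(K+u)$ for some open convex cone $K\ni d$ and some $u\in V$. This set is open and contains the directions of $\sigma$. I will show that it is also closed; then connectedness of the sphere gives $\Sigma=S^{n-1}$ when $\dim V\ge 2$, and the case $\dim V=1$ is a direct check by sweeping with $\pm$ the coordinate. A compactness argument then covers $V$ by finitely many translated cones, and the remaining bounded region is swept using the lemma once more.

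To prove closedness, let $d\in\overline{\Sigma}$. Take a convex subcone $K+u$ contained in $S$ with a direction $d'$ very close to $d$, and a supporting functional $f$ for a face of $K$ near $d'$, so that the outward normal is $f$. Apply the sweeping lemma with $R$ a large region where $\{f<c\}\cap R\subset K+u$. Sweeping repeatedly, with $N$ growing linearly in the distance from the apex, enlarges $K$ to a strictly bigger cone whose angular size is bounded below uniformly; here one uses that $\epsilon_j$ and $D$ are uniform over the finite cover. Hence $d$ itself lies in $\Sigma$.

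I expect this quantitative cone-enlargement step to be the main obstacle. It requires showing that a linear-in-distance number of sweeps near a facet, away from the cone's edges, really produces a wider convex cone rather than just a slab. This is where the estimate ``width gained $\approx N\epsilon_j$ with margin $ND$'' must be balanced, by choosing $N$ proportional to the distance from the apex with a small constant.
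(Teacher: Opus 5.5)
Your recursion (``local sweeping'') lemma is correct. You need to shrink the $U_j$ so that their closures lie in the open normal cones at the $x_j$; otherwise $\epsilon_j$ can vanish. The gap is the globalization, which is where the proof actually lives.

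Closedness of $\Sigma$ rests on the claim that one sweep enlarges a witnessing cone $K$ by an angle bounded below independently of $K$. That is not what the lemma certifies. To colour a point $x$ at height $f(x)-c<N\epsilon_j$ beyond a face of $K$, the lemma needs $x$ to be at distance $>ND$ from every point of $\{f<c\}$ not known to be coloured, i.e.\ from $\{f<c\}\setminus K$. Hence $f(x)-c<(\epsilon_j/D)\,\mathrm{dist}(x,\{f<c\}\setminus K)$.

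For a planar wedge of opening $\alpha$, this set starts at the opposite edge, so the angular gain is only of order $(\epsilon_j/D)\sin\alpha$. For a curved cone in dimension $\ge 3$ it is worse. The sliver between $K$ and a supporting hyperplane accumulates along the tangency ray, so a sweep at that hyperplane gains nothing there unless you first cut into $K$, and then the gain is again proportional to the aperture of $K$.

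If $K+u$ witnesses $d'\in\Sigma$, every direction of $K$ lies in $\Sigma$. So such cones have aperture at most $\mathrm{dist}(d',\partial\Sigma)$, and near $\partial\Sigma$ there is no uniform gain. The open--closed argument therefore does not go through as sketched. You would need a precise enlargement statement for a controlled class of cones, valid in all dimensions. For instance: round cones, with gain proportional to aperture, applied to a maximal cap in $\Sigma$ touching $\partial\Sigma$. That quantitative step is the entire content of the proof, and, as you anticipate, it is missing.

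The paper never grows cones. An open cone contains lattice balls of arbitrarily large radius. So one may assume $C_0$ contains the lattice points of a ball of radius $d$ about a lattice point taken as origin, and then sweep with the norm instead of a linear functional.

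Compactness, as in your choice of the $U_j$, gives a uniform angle $\theta$ with the following property. For every $v\neq 0$, some $P\in\mathcal{P}$ has a unique $\langle v,-\rangle$-maximizer $m$, and every segment from $m$ into $P$ makes angle at least $\theta$ with the level hyperplane. Take $d$ large in terms of $\theta$ and the diameters, and $\|v\|=d'\ge d$. Then the translate $(v-m)+P$ lies in the closed ball of radius $d'$ and meets its boundary only at $v$. Since lattice norms form a discrete set, induction on $\|v\|$ colours everything.

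Because sublevel sets of the norm are bounded, one ball starts the induction and each sublevel set is coloured in full. So no margin condition arises; largeness of $d$ only makes the sphere flat at scale $D$.

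Your caveat about $x\neq v$ is justified, and the paper's proof passes over it too. In the stated generality it genuinely fails: for $\mathcal{P}=\{[0,1]\}$, $L=\mathbb{Z}$, $\sigma=(0,\infty)$, every move colours $v+1$ from $v$, so $0$ is never coloured. What is really needed is a unique maximizer different from $0$. This holds in the application, because $0$ lies in the relative interior of each $\mathrm{conv}P_\omega$ and the polytopes produced there are not points.
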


We will prove this proposition by proving another sufficient condition, which requires us to choose an inner product $\langle -,-\rangle$ on $V$.

\begin{prop}\label{round and sufficiently large}
    Let $V$, $L$, $\mathcal{P}$ and $\langle -,-\rangle$ be given. If $\mathcal{P}$ is finite and jointly round, then there exists a constant $d\in \mathbb{R}_{>0}$ such that if $C_0$ contains all lattice points in a ball of radius $d$, then every lattice point can be coloured starting from $C_0$.
\end{prop}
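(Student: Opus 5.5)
The plan is to colour lattice points in order of increasing norm, using joint roundness to produce, for each uncoloured point $x$, a move whose only uncoloured lattice point is $x$. By compactness we first get a uniform version of joint roundness. Consider the unit sphere $S(V)$ with respect to $\langle-,-\rangle$. For each $P\in\mathcal{P}$ and each vertex $x$ of $P$, the set of unit functionals $f=\langle u,-\rangle$ for which $x$ is the unique maximum of $f|_P$ is open; it is the interior of the normal cone of $P$ at $x$, intersected with the sphere. Joint roundness says these open sets cover $S(V)$, and since $S(V)$ is compact, a Lebesgue number argument gives $\epsilon>0$ and $\delta>0$ with the following property. For every unit $u$ there exist $P\in\mathcal{P}$ and a vertex $x_P$ of $P$ such that for all $u'$ with $|u'-u|<\epsilon$, the vertex $x_P$ is the unique maximum of $\langle u',-\rangle$ on $P$. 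Moreover, every other lattice point $y\in P$ satisfies $\langle u,x_P-y\rangle\geq\delta$. The gap $\delta$ is available because each $P$ has only finitely many lattice points. Let $R$ bound the norms of all lattice points of all $P\in\mathcal{P}$.

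Next we colour by increasing norm. Enumerate the lattice points outside the initial ball in order of nondecreasing norm, breaking ties arbitrarily. Given an uncoloured $x$ with $|x|=r$, suppose every lattice point of norm $<r$, and every earlier point of norm $r$, is already coloured. Put $u=x/|x|$, choose $P$ and $x_P$ as above, and set $v=x-x_P$, so that $x=v+x_P$. Every other lattice point of $v+P$ has the form $z=x-(x_P-y)$ with $w:=x_P-y$ satisfying $\langle u,w\rangle\geq\delta$ and $|w|\leq 2R$. Then
\[|z|^2=|x|^2-2|x|\langle u,w\rangle+|w|^2\leq r^2-2r\delta+4R^2,\]
and this is $<r^2$ once $r>2R^2/\delta$. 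So if $d>2R^2/\delta$, and also large enough to absorb the finitely many points of small norm, every other point of $v+P$ is strictly closer to the origin and hence already coloured. We also need $x\neq v$, i.e. $x_P\neq 0$. When $0\in P$, taking $y=0$ gives $\langle u,x_P\rangle\geq\delta>0$, so $x_P\neq0$. When $0\notin P$, the point $v$ is not in $v+P$, and the condition is vacuous (or we simply note $x_P\neq 0$ trivially). In either case the move is legal and colours $x$. Induction along the enumeration colours every lattice point, which proves Proposition \ref{round and sufficiently large}. The $\epsilon$-robustness is not actually needed; only the gap $\delta$ is, and it comes from finiteness together with compactness of the sphere.

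Finally, Proposition \ref{round and conic} follows by translation. The game is translation invariant: if $x$ can be coloured from $C_0$, then $x+t$ can be coloured from $C_0+t$ for $t\in L$. An open cone $\sigma$ contains balls of arbitrarily large radius. Choose $t\in L$ such that the lattice ball $B(t,d)\cap L$ lies in $\sigma$, which is possible by taking $t$ far out along a ray in $\sigma$. Applying Proposition \ref{round and sufficiently large} centred at $t$ (the ball version is translation invariant) shows that all of $L$ can be coloured. The one delicate step is extracting the uniform gap $\delta$ from joint roundness via compactness. This depends on the set of unit functionals having a unique maximiser on a given $P$ at a given vertex being open, which holds because the lattice points of $P$ are finite and strict inequalities are open conditions.
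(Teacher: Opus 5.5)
Apart from one step, your argument follows the paper's proof. Compactness of the unit sphere and finiteness of $\mathcal{P}$ give a uniform quantitative form of joint roundness; your gap $\delta$ plays the role of the paper's angle $\theta$. One then colours lattice points by increasing distance from the centre, and your estimate $|z|^2\le r^2-2r\delta+4R^2$ replaces the paper's tangent-ball lemma.

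The step that fails is the check of the rule $x\neq v$. The sentence ``taking $y=0$ gives $\langle u,x_P\rangle\ge\delta$'' is circular. The gap inequality is only asserted for lattice points $y\neq x_P$, so it says nothing when $x_P=0$. And $x_P=0$ genuinely occurs: if $0$ is a vertex of some $P$ and compactness hands you the pair $(P,0)$ for the direction $u=x/|x|$, then $v=x$. Your move then tries to colour the translation point itself, which the rules forbid.

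No argument can close this under the stated hypotheses. Take $V=\mathbb{R}$, $L=\mathbb{Z}$, $\mathcal{P}=\{[0,1]\}$. Every nonzero functional has a unique maximum on $[0,1]$, so $\mathcal{P}$ is jointly round. Yet the only legal moves colour $v+1$ once $v$ is coloured, so starting from $C_0=\mathbb{Z}\cap[-d,d]$ the minimum of the coloured set never decreases, for any $d$. In this example your construction returns exactly $x_P=0$ for every negative $x$.

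The paper's proof has the same omission: its lemma sets $t=v-m$ without checking $m\neq 0$. So the proposition needs an extra hypothesis, e.g.\ that for every nonzero $f$ some $P\in\mathcal{P}$ has its unique maximiser at a nonzero vertex. Under that hypothesis your proof goes through verbatim once you run the compactness step only over pairs $(P,x)$ with $x\neq 0$. Concretely, the function $u\mapsto\max_{(P,x),\,x\neq 0}\min_{y\in P\cap L,\,y\neq x}\langle u,x-y\rangle$ is continuous and positive on the unit sphere, hence bounded below by some $\delta>0$. This is also the cleanest way to obtain your $\delta$.

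The extra hypothesis is harmless in the application. The images in $P_\omega$ of the orbit $W\omega$ average to $0$, so $0$ is a vertex of $\mathrm{conv}\,P_\omega$ only if $P_\omega=\{0\}$. That does not happen for the $\omega$ produced in the proof of Proposition \ref{weightsarejointlyround}.
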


If a set contains an open cone, then it contains balls of arbitrarily large radius, thus we immediately conclude the first proposition form the second.

To prove the second proposition, we will need a few lemmas about convex geometry.

\begin{lem}
    Let $V$, $\mathcal{P}$ and $\langle -,-\rangle$ be given. If $\mathcal{P}$ is finite and jointly round, there exists an angle $\theta\in (0,\pi/2]$ such that for all non-zero functionals $f:V\to \mathbb{R}$, there exists a polytope $P\in \mathcal{P}$ such that the angle between the hyperplane $f=0$ and any hyperplane supporting a positive dimensional face of $P$ is bounded below by $\theta$.
\end{lem}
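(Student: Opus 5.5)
The plan is a compactness argument on the unit sphere of $V^*$, using the inner product to identify functionals with vectors. Throughout, a hyperplane supporting a face $F$ of $P$ is taken together with its outward normal $g$, meaning $g|_P$ is maximized exactly along a set containing $F$. The angle between the hyperplane $\{f=0\}$ and this supporting hyperplane is measured as the angle between $f$ and the outward normal $g$. This oriented reading is needed: with unoriented hyperplanes the claim fails, since $-f$ may well be maximized on a positive-dimensional face of $P$ even when $f$ has a unique maximum there.

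For a polytope $P$ and a face $F$ of $P$, let $N_F\subset V^*$ denote the normal cone of $F$, consisting of all $g$ whose maximum on $P$ is attained on all of $F$. Each $N_F$ is a closed polyhedral cone, and $P$ has finitely many faces. The key observation is the following: $f|_P$ has a unique maximum if and only if $f$ lies in no $N_F$ with $\dim F>0$. Indeed, the maximizing set of $f$ is itself a face, and $f$ lies in its normal cone.

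For each $P\in\mathcal{P}$, define on the unit sphere $\mathbb{S}\subset V^*$ the function
\[
\varphi_P(f)=\min_{F\subset P,\ \dim F>0}\ \angle\big(f,\ N_F\cap\mathbb{S}\big).
\]
Here $\angle(f,K)$ denotes the infimum of the angles between $f$ and the elements of $K$, and it is set to $\pi$ if $K$ is empty. This function is continuous, being a finite minimum of distance functions to closed sets. Since each $N_F\cap\mathbb{S}$ is closed, $\varphi_P(f)>0$ exactly when $f$ lies in no such $N_F$, i.e. exactly when $f|_P$ has a unique maximum. Joint roundness therefore says that $\Phi:=\max_{P\in\mathcal{P}}\varphi_P$ is strictly positive on $\mathbb{S}$. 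As a finite maximum of continuous functions, $\Phi$ is continuous, so by compactness of $\mathbb{S}$ it attains a minimum $\theta_0>0$. Set $\theta=\min(\theta_0,\pi/2)$.

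Given a nonzero $f$, normalize it to lie in $\mathbb{S}$ and choose $P$ with $\varphi_P(f)=\Phi(f)\ge\theta$. Then every outward normal $g$ of a hyperplane supporting a positive-dimensional face $F$ of $P$ lies in $N_F$, so its angle with $f$ is at least $\theta$. This is the required bound, and it is uniform in $f$ as claimed.

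The only delicate point is the continuity and positivity of $\varphi_P$, including the convention for empty normal cones. This is where the finiteness of $\mathcal{P}$, and of the set of faces of each $P$, is used.
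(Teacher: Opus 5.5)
Your proof is correct and is essentially the paper's argument: the paper's closed set $K_P\subset SV$ of unit functionals without a unique maximum on $P$ is exactly your $\bigcup_{\dim F>0}N_F\cap\mathbb{S}$, and your oriented reading of the angle matches how the paper uses $K_P$. The only difference is cosmetic. The paper uses normality of the sphere to find neighbourhoods $U_P$ of $K_P$ with empty intersection and sets $\theta=\min_P d(K_P,SV\setminus U_P)$, whereas you directly minimize the continuous positive function $\max_P d(\cdot,K_P)$ over the compact sphere, and you also correctly cap $\theta$ at $\pi/2$.
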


\begin{proof}
    Let $SV\subset V$ denote the unit sphere. The statement is invariant under changing the functional $f$ by a positive multiple, hence we may consider the functionals of the form $f_v:=\langle v,-\rangle$ for a unit vector $v\in SV$. For a given polytope $P\in \mathcal{P}$, let $K_P$ denote the closed set of unit vectors $v\in SV$ such that the functional $f_v$ does not have a unique maximum on $P$. By the assumption that $\mathcal{P}$ is jointly round,
    \[\bigcap_{P\in \mathcal{P}} K_P=\emptyset.\]
    Since the unit sphere in $V$ is a normal space, it follows that there exist open neighborhoods $U_P$ of $K_P$ such that
    \[\bigcap_{P\in \mathcal{P}} U_P=\emptyset.\]
    The topology on the unit sphere is metrizable with metric given by the angle between two unit vectors. Since $K_P$ is compact, the distance in the angle metric from $K_P$ to $SV\setminus U_P$, denote it $\theta_P$, is strictly positive. Let $\theta$ be the minimum of $\theta_P$, where $P$ ranges over $\mathcal{P}$.

    Since the sets $SV\setminus U_P$ form a cover of the unit sphere, given a functional $f_v$ there exist a polytope $P_v\in \mathcal{P}$ such that $v\in SV\setminus U_{P_v}$. We will now prove the lower bound on the angle for any face of this polytope $P_v$ and the hyperplane given by the functional $f_v$. The set $K_{P_v}$ is exactly the subset of functionals that support a positive dimensional face of $P_v$. The angle $\theta$ was chosen to be at most the distance from $v$ to $K_{P_v}$ and thus in particular smaller than the angle between $f_v=0$ and the supporting hyperplanes of the faces of $P_v$. 
\end{proof}

\begin{lem}
    Let $V$ be a finite dimensional Euclidean vector space, $P$ be a polytope in $V$ and $H$ be a linear hyperplane. Suppose that $H$ intersects $P$ in a unique point $x_0$. If the angle between $H$ and any facet of $P$ containing $x$ is bounded below by $\theta\in (0,\pi/2]$, then for all $x\in P$ the angle between $H$ and the vector from $x_0$ to $x$ is bounded below by $\theta$.
\end{lem}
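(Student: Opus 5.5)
The plan is to localize at $x_0$, pass to the tangent cone of $P$ there, and reduce the claim to a statement about the extreme rays of that cone. Translate so that $x_0=0$, let $u$ be a unit normal to $H$, and orient it so that $P\subset\{y:\langle u,y\rangle\ge 0\}$. This is possible because $H\cap P=\{x_0\}$ and $P$ is convex. For a nonzero vector $y$, the angle between $H$ and $y$ is at least $\theta$ exactly when $\langle u,y\rangle\ge \sin\theta\,|y|$. So the goal is the inclusion
\[
P-x_0\subset C_\theta:=\{y:\langle u,y\rangle\ge \sin\theta\,|y|\}.
\]
Here $C_\theta$ is a closed convex cone, since $y\mapsto \sin\theta|y|-\langle u,y\rangle$ is convex and positively homogeneous. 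I will also work inside the affine span of $P$, where the facets are taken, so that $P$ may be assumed full-dimensional.

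\textbf{Step 1 (tangent cone).} Let $F_1,\dots,F_k$ be the facets of $P$ through $x_0$, with outward normals $n_i$. The tangent cone $T=\{y:\langle n_i,y\rangle\le 0 \text{ for all } i\}$ contains $P-x_0$. Since $P$ agrees with $x_0+T$ near $x_0$, the hypothesis $H\cap P=\{x_0\}$ gives $T\cap H=\{0\}$. Hence $T$ is pointed, $\langle u,\cdot\rangle>0$ on $T\setminus\{0\}$, and $T$ is the conical hull of finitely many extreme rays $r_1,\dots,r_m$. Because $C_\theta$ is a convex cone, it suffices to prove $r_j\in C_\theta$ for every $j$.

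\textbf{Step 2 (extreme rays and facets).} Each extreme ray $r_j$ is an edge of $T$, so it lies in $d-1$ of the hyperplanes $\{\langle n_i,y\rangle=0\}$, where $d=\dim V$. The hypothesis bounds the angle between $H$ and each facet hyperplane $\langle n_i\rangle^\perp$ through $x_0$ from below by $\theta$. The natural route is to pick a $2$-plane containing $r_j$ and $u$ (the one orthogonal to $H\cap\langle n_i\rangle^\perp$, for a suitable facet $F_i\ni r_j$). In that plane the angle of $r_j$ with $H$ equals the dihedral angle between $H$ and $F_i$, which is at least $\theta$. If $d=2$ this is immediate, because edges and facets coincide.

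\textbf{Main obstacle.} In general a line inside a hyperplane meets $H$ at an angle between $0$ and the dihedral angle, with equality only for the steepest line. So Step 2 requires showing that some facet through $r_j$ has $r_j$ as (or close to) its line of steepest ascent relative to $H$. I would try to prove this using the full collection of facets through $r_j$ together with the pointedness $T\cap H=\{0\}$. Concretely, I would write $u$ in terms of the $n_i$ via the dual cone $T^\ast=\operatorname{cone}(-n_i)$, which contains $u$ in its interior, and estimate $\langle u,r_j\rangle$ against the facets adjacent to $r_j$.

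If this estimate does not close in dimension $\ge 3$, the fallback is the following. In the application the constant $\theta$ comes from the preceding lemma, which bounds the angle with $H$ for hyperplanes supporting \emph{every} positive-dimensional face, including edges through $x_0$. This is precisely the bound on each $r_j$ that Step 2 needs, and with it Steps 1 and 2 finish the proof.
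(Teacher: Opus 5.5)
Your Step 1 reduction is fine, but the obstacle you flag in Step 2 cannot be overcome. With only the facets through $x_0$ controlled, the lemma is false once $\dim V\ge 3$, so no dual-cone estimate will close the argument.

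Here is a counterexample. Take $V=\mathbb{R}^3$, $H=\{z=0\}$, and $P=\mathrm{conv}(0,r_1,r_2,r_3)$ with $r_1=(1,0,\delta)$, $r_2=(0,\epsilon,1)$, $r_3=(0,-\epsilon,1)$ and $0<\epsilon,\delta\ll 1$. Then $P\cap H=\{0\}$. The three facets through $0$ have normals $(1,0,0)$, $(-\delta\epsilon,-1,\epsilon)$ and $(\delta\epsilon,-1,-\epsilon)$, so each makes an angle of at least $\arccos\epsilon$ with $H$. Yet the vertex $r_1$ makes the angle $\arctan\delta$ with $H$. This is exactly your scenario: both facets through the edge $r_1$ are nearly vertical, but $r_1$ is nearly parallel to both of their traces on $H$. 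For $\dim V\le 2$ the statement does hold, as you note.

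The paper's own proof has the same hole. It asserts that the point $v$ where $[x,x_H]$ meets a facet hyperplane $F$ satisfies $\angle(v,x_H)\ge\theta$. That angle equals the dihedral angle between $F$ and $H$ only when $x_H\perp F\cap H$, and is smaller otherwise. For $x=r_1$ above one gets $v=x$ and $\angle(v,x_H)=\arctan\delta$.

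So the proposal does not prove the lemma as stated, and nothing can. Your fallback, strengthening the hypothesis to what the preceding lemma actually provides, is the right repair. However, ``this is precisely the bound on each $r_j$'' is too quick. The angle between $r_j$ and $H$ is the \emph{minimum} of the dihedral angles with $H$ over all hyperplanes containing $r_j$. This minimum is attained at $\mathrm{span}(r_j,H\cap r_j^\perp)$, and that hyperplane need not support $P$. So bounds on supporting hyperplanes through $r_j$ alone do not bound $r_j$.

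The clean route uses what the proof of the preceding lemma really shows: the open spherical ball $B$ of radius $\theta$ around the outward normal $-u$ misses $K_P$. The argument runs as follows.
\begin{itemize}
\item Let $N=\{w:\langle w,y\rangle\le 0 \text{ for all } y\in P-x_0\}$ be the normal cone at $x_0$.
\item Unit vectors on $\partial N$ are maximized on a positive-dimensional face through $x_0$, so they lie in $K_P$.
\item Since $-u\in\mathrm{int}\,N$ and $B$ is connected and misses $K_P$, we get $B\subset N$.
\item For a unit vector $y$ in the cone over $P-x_0$, set $\beta=\angle(-u,y)$. Rotating $-u$ towards $y$ by any $t<\theta$ gives some $w\in N$, hence $\cos(\beta-t)=\langle w,y\rangle\le 0$.
\item Therefore $\beta\ge\pi/2+\theta$, that is, $\langle u,y\rangle\ge\sin\theta$.
\end{itemize}
This gives $P-x_0\subset C_\theta$ directly, without the tangent-cone and extreme-ray steps. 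It is also the form in which the next lemma actually uses the result.
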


\begin{proof}
    We may center the picture so that $x_0=0$. Let $x_H$ denote the orthogonal projection of $x$ onto $H$. The line segment from $x$ to $x_H$ intersects a hyperplane supported by a facet of $P$ adjecent to the vertex $x_0$. Let $v$ denote this intersection point. Then the angle between $x$ and $x_H$ is the angle between $x$ and $v$ plus the angle between $v$ and $x_H$. The angle between $v$ and $x_H$ is bounded below by $\theta$ and the angle between $x$ and $v$ is positive, hence the statement. 
\end{proof}

\begin{lem}
    Let $V$, $L$, $\mathcal{P}$ and $\langle -,-\rangle$ be given. If $\mathcal{P}$ is finite and jointly round, there exist a $d\in \mathbb{R}_{>0}$ such that for all $d'\ge d$ and any vector $v\in L$ with $||v||=d'$, there exist a polytope $P\in \mathcal{P}$ and a lattice element $t\in L$ such that $v$ belongs to $t+P$ and so that $(t+P)\setminus\{v\}$ is contained in the open ball of radius $d'$ centered at $0$.
\end{lem}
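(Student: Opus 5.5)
The plan is to take the functional $f_v=\langle v,-\rangle$ in the direction of $v$ and use the previous two lemmas. Choose $P\in\mathcal{P}$ as in the first lemma, so that $f_v$ has a unique maximum on $P$, at a vertex $x_P$. Every positive dimensional face of $P$ through $x_P$ then makes angle at least $\theta$ with the hyperplane $f_v=0$. Since $P$ is a lattice polytope, $x_P\in L$ (vertices of the convex hull of a finite subset of $L$ lie in that subset). Set $t:=v-x_P\in L$. Then $v\in t+P$, and $v$ is the unique maximiser of $f_v$ on $t+P$.

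Next apply the second lemma to $t+P$, with $H$ the affine hyperplane through $v$ orthogonal to $v$ (translate so that $v$ becomes the origin). It gives, for every $y\in (t+P)\setminus\{v\}$, that the vector $y-v$ makes angle at least $\theta$ with $H$ and points into the half-space $f_v<f_v(v)$. Writing $y-v=w$, this means $\langle w,v/\|v\|\rangle\le -\|w\|\sin\theta$. Hence
\[
\|y\|^2=\|v\|^2+2\langle w,v\rangle+\|w\|^2\le d'^2-2d'\|w\|\sin\theta+\|w\|^2 .
\]
The right-hand side is strictly less than $d'^2$ as soon as $\|w\|<2d'\sin\theta$. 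Let $D$ be the maximum of the diameters of the polytopes in $\mathcal{P}$, which is finite since $\mathcal{P}$ is finite. Then $0<\|w\|\le D$, so it suffices to take $d>D/(2\sin\theta)$. With this choice, $(t+P)\setminus\{v\}$ lies in the open ball of radius $d'$ for every $d'\ge d$, and this $d$ depends only on $\mathcal{P}$ and the inner product.

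The main point to check is that the angle bound from the first lemma, stated for hyperplanes supporting faces, really feeds into the second lemma. That requires the bound for every face adjacent to the maximising vertex, together with the correct sign: the polytope must lie on the side where $f_v$ decreases. The sign follows from $v$ being the unique maximum. Uniformity of $\theta$ over all directions $v$ comes from the first lemma, since $\theta$ there is independent of $f$. I would also note that $\theta\in(0,\pi/2]$ gives $\sin\theta>0$, so the constant $d$ is finite.
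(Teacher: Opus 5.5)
Your proposal is correct and follows the paper's proof. You make the same choice of $P$ via joint roundness and the two angle lemmas, and the same translate $t=v-x_P$.

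The only difference is in the final containment step. The paper places $t+P$ inside the closed ball of radius $d$ tangent to the hyperplane $f_v=\|v\|^2$ at $v$, and simply asserts that such a $d$ exists. You instead carry out the estimate $\|y\|^2\le d'^2-2d'\|w\|\sin\theta+\|w\|^2$ explicitly. This gives the concrete constant $d>D/(2\sin\theta)$ and the strict inequality directly, and it is exactly the computation implicit in the paper's tangent-ball step.
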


\begin{proof}
   Let $\theta$ be chosen as in the previous lemma. The number $d$ will be chosen to solve the following geometric problem. Let $r$ denote the maximum diameter of the polytopes in $\mathcal{P}$. Choose any hyperplane $H$ in $V$ with a point $x_0\in H$ and a half space $H_+$; the number $d$ will be independent of these choices. Consider the set $K$ of vectors $x$ in $H_+$ such that the angle between $H$ and the vector from $x_0$ to $x$ is bounded below by $\theta$ and the distance between $x_0$ and $x$ is less than $r$. Choose $d$ to be any number such that the closed ball inside $H_+$ of radius $d$ tangent to $H$ at $x_0$ contains $K$. It is clear that such a $d$ exists, since $K$ is compact and such balls tangent to $H$ at $x_0$ forms an increasing family whose union is $H_+$. Since affine isometries act transitively on the choices involved in defining $d$, this number is independent of the hyperplane $H$, the point $x_0$ and the half-space $H_+$.

   Now let $d'\ge d$ be given and $v\in L$ be a lattice point with $||v||=d'$.  Since $\mathcal{P}$ is jointly round, there exist a polytope $P\in \mathcal{P}$ such that the functional $f_v:=\langle v,-\rangle$ has a unique maximum on $P$, say at a vertex $m\in P$. By the previous two lemmas we can further assume that the angle between the vector from $m$ to any point of $P$ and the hyperplane $f_v=0$ is bounded below by  $\theta$. Since $P$ is assumed to be a lattice polytope in fact $m\in L$. Thus the choice $t=v-m$ is an element of $L$. Clearly $v\in t+P$, thus it remains to show that every element of $(t+P)\setminus\{v\}$ has norm strictly less than $d'$.

   The hyperplane $f_v=d'^2$ passes through $v$ and $t+P$ is contained in the half-space $f_v\le d'^2$. By assumption every angle between the hyperplane $f_v=d'^2$ and the vector from $v$ to a point of $t+P$ is at least $\theta$. Hence $t+P$ is contained in the set $K$ used to define $d$, where we choose $H$ to be $f_v=d'^2$, $x_0=v$ and $H_+$ to be the set $f_v\le d'^2$. By construction of $d$ this implies that $t+P\subset K$ is contained in the closed ball of radius $d$ tangent to $H$ at $v$, thus in particular it is contained in the same ball of radius $d'\ge d$. This ball is exactly the closed ball of radius $d'$ centered at $0$. Since $v$ is the only intersection point between $K$ and the ball of radius $d'$, it follows that $v$ is the only point of $P$ on the boundary of the ball of radius $d'$ centered at $0$. Hence $(t+P)\setminus \{v\}$ is contained in the open ball of radius $d'$ centered at $0$.
\end{proof}

With these lemma we now conclude that for jointly round sets of lattice polytopes, every point can be coloured if the starting state contains a ball of sufficiently large radius.

\begin{proof}[Proof of Proposition \ref{round and sufficiently large}]
Let $d$ be chosen as in the previous lemma. We may assume that the ball of radius $d$, which is contained in $C_0$, is centered at the origin. The image of the norm map $||\cdot||:L\to \mathbb{R}_{\ge 0}$ forms a discrete subset. We will prove that every lattice point can be coloured by induction on the distance from the origin. We assume that the statement is known for $d\in \mathbb{R}_{>0}$, which gives the base case of the induction. For the induction step we suppose that every lattice point in the interior of a ball of radius $d'$ can be coloured, for some $d'\ge d$. We have to prove that the lattice points on the boundary can also be coloured. Let $v$ be a lattice point on the boundary of the ball of radius $d'$. By the previous lemma we can choose a polytope $P\in \mathcal{P}$ and a lattice element $t\in L$ such that $v\in t+P$ and $(t+P)\setminus \{v\}$ is contained in the interior of this ball. Since the interior can be coloured given $C_0$ by the induction hypothesis, the move given by $P$ and $t$ shows that $v$ can also be coloured given $C_0$. Thus all points on the boundary of the ball of radius $d'$ centered at $0$ can be coloured, which proves the induction step. Thus all lattice points can be coloured given $C_0$.
\end{proof}

\begin{proof}[Proof of Proposition \ref{nilpotenceofCT}]

For a reductive group $G$, let $V=\Lambda_{G,P}\otimes_{\bZ}\bR$, $L=\Lambda_{G,P}$ and $\cP=\lbrace \mathrm{conv}P_{\omega}\rbrace$, where $\mathrm{conv}P_{\omega}$ denotes the convex hull of $P_\omega$ and $\omega$ ranges over the fundamental weights. Let $C_0\subset \Lambda_{G,P}$ be the subset consisting of $\mu$ such that $\on{CT}_P$ preserves universal nilpotent singular support.

We refer to this game as the \emph{game of polytopes associated to $G$}. By Lemma \ref{l:allbutone}, solving the game of polytopes for $G$ implies Proposition \ref{nilpotenceofCT}. It is not difficult to see that solving the game of polytopes associated to $G$ reduces to solving the game of polytopes associated to its adjoint quotient $G^\mathrm{ad}$. When $G$ is semi-simple, Proposition \ref{weightsarejointlyround} implies that $\cP$ is jointly round. By assumption, $C_0$ contains $\sigma\cap L$, where $\sigma\subset V$ is an open cone. We conclude by Proposition \ref{round and conic}.
\end{proof}

\subsubsection{} We end this subsection by proving Theorem \ref{t:csttermpreservesnilp} in the principal case $P=B$:

\begin{thm}\label{t:princcase}

The functor $\on{CT}_B$ preserves universal nilpotent singular support.
    
\end{thm}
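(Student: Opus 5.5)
The plan is to combine Proposition \ref{p:goodforsmallmu} with Proposition \ref{nilpotenceofCT}. When $P=B$ we have $M=T$, so $\rho_M=0$. Moreover $\on{Bun}_T(X/S)$ has a single Harder--Narasimhan stratum per connected component, since every $T$-bundle is semistable and $T_{\on{ad}}$ is trivial. Hence the open substack $\on{Bun}_T^{\mu}(X/S)^{(\underset{T_{\on{ad}}}{\leq} 0)}$ is all of $\on{Bun}_T^{\mu}(X/S)$, and the restriction $j^*_{(\leq 0),\mu}$ in Proposition \ref{p:goodforsmallmu} is the identity. We should double-check this point: the condition $\lambda\underset{T_{\on{ad}}}{\leq}(2g-2)\rho_T=0$ is vacuous because $\Lambda^{\bQ}_{T_{\on{ad}}}=0$.

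Proposition \ref{p:goodforsmallmu} then gives $N\gg0$ such that $\on{CT}_B^{\mu}$ preserves universal nilpotent singular support whenever $\langle\mu,\alpha\rangle<-N$ for every simple root $\alpha$. This is exactly the hypothesis of Proposition \ref{nilpotenceofCT} with $P=B$, so $\on{CT}_B=\bigoplus_\mu \on{CT}_B^\mu$ preserves $\on{Nilp}_u$.

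The remaining issue is that Proposition \ref{nilpotenceofCT} used Proposition \ref{weightsarejointlyround} only for semisimple $G$, and treated the reduction to $G^{\on{ad}}$ only briefly. I would make that reduction explicit for $P=B$. First, the set $C_0$ of good $\mu$ is stable under translation by $\Lambda_{Z(G)}$-directions. Concretely, tensoring with characters of the center, i.e.\ Hecke operators $V_\sigma$ for one-dimensional $V\in\on{Rep}(\check G)$, are invertible and preserve $\on{Nilp}_u$. By Theorem \ref{t:HeckeCT-formula}, $\on{CT}_B(V_\sigma\star-)\simeq V_{\check T,\sigma}\star\on{CT}_B(-)$ up to shift, with $\mu$ translated by the corresponding central coweight. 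Second, the polytopes $P_\omega$ project to the corresponding polytopes for $G^{\on{ad}}$ in $\Lambda/\Lambda_{Z_0(G)}$ rationally. One then plays the game in the quotient lattice, which is jointly round by Proposition \ref{weightsarejointlyround}.

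This step of separating central directions is where I expect the main care is needed. In particular, the image of $C_0$ in the quotient must contain an open cone intersected with the lattice. This holds because the defining condition $\langle\mu,\alpha\rangle<-N$ only involves roots, which vanish on central directions.
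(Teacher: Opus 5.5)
Your proposal is correct and is exactly the paper's argument: the paper proves Theorem \ref{t:princcase} in one line by combining Proposition \ref{p:goodforsmallmu} with Proposition \ref{nilpotenceofCT}. As you note, the Harder--Narasimhan restriction in Proposition \ref{p:goodforsmallmu} is vacuous for $M=T$, since $\rho_T=0$ and $\Lambda^{\bQ}_{T_{\on{ad}}}=0$.

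Your treatment of the central directions spells out the reduction to $G^{\on{ad}}$ that the paper only asserts inside the proof of Proposition \ref{nilpotenceofCT}, and it is sound. The Hecke-twist argument for $\Lambda_{Z(G)}$-invariance is not actually needed: the initial set $\{\mu : \langle\mu,\alpha\rangle<-N\}$ is already invariant, and any lift of a move in the quotient lattice is a valid move upstairs.
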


\begin{proof}

This follows by combining Proposition \ref{p:goodforsmallmu} and Proposition \ref{nilpotenceofCT}.

\end{proof}

\subsection{Proof for a general parabolic}

In this subsection, we finish the proof of Theorem \ref{t:csttermpreservesnilp} for a general parabolic. Thus, fix a parabolic $P$ properly contained in $G$ with Levi $M$. By Theorem \ref{t:princcase}, we assume that $P$ is not a Borel subgroup.

We assume the following induction hypotheses:
\begin{enumerate}
    \item For any parabolic $Q$ properly contained in $P$, $\on{CT}_Q$ preserves universal nilpotent singular support;

    \item For a parabolic $R$ of $M$, the functor
    \[
    \on{CT}_R: \on{Sh}(\on{Bun}_M(X/S))\to \on{Sh}(\on{Bun}_L(X/S))
    \]

    \noindent preserves universal nilpotent singular support.
\end{enumerate}

\subsubsection{} Fix $\mu\in\Lambda_{G,P}$ as in the statement of Proposition \ref{p:goodforsmallmu}. By Proposition \ref{nilpotenceofCT}, it suffices to prove that $\on{CT}_P^{\mu}$ preserves universal nilpotent singular support. Applying Proposition \ref{p:goodforsmallmu}, we let $j:U\subset \on{Bun}_M^{\mu}(X/S)$ be an open substack given by a union of Harder-Narasimhan strata such that
\[
\on{Bun}_M^{\mu}(X/S)^{(\underset{M_{\on{ad}}}{\leq}(2g-2)\rho_M)}\subset U,
\]

\noindent and such that the functor $j^*\circ \on{CT}_P^{\mu}$ preserves universal nilpotent singular support. Let $i:Z\subset \on{Bun}_M^{\mu}(X/S)$ be the complement of $U$ in $\on{Bun}_M^{\mu}(X/S)$. 

We need the following theorem proved in \cite{arinkin2020stack}:\footnote{The result was proved when $S=\on{pt}$ but the same proof works in the relative setting.}
\begin{thm}\label{t:relagkrrv}[{\cite[Thm. 19.1.3]{arinkin2020stack}}]

The functor
\[
j_!: \on{Sh}(U)\to \on{Sh}(\on{Bun}_M(X/S))
\]

\noindent restricts to a functor
\[
j_!: \on{Sh}_{\on{Nilp}_{u,M}}(U)\to \on{Sh}_{\on{Nilp}_{u,M}}(\on{Bun}_M(X/S)).
\]
    
\end{thm}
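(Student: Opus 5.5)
We would follow the strategy of \cite[Thm. 19.1.3]{arinkin2020stack}, carried out relative to $S$. Fix $\cF\in\on{Sh}_{\on{Nilp}_{u,M}}(U)$. By Lemma \ref{l:pointwisenilpsuffices} (or its ind-version, Lemma \ref{criterion for universal nilpotence ind-version}, after writing $\cF$ as a filtered colimit of constructible objects and using that $j_!$ commutes with colimits), it is enough to prove two things about $j_!\cF$. First, that it is non-characteristic over $S$. Second, that for each $s\in S$ its restriction $i_s^*j_!\cF\simeq j_{s,!}(i_s^*\cF)$ has nilpotent singular support on $\on{Bun}_M(X_s)$. The base change here holds because $j$ is an open embedding. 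Since $i_s^*\cF$ lies in $\on{Sh}_{\on{Nilp}}(U_s)$, the second point is exactly the pointwise theorem of \emph{loc.cit.} Everything therefore reduces to the non-characteristic property. This is genuinely new, because a $!$-extension across a closed substack can in principle create covectors pulled back from $S$.

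To control $\on{SS}(j_!\cF)$ near $Z$, we would use the mechanism from \emph{loc.cit.}. Each Harder--Narasimhan stratum $\on{Bun}_M(X/S)^{(\lambda)}$ in $Z$ is very unstable, since it lies outside $(\underset{M_{\on{ad}}}{\leq}(2g-2)\rho_M)$. So, as in the proof of Proposition \ref{p:goodforsmallmu}, $\on{Bun}_R^{(\lambda)}(X/S)\to\on{Bun}_M(X/S)$ is a locally closed embedding onto the stratum, and a neighbourhood of the stratum is contracted by a $\bG_m$-action coming from $Z(L)$. We would argue stratum by stratum, adding one HN stratum at a time in order of decreasing openness, so that each step is $j'_!$ for the inclusion of an open $U'$ into $U'\cup\on{Bun}_M^{(\lambda)}$. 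At each step we apply the singular support estimate for $!$-extensions under a contracting action, \cite[Prop. 19.4.2]{arinkin2020stack}, which is already used in the proofs of Lemma \ref{contractionstep} and Proposition \ref{p:goodforsmallmu}. It says that a covector $\xi\in\on{SS}(j'_!\cF)$ at a point of the stratum has component $\xi^0$ along the attracting locus, and that $\xi^0$ is the image of a covector in $\on{SS}(\cF)$ at a point of $U'$ contracting to it.

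The key step is to check that the $\bG_m$-action and the attracting map are defined over $S$, i.e.\ they commute with the projection to $S$. This holds because the contraction only rescales the $R$-reduction along the fibres of $\on{Bun}_R\to\on{Bun}_L$. Granting this, a covector pulled back from $S$ restricts on the attracting locus to a covector pulled back from $S$. By the estimate, such a covector must come from a covector in $\on{SS}(\cF)$ that is also pulled back from $S$, and that covector is zero by the non-characteristic property of $\cF$. The remaining normal component must then vanish because the ambient covector is $d\pi^*$ of a covector on $S$. Hence $j'_!\cF$ is non-characteristic over $S$ at each step. I expect this step, the relative compatibility of the contraction estimate, to be the main obstacle. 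One has to make sure that \cite[Prop. 19.4.2]{arinkin2020stack} applies to a family of attractors over $S$ whose contraction is fibrewise, including the relative version of the HN-reduction results of \cite{drinfeld2015compact}. Once it is in place, combining the two points with Lemma \ref{l:pointwisenilpsuffices} gives $\on{SS}(j_!\cF)\subset\on{Nilp}_{u,M}$.
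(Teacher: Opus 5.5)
Your proposal is correct and is essentially the paper's route. The paper only cites \cite[Thm.~19.1.3]{arinkin2020stack} and asserts that its proof relativizes, and your split into fibrewise nilpotence (pointwise theorem plus base change) and $S$-non-characteristicness (relative contraction estimate), glued by Lemma~\ref{l:pointwisenilpsuffices}, is exactly how the paper treats the same issue in Step~2 of Proposition~\ref{p:goodforsmallmu} and in Proposition~\ref{poincare non-char}; the lemma's weak constructibility hypothesis holds for $j_!\cF$ automatically, since $\cF$ has singular support in a conic Lagrangian.

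One correction to your geometric sketch: the $\bG_m$ coming from $Z(L)$ acts on $\on{Bun}_M(X/S)$ through inner automorphisms, hence trivially, so it contracts no neighbourhood of the stratum there. Consequently \cite[Prop.~19.4.2]{arinkin2020stack}, which constrains the component of $\xi$ along the \emph{fixed} locus, has to be applied on an auxiliary smooth chart that carries the contraction and in which the stratum pulls back to the fixed locus (the contractive structure of \cite[\S 5]{drinfeld2015compact}); your non-characteristicness argument then goes through verbatim, provided this chart is smooth over $S$ and both the action and the chart map are defined over $S$.
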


\subsubsection{} Henceforth, fix $\cF \in\on{Sh}_{\on{Nilp}_{u,G}}(\on{Bun}_G(X/S))$. Let $\cG:=i_*i^*(\on{CT}_P(\cF))$. From the above theorem and the triangle
\[
j_!j^*\to \on{id}\to i_*i^*,
\]

\noindent it suffices to prove that $\on{SS}(\cG)\subset \on{Nilp}_{u,M}$. More precisely, let $\on{Bun}_M(X/S)^{(\lambda)}$ be a Harder-Narasimhan stratum for some $\lambda\in \Lambda_M^{+,\bQ}$ whose intersection with $\on{Bun}_M^{\mu}(X/S)$ is open in $Z$, and let $\xi\in \on{SS}(\cG)$ lie over $\on{Bun}_M(X/S)^{(\lambda)}$. We need to show that $\xi$ is nilpotent. The idea is that because $\xi$ is sufficiently far out in the Harder-Narasimhan stratification, we may further reduce to a smaller parabolic, where we may apply the induction hypothesis.

\subsubsection{} Let $R\subset M$ be the parabolic with Levi $L$ whose simple roots $\alpha$ are exactly those that satisfy $\langle \lambda,\alpha\rangle \leq 2g-2$. Note that $R$ is properly contained in $M$ since we assumed that $M$ was not a torus (otherwise, we would have $\lambda\underset{M_{\on{ad}}}{\leq} (2g-2)\rho_M$).

Let
\[
f': \on{Bun}_R(X/S)\to \on{Bun}_M(X/S)
\]

\noindent be the evident map. We write $\on{Bun}_L(X/S)^{(\lambda)}$ for the Harder-Narasimhan stratum in $\on{Bun}_L(X/S)$ corresponding to $\lambda$.

Let $A\subset \Lambda_{M,R}$ be the (finite) set corresponding to components of $\on{Bun}_L(X/S)$ that intersects $\on{Bun}_L(X/S)^{(\lambda)}$. Concretely, these are the $\mu\in\Lambda_{M,R}$ such that the image of $\mu$ along
\[
\Lambda_{M,R}\to \Lambda_{M,R}^{\bQ}
\]

\noindent coincides with the image of $\lambda$ under
\[
\Lambda^{\bQ}\to \Lambda_{M,R}^{\bQ}.
\]

Write
\[
\on{Bun}_R^A(X/S)
\]

\noindent for the union of components indexed by $A$, and let
\[
f: \on{Bun}_R^A(X/S)\to \on{Bun}_M(X/S)
\]

\noindent be the restriction of $f'$ to $\on{Bun}_R^A(X/S)$. Let
\begin{equation}\label{eq:defofg}
g: \on{Bun}_R^A(X/S)\to \on{Bun}_L(X/S)
\end{equation}

\noindent be the evident map. Let
\[
f^{-1,A}(\on{Bun}_M(X/S)^{(\lambda)}):=f^{-1}(\on{Bun}_M(X/S)^{(\lambda)})\cap \on{Bun}_R^A(X/S).
\]

It is easy to see that we have an inclusion (see e.g. \cite[Lemma B.3.2]{drinfeld2016geometric}):
\[
g^{-1}(\on{Bun}_L(X/S)^{(\lambda)})\subset f^{-1,A}(\on{Bun}_M(X/S)^{(\lambda)}).
\]

\begin{lem}\label{l:incisan=}

The inclusion
\[
g^{-1}(\on{Bun}_L(X/S)^{(\lambda)})\subset f^{-1,A}(\on{Bun}_M(X/S)^{(\lambda)})
\]

\noindent is an equality.
\end{lem}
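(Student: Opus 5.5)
The argument works fiberwise and compares two Harder--Narasimhan type vectors. Both sides are locally closed substacks of $\on{Bun}_R^A(X/S)$, defined by conditions on field-valued points over $s\in S$. So it suffices to check equality on geometric points $(s,\sP_R)$, and we may take $S=\on{pt}$ and $X=X_s$. Fix such a point $\sP_R$ with induced $M$-bundle $\sP_M$ of HN-type $\lambda$, and with $\sP_R$ lying in a component in $A$. We must show that the induced $L$-bundle $\sP_L$ also has HN-type $\lambda$ as an $L$-bundle.

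\emph{Step 1: identify the canonical reduction.} Let $R_\lambda\subset M$ be the HN parabolic of $\lambda$, with Levi $L_\lambda$. By definition of $R$, every simple root $\alpha$ of $M$ with $\langle\lambda,\alpha\rangle=0$ satisfies $\langle \lambda,\alpha\rangle\le 2g-2$, so $R_\lambda\subset R$ and $L_\lambda\subset L$. The canonical reduction $\sP_{R_\lambda}$ of $\sP_M$ then has a semistable $L_\lambda$-bundle of slope $\lambda$. The core claim is that the given $R$-reduction $\sP_R$ is the one induced from $\sP_{R_\lambda}$, i.e. $\sP_R=\sP_{R_\lambda}\times^{R_\lambda}R$.

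If this holds, then $\sP_L$ acquires the $(R_\lambda\cap L)$-reduction induced from $\sP_{R_\lambda}$. That reduction has semistable Levi bundle of slope $\lambda$, and $\lambda$ is $L$-dominant. It is therefore the canonical reduction of $\sP_L$, so $\sP_L$ lies in $\on{Bun}_L^{(\lambda)}$.

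\emph{Step 2: prove the uniqueness claim (main obstacle).} Consider the space of $R$-reductions of $\sP_M$ whose degree lies in $A$, meaning the projection of the degree to $\Lambda^{\bQ}_{M,R}$ equals that of $\lambda$. I would use the standard convexity characterization of the HN-type (as in \cite[\S 7, \S 10]{drinfeld2015compact}): for any reduction $\sP_{R'}$ of $\sP_M$, its degree satisfies $\deg\sP_{R'}\underset{M}{\le}\lambda$ after projection. Equality forces the reduction to be compatible with the canonical one.

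The concrete plan is as follows.
\begin{itemize}
\item Compare $\sP_R$ with $\sP_{R_\lambda}$ via their generic relative position $w\in W_{R}\backslash W_M/W_{R_\lambda}$.
\item Apply the usual inequality $\langle\deg\sP_R,\check\omega\rangle\le\langle\lambda,\check\omega\rangle$ for fundamental weights $\check\omega$ of $M$ that are trivial on $L$. The inequality is an equality exactly when $w=1$ and the reduction is the induced one.
\item For this, use that $\lambda$ is strictly positive on the simple roots in $\Gamma_M\setminus\Gamma_L$ (they have $\langle\lambda,\alpha\rangle>2g-2\ge 0$). This strictness is what makes the equality case rigid.
\end{itemize}
Since the component lies in $A$, the degrees agree in $\Lambda_{M,R}^{\bQ}$, so equality holds and $\sP_R$ is induced from $\sP_{R_\lambda}$.

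I expect the degree inequality and its rigidity in the equality case to be the delicate point. The margin $2g-2$ in the definition of $R$ is what should make the argument go through. If the convexity route fails, I would instead argue via vanishing of $H^0$ of the negative part. Because $\langle\lambda,\alpha\rangle>2g-2$ for the roots in $\fn_R$, the relevant Hom-spaces between graded pieces vanish (by \cite[Rmk. 10.3.5]{drinfeld2015compact}, as in Step 1 of Proposition \ref{p:goodforsmallmu}). That vanishing forces the $R$-reduction to be unique, and hence the canonical one.
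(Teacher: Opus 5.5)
Your Step 1 is sound. If $\sP_R\simeq \sP_{R_\lambda}\overset{R_\lambda}{\times}R$, then $\sP_L$ inherits a reduction to $R_\lambda\cap L$ whose Levi bundle is the semistable $L_\lambda$-bundle of slope $\lambda$. Since $\lambda$ is $L$-dominant and regular for that parabolic, this is the canonical reduction of $\sP_L$.

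The gap is Step 2, which carries the entire content of the lemma and is not proved. You assert that equality in $\langle \deg\sP_R,\check\omega\rangle\leq\langle\lambda,\check\omega\rangle$ forces the generic relative position $w$ to be trivial. That is a restatement of what has to be shown, not an argument. To make the relative-position route work, you would have to carry out, for $w\neq 1$, the following steps:
\begin{enumerate}
\item produce the induced reduction of the semistable $L_\lambda$-bundle to $L_\lambda\cap w^{-1}Rw$;
\item bound its degree using semistability;
\item control the defect at the points where the relative position degenerates;
\item extract a strict loss for some $\check\omega$ from the regularity of $\lambda$ on $\Gamma_M\setminus\Gamma_L$.
\end{enumerate}
None of this is in the proposal.

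The fallback does not work either. Applied at $\sP_R$, using \cite[Rmk. 10.3.5]{drinfeld2015compact} on the graded pieces of $(\mathfrak{m}/\mathfrak{r})_{\sP_R}$ presupposes the HN type of $\sP_L$, which is the conclusion. Applied at the induced reduction, vanishing of this $H^0$ only shows that point is isolated in the space of $R$-reductions of $\sP_M$. It does not rule out other $R$-reductions of the same degree. Also, the margin $2g-2$ plays no role here. Only $\langle\lambda,\alpha\rangle>0$ for $\alpha\in\Gamma_M\setminus\Gamma_L$ (i.e. $R_\lambda\subset R$) is used.

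The paper avoids any equality-case analysis by refining $\sP_R$ with the canonical reduction of $\sP_L$, instead of comparing it with that of $\sP_M$.
\begin{enumerate}
\item Let $\nu$ be the HN type of $\sP_L$, $Q_L\subset L$ its HN parabolic with Levi $K$, and $Q_M=Q_L\times_L R\subset M$. The pair $(\sP_R,\sP_{Q_L})$ gives a $Q_M$-reduction of $\sP_M$ whose $K$-bundle is semistable of slope $\nu$.
\item The HN inequality \cite[Thm. 4.5.1]{schieder2015harder} gives $\nu\underset{M}{\leq}\lambda$.
\item Since $\sP_R$ lies in a component indexed by $A$, $\nu$ and $\lambda$ have the same image in $\Lambda^{\bQ}_{M,R}$. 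Hence $\lambda-\nu$ is a non-negative combination of simple coroots of $L$.
\item The off-diagonal Cartan entries are nonpositive, so $\langle\nu,\alpha\rangle\geq\langle\lambda,\alpha\rangle>0$ for $\alpha\in\Gamma_M\setminus\Gamma_L$. Thus $\nu$ is $M$-dominant and $Q_M$-regular.
\item Therefore $\sP_{Q_M}$ is the canonical reduction of $\sP_M$, and $\nu=\lambda$.
\end{enumerate}
This uses only the inequality and the standard characterization of the canonical reduction. As a byproduct it proves your uniqueness claim: $Q_M=R_\lambda$ and $\sP_R$ is induced from $\sP_{R_\lambda}$.
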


\begin{proof}

If $X$ has genus $g=0$, then $R$ is a Borel subgroup and $\on{Bun}_R^A(X/S)=g^{-1}(\on{Bun}_L(X/S)^{(\lambda)})$, so the assertion is tautological.

Assume that $g>0$. Let $\sP_R\in f^{-1,A}(\on{Bun}_M(X/S)^{(\lambda)})\subset \on{Bun}_R(X/S)$. Let $\sP_L,\sP_M$ be the induced $L$ and $M$-bundles. Let $\nu\in \Lambda_L^{+,\bQ}$ be the Harder-Narasimhan type of $\sP_L$; i.e., $\sP_L\in \on{Bun}_L(X/S)^{(\nu)}$. We need to show that $\nu=\lambda$.

\step We first show that $\nu\underset{L}{\leq}\lambda$. Let $Q_L\subset L$ be the Harder-Narasimhan parabolic associated to $\nu$ (i.e., the standard parabolic associated to the subset of simple roots $\alpha$ of $L$ satisfying $\langle \nu,\alpha\rangle=0$), and let $\sP_{Q_L}$ be the canonical reduction of $\sP_L$ to $Q_L$ (cf. \cite[Thm. 7.4.3]{drinfeld2015compact}). Denote by $K$ the Levi quotient of $Q_L$.

Let $Q_M\subset M$ be the parabolic of $M$ whose Levi quotient coincides with $K$. That is, the parabolic defined by the Cartesian diagram:
\[\begin{tikzcd}
	{Q_M} && R \\
	\\
	{Q_L} && L.
	\arrow[from=1-1, to=1-3]
	\arrow[from=1-1, to=3-1]
	\arrow[from=1-3, to=3-3]
	\arrow[from=3-1, to=3-3]
\end{tikzcd}\]

In particular, the triple $(\sP_{Q_L},\sP_L,\sP_R)$ gives a $Q_M$-bundle $\sP_{Q_M}$ whose induction to $R$ coincides with $\sP_R$. Since $\sP_M$ has Harder-Narasimhan type $\lambda$, it follows from \cite[Thm. 4.5.1]{schieder2015harder} that $\nu\underset{M}{\leq}\lambda$. Moreover, since $\sP_R\in \on{Bun}_R^A(X/S)$, we get that the images of $\nu$ and $\lambda$ under
\[
\Lambda^{\bQ}\to \Lambda_{M,R}^{\bQ}
\]

\noindent coincide. Thus, $\nu\underset{L}{\leq}\lambda$.

\step Next, we prove that $\lambda\underset{M}{\leq} \nu$, which together with Step 1 proves the lemma.

We claim that $\nu$ is $M$-dominant and $Q_M$-regular. For $\alpha\in \Gamma_K$, we have $\langle \nu,\alpha\rangle=0$, and for $\alpha\in \Gamma_L-\Gamma_K$, we have $\langle \nu,\alpha\rangle>0$ by definition of $Q_L$. For $\alpha\in\Gamma_M-\Gamma_L$, we have $\langle \nu,\alpha\rangle>0$ since $\lambda$ is $M$-dominant and $\nu\underset{L}{\leq} \lambda$ by Step 1.

Applying \cite[Thm. 4.5.1]{schieder2015harder} again, we see that $\lambda\underset{M}{\leq} \nu$, as desired.

\end{proof}

\begin{lem}\label{l:(1)}

We have an equality
\[
\on{SS}(f^*(\cG))\vert_{f^{-1,A}(\on{Bun}_M(X/S)^{(\lambda)})}=df(\on{SS}(\cG))\vert_{f^{-1,A}(\on{Bun}_M(X/S)^{(\lambda)})}.
\]
    
\end{lem}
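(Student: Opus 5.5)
We will show that, over the locus $f^{-1,A}(\on{Bun}_M(X/S)^{(\lambda)})$, the map $f$ is smooth, or at least non-characteristic for $\on{SS}(\cG)$, in a suitable neighborhood. Then the standard pullback estimate for singular support along a smooth (non-characteristic) morphism gives equality rather than mere containment. The key input is a vanishing of $H^1$, exactly as in Step 1 of the proof of Proposition \ref{p:goodforsmallmu}.

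\textbf{Step 1: tangent complex of $f$.} At a point $\sP_R\in f^{-1,A}(\on{Bun}_M(X/S)^{(\lambda)})$ lying over $s\in S$, both stacks are smooth over $S$. The cokernel of the tangent map of $f$ is therefore identified, after reducing to $S=\on{pt}$, with
\[
H^1(X_s,(\fm/\mathfrak{r})_{\sP_R}),
\]
where $\mathfrak{r}=\on{Lie}(R)$. This bundle has a filtration whose associated graded is $(\fm/\mathfrak{r})_{\sP_L}$. By Lemma \ref{l:incisan=}, $\sP_L$ lies in $\on{Bun}_L(X/S)^{(\lambda)}$. Passing to the Harder-Narasimhan reduction of $\sP_L$, which is semistable of slope $\lambda$ for its Levi, we may apply \cite[Rmk. 10.3.5]{drinfeld2015compact}. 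The required vanishing then follows if
\[
\langle \beta,\lambda\rangle < 2-2g
\]
for every root $\beta$ of $\fm/\mathfrak{r}$, i.e. for the negative roots of $M$ not in $L$.

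This is where the choice of $R$ enters. By construction, each simple root $\alpha\in\Gamma_M-\Gamma_L$ satisfies $\langle\lambda,\alpha\rangle>2g-2$. Since $\lambda$ is $M$-dominant, every positive root of $M$ not in $L$ then pairs with $\lambda$ to give something $>2g-2$. Negating, every root of $\fm/\mathfrak{r}$ pairs to something $<2-2g$, which is the required inequality. Hence $H^1=0$, and $f$ is smooth at every point of $f^{-1,A}(\on{Bun}_M(X/S)^{(\lambda)})$. Smoothness is an open condition, so $f$ is smooth on an open neighborhood $W\subset\on{Bun}_R^A(X/S)$ of this locus.

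\textbf{Step 2: conclude.} For a smooth morphism, $\on{SS}(f^*\cG)=df(\on{SS}(\cG))$ holds on the nose, not just as an inclusion. The codifferential is injective, so nothing is lost, and singular support is smooth-local. Applying this on $W$ and restricting to $f^{-1,A}(\on{Bun}_M(X/S)^{(\lambda)})$ gives the lemma.

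\textbf{Main obstacle.} The delicate point is Step 1: making sure the $H^1$-vanishing really holds uniformly over the whole locus. This is exactly why Lemma \ref{l:incisan=} is needed. It guarantees that the $L$-bundle has Harder-Narasimhan type exactly $\lambda$, rather than some other type $\nu$ for which the inequality might fail. One must also check that passing to the Harder-Narasimhan reduction of $\sP_L$ only introduces further filtrations whose graded pieces involve the same roots, paired against the same $\lambda$, so that the criterion of \cite{drinfeld2015compact} applies.
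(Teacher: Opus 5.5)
There is a genuine gap in Step 1: the vanishing criterion is applied in the wrong direction.

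In (\ref{eq:ineq}) the inequality $\langle\beta,\lambda\rangle<2-2g$ is imposed on \emph{positive} roots $\beta\notin M$, whereas the weights of $\fg/\fp$ are the \emph{negative} roots $-\beta$. The criterion of \cite[Rmk. 10.3.5]{drinfeld2015compact} says that a semistable graded piece whose weights pair with $\lambda$ to something $>2g-2$ has vanishing $H^1$. The weights of $\fm/\mathfrak{r}$ are the negative roots of $M$ not in $L$. So you would need $\langle\beta,\lambda\rangle<2-2g$ for the positive roots $\beta$ of $M$ not in $L$, which is the opposite of what the choice of $R$ provides.

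Your computation $\langle\beta,\lambda\rangle>2g-2$ is correct. But after passing to the HN reduction of $\sP_L$, it shows that every graded piece of $(\fm/\mathfrak{r})_{\sP_R}$ is semistable of slope $<2-2g$. Hence its $H^0$ vanishes, and by Riemann--Roch its $H^1$ is nonzero (using $g\geq 1$ and $R\neq M$). So $f\colon\on{Bun}^A_R(X/S)\to\on{Bun}_M(X/S)$ is smooth at \emph{no} point of $f^{-1,A}(\on{Bun}_M(X/S)^{(\lambda)})$.

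A concrete check: take $M=GL_2$, $R=B$, $\sP_M=\cO(a)\oplus\cO(b)$ with $a-b>2g-2$. Then the cokernel of $df$ on tangent spaces at the HN reduction is $H^1(X_s,\cO(b-a))$, which has dimension $a-b+g-1>0$. What the vanishing of $H^0$ gives instead is that $f$ is injective on tangent spaces along this locus. This is consistent with $f$ mapping the locus isomorphically onto a locally closed stratum.

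The missing idea is to use where $\cG$ lives. Your argument never uses this, and if it worked it would give the equality for arbitrary $\cG$. Since $\cG=i_*i^*\on{CT}_P(\cF)$ is supported on $Z$, and the stratum's intersection with $\on{Bun}^\mu_M(X/S)$ is open in $Z$, near the stratum $\cG$ is the pushforward along a closed embedding of a sheaf on the stratum. By base change, it then suffices that the \emph{restricted} map $f^{-1,A}(\on{Bun}_M(X/S)^{(\lambda)})\to\on{Bun}_M(X/S)^{(\lambda)}$ be smooth, and this is how the paper argues.

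For $g>0$ that restricted map is in fact an isomorphism, by \cite[Lemma B.3.2(3)]{drinfeld2016geometric}, once Lemma \ref{l:incisan=} identifies $f^{-1,A}(\on{Bun}_M(X/S)^{(\lambda)})$ with $g^{-1}(\on{Bun}_L(X/S)^{(\lambda)})$. So Lemma \ref{l:incisan=} does not serve to make an $H^1$-vanishing uniform. Its role is to identify the preimage with the stack to which the Drinfeld--Gaitsgory isomorphism applies.
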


\begin{proof}

Since $\on{Bun}_M(X/S)^{(\lambda)}$ is open in the support of $\cG$, it suffices to show that
\begin{equation}\label{eq:fisaniso}
f: f^{-1,A}(\on{Bun}_M(X/S)^{(\lambda)})\to \on{Bun}_M(X/S)^{(\lambda)}
\end{equation}

\noindent is smooth. Suppose first that the genus of $X\to S$ is zero, so that $R$ is a Borel subgroup and $L$ is the Cartan. Then $\on{Bun}_M(X/S)^{(\lambda)}$ and $f^{-1}(\on{Bun}_M(X/S)^{(\lambda)})$ are both classifying stacks for families of smooth algebraic groups $S$ (for the latter, this follows because the fiber of $\on{Bun}_R^{\lambda}(X/S)\to \on{Bun}_M^{\lambda}(X/S)$ consists of a single isomorphism class of geometric point whenever $\lambda$ is dominant).

Next, suppose that the genus is $>0$. In this case, we claim that $(\ref{eq:fisaniso})$ is an isomorphism. Indeed, this follows from \cite[Lemma B.3.2(3)]{drinfeld2016geometric} and Lemma \ref{l:incisan=}.
    
\end{proof}

\subsubsection{} Let $\on{Nilp}_{u,R}$ be the set-theoretic image of the codifferential
\[
df': \on{Nilp}_{u.M}\underset{\on{Bun}_M(X/S)}{\times} \on{Bun}_R(X/S)\to  T^*\on{Bun}_R(X/S).
\]

We say that a covector $\xi_P\in T^*\on{Bun}_R(X/S)$ is nilpotent if it lies in $\on{Nilp}_{u,R}$.

\begin{cor}\label{c:laststep}

We have $\on{SS}(\cG)\vert_{\on{Bun}_M(X/S)^{(\lambda)}}\subset \on{Nilp}_{u,M}$.
    
\end{cor}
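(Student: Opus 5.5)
Let $\xi\in \on{SS}(\cG)$ lie over a point $\sP_M\in \on{Bun}_M(X/S)^{(\lambda)}$. The plan is to reduce nilpotence of $\xi$ to a statement about the Levi $L$, where the induction hypotheses apply. Since $\on{Bun}_M(X/S)^{(\lambda)}$ is open in the support of $\cG$, we have $\cG\vert_{\on{Bun}_M(X/S)^{(\lambda)}}\simeq \on{CT}_P(\cF)\vert_{\on{Bun}_M(X/S)^{(\lambda)}}$ there. By Lemma \ref{l:(1)} and the fact that (\ref{eq:fisaniso}) is smooth (an isomorphism in positive genus), $\xi$ is nilpotent if and only if $df(\xi)$ is nilpotent, i.e. lies in $\on{Nilp}_{u,R}$, on $f^{-1,A}(\on{Bun}_M(X/S)^{(\lambda)})$. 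Using Lemma \ref{l:incisan=}, this locus equals $g^{-1}(\on{Bun}_L(X/S)^{(\lambda)})$. So it suffices to understand $\on{SS}(f^*\cG)$ on $g^{-1}(\on{Bun}_L(X/S)^{(\lambda)})$.

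Next I would compare $f^*$ with the constant term for $R$. The choice of $R$ (simple roots $\alpha$ of $M$ with $\langle\lambda,\alpha\rangle> 2g-2$ lie outside $L$) should make, by the vanishing criterion \cite[Rmk. 10.3.5]{drinfeld2015compact} as in Step 2 of Proposition \ref{p:goodforsmallmu}, $H^0(X_s,(\fn_R)_{\sP_L})=H^1(X_s,(\fn_R)_{\sP_L})=0$ for $\sP_L\in \on{Bun}_L(X/S)^{(\lambda)}$. Hence $g$ is an isomorphism over $\on{Bun}_L(X/S)^{(\lambda)}$: $\on{Bun}_R$ and $\on{Bun}_L$ agree there, with $g$ inverse to the section $\on{Bun}_L\to\on{Bun}_R$. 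Consequently, on this stratum, $f^*\cG$ identifies with $g^*$ of $\on{CT}_R(\cG)$ restricted to $\on{Bun}_L(X/S)^{(\lambda)}$, which in turn agrees with $\on{CT}_R(\on{CT}_P(\cF))$ on that (locally closed) stratum, by the same openness-in-support argument. By transitivity of constant terms, $\on{CT}_R\circ\on{CT}_P\simeq \on{CT}_Q$ where $Q\subsetneq P$ is the parabolic of $G$ with Levi $L$. By induction hypothesis (1), $\on{CT}_Q(\cF)\in \on{Sh}_{\on{Nilp}_{u,L}}$. Pulling back along the isomorphism $g$, the covector $df(\xi)$ is the pullback of a nilpotent covector on $\on{Bun}_L$ along $g$.

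Finally, a covector on $\on{Bun}_R$ pulled back from $\on{Nilp}_{u,L}$ along $g$ lies in $\on{Nilp}_{u,R}$. This holds because $\on{Nilp}_u$ is defined by transport of the zero section from $\on{Bun}_B$, and $\on{Bun}_{B_L}\to \on{Bun}_L$ factors compatibly through $\on{Bun}_{B_M}$ via $\on{Bun}_R$. Equivalently, one can check it pointwise in $s$ and apply the proof of Lemma \ref{l:pointwisenilpsuffices}, since everything is $S$-non-characteristic. Since $df$ is injective here, we conclude that $\xi\in\on{Nilp}_{u,M}$.

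The main obstacle is the middle step: correctly identifying $f^*\cG$ with $\on{CT}_R\cG$ on the stratum. This requires the cohomology vanishing for $\fn_R$ on the HN stratum, and making sure that the restriction of $\cG=i_*i^*\on{CT}_P\cF$ to the locally closed stratum commutes with the $!$-pushforward defining $\on{CT}_R$. If the latter fails, I would instead use induction hypothesis (2), applying $\on{CT}_R$ to $\cG$ viewed on $\on{Bun}_M$ once $\cG$ is known to be nilpotent near the stratum.
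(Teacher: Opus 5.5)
There is a genuine gap in your middle step. You identify $\on{CT}^M_R(\cG)$ with $\on{CT}_Q(\cF)=\on{CT}^M_R(\on{CT}_P(\cF))$ only after $*$-restriction to the locally closed stratum $\on{Bun}_L(X/S)^{(\lambda)}$. You then read off singular support from the nilpotence of $\on{CT}_Q(\cF)$, and that inference fails. Singular support at a point of a stratum depends on the germ of the sheaf in an ambient neighbourhood, not on its restriction to the stratum. Moreover, $\xi$ and $df(\xi)$ are covectors on the ambient stacks and can have components normal to the stratum.

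Concretely, the two sheaves differ by $\on{CT}^M_R(j_!j^*\on{CT}_P(\cF))$. This term restricts to zero on $\on{Bun}_L(X/S)^{(\lambda)}$, by base change and Lemma \ref{l:incisan=}. However, its support can accumulate on the stratum, as for a $!$-extension from an open set, so its singular support over the stratum need not vanish.

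Controlling this term is the key step in the paper's proof:
\begin{enumerate}
\item $j_!j^*\on{CT}_P(\cF)$ is already known to be nilpotent, by the choice of $U$ and Theorem \ref{t:relagkrrv}. Induction hypothesis (2) applied to it makes $\on{CT}^M_R(j_!j^*\on{CT}_P(\cF))$ nilpotent.
\item Hypothesis (1) makes $\on{CT}_Q(\cF)$ nilpotent.
\item The triangle $\on{CT}^M_R(j_!j^*\on{CT}_P(\cF))\to\on{CT}_Q(\cF)\to\on{CT}^M_R(\cG)$ then shows that $\on{CT}^M_R(\cG)$ is nilpotent as a sheaf on $\on{Bun}_L(X/S)$, normal directions included.
\end{enumerate}
Only after this is the map $g$ used to transport the estimate to $f^*\cG$. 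Your fallback, applying (2) to $\cG$ once it is known to be nilpotent near the stratum, is circular. Hypothesis (2) has to be applied to $j_!j^*\on{CT}_P(\cF)$, not to $\cG$.

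Two further claims are false, though not fatal.
\begin{enumerate}
\item For $\sP_L$ of type $\lambda$, the weights $\beta$ of $\fn_R$ satisfy $\langle\lambda,\beta\rangle>2g-2\geq 0$. So only $H^1(X_s,(\fn_R)_{\sP_L})$ vanishes, while $H^0$ is nonzero. Hence $g$ over $\on{Bun}_L(X/S)^{(\lambda)}$ is a unipotent gerbe, as in \cite[Lemma B.3.5]{drinfeld2016geometric}, not an isomorphism. This is harmless, since $g$ is still smooth there and $g^*$ is an equivalence up to shift.
\item $df\colon T^*_{\sP_M}\on{Bun}_M(X/S)\to T^*_{\sP_R}\on{Bun}_R(X/S)$ is not injective. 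Its kernel is Serre dual to $H^1(X_s,(\fm/\mathfrak{r})_{\sP_R})$, which is nonzero because $\fm/\mathfrak{r}$ has negative slopes. Smoothness of (\ref{eq:fisaniso}) concerns only the strata. Likewise, a covector $dg(\eta)$ with $\eta\in\on{Nilp}_{u,L}$ need not lie in $\on{Nilp}_{u,R}$, which by definition sits inside the image of $df'$.
\end{enumerate}
Injectivity is not needed. Suppose $df(\xi)=dg(\eta)$ and $\eta$ is killed by a $B_L$-reduction $\sP_{B_L}$ of $\sP_L$. Since $\on{Bun}_{B_M}\to\on{Bun}_R\to\on{Bun}_L$ factors through $\on{Bun}_{B_L}$, $\xi$ is killed by the $B_M$-reduction determined by $(\sP_R,\sP_{B_L})$. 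So $\xi\in\on{Nilp}_{u,M}$ directly from the definition, which is how the paper concludes.
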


\begin{proof}

Write
\[
\on{CT}^M_R: \on{Sh}(\on{Bun}_M(X/S))\to \on{Sh}(\on{Bun}_L(X/S))
\]

\noindent for the constant term functor associated to $R$. Consider the map
\[
g: \on{Bun}_R(X/S)\to \on{Bun}_L(X/S).
\]

Hence
\[
\on{CT}^M_R(\cG)\vert_{\on{Bun}_L(X/S)^{(\lambda)}}\simeq g_!f^*(\cG)\vert_{\on{Bun}_L(X/S)^{(\lambda)}}.
\]

\step First, we claim that
\begin{equation}\label{eq:firstest}
\on{SS}(g_!f^*(\cG))\vert_{\on{Bun}_L(X/S)^{(\lambda)}}
\end{equation}

\noindent lies in $\on{Nilp}_{u,L}$. Indeed, note that $\on{CT}^M_R\circ \on{CT}_P\simeq \on{CT}_{R'}$, where $R'\subset G$ is the parabolic whose Levi coincides with that of $R$. As noted above, $j_!j^*(\on{CT}_P(\cF))$ has nilpotent singular support. As such, applying the induction hypothesis (1)+(2) (using that $R$ is a proper parabolic of $M$), the two first terms of the triangle
\[
\on{CT}^M_R(j_!j^*(\on{CT}_P(\cF)))\to \on{CT}^M_R\circ\on{CT}_P(\cF)\to \on{CT}^M_R(\cG)
\]

\noindent have singular support contained in $\on{Nilp}_{u,L}$. This shows that (\ref{eq:firstest}) is nilpotent.

\step

By \cite[Lemma B.3.5]{drinfeld2016geometric}, the map
\[
g: f^{-1,A}(\on{Bun}_M(X/S)^{(\lambda)}) \to \on{Bun}_L(X/S)^{(\lambda)}
\]

\noindent satisfies that its fibers are unipotent gerbes and that $g_!$ induces an equivalence on sheaves. By Lemma \ref{l:(1)} and Step 1, this shows that for $\sP_R\in f^{-1,A}(\on{Bun}_M(X/S)^{(\lambda)})$ and $\xi\in \on{SS}(\cG)\vert_{\on{Bun}_M(X/S)^{(\lambda)}}$ lying over $\sP_M=\sP_R\overset{R}{\times}M$, the image of $\xi$ under
\[
T^*_{\sP_M} \on{Bun}_M(X/S)\to T^*_{\sP_R} \on{Bun}_R(X/S)
\]

\noindent lies in the image of
\[
\on{Nilp}_{u,L}\underset{\on{Bun}_L(X/S)}{\times} \lbrace \sP_L\rbrace \to T^*_{\sP_R} \on{Bun}_R(X/S),
\]

\noindent where $\sP_L=\sP_R\overset{R}{\times}L$. From the definition of the universal nilpotent cone in \S \ref{ss:nilpudef}, this shows that $\xi$ lies in $\on{Nilp}_{u,M}$.
\end{proof}

\part{}

This part begins our study of the Whittaker functional for a family of curves $X/S$. Our goal in the next few sections is to show that the Whittaker functional sends nilpotent automorphic sheaves to locally constant sheaves on $S$. In the case of a curve over a point, Nadler and Taylor proved that the Whittaker functional is a shifted microstalk. We will prove the analogous statement for a family of curves, namely that the Whittaker functional is given by a relative microstalk, in the sense that we will define in the next section. We then prove a general criterion for when relative microstalks form local systems. From this criterion, we conclude that the Whittaker coefficients of a nilpotent automorphic sheaf forms a local system on the base.

\section{The Whittaker Functional}\label{S:Whittakerfunctional}

    In this section we set up the definitions of the Poincaré sheaf and the Whittaker functional in the context of a family of curves.

\subsection{The Poincare sheaf}\label{ss:poinc} Throughout, let $f:X \to S$ be a family of smooth proper complex curves over a real analytic manifold $S$, and assume we have a section $x:S\to X$ and a square root $\Omega^{1/2}$ of $\Omega_{X/S}$. Let $\check{\rho}$ be the half-sum of positive coroots of $G$ and $\check{\rho}(\Omega):=2\check{\rho}(\Omega^{1/2})$ denote the induced $T$-bundle on $X/S$.

Consider the $T$-bundle $\check{\rho}(\Omega)$ as a section $\check{\rho}(\Omega):S\to \Bun_T(X/S)$ and let $\Bun_N^\Omega(X/S)$ denote the fiber product
\[\Bun_N^{\check{\rho}(\Omega)}(X/S):\simeq \Bun_B(X/S)\times_{\Bun_T(X/S)} S.\]
This stack admits a forgetful map
\begin{equation}\label{eq:NtoBtoG}
a:\Bun_N^{\check{\rho}(\Omega)}(X/S)\to \Bun_B^{(g-1)2\check{\rho}}(X/S)\to \Bun_G(X/S).
\end{equation}

For each simple root $\alpha:T\to \mathbb{G}_m$, $\langle\check{\rho},\alpha\rangle=1$ and thus the induced $\mathbb{G}_m$-bundle $\alpha\circ\check{\rho}(\Omega)$ is isomorphic to $\Omega$. Using Serre duality, we obtain a map
\[\chi_\alpha:\Bun_N^{\check{\rho}(\Omega)}(X/S)\to \mathbb{V}(f_*\mathcal{O}_X)\simeq \mathbb{A}^1_S,\]
where $\mathbb{V}(f_*\mathcal{O}_X)$ denotes the line bundle $f_*\mathcal{O}_X$ considered as an affine space over $S$. Let $r$ be the semi-simple rank of $G$, combining all simple roots of $G$ gives a map
\[\chi:\Bun_N^{\check{\rho}(\Omega)}(X/S)\to \mathbb{A}^r_S.\]
The rational cocharacter $\check{\rho}$ defines an action of $\mathbb{G}_m$ on $B$ by conjugation that acts trivially on $T\subset B$. This in turn defines an action of $\mathbb{G}_m$ on the stack $\Bun_N^{\check{\rho}(\Omega)}(X/S)$. Let $+:\mathbb{A}^r_S\to \mathbb{A}^1_S$ denote the sum of coordinates map. Composing $\chi$ with $+$ we obtain a map which we denote $\varphi$.
This map is $\mathbb{G}_m$-equivariant, where $\mathbb{G}_m$ acts on the target by the weight $1$ scaling action, and hence it descends to a map
\[\varphi/\mathbb{G}_m:\Bun_N^{\check{\rho}(\Omega)}(X/S)/\mathbb{G}_m\to \mathbb{A}^1_S/\mathbb{G}_m.\]

\begin{defin}
   Let $\exp_!/\mathbb{G}_m\in \mathrm{Sh}(\mathbb{A}_S^1/\mathbb{G}_m)$ denote the sheaf given by $j_*\mathbf{1}[-1]$, where $j$ is the open embedding $j:S\simeq (\mathbb{A}_S^1\setminus \{0\})/\mathbb{G}_m\to \mathbb{A}_S^1/\mathbb{G}_m$.
\end{defin}

\begin{defin}\label{the poincare sheaf}
    The \textit{Poincaré sheaf} is the sheaf on $\Bun_G(X/S)$ given by
    \[\mathrm{Poinc}_!:\simeq (a/\mathbb{G}_m)_!(\varphi/\mathbb{G}_m)^*\mathrm{exp}_!/\mathbb{G}_m\in \mathrm{Sh}(\Bun_G(X/S)),\]
    where $a/\mathbb{G}_m$ is the map
    \[a/\mathbb{G}_m:\Bun_N^{\check{\rho}(\Omega)}(X/S)/\mathbb{G}_m\to \Bun_G.\]
    We note that the $\textit{Poincaré sheaf}$ is constructible.
    
    The \textit{Whittaker functional} is the functor
    \[\mathrm{Whit}_{X/S}:\mathrm{Sh}(\Bun_G(X/S))\to \mathrm{Sh}(S)\]
    given by $\Psi_*\mathcal{H}om(\mathrm{Poinc}_!,-)$. Recall that $\Psi:\mathrm{Bun}_G(X/S)\to S$ is the structure map.
\end{defin}

The objective of the next few sections is to prove the following local constancy result for the Whittaker functional.

\begin{prop}\label{Localconstancyofwhittaker}
    For any nilpotent automorphic sheaf $\mathcal{F}\in \mathrm{Sh}_{\mathrm{Nilp}_u}(\Bun_G(X/S))$, the Whittaker functional $\mathrm{Whit}_{X/S}\mathcal{F}$ is a local system on $S$. Furthermore, there is an identification of the stalk of $\mathrm{Whit}_{X/S}\mathcal{F}$ at a point $s$ in $S$ with $\mathrm{Whit}_{X_s} i_s^*\mathcal{F}$.
\end{prop}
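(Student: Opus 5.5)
The plan is to follow the strategy already sketched in the introduction: first realize $\mathrm{Whit}_{X/S}$ as a \emph{relative microstalk} along a section of the smooth locus of $\mathrm{Nilp}_u$, and then prove a general criterion ensuring that such relative microstalks of $S$-non-characteristic sheaves are local systems on $S$.

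\textbf{Step 1 (relative Nadler--Taylor).} For $S=\mathrm{pt}$, Nadler--Taylor \cite{nadler2022whittaker} identify the Whittaker coefficient with a shifted microstalk. The covector used is the principal nilpotent Higgs field on the bundle induced from $\check{\rho}(\Omega)$ via $B\to G$, which is a smooth point of $\mathrm{Nilp}$. In the family setting, the data $(x,\Omega^{1/2})$ produce a real analytic section
\[\xi:S\to \mathrm{Nilp}_u\subset T^*\Bun_G(X/S).\]
I would first check that $\xi(s)$ lies in the smooth locus of $\mathrm{Nilp}_u$ for every $s$. By Theorem \ref{the nilpotent cone}, $\mathrm{Nilp}_u\to\mathrm{Nilp}_{\mathrm{rel}}$ is a bijection, and it is non-characteristic, so this should reduce to the fiberwise statement. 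Next I would choose a real analytic family of test function germs $\phi_s$ near $a(\xi(s))$ with $d\phi_s=\xi(s)$, transverse to $\mathrm{Nilp}_u$ at $\xi(s)$. Then I would rerun the contraction argument of Nadler--Taylor relatively. This uses the $\mathbb{G}_m$-equivariance of $\varphi$, the description of $\exp_!/\mathbb{G}_m$ as $j_*\mathbf{1}[-1]$, and the contraction principle on $\Bun_N^{\check{\rho}(\Omega)}(X/S)/\mathbb{G}_m$. The goal is a natural isomorphism
\[\mathrm{Whit}_{X/S}(\cF)\simeq \Psi_*\,\Gamma_{\{\phi\ge 0\}}(\cF)|_{\text{nbhd of }a\circ\xi}\]
up to shift, for $\cF\in\mathrm{Sh}_{\mathrm{Nilp}_u}$. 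Since every step is a base change along $i_s$, and $\cF$ is non-characteristic (so that $i_s^*$ and $i_s^!$ agree up to shift and commute with the relevant functors), this isomorphism will also give the stalk identification $i_s^*\mathrm{Whit}_{X/S}\cF\simeq\mathrm{Whit}_{X_s}i_s^*\cF$.

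\textbf{Step 2 (local constancy criterion, following \cite{guillermou2015quantizationconiclagrangiansubmanifolds}).} Work in the following abstract setting:
\begin{itemize}
\item $M\to S$ is a submersion and $\Lambda\subset T^*M$ is a closed conic $S$-non-characteristic Lagrangian;
\item $\xi:S\to\Lambda^{\mathrm{sm}}$ is a section, and $\phi$ is a family of functions with $\mathrm{graph}(d\phi_s)\cap\Lambda=\{\xi(s)\}$ cleanly, transversally, and locally uniformly in $s$.
\end{itemize}
For $\cF$ with $\mathrm{SS}(\cF)\subset\Lambda$, I would show that $\Psi_*\Gamma_{\{\phi\ge0\}}\cF$, restricted to a small relatively compact tube, has singular support in the zero section of $T^*S$. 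I would do this by estimating the singular support of the pushforward.
\begin{itemize}
\item Contributions can only arise where $d\phi$ meets $\mathrm{SS}(\cF)$, that is, at $\xi(s)$, or at the boundary of the tube.
\item Boundary contributions are excluded by transversality, via a standard Morse-theoretic shrinking argument.
\item At $\xi(s)$, the $T^*S$-component vanishes because $\Lambda$ is $S$-non-characteristic and $\xi$ moves inside the smooth part of $\Lambda$. Geometrically, moving the base point of the microstalk inside $\Lambda^{\mathrm{sm}}$ does not change it.
\end{itemize}
Alternatively, one can microlocally straighten $\Lambda$ near $\xi(S)$ by a relative quantized contact transformation. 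This reduces to the case $\Lambda=T^*_{N}M$ with $N\to S$ a submersion, where the statement is elementary.

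\textbf{Step 3.} Combining Steps 1 and 2 with $M$ a smooth atlas of $\Bun_G(X/S)$ near $a(\xi(S))$ proves Proposition \ref{Localconstancyofwhittaker}.

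\textbf{Main obstacle.} I expect the main difficulty to be in Step 1. One must show that the global Whittaker functional really localizes to the microstalk uniformly in $s$, i.e., that no other covectors in $\mathrm{Nilp}_u$ contribute. This requires the transversality statement that $d\phi$ meets $\mathrm{Nilp}_u$ only at $\xi$, in families, together with enough properness (or a contraction argument) on $\Bun_N^{\check{\rho}(\Omega)}(X/S)$ to commute $\Psi_*$ with base change. I would first try to deduce the transversality fiberwise from the case $S=\mathrm{pt}$ using Lemma \ref{l:pointwisenilpsuffices}-style uniqueness of lifts, and obtain the uniformity from the $\mathbb{G}_m$-contraction.
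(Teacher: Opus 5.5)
Your overall plan matches the paper. Step 1 is what Propositions \ref{whittakerisvanishing}, \ref{contractionprinciplevanishing} and \ref{p:Extending test function} do: vanishing cycles on $\Bun_N^{\check{\rho}(\Omega)}$, rigidification, Kostant-section transversality, and $\mathbb{G}_m$-contraction to a test function $F$. Step 2 is meant to be Proposition \ref{localcontancyofmicro}. The gap is in the mechanism you propose for Step 2.

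The contraction principle produces the \emph{germ} $i^*\Gamma_{\{F\ge 0\}}\cF$ along the section, not a pushforward from a fixed tube. The ``Morse-theoretic shrinking'' that would relate the two is not uniform in $s$. Branches of $\Lambda$ that never meet $\xi(S)$ can pass through $i(s_0)$ and at distance $\sim|s-s_0|$ from $i(s)$. For instance, take $M=\bR^2\times\bR_s$, $i(s)=(0,0,s)$, and $\Lambda\supset T^*_{\{y=s\}}M$ with $\xi$ not conormal to $\{y=s\}$. Round tubes then have characteristic levels accumulating on $i(s_0)$. So local constancy of $\Psi_*(\Gamma_{\{\phi\ge0\}}\cF|_U)$ for a fixed tube $U$ tells you nothing about the germ at $s\neq s_0$. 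To conclude, you would have to show that these intermediate contributions cancel, which is a genuinely microlocal statement.

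The missing idea is to microlocalize. Since $i(S)\subset\{\varphi=0\}$, Sato's triangle gives
$i^*\Gamma_{\{\varphi\ge0\}}\cF\simeq i^*\pi_*\bigl(\mu\mathrm{hom}(\mathbf 1_{\{\varphi\ge 0\}},\cF)|_{\dot T^*}\bigr)$.
This $\mu\mathrm{hom}$ is supported on $\Lambda\cap\Lambda'_\varphi=C_\rho$. The paper deduces that $\Lambda$ and $\Lambda'_\varphi$ intersect cleanly from fiberwise transversality and the injectivity of $\Lambda\to T^*_SM$. With \cite[Cor.~6.4.3]{kashiwara1990sheaves}, this puts the singular support of $\mu\mathrm{hom}$ in $T^*_{C_\rho}$, so it is a local system on $C_\rho\simeq S\times\bR_{>0}$. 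Your quantized-contact-transformation alternative presupposes exactly this microlocal invariance of the relative microstalk.

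The stalk identification does not follow from non-characteristicness of $\cF$ either. Restriction to $M_s$ commutes with $\Gamma_{\{\phi\ge0\}}$ only if $\Gamma_{\{\phi\ge0\}}\cF$ is itself non-characteristic for $M_s\hookrightarrow M$ near $i(S)$. Nor can you base-change through $a^!$ directly, since $a$ is not smooth. The paper proves this non-characteristicness by bounding $\mathrm{SS}(\Gamma_{\{\varphi\ge0\}}\cF)$ by $\Lambda\,\widehat{+}\,\Lambda'_\varphi$. It then rules out horizontal covectors $v+\lambda\,d\varphi$ with $v\in\Lambda$ and $\lambda>0$. That step uses three facts: $\mathrm{Nilp}_u$ is stable under $-1$, it injects into $T^*_S\Bun_G(X/S)$ (the Nadler--Yun bijection), and $\Lambda\cap\Lambda'_\varphi=C_\rho$. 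Mere $S$-non-characteristicness of $\Lambda$, which is all your abstract setting assumes, does not exclude such covectors. For example, it allows $\Lambda$ to contain both $\lambda\rho(s)$ and $-\lambda\rho(s)+h$ with $h\neq 0$ horizontal. So your claim that ``the $T^*S$-component vanishes because $\Lambda$ is $S$-non-characteristic'' needs these extra hypotheses.
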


The Whittaker functional can also be described in terms of vanishing cycles. Taking vanishing cycles with respect to $\varphi$, considered as a holomorphic function, defines a functor
\[\Phi_\varphi:\mathrm{Sh}(\Bun_N^{\check{\rho}(\Omega)}(X/S))\to \mathrm{Sh}(\varphi^{-1}(0)).\]
For $\mathbb{G}_m$-equivariant sheaves, the vanishing cycles with respect to $\varphi$ is naturally equivalent to the real vanishing cycles with respect to the $C^\infty$-function $\operatorname{Re} \varphi$, this follows by arguing as in \cite[VIII Exercise 13]{kashiwara1990sheaves} using that $\varphi_*\mathcal{F}$ is weakly constructible for such sheaves. We will freely move between the complex analytic and the smooth perspective.

Note that $\varphi\circ \check{\rho}(\Omega)=0$, hence $\check{\rho}(\Omega):S\to \Bun_N^{\check{\rho}(\Omega)}(X/S)$ factors through $\varphi^{-1}(0)$. Write $\check{\rho}(\Omega)^* \Phi_\varphi a^!=:\Phi_{\varphi,\check{\rho}(\Omega)}a^!$.

\begin{prop}\label{whittakerisvanishing}
    There is a natural equivalence \[\mathrm{Whit}_{X/S}\simeq \Phi_{\varphi,\check{\rho}(\Omega)}a^!.\]
\end{prop}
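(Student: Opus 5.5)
The plan is to unwind $\mathrm{Whit}_{X/S}=\Psi_*\mathcal{H}om(\mathrm{Poinc}_!,-)$ by adjunction until it becomes a computation on $\Bun_N^{\check{\rho}(\Omega)}(X/S)/\mathbb{G}_m$ over $\mathbb{A}^1_S/\mathbb{G}_m$, and then to recognize the result as vanishing cycles, following the argument of Nadler--Taylor in the absolute case. Write $\pi:\Bun_N^{\check{\rho}(\Omega)}(X/S)/\mathbb{G}_m\to S$ for the structure map, so that $\Psi\circ (a/\mathbb{G}_m)=\pi$. First, $\Psi_*\mathcal{H}om((a/\mathbb{G}_m)_!(-),-)\simeq \pi_*\mathcal{H}om(-,(a/\mathbb{G}_m)^!(-))$ by the $(!,!)$-adjunction in its sheafified form, which holds relative to $S$. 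This gives
\[
\mathrm{Whit}_{X/S}(\mathcal{F})\simeq \pi_*\mathcal{H}om\big((\varphi/\mathbb{G}_m)^*\exp_!/\mathbb{G}_m,(a/\mathbb{G}_m)^!\mathcal{F}\big).
\]
Next, I would push forward along $\varphi/\mathbb{G}_m$ to $\mathbb{A}^1_S/\mathbb{G}_m$ using the $(*,*)$-adjunction. Then I would use the explicit form $\exp_!/\mathbb{G}_m=j_*\mathbf{1}[-1]$, with $j$ the inclusion of $(\mathbb{A}^1_S\setminus 0)/\mathbb{G}_m\simeq S$. Since $\mathcal{H}om(j_*\mathbf{1},-)$ on $\mathbb{A}^1/\mathbb{G}_m$ is computed by the fiber of the restriction from the $\mathbb{G}_m$-equivariant picture at $0$ to the generic point, the answer becomes the cone of a ``specialization'' map. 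Concretely, for $\mathcal{G}=(\varphi/\mathbb{G}_m)_*(a/\mathbb{G}_m)^!\mathcal{F}$, I expect $\mathcal{H}om(j_*\mathbf{1}[-1],\mathcal{G})$ to be identified with the cofiber of $i_0^*\mathcal{G}\to j^*\mathcal{G}$ up to shift. This is the hyperbolic/contraction formula for $\mathbb{G}_m$-monodromic sheaves on $\mathbb{A}^1$, namely $i_0^*\simeq$ global sections.

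The key geometric input is contraction. The $\check{\rho}$-action of $\mathbb{G}_m$ on $\Bun_N^{\check{\rho}(\Omega)}(X/S)$ contracts it onto the section $\check{\rho}(\Omega):S\to\Bun_N^{\check{\rho}(\Omega)}(X/S)$, because $\check{\rho}$ has positive weights on $\mathfrak{n}$. Since $\varphi$ is equivariant for weight one, the contraction principle for monodromic sheaves identifies $\pi_*$ of the part over $0$ with the $*$-restriction to that section. Comparing the two terms with the definition of vanishing cycles, as the cone of the map from restriction to $\varphi^{-1}(0)$ into nearby cycles, then yields $\check{\rho}(\Omega)^*\Phi_\varphi a^!\mathcal{F}$. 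Here I would use the real form $\operatorname{Re}\varphi$, which the paper already allows for $\mathbb{G}_m$-equivariant sheaves, and the fact that $\mathbb{G}_m$-monodromy makes the nearby-cycle term equal to sections over $\{\operatorname{Re}\varphi>0\}$, contracted to a neighborhood of the section. Everything should be checked relative to $S$, with the adjunctions and base change taken for the smooth-over-$S$ maps.

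The main obstacle I expect is the relative contraction principle. The statement needed is that for a $\mathbb{G}_m$-equivariant sheaf on a stack over $S$ contracting onto a section, the pushforward to $S$ agrees with restriction to the section. This must hold naturally in $S$, not just fiberwise. The stack is not of finite type, although $\Bun_N^{\check{\rho}(\Omega)}$ is a quotient of an affine space bundle by a unipotent group over $S$. My first attempt would be to reduce to the finite-type vector-bundle presentation via the filtration of $N$ by $\mathbb{G}_m$-weight, where contraction is the standard Braden/Springer lemma in families. A secondary point is that $a^!$ commutes with the quotient by $\mathbb{G}_m$, which is harmless because the $\mathbb{G}_m$-action on $\Bun_G$ is trivial.
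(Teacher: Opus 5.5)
Your proposal is correct. It uses the same three ingredients as the paper: the $(!,!)$-adjunction for $a/\mathbb{G}_m$, the contraction principle on $\Bun_N^{\check{\rho}(\Omega)}(X/S)$, and a computation on $\mathbb{A}^1/\mathbb{G}_m$. You assemble them in the opposite direction, and the steps you assemble differently are not the easy ones.

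\textbf{How the paper argues.} The paper works with kernels upstairs.
\begin{enumerate}
\item It writes the vanishing cycles as $i^*\mathcal{H}om(\varphi^*K,a^!\mathcal{F})$, with $K=\mathrm{cofib}(e_!e^!\mathbf{1}\to\mathbf{1})$.
\item It applies contraction once, to the monodromic sheaf $\mathcal{H}om(\varphi^*K,a^!\mathcal{F})$, to trade restriction along $\check{\rho}(\Omega)$ for pushforward to $S$.
\item It adjoins $a/\mathbb{G}_m$ across.
\item It is left with the $\mathcal{F}$-independent identity $p_!K\simeq \exp_!/\mathbb{G}_m$ on $\mathbb{A}^1/\mathbb{G}_m$, which it checks on the two strata.
\end{enumerate}

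\textbf{Your formula, stated precisely.} You instead push forward along $\varphi/\mathbb{G}_m$ and test $\exp_!/\mathbb{G}_m$ against $\mathcal{G}$. Your formula is exactly the Yoneda-dual of the paper's identity. Precisely, it reads $\pi'_*\mathcal{H}om(\exp_!/\mathbb{G}_m,\mathcal{G})\simeq \mathrm{fib}(\mathcal{G}_0\to\mathcal{G}_1)[2]$. Here $\pi'\colon \mathbb{A}^1_S/\mathbb{G}_m\to S$, and $\mathcal{G}_0,\mathcal{G}_1$ are the restrictions of $p^*\mathcal{G}$ to the $0$- and $1$-sections of $\mathbb{A}^1_S$. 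The shift comes from $p^!=p^*[2]$. Two corrections to how you wrote it:
\begin{itemize}
\item The identification holds for $\pi'_*\mathcal{H}om$, not for $\mathcal{H}om$ on $\mathbb{A}^1_S/\mathbb{G}_m$.
\item Your ``$i_0^*\mathcal{G}$'' must mean this non-equivariant restriction to $S$. It cannot mean sections over $B\mathbb{G}_m$, which would give equivariant cohomology and the wrong answer.
\end{itemize}

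\textbf{What each route costs and buys.} Pushing forward first forces you to contract the special and generic terms separately, and the generic one carries the real content. You need the pushforward to $S$ of $a^!\mathcal{F}|_{\{\operatorname{Re}\varphi>0\}}$ (which is $\mathcal{G}_1$ up to shift) to agree with the stalk of $\psi_\varphi a^!\mathcal{F}$ along the section. This must hold compatibly with the map from $i^*$. The cleanest proof is to contract the monodromic sheaf $\tilde{e}_*\tilde{e}^*a^!\mathcal{F}$, where $\tilde{e}$ is the base change of $e$ along $\varphi$. That is the paper's kernel argument in disguise, so the paper's route is more economical and makes naturality in $\mathcal{F}$ automatic.

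Your route has one advantage. If you skip the comparison with $\psi_\varphi$, you get the fiber $\Gamma$ of $\Gamma_{\{\operatorname{Re}\varphi\le 0\}}a^!\mathcal{F}$ over $S$. A single contraction of that sheaf then gives its stalk along the section. This is the real vanishing-cycle (microstalk) form the paper uses afterwards, modulo the real/complex comparison the paper already asserts.

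Finally, the relative contraction on the stack $\Bun_N^{\check{\rho}(\Omega)}(X/S)$, which you flag as the main obstacle, is simply asserted in the paper's proof. Nothing is handled differently there.
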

\begin{proof}
    Let $i:\varphi^{-1}(0)\to \Bun_N^{\check{\rho}(\Omega)}(X/S)$ be the inclusion of the zero fiber and $e:\mathbb{A}_\mathbb{C}^1\to \mathbb{A}_\mathbb{C}^1$ be the holomorphic map given by the universal cover of $\mathbb{A}_\mathbb{C}^1\setminus\{0\}\subset \mathbb{A}_\mathbb{C}^1$. By definition, the complex vanishing cycles of $a^!\mathcal{F}$ is given by $i^*\mathcal{H}om(\varphi^*K,a^!\mathcal{F})$, where $K$ is the sheaf on $\mathbb{A}_\mathbb{C}^1$ given by the cofiber of the map $e_!e^!\mathbf{1}\to \mathbf{1}$. Since $a^!\mathcal{F}$ is naturally $\mathbb{G}_m$-equivariant and the $\mathbb{G}_m$-action on $\Bun_N^{\check{\rho}(\Omega)}(X/S)$ contracts the stack to the section $\check{\rho}(\Omega)$, pulling back along the section $\check{\rho}(\Omega):S\to \Bun_N^{\check{\rho}(\Omega)}(X/S)$ is naturally equivalent to pushing forward to $S$ along the structure map.

    Let $p:\mathbb{A}^1_\mathbb{C}\to \mathbb{A}_\mathbb{C}^1/\mathbb{G}_m$ denote the quotient map. By adjoining $a^!$ over in the formula for vanishing cycles, the problem reduces to providing an equivalence between $p_!K$ and $\mathrm{exp}_!/\mathbb{G}_m$. To do this, consider the following diagram with Cartesian square
\[
\begin{tikzcd}
\mathbb{A}^1_{\mathbb{C}} \arrow["e'"',rd] \arrow[r, "e"] & \mathbb{A}^1_{\mathbb{C}} \arrow[r, "p"]                                & \mathbb{A}^1_{\mathbb{C}}/\mathbb{G}_m                                        \\
                                       & \mathbb{A}^1_{\mathbb{C}}\setminus\{0\} \arrow[u, "j'"] \arrow[r, "p'"] & *\simeq (\mathbb{A}^1_{\mathbb{C}}\setminus\{0\})/\mathbb{G}_m \arrow[u, "j"]
\end{tikzcd}\]
    We have a natural map $p_!\mathbf{1}\to \mathrm{exp}_!/\mathbb{G}_m$ defined as follows. Note that $\tau_{\le -1}p'_!\mathbf{1}\simeq \mathbf{1}[-1]$, then
    \begin{align*}
        \mathrm{Map}(p'_!\mathbf{1},\tau_{\le -1}p'_!\mathbf{1})\simeq \mathrm{Map}(p'_!\mathbf{1},\mathbf{1}[-1])\simeq  \mathrm{Map}(j^*p_!\mathbf{1},\mathbf{1}[-1])\simeq \mathrm{Map}(p_!\mathbf{1},j_*\mathbf{1}[-1])
    \end{align*}
   Thus the map $p'_!\mathbf{1}\to\tau_{\le -1}p'_!\mathbf{1}$ given by truncation defines a map $p_!\mathbf{1}\to \mathrm{exp}_!/\mathbb{G}_m$. It suffices to prove that 
   \[p_!e_!e^!\mathbf{1}\to p_!\mathbf{1}\to \mathrm{exp}_!/\mathbb{G}_m\]
   is a fiber sequence. This can be checked on stalks and hence it suffices to prove the statement after pulling back along $j$ or its closed complement $i:B\mathbb{G}_m\to \mathbb{A}^1/\mathbb{G}_m$. In both cases the statement reduces to simple cohomology computations.
\end{proof}

To prove Proposition \ref{Localconstancyofwhittaker} we will first prove a general result in the $C^\infty$-setting on the local constancy of microstalks in families, see Proposition \ref{localcontancyofmicro}. Then we will follow the same strategy as Nadler and Taylor to prove that the Whittaker functional is a microstalk in a relative sense. Together these two results will give a proof of Proposition \ref{Localconstancyofwhittaker}. 

\subsubsection{Non-characteristicness of the Poincaré Sheaf}

We recall the following result proved in \cite{gaitsgory2025geometric}:
\begin{prop}(\cite[Thm. 4.4.2]{gaitsgory2025geometric})\label{poincare non-char}
The Poincaré sheaf
\[
\on{Poinc}_!\in \on{Sh}(\on{Bun}_G(X/S))
\]

\noindent is non-characteristic over $S$.
\end{prop}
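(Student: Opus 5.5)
The plan is to prove non-characteristicness by bounding the singular support of $\on{Poinc}_!$ through the functorial estimates for $*$-pullback and $!$-pushforward. We then check that no nonzero covector pulled back from $S$ can survive these estimates. The key geometric input is that the Whittaker datum is ``transverse'' to the base directions. The cotangent directions along $S$ can be absorbed because $\varphi$ is a function on a family that is smooth over $S$, and its differential along the fibers is generic.

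\textbf{Step 1: the sheaf upstairs.} We first bound $\on{SS}$ of $(\varphi/\bG_m)^*\exp_!/\bG_m$ on $\on{Bun}_N^{\check{\rho}(\Omega)}(X/S)/\bG_m$. On $\bA^1_S/\bG_m$ the sheaf $j_*\mathbf{1}[-1]$ is pulled back from $\bA^1/\bG_m$, so its singular support lies in the conormal to the zero section $S\times B\bG_m$ together with the zero section. In particular, it has no component along $T^*S$ that is nonzero without a $dz$-component. Pulling back along $\varphi$ is non-characteristic because $\varphi$ is a submersion: it is affine-linear along the fibers of $\on{Bun}_N^{\check\rho(\Omega)}\to S$, via the Serre duality pairings $\chi_\alpha$. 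Hence the singular support upstairs is contained in $\bR_{\geq0}\cdot d\varphi$ over $\varphi^{-1}(0)$, together with the zero section.

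\textbf{Step 2: the pushforward along $a$.} The map $a$ is not proper, so we use the $\bG_m$-contraction. As in Nadler--Taylor and in \cite{gaitsgory2025geometric}, we compactify $a$ fiberwise over $S$, or equivalently use that the relevant pushforward is computed over the stacky quotient, where the contraction reduces the estimate to a proper one. This gives that $\on{SS}(\on{Poinc}_!)$ consists of covectors $\xi$ at $\sP_G$ such that $da^*\xi=c\,d\varphi$ at some point of $\on{Bun}_N^{\check\rho(\Omega)}$ over $\sP_G$, with $c\geq 0$.

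\textbf{Step 3: the transversality check.} Suppose $\xi=d\Psi^*\eta$ with $\eta\in T^*_sS$. Then $da^*\xi$ is pulled back from $S$, so it vanishes on vertical tangent vectors. On the other hand, $c\,d\varphi$ restricted to the vertical direction is $c$ times the fiberwise differential of $\varphi$, which is nonzero since $\varphi$ is a nonzero linear functional on the fibers. Hence $c=0$, and then $da^*d\Psi^*\eta=0$. But the composite $\on{Bun}_N^{\check\rho(\Omega)}\to S$ is a submersion, so $\eta=0$.

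The main obstacle is Step 2, namely making the pushforward estimate rigorous for the non-proper, non-representable map $a$ in the relative real-analytic setting. We would follow the argument of \cite[Thm. 4.4.2]{gaitsgory2025geometric} verbatim: factor $a$ through $\on{Bun}_B$, use the affine-fibration structure and the $\bG_m$-monodromicity to replace $!$-pushforward by a proper pushforward on the closure, and verify that the boundary contributes no new $S$-horizontal covectors. This last point is where the reduction to fiberwise linearity in Step 3 must be redone on the boundary strata.
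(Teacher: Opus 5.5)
Your Steps 1 and 3 correctly show that the sheaf upstairs is $S$-non-characteristic, and that a proper pushforward from $\Bun_N^{\check\rho(\Omega)}$ would remain so. One slip: $j_*\mathbf{1}$ on $\mathbb{A}^1$ has the whole complex conormal line $\bC\cdot dz$ at $0$ in its singular support, not $\bR_{\geq 0}\cdot dz$, though this does not affect Step 3.

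\textbf{The gap is Step 2.} The estimate you state there is the one for a proper pushforward: every $\xi\in\on{SS}(\on{Poinc}_!)$ satisfies $da^*\xi=c\,d\varphi$ at some point of $\Bun_N^{\check\rho(\Omega)}(X/S)$ over $\sP_G$. But $a/\bG_m$ is not proper. Suppressing $X/S$, for $g\geq 2$ it factors as
\[
\Bun_N^{\check\rho(\Omega)}/\bG_m\to \Bun_B^{(g-1)2\check\rho}\simeq \Bun_G^{((g-1)2\check\rho)}\into U:=\Bun_G^{(\leq(g-1)2\check\rho)}\overset{j}{\into}\Bun_G .
\]
Here the first map is a torus torsor over a closed substack. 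The Harder--Narasimhan stratum is closed in the open substack $U$, but not in $\Bun_G$.

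The points of $\Bun_G\setminus U$ are exactly where $j_!$ can create new singular support. Over them there is no point of $\Bun_N^{\check\rho(\Omega)}$ at all. So your condition is vacuous there, and Step 3 says nothing about covectors pulled back from $S$ at those points.

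None of your proposed remedies addresses this:
\begin{itemize}
\item the $\bG_m$-contraction of $\Bun_N^{\check\rho(\Omega)}$ onto the section $\check\rho(\Omega)$ contracts the source, not the boundary of $U$;
\item a compactification taken fiberwise over $S$ gives no control over the $S$-directions, which is the entire issue;
\item ``fiberwise linearity of $\varphi$ on boundary strata'' has no meaning at points with no Whittaker datum over them.
\end{itemize}

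\textbf{The missing idea} is to isolate the open embedding $j$; this is what the paper does, following \cite{gaitsgory2025geometric}.

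On $U$ the sheaf is $S$-non-characteristic, for two reasons. First, the torus-torsor part is harmless: $\chi$ is $T$-equivariant, so by base change that pushforward is a smooth pullback along $\chi/T$ of a sheaf pulled back from $\mathbb{A}^r/T$. Second, the rest is a closed pushforward from a stack smooth over $S$. This is where your Steps 1 and 3 legitimately enter.

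What remains is that $j_!$ preserves $S$-non-characteristicness. This follows from $j$ being contractive in the sense of \cite[\S 5]{drinfeld2015compact}, combined with the analysis of $!$-extension along contractive embeddings in \cite[\S 19]{arinkin2020stack}.

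Note also that the factorization uses $g\geq 2$. This makes $(g-1)2\check\rho$ regular dominant, so that $\Bun_B^{(g-1)2\check\rho}\to\Bun_G^{((g-1)2\check\rho)}$ is an isomorphism. The cases $g\leq 1$ are treated separately in \cite{gaitsgory2025geometric}.
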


\begin{rem}
    In truth, they prove that $\on{Poinc}_!$ is ULA over $S$, which is weaker that non-characteristicness. However, the same proof goes through to show the latter. For completeness, we sketch their proof below.
\end{rem}

\begin{proof}

Assume that the genus of $X$ is at least $2$ (the case case $g=0,1$ are treated separately in \cite{gaitsgory2025geometric}). 

Consider the map
\[
r: \Bun_N^{\check{\rho}(\Omega)}(X/S)/T\to \Bun_B^{(g-1)2\check{\rho}}(X/S)\to \on{Bun}_G(X/S)^{((g-1)2\check{\rho})}\into \on{Bun}_G(X/S)^{(\leq(g-1)2\check{\rho})},
\]

\noindent where $\on{Bun}_G(X/S)^{(\lambda)}$ (resp. $\on{Bun}_G(X/S)^{(\leq\lambda)}$) denotes the locally closed (resp. open) substack of $\on{Bun}_G(X/S)$ given by the (relative) Harder-Narasimhan strata corresponding to the rational dominant coweight $\lambda$ (resp. union of HN-strata $\lambda'\leq \lambda$); see also \cite[§5.2]{gaitsgory2025geometric}. The first map above is closed by definition. The second is an isomorphism by \cite[Thm. 7.4.3 (1')]{drinfeld2015compact}, and the third is also closed, see \cite[Corollary 7.4.11]{drinfeld2015compact}. It follows that $r_!(\phi/\bG_m)^*(\on{exp}_!/\bG_m)$ is non-characteristic over $S$.

Denote by $j$ the inclusion
\[
\on{Bun}_G(X/S)^{(\leq(g-1)2\check{\rho})}\into \on{Bun}_G(X/S).
\]

\noindent It remains to prove that $j_!$ preserves sheaves non-characteristic over $S$. This in turn follows from the fact that $j$ is contractive in the sense of \cite[§5]{drinfeld2015compact}. See \cite[§19]{arinkin2020stack} for a proof that this implies that $j_!$ preserves non-characteristicness.
    
\end{proof}

\section{Microstalks in Families}
In this section we temporarily turn to the setting of $C^\infty$-manifolds. Throughout $S$ will denote $C^\infty$-manifold and $M\to S$ will be a submersion.

\subsection{Local constancy of the microstalk}
Our goal in this subsection is to prove a local constancy result for mircostalks in families.

Let $\mathcal{F}\in \mathrm{Sh}(M)$ be a sheaf on the total space of a submersive family $f:M\to S$ of $C^\infty$-manifolds. For every point $s\in S$, denote by $M_s$ the fiber over $s$ and fix a non-zero cotangent vector $v_s\in T^*M_s$. In this section we study a general criterion for when the microstalk of $\mathcal{F}|_{M_s}$ in the direction of $v_s$ varies in a local system on $S$.

\begin{rem}
The content of this section is closely related to the results in section 7 of \cite{guillermou2015quantizationconiclagrangiansubmanifolds}. In our language, the results of \cite{guillermou2015quantizationconiclagrangiansubmanifolds} implies the results in this section in the case when $M\to S$ is a constant family $M_0\times S\to S$ and the sheaf $\mathcal{F}$ is pulled back from a sheaf $\mathcal{F}_0$ on $M_0$. The ideas of the proofs in this section are essentially the same as in \cite{guillermou2015quantizationconiclagrangiansubmanifolds}, only minor modifications are required to allow non-constant families.    
\end{rem}

\subsubsection{Relative micro-stalks}
Suppose $\mathrm{SS}(\mathcal{F})\subset \Lambda$, where $\Lambda\subset T^*M$ is a closed conic Lagrangian stable under involution. Let $\varphi:M\to \mathbb{R}$ be a $C^\infty$-function such that $\Gamma_{d\varphi}\subset T^*M$ intersects $\Lambda$ in the graph $\Gamma_\rho$ of a section $\rho:S\to T^*M$ and assume that $\Gamma_\rho$ is contained in the smooth locus of the morphism $\Lambda\to S$. Suppose that, in a neighborhood of $\Gamma_\rho$, $\Lambda$ maps isomorphically to its image in the relative cotangent space $T_S^*M$. Assume that the cotangent vector $v_s\in T^*M_s$ we fixed is the image of $\rho(s)$ in the relative cotangent space.

For any $s\in S$, let $\varphi_s:M_s\to \mathbb{R}$ be the restriction of $\varphi$ to $M_s$, and let $\Lambda_s$ denote the closed cone given by the image of $\Lambda$ in $T^*M_s$. Let $i:S\hookrightarrow M$ denote the closed immersion given by the composition $\pi \circ \rho$, where $\pi:T^*M\to M$ is the projection map. Assume further that $\varphi\circ i=0$, $d\varphi(i(s))\in T^*M_s\setminus M_s$ and that $\Gamma_{d\varphi_s}$ intersects $\Lambda_s$ transversely at $\rho(s)$ for all $s\in S$.

\begin{defin}
    For any point $s\in S$, the \textit{shifted microstalk} of $\mathcal{F}_s$ along the function $\varphi_s$ at the point $i(s)$ is the object given by
\[\Phi_{\varphi_s,i(s)} \mathcal{F}_s:= i_s^*\Gamma_{\{\varphi_s\ge 0\}} \mathcal{F}_s\in \mathrm{Sh}(s).\]
\end{defin}

As in \cite{kashiwara1990sheaves}, $\Gamma_{\{\varphi_s\ge 0\}}\mathcal{F}_s$ denotes the sheaf on $M_s$ given by $k_*k^!\mathcal{F}_s$, where $k$ is the inclusion of the closed subset $\{\varphi_s\ge 0\}$ into $M_s$. For $A\subset M$ we abuse notation by writing $\mathbf{1}_A$ for the $!$-pushforward of $\mathbf{1}_A$ to $M$.

\begin{rem}
The term \emph{shifted microstalk} is justified since the assumption that $\Gamma_{d\varphi_s}$ intersects $\Lambda_s$ transversely implies that the vanishing cycles $\Phi_{\varphi_s,i(s)}\mathcal{F}_s$ really computes a shift of the microstalk of $\mathcal{F}_s$ at $i(s)$ in the codirection $v_s:=d\varphi_s(i(s))$ in the sense of Kashiwara-Schapira\cite{kashiwara1990sheaves}.
\end{rem}

In Proposition \ref{localcontancyofmicro} below, we prove that in the above situation these shifted microstalks vary in a local system along $S$. More precisely, we will prove that the sheaf given by
\[\Phi_{\varphi,i}\mathcal{F}:\simeq i^*\Gamma_{\{\varphi \ge0\}} \mathcal{F}\]
is a local system on $S$ with stalk at a point $s\in S$ equivalent to $\Phi_{\varphi_s,i(s)}\mathcal{F}_s$. We refer to $i^*\Gamma_{\{\varphi \ge0\}} \mathcal{F}$ as the relative shifted mircostalk.

\subsubsection{Local constancy of relative micro-stalks} Before proving the main proposition of this section, let us introduce a bit more notation and prove a few preliminary lemmas.

\begin{defin}
    Let $X$ be a smooth manifold and $Z_1,Z_2\subset X$ be subsets. We say that $Z_1$ and $Z_2$ intersect cleanly if $Z_1\cap Z_2$ is a closed submanifold of $X$, both $Z_1$ and $Z_2$ are closed submanifolds of $X$ in a neighborhood of $Z_1\cap Z_2$ and
    \[T_x(Z_1\cap Z_2)=T_xZ_1\cap T_xZ_2\]
     for all $x\in Z_1\cap Z_2$.
\end{defin}

For two sheaves $\cF_1,\cF_2$, we may consider the microlocal hom-functor $\mu\mathrm{hom}(\cF_1,\cF_2)\in\on{Sh}(T^*M)$ as defined in \cite[Definition 4.4.1]{kashiwara1990sheaves}.

\begin{lem}\label{ssforclean}
    Let $\Lambda_1$ and $\Lambda_2$ be closed conical Lagrangian subsets of $T^*M$ and $\mathcal{F}_i$ be a sheaf on $M$ with $SS(\mathcal{F}_i)\subseteq \Lambda_i$, for $i=1,2$. If $\Lambda_1$ and $\Lambda_2$ intersect cleanly, then
    \[SS(\mathrm{\mu hom}(\mathcal{F}_1,\mathcal{F}_2))\subset T^*_{\Lambda_1\cap \Lambda_2} T^*M.\]
\end{lem}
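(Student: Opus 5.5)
The plan is to combine the Kashiwara--Schapira bound for the microsupport of $\mu\mathrm{hom}$ with a local normal form for a clean intersection of two Lagrangians. By \cite[Cor. 6.4.3]{kashiwara1990sheaves} we have
\[
SS(\mu\mathrm{hom}(\mathcal{F}_1,\mathcal{F}_2))\subset C(SS(\mathcal{F}_2),SS(\mathcal{F}_1))\subset C(\Lambda_2,\Lambda_1),
\]
where $C(-,-)$ is the normal cone, viewed inside $T_{T^*M}T^*M\simeq T^*T^*M$ via the Hamiltonian isomorphism $-H$ induced by the symplectic form. Since $\mu\mathrm{hom}(\mathcal{F}_1,\mathcal{F}_2)$ is supported on $\Lambda_1\cap\Lambda_2$, the statement is local near points $p\in\Lambda_1\cap\Lambda_2$. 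It therefore suffices to prove the set-theoretic inclusion
\[
H^{-1}\big(C_p(\Lambda_2,\Lambda_1)\big)\subset (T^*_{\Lambda_1\cap\Lambda_2}T^*M)_p .
\]

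The key step is to compute $C_p(\Lambda_2,\Lambda_1)$ under the cleanness assumption. Near $p$ both $\Lambda_i$ are closed submanifolds and $\Lambda_1\cap \Lambda_2$ is a submanifold. In that situation the normal cone $C(\Lambda_2,\Lambda_1)$ of two submanifolds intersecting cleanly at $p$ equals $T_p\Lambda_1+T_p\Lambda_2\subset T_p(T^*M)$. To justify this, we choose local coordinates in which $\Lambda_1$ and $\Lambda_2$ are linear subspaces, which is possible because the intersection is clean. In those coordinates the normal cone of two linear subspaces $A,B$ is the set of limits of $c_n(a_n-b_n)$, and this is exactly $A+B$.

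We then translate through the symplectic isomorphism. The conormal to $\Lambda_1\cap\Lambda_2$ at $p$ corresponds under $H$ to the symplectic orthogonal $(T_p\Lambda_1\cap T_p\Lambda_2)^{\perp\omega}$; here we use cleanness again, $T_p(\Lambda_1\cap\Lambda_2)=T_p\Lambda_1\cap T_p\Lambda_2$. Because each $T_p\Lambda_i$ is Lagrangian, $(T_p\Lambda_1\cap T_p\Lambda_2)^{\perp\omega}=(T_p\Lambda_1)^{\perp\omega}+(T_p\Lambda_2)^{\perp\omega}=T_p\Lambda_1+T_p\Lambda_2$. Comparing with the previous paragraph gives the desired inclusion. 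Conicity of the $\Lambda_i$ plays no real role beyond guaranteeing that everything is $\mathbb{R}_{>0}$-stable.

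The main obstacle is the linearization step: producing coordinates that simultaneously straighten two cleanly intersecting submanifolds, and checking that the normal cone really is the sum of tangent spaces rather than something larger. The intended route is the standard argument. First straighten $\Lambda_1\cap\Lambda_2$. Next straighten $\Lambda_1$ so that it contains it. Finally write $\Lambda_2$ as a graph over $T_p\Lambda_2$; cleanness means the graph function has no component transverse to $T_p\Lambda_1+T_p\Lambda_2$ at first order, which should suffice to bound the normal cone by $T_p\Lambda_1+T_p\Lambda_2$. An alternative is to quote \cite[Prop. 6.2.? / Ex. VI]{kashiwara1990sheaves}, or \cite{guillermou2015quantizationconiclagrangiansubmanifolds}, where this clean-intersection estimate for $\mu\mathrm{hom}$ appears, in the form that $\mu\mathrm{hom}$ is locally constant along $\Lambda_1\cap\Lambda_2$.
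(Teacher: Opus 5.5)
Your proposal is correct and follows essentially the same route as the paper. Both arguments bound $SS(\mu\mathrm{hom}(\mathcal{F}_1,\mathcal{F}_2))$ by $C(\Lambda_2,\Lambda_1)$ via \cite[Cor.~6.4.3]{kashiwara1990sheaves}, compute this normal cone for a clean intersection in local coordinates as $T\Lambda_1+T\Lambda_2$ over $\Lambda_1\cap\Lambda_2$, and use the Lagrangian condition to identify the result with the conormal of $\Lambda_1\cap\Lambda_2$ under the Hamiltonian isomorphism. Your identity $(T_p\Lambda_1\cap T_p\Lambda_2)^{\perp\omega}=T_p\Lambda_1+T_p\Lambda_2$ is a more explicit version of the paper's step $T\Lambda_i\simeq T^*_{\Lambda_i}T^*M$, and it gives equality where only the inclusion is needed. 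The simultaneous straightening of two cleanly intersecting submanifolds, which you flag as the main obstacle, is standard.
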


\begin{proof}
    By Corollary 6.4.3 in \cite{kashiwara1990sheaves}, there is the bound
    \[SS(\mathrm{\mu hom}(\mathcal{F}_1,\mathcal{F}_2))\subset C(\Lambda_2,\Lambda_1),\]
    where $C(\Lambda_2,\Lambda_1)$ is considered as a subset of $T^*T^*M$ via the Hamiltonian isomorphism $TT^*M\cong T^*T^*M$. By the assumption that $\Lambda_1$ and $\Lambda_2$ intersect cleanly and a computation in local coordinates, it follows that
    \[C(\Lambda_2,\Lambda_1)=(\Lambda_1\cap \Lambda_2)\times_{T^*M} T\Lambda_1+(\Lambda_1\cap \Lambda_2)\times_{T^*M} T\Lambda_2\subset TT^*M.\]
    Since $\Lambda_1$ and $\Lambda_2$ are Lagrangian, $T\Lambda_i$ is identified with $T^*_{\Lambda_i}T^*M$ under the Hamiltonian isomorphism over a neighbourhood of $\Lambda_1\cap \Lambda_2$. Hence
    \[C(\Lambda_2,\Lambda_1)\subset T^*_{\Lambda_1\cap \Lambda_2}T^*M.\]
\end{proof}

\begin{defin}
    Let $C_\rho\subset T^*M$ denote the subset
    \[C_\rho=\{\lambda \rho(s)\in T^*M| s\in S , \lambda> 0\}.\]
    In words, $C_\rho$ is the positive half-line spanned by the section $\rho:S\to T^*M$.
\end{defin}

\begin{defin}
    Let $\Lambda_\varphi'\subset T^*M$ denote the locally closed subset given by
    \[\Lambda_\varphi'=\{(x,\lambda d\varphi(x))\in T^*M| \varphi(x)=0, \lambda > 0\}.\]
\end{defin}

\begin{lem}\label{ssforhyper}
    There exist a neighborhood $V$ of the closed submanifold $i(S)$ such that
    \[SS(\mathbf{1}_{\{\varphi\ge 0\}})\cap T^*V\setminus V=\Lambda_\varphi'\cap T^*V\setminus V.\]
\end{lem}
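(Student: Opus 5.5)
The plan is to reduce to the standard computation of the singular support of the constant sheaf on a closed half-space with smooth boundary, as in \cite[Prop. 5.3.8]{kashiwara1990sheaves}. The key point is that $\varphi$ has non-vanishing differential near $i(S)$. By assumption, $d\varphi(i(s))$ is a nonzero element of $T^*M_s$, so the restriction of $d\varphi$ to the vertical tangent space at $i(s)$ is nonzero. In particular $d\varphi(i(s))\neq 0$ as a covector on $M$.

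\textbf{Choice of $V$.} Non-vanishing of $d\varphi$ is an open condition, so we choose an open neighborhood $V$ of $i(S)$ on which $d\varphi$ vanishes nowhere. Then $Z:=\{\varphi=0\}\cap V$ is a closed smooth hypersurface of $V$. Moreover, $\{\varphi\geq 0\}\cap V$ is a closed submanifold with boundary $Z$, whose inward conormal direction at $x\in Z$ is $\bR_{>0}\,d\varphi(x)$.

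\textbf{Local computation.} Singular support is local, so $SS(\mathbf{1}_{\{\varphi\ge0\}})\cap T^*V=SS(\mathbf{1}_{\{\varphi\ge0\}\cap V})$. Near any $x\in V$ we straighten $\varphi$ to a coordinate function $x_1$ using the submersion theorem. The sheaf then becomes $\mathbf{1}_{\{x_1\ge 0\}}$, whose singular support is known explicitly (cf. \cite[Prop. 5.3.8]{kashiwara1990sheaves}):
\begin{itemize}
\item on $\{x_1>0\}$ it is the zero section;
\item on $\{x_1<0\}$ it is empty;
\item over $\{x_1=0\}$ it is $\{(x,\lambda\,dx_1):\lambda\ge 0\}$.
\end{itemize}

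\textbf{Removing the zero section.} After removing the zero section, what remains is exactly $\{(x,\lambda d\varphi(x)):\varphi(x)=0,\lambda>0\}\cap T^*V$. This is $\Lambda'_\varphi\cap (T^*V\setminus V)$, which is the claimed equality.

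\textbf{Expected difficulty.} I do not expect a real obstacle; the argument is essentially bookkeeping. The only care needed is to observe that the hypothesis is stated in terms of the relative covector, and that non-vanishing of $d\varphi_s$ implies non-vanishing of $d\varphi$. One also has to account for the sign convention: $\mathbf{1}_{\{\varphi\ge0\}}$ is the $!$-extension from a closed set, so it picks up the positive conormal direction $+d\varphi$ and not $-d\varphi$.
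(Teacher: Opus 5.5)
Your proposal is correct and takes the same approach as the paper. The paper also uses $d\varphi_s(i(s))\neq 0$ to find a neighborhood $V$ of $i(S)$ on which $\{\varphi=0\}$ is a smooth hypersurface, and then cites the standard Kashiwara--Schapira computation of the singular support of the constant sheaf on a closed half-space with smooth boundary. Your extra steps (straightening $\varphi$ locally and checking that the closed set $\{\varphi\ge 0\}$ contributes $\lambda\,d\varphi$ with $\lambda>0$) make explicit what the paper leaves to that citation.
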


\begin{proof}
    Since $d\varphi_s(i(s))\neq 0$ for all $s\in S$, the hypersurface $\varphi= 0$ is smooth in a neighborhood $V$ of $i(S)$. Thus the statement follows from [KS] Proposition 5.3.3. 
\end{proof}

\begin{lem}\label{intersectionofcones}
    The intersection of $\Lambda$ and $\Lambda_\varphi'$ is $C_\rho$.
\end{lem}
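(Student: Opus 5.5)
The plan is to prove the two inclusions directly from the definitions. The argument uses only three facts: $\Lambda$ is conic, $\Gamma_{d\varphi}\cap\Lambda=\Gamma_\rho$, and $\varphi\circ i=0$. No microlocal input should be needed.

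First I record the basic identification. Since $\Gamma_\rho=\Gamma_{d\varphi}\cap\Lambda\subset\Gamma_{d\varphi}$, every point $\rho(s)$ has the form $(x,d\varphi(x))$ with $x=\pi(\rho(s))=i(s)$. Hence
\[
\rho(s)=(i(s),d\varphi(i(s))) \quad\text{for all } s\in S.
\]

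For the inclusion $C_\rho\subset\Lambda\cap\Lambda'_\varphi$, take $\lambda\rho(s)$ with $\lambda>0$. It lies in $\Lambda$ because $\rho(s)\in\Lambda$ and $\Lambda$ is conic. By the identification above, $\lambda\rho(s)=(i(s),\lambda\, d\varphi(i(s)))$. Since $\varphi(i(s))=0$, this point lies in $\Lambda'_\varphi$.

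For the inclusion $\Lambda\cap\Lambda'_\varphi\subset C_\rho$, take a point $(x,\lambda\, d\varphi(x))$ with $\varphi(x)=0$, $\lambda>0$, lying in $\Lambda$. Because $\Lambda$ is conic and $\lambda>0$, rescaling by $\lambda^{-1}$ gives $(x,d\varphi(x))\in\Lambda$. This point also lies in $\Gamma_{d\varphi}$, so it lies in $\Gamma_{d\varphi}\cap\Lambda=\Gamma_\rho$. Therefore $(x,d\varphi(x))=\rho(s)$ for some $s\in S$, with $x=i(s)$. The original point is then $\lambda\rho(s)\in C_\rho$.

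There is no real obstacle in this argument. The only points to check are that conicity is used for positive scalars only, which matches the $\lambda>0$ in both $C_\rho$ and $\Lambda'_\varphi$, and that the hypothesis $\varphi\circ i=0$ is exactly what places $C_\rho$ on the zero level set $\{\varphi=0\}$ in the definition of $\Lambda'_\varphi$.
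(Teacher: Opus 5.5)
Your proof is correct and follows essentially the same route as the paper. Both inclusions come from conicity of $\Lambda$ (rescaling by a positive scalar to land in $\Gamma_{d\varphi}\cap\Lambda=\Gamma_\rho$) together with $\varphi\circ i=0$, and your explicit observation that $\rho(s)=(i(s),d\varphi(i(s)))$ spells out a step the paper leaves implicit in the inclusion $C_\rho\subset\Lambda'_\varphi$.
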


\begin{proof}
   Suppose that $\Lambda$ and $\Lambda'_\varphi$ intersect in a point $(x,\lambda d\varphi(x))$ for some $\lambda> 0$. Since $\Lambda$ and $\Lambda'_\varphi$ are cones, it follows that they intersect at the point $(x,d\varphi(x))$. This implies that $(x,d\varphi(x))$ is in the intersection of $\Lambda$ and $\Gamma_{d\varphi}$. Since the intersection of $\Lambda$ and $\Gamma_{d\varphi}$ is the section $\rho$, $(x,d\varphi(x))=\rho(f(x))$. Thus $(x,\lambda d\varphi(x))=\lambda \rho(f(x))\in C_\rho$.

  Since $\varphi\circ i=0$, it follows that $C_\rho\subset \Lambda_\varphi'$ and since $\Lambda$ is a cone and $\rho$ takes values in $\Lambda$, $C_\rho\subset \Lambda$.
\end{proof}

\begin{lem}\label{l:graphisclean}
    The Lagrangians $\Lambda$ and $\Gamma_{d\varphi}$ intersect cleanly.
\end{lem}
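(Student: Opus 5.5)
Everything happens in a neighborhood of $\Gamma_\rho=\Lambda\cap\Gamma_{d\varphi}$, which is a closed submanifold of $T^*M$ because it is the image of the smooth section $\rho:S\to T^*M$. By hypothesis $\Gamma_\rho$ lies in the smooth locus of $\Lambda\to S$, so near $\Gamma_\rho$ the set $\Lambda$ is a closed submanifold of $T^*M$. The graph $\Gamma_{d\varphi}$ is a closed submanifold everywhere. What remains is the tangent-space condition
\[
T_p\Gamma_\rho = T_p\Lambda\cap T_p\Gamma_{d\varphi},\qquad p=\rho(s).
\]
The inclusion $\subseteq$ is automatic. Since $\dim T_p\Gamma_\rho=\dim S$, it suffices to show that $\dim\big(T_p\Lambda\cap T_p\Gamma_{d\varphi}\big)\le \dim S$.

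To get this bound, project along $df:T_pT^*M\to T_sS$, where $f:T^*M\to M\to S$ denotes the composite projection. Take $w\in T_p\Lambda\cap T_p\Gamma_{d\varphi}$ with $df(w)=0$. I claim $w=0$. Then the intersection injects into $T_sS$, which gives the bound; and since $d\rho$ maps $T_sS$ onto $T_p\Gamma_\rho$, we get equality.

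To prove the claim, restrict to the fiber over $s$. Because $\Lambda\to S$ is a submersion near $p$, the vertical part $T_p\Lambda\cap\ker df$ is the tangent space to $\Lambda\cap T^*M|_{M_s}$. Similarly $T_p\Gamma_{d\varphi}\cap \ker df$ is the tangent space to $\Gamma_{d\varphi}|_{M_s}$.

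Now use the projection $q:T^*M|_{M_s}\to T^*M_s$. We are assuming $\Lambda$ maps isomorphically onto its image in $T^*_SM$ near $\Gamma_\rho$, so $q$ restricted to $\Lambda|_{M_s}$ is an immersion onto $\Lambda_s$ near $\rho(s)$. Likewise $q$ identifies $\Gamma_{d\varphi}|_{M_s}$ with $\Gamma_{d\varphi_s}$. Hence $dq(w)$ lies in $T\Lambda_s\cap T\Gamma_{d\varphi_s}$ at $v_s$. That intersection is $0$ by the transversality hypothesis: two Lagrangians in $T^*M_s$ that are transverse meet with zero tangent intersection. So $dq(w)=0$, and injectivity of $dq$ on $T\Lambda|_{M_s}$ gives $w=0$.

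The step I expect to be the main obstacle is the identification of the vertical tangent spaces. One has to check carefully that "isomorphic onto its image in $T^*_SM$" really yields injectivity of $dq$ on $T(\Lambda|_{M_s})$, and not merely a bijection on points. I would handle this using the smoothness of $\Lambda\to S$ near $\Gamma_\rho$ together with the non-characteristic hypothesis: a conormal-type vector of $\Lambda$ pulled back from $S$ would lie in $\ker dq$. If the hypotheses only give a topological isomorphism, I would strengthen them by taking "isomorphically" to mean a diffeomorphism of submanifolds near $\Gamma_\rho$.
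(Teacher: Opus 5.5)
Your proof is correct and follows the same route as the paper. The paper passes to the relative cotangent space, where $\Lambda$ and $\Gamma_{d\varphi}$ map isomorphically, uses smoothness of $\Lambda$ and submersiveness over $S$ to reduce cleanness to the fibers, and concludes from fiberwise transversality of $\Lambda_s$ and $\Gamma_{d\varphi_s}$; your splitting via $df$ into a part injecting into $T_sS$ and a vertical part killed by transversality is that argument made explicit. The obstacle you flag is milder than you fear: $w$ also lies in $T_p(\Gamma_{d\varphi}|_{M_s})$, and $q$ restricts to a diffeomorphism $\Gamma_{d\varphi}|_{M_s}\to\Gamma_{d\varphi_s}$, so $dq(w)=0$ already forces $w=0$ and injectivity of $dq$ on $T(\Lambda|_{M_s})$ is never needed (you only need $\Lambda_s$ smooth at $v_s$, which is implicit in the transversality hypothesis).
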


\begin{proof}
    Since $\Lambda$, $\Gamma_{d\varphi}$ and the intersection $C_\rho$ all map isomorphically to the relative cotangent space, it suffices to prove that the intersection in the relative cotangent space is clean.

    As $\Lambda$ is smooth in a neighborhood of the intersection with $\Gamma_{d\varphi}$ and the intersection maps submersively to $S$, it suffices to prove that the intersection is clean in each fiber over $S$. Thus we are reduced to proving that the intersection of $\Lambda_s$ and $\Gamma_{d\varphi_s}$ is clean for any $s\in S$. This intersection is transverse by assumption and in particular clean.
\end{proof}

\begin{lem}\label{conesareclean}
    The cones $\Lambda$ and $\Lambda_\varphi'$ intersect cleanly.
\end{lem}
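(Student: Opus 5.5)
The plan is to deduce cleanness of $\Lambda\cap\Lambda'_\varphi$ from Lemma \ref{l:graphisclean} by conifying. By Lemma \ref{intersectionofcones}, $\Lambda\cap\Lambda'_\varphi=C_\rho$. First I will check that $C_\rho$ is a closed submanifold of $T^*M\setminus M$. It is the image of the map
\[
S\times\mathbb{R}_{>0}\to T^*M,\qquad (s,\lambda)\mapsto\lambda\rho(s).
\]
This map is an injective immersion, because $\rho$ is a section over the closed embedding $i$ and $\rho(s)\neq 0$. It is also proper onto its image away from the zero section, so $C_\rho$ is a submanifold of dimension $\dim S+1$.

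Next I will check that both $\Lambda$ and $\Lambda'_\varphi$ are submanifolds near $C_\rho$. For $\Lambda'_\varphi$ this holds near $i(S)$ by the smoothness of $\{\varphi=0\}$ used in Lemma \ref{ssforhyper}. For $\Lambda$, smoothness near $\Gamma_\rho$ is part of the hypotheses, and it propagates along $C_\rho$ because $\Lambda$ is conic.

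The main point is the tangent space identity $T_p\Lambda\cap T_p\Lambda'_\varphi=T_pC_\rho$ at $p=\lambda\rho(s)$. By conicity under the $\mathbb{R}_{>0}$-action, which preserves $\Lambda$, $\Lambda'_\varphi$ and $C_\rho$, it suffices to take $\lambda=1$, i.e. $p=\rho(s)$. Let $\xi\in T_p\Lambda\cap T_p\Lambda'_\varphi$ and let $\pi$ be the base projection. The vector $\xi$ is tangent to $\Lambda'_\varphi$, whose points are $(x,\mu\,d\varphi(x))$ with $\varphi(x)=0$. Hence $\xi=\eta+c\,e$, where $e$ is the Euler vector field at $p$ and $\eta$ is tangent to the graph $\Gamma_{d\varphi}$ over a vector in $T_x\{\varphi=0\}$. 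Since $\Lambda$ is conic, $e\in T_p\Lambda$, so $\eta=\xi-ce\in T_p\Lambda\cap T_p\Gamma_{d\varphi}$. By Lemma \ref{l:graphisclean} this intersection equals $T_p(\Lambda\cap\Gamma_{d\varphi})=T_p\Gamma_\rho$. Therefore $\xi\in T_p\Gamma_\rho+\mathbb{R}e=T_pC_\rho$. The reverse inclusion is clear since $C_\rho$ lies in both sets.

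The step I expect to need the most care is this decomposition $\xi=\eta+ce$ for a tangent vector to $\Lambda'_\varphi$. To make it rigorous I would parametrize $\Lambda'_\varphi$ by $(x,\mu)\mapsto(x,\mu\,d\varphi(x))$ and differentiate at $\mu=1$. I also need $e\neq 0$ and $e$ transverse to $\Gamma_{d\varphi}$, which holds since $d\varphi(i(s))\neq0$.
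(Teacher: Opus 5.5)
Your argument is correct and is essentially the paper's proof: you reduce to points of $\rho$ by conicity, then combine Lemma~\ref{l:graphisclean} with the fact that the Euler vector at $\rho(s)$ lies in $T_{\rho(s)}\Lambda$. The only difference is cosmetic: the paper passes through the auxiliary cone $\Gamma'_{d\varphi}=\{(x,\lambda\, d\varphi(x)) \mid \lambda>0\}$ and concludes by a dimension count, whereas you decompose tangent vectors of $\Lambda'_\varphi$ directly as $\eta+c\,e$, which makes that step, and the closed-submanifold property of $C_\rho$, explicit.
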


\begin{proof}
    By the assumption that $d\varphi_s(i(s))\neq 0$, it follows that the hypersurface $\varphi=0$ is smooth in a neighborhood $V$ of $i(S)$. Thus $\Lambda_\varphi '\cap T^*V\setminus V$ is a closed submanifold of $T^*V\setminus V$, in particular $\Lambda_\varphi'$ is a closed submanifold in a neighborhood of $C_\rho$. Since $\Lambda$ is conic and smooth in a neighborhood of $\rho$, it follows that $\Lambda$ is smooth in a neighborhood of $C_\rho$. Since $\Lambda$ and $\Lambda_{\varphi}'$ are conic, it suffices to prove that the intersection is clean at the points of $\rho$. Let $\Gamma_{d\varphi}'=\{ (x,\lambda d\varphi(x))|x\in M,\lambda> 0\}$. Then $\Gamma_{d\varphi}'\cap T^*V\setminus V$ is a closed submanifold of dimension $\text{dim } M+1$ and $\Gamma_{d\varphi}'\cap \Lambda\cap T^*V\setminus V =C_\rho$. Since $\Gamma_{d\varphi}$ intersects $\Lambda$ cleanly by Lemma \ref{l:graphisclean},
    \[T_{\rho(s)} (\Lambda\cap \Gamma_{d\varphi})=T_{\rho(s)}\Lambda \cap T_{\rho(s)} \Gamma_{d\varphi}\]
    for all $s\in S$. Since $\text{dim } C_\rho=\text{dim } \Lambda \cap \Gamma_{d\varphi}+1$ and $\text{dim }\Gamma_{d\varphi}'=\text{dim }\Gamma_{d\varphi}+1$, it follows that
    
    \[T_{\rho(s)} C_\rho = T_{\rho(s)} \Lambda \cap T_{\rho(s)} \Gamma_{d\varphi}'.\]
    Now $\Lambda_\varphi'\subset \Gamma_{d\varphi}'$ and so:
    \[T_{\rho(s)} C_\rho\subset T_{\rho(s)} \Lambda \cap T_{\rho(s)} \Lambda_\varphi'\subset T_{\rho(s)} \Lambda \cap T_{\rho(s)} \Gamma_{d\varphi}'=T_{\rho(s)} C_\rho.\]
Since $\Lambda$ and $\Lambda_\varphi'$ are cones, the intersection is also clean at any $\lambda \rho(s)$, $\lambda>0$.
\end{proof}

\begin{lem}\label{estimates}
    For any sheaf $\mathcal{F}$ with $SS(\mathcal{F})\subset \Lambda$, there exist a neighborhood $V$ of $i(S)$ such that 
    \begin{align}
        &SS(\mu \mathrm{hom}(\mathbf{1}_{\{\varphi\ge 0\}},\mathcal{F})|_{T^*V\setminus V})\subset T^*_{C_\rho}(T^*V\setminus V)\\
        &i^*\pi_{V*} (\mu \mathrm{hom}(\mathbf{1}_{\{\varphi\ge 0\}},\mathcal{F})|_{T^*V\setminus V})\simeq \Phi_{\varphi,i}\mathcal{F}
    \end{align}
    where $\pi_V:T^*V\to V$ is the projection map.
\end{lem}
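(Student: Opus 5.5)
The plan is to derive both statements from the microlocal calculus of Kashiwara--Schapira, combined with the geometric lemmas just proved.

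For (1), apply Lemma \ref{ssforclean} with $\mathcal{F}_1=\mathbf{1}_{\{\varphi\ge 0\}}$ and $\mathcal{F}_2=\mathcal{F}$. First choose $V$ as in Lemma \ref{ssforhyper}. Then over $T^*V\setminus V$ we have $SS(\mathbf{1}_{\{\varphi\ge0\}})=\Lambda'_\varphi$, which is a closed conic Lagrangian of $T^*V\setminus V$ because $\{\varphi=0\}$ is a smooth hypersurface in $V$. By Lemma \ref{conesareclean}, $\Lambda$ and $\Lambda'_\varphi$ intersect cleanly, and by Lemma \ref{intersectionofcones} the intersection is $C_\rho$. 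Lemma \ref{ssforclean} is local on $T^*M$, so it applies after restricting to the open set $T^*V\setminus V$, possibly shrinking $V$ so that cleanness holds on all of $\Lambda\cap\Lambda'_\varphi$ over $V$. Its output is exactly (1).

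For (2), start from the standard formula (KS, Prop.\ 4.4.2 / (4.4.5)): $R\pi_{*}\mu\mathrm{hom}(\mathcal{F}_1,\mathcal{F}_2)\simeq \mathcal{H}om(\mathcal{F}_1,\mathcal{F}_2)$, and $\dot\pi_*$ of the restriction to the complement of the zero section fits into a triangle
\[
D'\mathcal{F}_1\otimes\mathcal{F}_2\to \mathcal{H}om(\mathcal{F}_1,\mathcal{F}_2)\to \dot\pi_*\big(\mu\mathrm{hom}(\mathcal{F}_1,\mathcal{F}_2)|_{\dot T^*M}\big)\xrightarrow{+1}.
\]
Take $\mathcal{F}_1=\mathbf{1}_{\{\varphi\ge0\}}$ and restrict everything to $V$. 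Then $\mathcal{H}om(\mathbf{1}_{\{\varphi\ge0\}},\mathcal{F})=\Gamma_{\{\varphi\ge0\}}\mathcal{F}$, and $D'\mathbf{1}_{\{\varphi\ge0\}}\simeq\mathbf{1}_{\{\varphi>0\}}$ near the smooth hypersurface, so $D'\mathbf{1}_{\{\varphi\ge0\}}\otimes\mathcal{F}\simeq \mathbf{1}_{\{\varphi>0\}}\otimes\mathcal{F}$. Since $\varphi\circ i=0$, the $i^*$-pullback of this term vanishes. Applying $i^*$ to the triangle therefore gives $i^*\pi_{V*}(\mu\mathrm{hom}|_{T^*V\setminus V})\simeq i^*\Gamma_{\{\varphi\ge0\}}\mathcal{F}=\Phi_{\varphi,i}\mathcal{F}$, which is (2).

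The main obstacle I expect is in (1): verifying that the singular-support bound for $\mu\mathrm{hom}$ holds with the noncompact, merely locally closed $\Lambda'_\varphi$. In particular one must check that the Hamiltonian identification $T\Lambda_i\simeq T^*_{\Lambda_i}T^*M$ used in Lemma \ref{ssforclean} remains valid on the open piece $T^*V\setminus V$, and that shrinking $V$ does not destroy the cleanness of Lemma \ref{conesareclean} along all of $C_\rho|_V$. I would handle this by noting that every statement involved is conic and local near $C_\rho$, so it suffices to work in a conic neighborhood of $\rho(S)$ where all the manifolds are smooth.
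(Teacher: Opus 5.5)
Your proposal is correct and follows the paper's proof. For part (1), you combine Lemmas \ref{intersectionofcones} and \ref{conesareclean} with Lemma \ref{ssforclean}, exactly as the paper does, and you also make explicit the use of Lemma \ref{ssforhyper} to identify $SS(\mathbf{1}_{\{\varphi\ge 0\}})$ with $\Lambda'_\varphi$ over $T^*V\setminus V$, which the paper leaves implicit. For part (2), you use the same Sato triangle, with $D'\mathbf{1}_{\{\varphi\ge 0\}}\simeq \mathbf{1}_{\{\varphi>0\}}$ near the smooth hypersurface and $\varphi\circ i=0$ killing the first term after applying $i^*$.
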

\begin{proof}
    The first statement follows from combining Lemma \ref{ssforclean}, Lemma \ref{intersectionofcones} and Lemma \ref{conesareclean}.

    To prove the second statement, we use Sato's fiber sequence \cite[4.3.1]{kashiwara1990sheaves} and pull back along $i$. This give the following fiber sequence
    \[i^*(\mathcal{H}om(\mathbf{1}_{\{\varphi\ge 0\}},\mathbf{1})\otimes \mathcal{F})\to \Psi_{\varphi,i}\mathcal{F}\to i^*\pi_{V*} (\mu \mathrm{Hom}(\mathbf{1}_{\{\varphi\ge 0\}},\mathcal{F})|_{T^*\setminus V}).\]
    We may assume the hypersurface $\varphi= 0$ is smooth, thus $\mathcal{H}om(\mathbf{1}_{\{\varphi\ge 0\}},\mathbf{1})\simeq \mathbf{1}_{\{\varphi> 0\}}$. Since $i$ takes values in the set where $\varphi= 0$, it follows that $i^*\mathbf{1}_{\{\varphi> 0\}}\simeq 0$ and we conclude the statement from Sato's fiber sequence.
\end{proof}

Finally, we are ready to prove the main proposition of this section.

\begin{prop}\label{localcontancyofmicro}
    The sheaf $\Phi_{\varphi,i}\mathcal{F}$ is a local system on $S$. Furthermore, for each $s\in S$, there is an isomorphism between the stalk of $\Phi_{\varphi,i}\mathcal{F}$ at $s$ and the vanishing cycles of $\mathcal{F}_s$ on $M_s$ along the function $\varphi_s$ at the point $i(s)$.
\end{prop}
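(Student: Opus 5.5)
The plan is to exploit Lemma \ref{estimates}, which rewrites the relative shifted microstalk as
\[
\Phi_{\varphi,i}\mathcal{F}\simeq i^*\pi_{V*}\big(\mu\mathrm{hom}(\mathbf{1}_{\{\varphi\ge 0\}},\mathcal{F})|_{T^*V\setminus V}\big),
\]
where the sheaf $\mathcal{K}:=\mu\mathrm{hom}(\mathbf{1}_{\{\varphi\ge 0\}},\mathcal{F})|_{T^*V\setminus V}$ has $SS(\mathcal{K})\subset T^*_{C_\rho}(T^*V\setminus V)$. First I will show that $\mathcal{K}$ is supported on $C_\rho$. The support of $\mu\mathrm{hom}$ is contained in $SS(\mathbf{1}_{\{\varphi\ge0\}})\cap SS(\mathcal{F})$. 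Over $V$ this is contained in $\Lambda'_\varphi\cap\Lambda=C_\rho$, by Lemma \ref{ssforhyper} and Lemma \ref{intersectionofcones}.

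Since $C_\rho$ is a closed submanifold of $T^*V\setminus V$ and the microsupport of $\mathcal{K}$ lies in its conormal bundle, the standard result \cite[Prop. 6.6.1 / Lemma 6.2.?]{kashiwara1990sheaves} applies: a sheaf supported on a submanifold $Z$ with $SS\subset T^*_Z$ is locally constant on $Z$. So $\mathcal{K}\simeq \mathcal{L}_{C_\rho}$ for a local system $\mathcal{L}$ on $C_\rho$. Now $C_\rho\simeq S\times\mathbb{R}_{>0}$ via $(s,\lambda)\mapsto\lambda\rho(s)$, and $\pi_V$ restricted to $C_\rho$ is the projection to $i(S)\simeq S$. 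Its fibers are contractible, so $\pi_{V*}\mathcal{K}$ is supported on $i(S)$ and equals the pushforward of a local system on $S$ whose stalk is the stalk of $\mathcal{L}$. Hence $\Phi_{\varphi,i}\mathcal{F}$ is a local system. Strictly speaking, $\mathcal{K}$ is conic, so $\mathcal{L}$ is pulled back from $S$, and one only needs that $R\Gamma(\mathbb{R}_{>0},\text{const})$ is the stalk.

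For the stalk identification, I will run the same argument on the fiber $M_s$, with $\Lambda_s$ and $\varphi_s$. The transversality hypothesis makes $C_{\rho(s)}$ a single open ray. This gives
\[
\Phi_{\varphi_s,i(s)}\mathcal{F}_s\simeq \mu\mathrm{hom}(\mathbf{1}_{\{\varphi_s\ge0\}},\mathcal{F}_s)_{\rho(s)}.
\]
It then remains to compare $\mu\mathrm{hom}(\mathbf{1}_{\{\varphi\ge0\}},\mathcal{F})$ at $\rho(s)$ with the fiberwise $\mu\mathrm{hom}$ at $v_s$. I would do this using the non-characteristic restriction to $M_s$: $\Lambda$ maps isomorphically to $T^*_SM$ near $\Gamma_\rho$, and $\Gamma_\rho$ is in the smooth locus of $\Lambda\to S$, so $M_s\hookrightarrow M$ is non-characteristic for both sheaves near $i(s)$. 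Then \cite[Prop. 6.?]{kashiwara1990sheaves}, compatibility of $\mu\mathrm{hom}$ with non-characteristic pullback, identifies the restriction to the fiber with the fiberwise $\mu\mathrm{hom}$ up to the shift by the codimension, which matches the shift convention in $\Gamma_{\{\varphi\ge0\}}$. An alternative is to compare $i_s^*\Gamma_{\{\varphi\ge0\}}\mathcal{F}$ with $\Gamma_{\{\varphi_s\ge0\}}i_s^*\mathcal{F}$ directly, via the non-characteristic base change $i_s^*k_*k^!\simeq k_{s*}k_s^!i_s^*$ \cite[Prop. 5.4.13]{kashiwara1990sheaves}. That route is possibly cleaner and I would try it first.

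The main obstacle is this last step. Non-characteristicness of $M_s$ is needed for $\{\varphi\ge0\}$ and $\mathcal{F}$ simultaneously, that is, for $\Lambda+\Lambda'_\varphi$-type cones. This works only in a neighborhood of $i(s)$, so the local statements must be cut down to $V$ carefully. A shift bookkeeping issue also arises.
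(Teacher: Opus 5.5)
Your local-constancy argument is the paper's: $\mu\mathrm{hom}(\mathbf{1}_{\{\varphi\ge0\}},\mathcal{F})$ is supported on $C_\rho$ with microsupport in $T^*_{C_\rho}$, so it is a local system on the $\mathbb{R}_{>0}$-bundle $C_\rho\to S$, and you then push forward and apply Lemma~\ref{estimates}. Your preferred route~B for the stalks is also the paper's route. The gap is the step you defer as ``the main obstacle.'' You rightly sense that a cone of type $\Lambda+\Lambda'_\varphi$ must be non-characteristic, but you give no argument for it. The reason you do state---that $g_s\colon M_s\hookrightarrow M$ is non-characteristic for $\mathcal{F}$ and for $\mathbf{1}_{\{\varphi\ge0\}}$ separately---does not imply it.

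Base change for $\Gamma_{\{\varphi\ge0\}}\mathcal{F}=\mathcal{H}om(\mathbf{1}_{\{\varphi\ge0\}},\mathcal{F})$ comes from $g_s^!\mathcal{H}om(A,B)\simeq\mathcal{H}om(g_s^*A,g_s^!B)$ together with $g_s^!\simeq g_s^*[-\dim S]$, applied to \emph{both} $\mathcal{F}$ and $\Gamma_{\{\varphi\ge0\}}\mathcal{F}$. So you need $g_s$ to be non-characteristic for $\mathrm{SS}(\Gamma_{\{\varphi\ge0\}}\mathcal{F})$. The two shifts cancel, so no shift bookkeeping arises. Separate non-characteristicness does not give this. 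Take $\mathbb{R}^2\to\mathbb{R}$, $(x,t)\mapsto t$. The sheaves $\mathbf{1}_{\{x\ge t\}}$ and $\mathbf{1}_{\{x+t\ge0\}}$ are both non-characteristic along $\{t=0\}$. Yet $\Gamma_{\{x+t\ge0\}}\mathbf{1}_{\{x\ge t\}}$ has zero stalk at the origin, while $\Gamma_{\{x\ge0\}}\mathbf{1}_{\{x\ge0\}}$ has stalk $e$ at $0$. The culprit is the characteristic covector $(dx-dt)-(dx+dt)=-2\,dt$. The same example shows that $\mu\mathrm{hom}$ does not commute with restriction to $M_s$ under separate non-characteristicness, so route~A would need the same input.

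The paper closes the gap using the hypotheses on $C_\rho$. Over a neighborhood of $i(S)$, Lemma~\ref{ssforhyper} and the bound $\mathrm{SS}(\mathcal{H}om(G,F))\subset\mathrm{SS}(F)\,\widehat{+}\,\mathrm{SS}(G)^a$ of \cite{kashiwara1990sheaves} give $\mathrm{SS}(\Gamma_{\{\varphi\ge0\}}\mathcal{F})\subset\Lambda\cup\bigl(\Lambda\,\widehat{+}\,(\Lambda'_\varphi)^a\bigr)$. The paper writes $\Lambda\,\widehat{+}\,\Lambda'_\varphi$; since $\Lambda=\Lambda^a$ the two differ by the antipodal map, which does not affect non-characteristicness.

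For the finite part, suppose $v_1\in\Lambda$, $v_2\in(\Lambda'_\varphi)^a$, and $v_1+v_2$ is pulled back from $S$. Then in $T^*_SM$ one gets $\bar v_1=-\bar v_2\in\overline{\Lambda}\cap\overline{\Lambda'_\varphi}$. Near $i(S)$ this set is the bijective image of $\Lambda\cap\Lambda'_\varphi=C_\rho$ (Lemma~\ref{intersectionofcones}). Injectivity of $\Lambda$ and of $\Lambda'_\varphi$ into $T^*_SM$ then forces $v_1=-v_2$.

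For the part at infinity, cleanness of $\Lambda\cap\Lambda'_\varphi$ (Lemma~\ref{conesareclean}) places $\Lambda\,\underset{\infty}{\widehat{+}}\,(\Lambda'_\varphi)^a$ inside $T^*_{i(S)}M$. This meets the image of $T^*S$ only in the zero section, because $i$ is a section. Hence $g_s^*\Gamma_{\{\varphi\ge0\}}\mathcal{F}\simeq\Gamma_{\{\varphi_s\ge0\}}\mathcal{F}_s$ near $i(s)$, and taking stalks at $i(s)$ gives the claim.

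In the example above, it is exactly the bijectivity of $\Lambda\cap\Lambda'_\varphi\to\overline{\Lambda}\cap\overline{\Lambda'_\varphi}$ that fails. There $dx$ lies in both relative images at the origin, with different lifts $dx-dt$ and $dx+dt$.
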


\begin{proof}
    To see that $\Phi_{\varphi,i}\mathcal{F}$ is a local system on $S$, note that the first statement of Lemma \ref{estimates} implies that the sheaf $\mu \mathrm{hom}(\mathbf{1}_{\{\varphi\ge 0\}},\mathcal{F})|_{T^*V\setminus V}$ is pushed forward from a local system on $C_\rho$. Now $C_\rho$ is a $\mathbb{R}_{> 0}$-fiber bundle over $S$, so the further pushforward along $\pi_V:T^*V\to V$ is a local system supported on $i(S)$. Thus the local constancy of $\Phi_{\varphi,i}\mathcal{F}$ follows from the third statement of Lemma \ref{estimates}.
    
    To prove the identification of the stalks, we start by proving that $\Gamma_{\{\varphi\ge 0\}} \mathcal{F}$ is non-characteristic with respect to the map $g_s:M_s\to M$ in a neighborhood of $i(s)$. Lemma \ref{ssforhyper} together with \cite[Remark 6.2.5]{kashiwara1990sheaves} imply that
    \[SS(\Gamma_{\{\varphi\ge 0\}} \mathcal{F})\cap T^*V\setminus V\subset \Lambda\widehat{+} \Lambda_\varphi'=(\Lambda+ \Lambda_\varphi')\cup(\Lambda\underset{\infty}{\widehat{+}} \Lambda_\varphi')\]
    in a neighborhood $V$ of $i(S)$ (see \cite[Definition 6.2.3]{kashiwara1990sheaves} for the relevant notation). Since $d\varphi_s(x)\neq 0$ for all $x$ in a neighborhood of $i(S)$, we may assume that $\Lambda_\varphi'$ maps bijectively to its image in the relative cotangent space in a neighborhood of $i(S)$.

    To see that  $\Lambda+ \Lambda_\varphi'$ is non-characteristic over $S$, suppose $(x,v_1)\in \Lambda$ and $(x,v_2)\in \Lambda_\varphi'$ are such that $(x,v_1+v_2)$ is pulled back from a cotangent vector $(f(x),w)$ of $S$. Since $(x,v_1+v_2)$ is pulled back from $S$, its image in the relative cotangent space of $M$ over $S$ is $0$. Let $(x,\overline{v}_i)$ denote the image of $(x,v_i)$ in the relative cotangent space, and let $\overline{\Lambda}, \overline{\Lambda}_\phi$ denote the respective images of $\Lambda$, $\Lambda_\phi$ in the relative contangent space. Then $\overline{v}_1=-\overline{v}_2$. Since $\Lambda$ is stable under involution, this implies that $\overline{v}_2\in \overline{\Lambda}\cap \overline{\Lambda}_\varphi'$. Since $\Lambda$ and $\Lambda_\varphi'$ map bijectively to their image in the relative cotangent space and $\Lambda\cap \Lambda_\varphi'$ maps bijectively to $\overline{\Lambda}\cap \overline{\Lambda}_\varphi'$, it follows that $v_1=-v_2$. Thus $(x,v_1+v_2)=(x,0)$ and hence $\Lambda+\Lambda_\varphi'$ is non-characteristic over $S$.

    For the non-characteristicness of $(\Lambda\underset{\infty}{\widehat{+}} \Lambda_\varphi')$ over $S$, it suffices to see that
    \[(\Lambda\underset{\infty}{\widehat{+}} \Lambda_\varphi')\subset T^*_{i(S)} M.\]
    This follows from [KS] Proposition 6.2.4(ii) and the same local computation as in Lemma \ref{ssforclean} using the cleanness of the intersection of $\Lambda$ and $\Lambda'_\varphi$.

    By the non-characteristicness of $\Gamma_{\{\varphi\ge 0\}}\mathcal{F}$ and $\mathcal{F}$ relative to $g_s$, it follows that
    \[g_s^*\Gamma_{\{\varphi\ge 0\}} \mathcal{F}\simeq g_s^*\mathcal{H}om(\mathbf{1}_{\{\varphi\ge 0\}},\mathcal{F})\simeq \mathcal{H}om(g_s^*\mathbf{1}_{\{\varphi\ge 0\}},g_s^*\mathcal{F})\simeq \mathcal{H}om(\mathbf{1}_{\{\varphi_s\ge 0\}},\mathcal{F}_s)\]
    Thus we conclude the statement of the proposition by taking the stalk at $i(s)$.
\end{proof}

\subsection{The contraction principle in families}
Our goal is to show that the Whittaker functional is a relative microstalk in the sense defined in the previous section. For a curve over a point, this was proved in \cite{nadler2022whittaker}. We follow the same strategy in the relative setting.

In this subsection we prove the relative version of the contraction principle for vanishing cycles.

\subsubsection{The setup} Let $M\to S$ be a smooth morphism of complex analytic manifolds equipped with a $\mathbb{G}_m$-action relative to $S$. Let $M^0$ denote the fixed points of the $\mathbb{G}_m$-action and $i:S\to M^0$ be a section. Let $a:M^{> 0}\to M$ be the attracting locus of $i(S)$ and let $M^{\le 0}$ denote the global repelling locus. Assume that $M^{>0}$ and $M^{\le 0}$ are smooth over $S$ and that their intersection is $i(S)$.

Fix a $\mathbb{G}_m$-equivariant holomorphic function $f:M^{>0}\to \mathbb{C}$, where $\mathbb{G}_m$ acts on $\mathbb{C}$ by some non-zero weight. Since $i$ takes values in the fixed point locus, it follows that $f\circ i=0$. For a $\mathbb{G}_m$-monodromic sheaf $\mathcal{F}$ on $M$, we wish to compare the vanishing cycles of $a^!\mathcal{F}$ along $f$ at $i(S)$ to a direct vanishing cycles of $\mathcal{F}$ with respect to a function $F:U\to\mathbb{R}$ defined in a neighborhood $U$ of $i(S)$ extending $\operatorname{Re}f$ on $M^{>0}\cap U$.

\subsubsection{Coordinates and the extension of $f$}

The function $F$ will be constructed by choosing coordinates for $M$ diagonalizing the $\mathbb{G}_m$-action. In an open neighborhood $U$ around $i(S)$, we may choose coordinates 
\[(y^{>0}_S,y^0_S,y^{<0}_S):U\hookrightarrow \mathbb{A}_S^{n_{>0}}\times_S \mathbb{A}^{n_0}_S\times_S \mathbb{A}^{n_{<0}}_S\] 
such that the map is an open immersion and the $\mathbb{G}_m$-action is given locally by the standard $\mathbb{G}_m$-action with negative weights on the first factor, zero weight on the second factor and positive weights on the last factor. We may further assume that $i(S)$ is sent to the zero-section. Let $y^{>0}$ denote the composition 
\[U\overset{y^{>0}_S}{\to} \mathbb{A}^{n_{\ge 0}}_S\to \mathbb{A}^{n_{\ge 0}}_{\mathbb{C}},\] 
we use similar notation for the other coordinates.

With respect to these coordinate functions, we may define a function $d_{>0}:U\to \mathbb{R}$ (respectively, $d_0$ and $d_{<0}$) by pulling back the function on $\mathbb{A}^{n_{\ge 0}}_{\mathbb{C}}$ (respectively, $\mathbb{A}^{n_{\ge 0}}_{\mathbb{C}}$ and $\mathbb{A}^{n_{\ge0}}_{\mathbb{C}}$) which sends a point to the square of its distance from the origin in the standard metric. Let $\pi:U\to M^{>0}\cap U$ denote the local projection to the attracting locus in the given coordinates.

Given these choices, we define the function $F:U\to \mathbb{R}$ by
\[F=\operatorname{Re}f\circ \pi-d_0-d_{<0}.\]
This extends $\on{Re}f$ on the attracting locus in $U$, since $i(S)$ is mapped to the zero-section.

\subsubsection{The contraction principle}

Having chosen coordinates and obtained the extension $F$, we obtain a diagram as below, in which the square is cartesian.
\[
\begin{tikzcd}
S \arrow[r, "i"] & M^{>0} \arrow[r, "a"]                      & M                         \\
                 & \{\on{Re} f\ge 0\} \arrow[u, "k'"] \arrow[r, "a'"] & \{F\ge 0\} \arrow[u, "k"]
\end{tikzcd}\]
From this we obtain a natural transformation
\[\Phi_{f,i}a^!:= i^*k'_!k'^!a^!\simeq i^*a^*a_!k'_!k'^!a^!\simeq i^*a^*k_!a'_!a'^!k^!\to i^*a^*k_!k^!\simeq: \Phi_{F,i}.\]

We may now prove the following proposition, which we henceforth refer to as the \emph{contraction principle}.

\begin{prop}\label{contractionprinciplevanishing}
    For any $\mathbb{G}_m$-monodromic sheaf $\mathcal{F}$ on $M$ with singular support in a Lagrangian cone $\Lambda$, the natural map
    $\Phi_{f,i}a^!\mathcal{F}\to \Phi_{F,i}\mathcal{F}$
    is an equivalence of sheaves on $S$.
\end{prop}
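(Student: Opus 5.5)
The plan is to compare both sides through an explicit cofiber and kill that cofiber with a relative contraction lemma for $\mathbb{G}_m$-monodromic sheaves. This mirrors the argument of \cite{nadler2022whittaker}, adapted to work uniformly over $S$.

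\textbf{Step 1 (isolate the cofiber).} Let $j:\{F\ge 0\}\setminus a'(\{\operatorname{Re} f\ge 0\})\into \{F\ge 0\}$ be the open complement of the closed subset $a'(\{\operatorname{Re} f\ge 0\})=\{F\ge0\}\cap M^{>0}$. The counit $a'_!a'^!\to \on{id}$ sits in the triangle $a'_!a'^!\to \on{id}\to j_*j^*$. Hence the cone of $\Phi_{f,i}a^!\mathcal{F}\to \Phi_{F,i}\mathcal{F}$ is identified with
\[
i^*a^*k_!\,j_*j^*k^!\mathcal{F}.
\]
So it suffices to show this sheaf on $S$ vanishes. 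Set $\mathcal{G}:=k_!j_*j^*k^!\mathcal{F}$, a sheaf on $U$ supported on $\{F\ge0\}$.

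\textbf{Step 2 (monodromicity and the contraction lemma).} The $\mathbb{G}_m$-action restricts to an $\bR_{>0}$-action. In the chosen coordinates this action scales $y^{>0}$ towards $0$ as $t\to\infty$ and $y^{<0}$ towards $0$ as $t\to 0$. Since $\varphi$-type functions scale, $\operatorname{Re}f\circ\pi$ and $d_0+d_{<0}$ are each homogeneous along orbits. This homogeneity lets us replace $U$ by a $\bR_{>0}$-stable neighborhood and the region $\{F\ge0\}$ by a cone on which $\mathcal{G}$ is still $\bR_{>0}$-monodromic up to a harmless reparametrization. For example, we may deform $F$ within the family $\operatorname{Re}f\circ\pi-c(d_0+d_{<0})$, $c>0$, which does not change the stalks along $i(S)$ by non-characteristic deformation, using $\on{SS}(\mathcal{F})\subset\Lambda$.

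Next we apply the relative version of the contraction (Braden / hyperbolic localization) lemma to the flow contracting onto $i(S)$ along the repelling directions. This gives
\[
i^*a^*\mathcal{G}\simeq i^*\,p^{-}_*\bigl(\mathcal{G}|_{M^{\le0}}\bigr),
\]
where $p^-:M^{\le 0}\to S$ is the contraction. The singular-support hypothesis ensures that all sheaves involved are weakly constructible and $\bR$-constructible along the flow, which is what the contraction lemma needs. The lemma is proved over $S$ exactly as in the absolute case, since all coordinates are relative to $S$.

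\textbf{Step 3 (the support computation).} On $M^{\le 0}\cap U$ we have $\pi=$ (the projection to $M^{>0}$) landing in $i(S)$. Therefore $f\circ\pi=0$ there, and $F=-d_0-d_{<0}$. Consequently $\{F\ge0\}\cap M^{\le0}=i(S)$, and this set is contained in $a'(\{\operatorname{Re}f\ge0\})$. So $M^{\le 0}$ does not meet the open set $\{F\ge 0\}\setminus a'(\{\operatorname{Re}f\ge0\})$ on which $j^*$ lives. One still has to check that $j_*$ does not create support on $M^{\le0}$: near $i(S)$ the boundary point of that open set is approached only through $M^{>0}$-directions. This is where the $\bR_{>0}$-homogeneity from Step 2 is used again: we compute $(k_!j_*j^*k^!\mathcal{F})|_{M^{\le0}}$ via stalks along orbits and see that it vanishes. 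Hence $\mathcal{G}|_{M^{\le 0}}=0$, so $i^*a^*\mathcal{G}=0$ and the map is an equivalence.

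\textbf{Main obstacle.} I expect the hard part to be Step 2 and the final vanishing in Step 3, because $F$ is not honestly $\mathbb{G}_m$-invariant: the two pieces of $F$ scale with different weights. The set $\{F\ge 0\}$ is therefore not a cone, so the contraction lemma cannot be applied naively. My first attempt would be to use the singular-support bound (the cleanness and non-characteristic estimates of the previous subsection) to show that $\Phi_{F_c,i}\mathcal{F}$ is independent of $c>0$ and of shrinking $U$. Granting that, one can pass to the limit $c\to 0$ along orbits, where the relevant region becomes monodromic.
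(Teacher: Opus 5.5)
Your Step~1 is correct and amounts to the paper's reduction. Unwinding stalks gives $(i^*a^*\mathcal{G})_s\simeq\operatorname{colim}_B\on{Hom}(\mathbf{1}_{B\cap W},\mathcal{F})$, where $W=\{F\ge 0\}\setminus M^{>0}$, $B$ runs over small neighbourhoods of $i(s)$, and $\mathbf{1}_{B\cap W}$ is extended by zero. In other words, the cofiber is corepresented by the constant sheaf on the difference region.

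The vanishing of this is the entire content of the proposition, and Steps~2--3 do not prove it. Several claims there are wrong:
\begin{itemize}
\item The function $d_0+d_{<0}$ is not homogeneous, as you concede at the end.
\item The flow retracts $M^{\le 0}$ onto $M^0$, which is in general larger than $i(S)$ because it contains the $y^0$-directions. In any case $i^*\mathcal{G}$ only involves stalks at $i(S)$, so passing to $\mathcal{G}|_{M^{\le0}}$ and invoking hyperbolic localization gains nothing.
\item The assertion that the boundary is approached only through $M^{>0}$-directions is false. $W$ consists precisely of points off $M^{>0}$, with $\operatorname{Re}f\circ\pi\ge d_0+d_{<0}>0$, accumulating at $i(S)$. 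Showing that these points contribute nothing to the stalk is the whole point.
\item The non-characteristic deformation in $c$ would need transversality of $\Gamma_{dF_c}$ to $\Lambda$. This is not available for arbitrary diagonalizing coordinates, and the proposition only uses monodromicity.
\item The fallback $c\to 0$ fails, because $F_0=\operatorname{Re}f\circ\pi$ computes a different functor. Take $S=\mathrm{pt}$, $M=\mathbb{C}^2$, $t\cdot(u,v)=(tu,t^{-1}v)$, $f=u$, and $\mathcal{F}$ the constant sheaf on $\{u=0\}$. Then $\Phi_{f,i}a^!\mathcal{F}\simeq\Phi_{F_c,i}\mathcal{F}\simeq e[-2]$ for every $c>0$, but $\Phi_{F_0,i}\mathcal{F}\simeq e$.
\end{itemize}

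The missing idea is that $\{F\ge0\}$ need not be a cone; what matters is that $F$ is monotone along orbits. Orient the $\bR_{>0}$-action so that $M^{>0}$ contracts onto $i(S)$ as $t\to 0$. Then $f$ has weight $w>0$, $d_0$ is invariant, and $t\mapsto d_{<0}(t\cdot x)$ is non-increasing.

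For $x\in W$ one has $\operatorname{Re}f(\pi x)>0$. Hence $t\mapsto F(t\cdot x)=t^{w}\operatorname{Re}f(\pi x)-d_0(x)-d_{<0}(t\cdot x)$ is strictly increasing, and the orbit meets $\{F\ge 0\}$ in a closed half-line.

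Now take $B$ to be a polydisc whose radius in the $y^{>0}$-directions is small compared with the others. This ensures $F<0$ on the parts of $\partial B$ where $y^0$ or $y^{<0}$ reaches its maximal radius. Then each orbit meets $B\cap W$ in a half-open interval $[t_0,t_1)$ or not at all.

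Since $\mathcal{F}$ is locally constant along orbits, and the extension by zero of the constant sheaf on a half-open interval has no nonzero maps to a local system on the line, $\on{Hom}(\mathbf{1}_{B\cap W},\mathcal{F})=0$. This is the paper's argument, which follows Nadler--Taylor fiberwise over $S$.
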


\begin{proof}
    By arguing as in \cite{nadler2022whittaker}, the functor $\Phi_{f,i}a^!(-)$ is corepresented by the $!$-pushforward of $\mathbf{1}$ from $U:=B\cap \{\mathrm{Re} f\ge 0\}\cap M^{> 0}$ to $M$, where $B$ is any sufficiently small tubular neighborhood of $i(S)$ in $M$. On the other hand, $\Phi_{F,i}(-)$ is corepresented by the $!$-pushforward of $\mathbf{1}$ from $V:=B\cap \{F\ge 0\}$, where $B$ is any sufficiently small tubular neighborhood of $i(S)$ in $M$.

   From this we conclude that the cofiber of the natural transformation $\Phi_{f,i}a^!(-)\to \Phi_{F,i}(-)$ is corepresented by the $!$-pushforward of $\mathbf{1}$ from $U\setminus V$ to $M$. By arguing as in \cite{nadler2022whittaker} in each fiber over $S$, we conclude that each orbit of the $\mathbb{R}_{>0}$-action intersects $U\setminus V$ either in a half-open interval or has empty intersection with $U\setminus V$. From this we conclude that $\mathrm{Map}(\mathbf{1}_{U\setminus V},\mathcal{F})\simeq 0$ for any $\mathbb{R}_{>0}$-equivariant sheaf $\mathcal{F}$ on $M$, hence the natural map is an equivalence for such sheaves.
\end{proof}

\subsubsection{Well-chosen coordinates} The contraction principle applies to any extension $F$ obtained by a choice of local coordinates. However, in applying the contraction principle, we want to choose the coordinates such that $\Psi_{F,i}\mathcal{F}$ computes a shifted relative microstalk of $\mathcal{F}$.

Assume that $\Lambda\subset T^*M$ intersects $df+T^*_{M^{>0}}M$ cleanly, that $\Lambda \cap (df+T^*_{M^{>0}}M)$ maps submersively to $S$ and that
\begin{align}\label{dimensionassumption}
\mathrm{dim}(\Lambda_s \cap (df_s+T^*_{M^{>0}_s}M_s))\le \text{dim }M_s^{\le 0}
\end{align}
for all $s\in S$.

\begin{lem}
    \label{liftofsection}
    The section $df|_{i(S)}:i(S)\to T^*M^{>0}$ lifts uniquely to a section $\rho:i(S)\to T^*M$ that maps to $0$ in $T^*M^{\le 0}$.
\end{lem}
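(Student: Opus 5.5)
This is pointwise linear algebra along $i(S)$. Fix $s\in S$ and set $x=i(s)$. Along $i(S)$ the tangent space splits by $\mathbb{G}_m$-weights as
\[
T_xM=T_xM^{>0}\oplus T_x^{0}\oplus T_x^{<0}.
\]
Here $T_xM^{>0}$ is the sum of the attracting weight spaces, while $T_x^{0}\oplus T_x^{<0}$ is the sum of the zero and repelling weight spaces. By assumption $M^{>0}$ and $M^{\le 0}$ are smooth, and their intersection is $i(S)$. I plan to show that this forces
\[
T_xM^{\le 0}=T_xS\oplus T_x^{<0},
\]
where $T_xS$ is the tangent space to the section $i(S)$, and also $T^0_x=T_xS$. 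This uses that the fixed locus near $i(S)$ is exactly $i(S)$ inside both loci, since $M^{>0}\cap M^{\le 0}=i(S)$, and that the tangent space of a fixed-point component is the weight-zero part. Consequently $T_xM=T_xM^{>0}+T_xM^{\le 0}$, and
\[
T_xM^{>0}\cap T_xM^{\le 0}=T_x(i(S)).
\]
In the local coordinates $(y^{>0}_S,y^0_S,y^{<0}_S)$ chosen above, this is visible directly. $M^{>0}$ is $\{y^0=y^{<0}=0\}$, $M^{\le 0}$ is $\{y^{>0}=0\}$, and $i(S)$ is the zero section. I would phrase the argument in these coordinates to avoid weight-space subtleties.

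Dually, we want a covector $\rho(x)\in T^*_xM$ that restricts to $df(x)$ on $T_xM^{>0}$ and to $0$ on $T_xM^{\le 0}$. Since the two subspaces span $T_xM$, such a covector is unique if it exists. For existence we need compatibility on the intersection: $df(x)$ must vanish on $T_x(i(S))$. This holds because $f\circ i=0$, which we observed from $\mathbb{G}_m$-equivariance with non-zero weight and the fact that $i$ lands in the fixed locus. Therefore the functional defined as $df(x)$ on $T_xM^{>0}$ and $0$ on $T_xM^{\le 0}$ is well defined on the sum. In coordinates, $\rho$ is simply $d(f\circ\pi)|_{i(S)}$, which equals $dF|_{i(S)}$ because the terms $d_0$ and $d_{<0}$ are squared norms and so vanish to second order at the zero section.

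Smoothness of the resulting map $i(S)\to T^*M$ follows from the coordinate description, or abstractly from the fact that the splitting varies smoothly along $i(S)$. The main point to check carefully is the transversality claim $T_xM=T_xM^{>0}+T_xM^{\le 0}$. In particular we need that no additional weight-zero directions survive beyond $T_xS$. I would first try to read this off from the coordinate chart, whose existence is already assumed in the setup. If that is insufficient, I would fall back on the Białynicki-Birula-type description of attracting and repelling loci as the positive and non-positive weight parts of the normal bundle.
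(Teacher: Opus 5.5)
Your argument is the paper's argument. The paper asserts that the square formed by $T^*_{i(s)}M$, $T^*_{i(s)}M^{>0}$, $T^*_{i(s)}M^{\le 0}$ over $T^*_sS$ is Cartesian, and then lifts the pair $(df(i(s)),0)$, which is compatible because $f\circ i=0$. Dually, this is your statement that $T_xM^{>0}$ and $T_xM^{\le 0}$ span $T_xM$ (giving uniqueness) and meet in $T_x\,i(S)$ (giving existence). The paper cites only cleanness, which by itself gives just the intersection condition. The spanning condition, needed for uniqueness, is exactly what your coordinate description supplies.

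One piece of your weight-space bookkeeping is wrong, and it is the piece you singled out as the main point to check. It is not true in general that $T^0_x=T_xS$, or that $T_xM^{\le 0}=T_xS\oplus T^{<0}_x$.
\begin{itemize}
\item The hypothesis $M^{>0}\cap M^{\le 0}=i(S)$ only says that the fixed points inside $M^{>0}$ are $i(S)$.
\item The fixed locus $M^0$ may be strictly larger than $i(S)$. Since $M^{\le 0}$ is the \emph{global} repelling locus, it contains all of $M^0$ near $x$.
\item In the application, $M^0\supseteq \Bun_T^{\Omega,nx}(X/S)$, whose relative tangent space $H^1(X_s,\mathfrak{t}(-nx_s))$ is nonzero. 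Your own chart shows the same thing: $M^{\le 0}=\{y^{>0}=0\}$ contains the $y^0$-directions.
\end{itemize}
So the claim ``no weight-zero directions survive beyond $T_xS$'' would fail if you tried to prove it.

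Fortunately it is not needed. Write $T_xM=T^{\mathrm{att}}_x\oplus T^0_x\oplus T^{\mathrm{rep}}_x$, where $T^0_x$ is the \emph{full} weight-zero part, so $T_xS\subseteq T^0_x$. Then $T_xM^{>0}=T_xS\oplus T^{\mathrm{att}}_x$ and $T_xM^{\le 0}=T^0_x\oplus T^{\mathrm{rep}}_x$. These visibly span $T_xM$ and intersect in $T_xS$. Drop the false intermediate claims and keep the coordinate argument, which already uses exactly this and is correct as it stands.
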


\begin{proof}
    Since $M^{>0}$ intersects $M^{\le 0}$ cleanly in a section $i:S\to M$, for any $s\in S$, the following square is Cartesian
    \[
\begin{tikzcd}
T_{i(s)}^*M \arrow[r] \arrow[d] & T^*_{i(s)}M^{\le 0} \arrow[d] \\
T^*_{i(s)}M^{>0} \arrow[r]      & T_s^*S.                       
\end{tikzcd}\]
The function $f$ defines a cotangent vector $df(i(s))$ in $T_{i(s)}^*M^{> 0}$, for any $s\in S$. Since $f\circ i=0$, $df(i(s))$ maps to $0$ in $T^*_sS$, thus it admits a unique lift $\rho(s)$ to $T^*_{i(s)}M$ which maps to $0$ under the map $T^*_{i(s)}M\to T^*_{i(s)}M^{\le 0}$. So $\rho(s)$ defines a section $\rho:S\to df+T^*_{M^{>0}}M$.
\end{proof}

Suppose that the image of $\rho$ is contained in the smooth locus of the morphism $\Lambda\to S$. The goal of this subsection is to prove the following result:

\begin{prop}\label{p:Extending test function}
    In the situation described above, the coordinates of $M$ around $i(S)$ can be chosen such that the intersection between $\Gamma_{dF}$ and $\Lambda$ is the image of $\rho$ and, for all $s\in S$, the intersection between $\Gamma_{dF_s}$ and $\Lambda_s$ is transverse.
\end{prop}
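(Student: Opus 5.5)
The plan is to adapt the Nadler--Taylor construction to the relative setting: modify the quadratic terms in $F=\operatorname{Re}f\circ\pi-d_0-d_{<0}$ by a choice of coordinates adapted to $\Lambda$ along $\rho$. First compute $dF$ along $i(S)$. Since $d_0$ and $d_{<0}$ vanish to second order on $i(S)$, $dF(i(s))$ restricts to $d\operatorname{Re}f(i(s))$ on $M^{>0}$ and to $0$ on $M^{\le 0}$. By Lemma \ref{liftofsection} this forces $dF(i(s))=\rho(s)$ (identifying real and holomorphic covectors as usual). Hence $\Gamma_{dF}\cap\Lambda\supset\rho(S)$. The content of the proposition is therefore twofold: the intersection is no larger near $i(S)$, after shrinking $U$, and it is transverse fiberwise.

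Next, reduce to a linear-algebra statement at each point $\rho(s)$. Transversality of $\Gamma_{dF_s}$ and $\Lambda_s$ at $\rho(s)$ only depends on the Hessian of $F_s$ at $i(s)$ and on the Lagrangian subspace $T_{\rho(s)}\Lambda_s$. Once transversality holds for all $s$, the intersection $\Gamma_{dF}\cap\Lambda$ is a clean intersection of dimension $\dim S$ near $\rho(S)$. It therefore coincides with $\rho(S)$ in a neighborhood, and shrinking $U$ gives the first claim. Here I use that $\rho$ lands in the smooth locus of $\Lambda\to S$, so $\Lambda_s$ is a smooth Lagrangian near $\rho(s)$.

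In the coordinates, the Hessian of $F_s$ is $\operatorname{Hess}(\operatorname{Re}f_s)$ on the attracting directions, minus positive-definite forms $Q_0,Q_{<0}$ on the $y^0$ and $y^{<0}$ directions. These forms are determined by the chosen metrics, which can be replaced by any positive-definite Hermitian forms. The freedom I will exploit is to choose the coordinates $y^0_S,y^{<0}_S$ (equivalently the forms $Q_0,Q_{<0}$, varying smoothly in $s$) so that the graph of $-\operatorname{Hess}$ on $T_{i(s)}M^{\le0}$ is transverse to $T_{\rho(s)}\Lambda_s$. Since the repelling directions span $T M^{\le 0}_s$, this is a condition on a linear Lagrangian in $T^*_{i(s)}M^{\le0}_s\times(\text{attracting part})$.

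The key input is the clean intersection hypothesis together with the dimension bound (\ref{dimensionassumption}). Cleanness shows that $T_{\rho(s)}\Lambda_s\cap T_{\rho(s)}(df_s+T^*_{M^{>0}_s}M_s)$ is the tangent space of the intersection. The bound says this space has dimension at most $\dim M^{\le0}_s$, so after reduction by the coisotropic $df_s+T^*_{M^{>0}_s}M_s$ it projects to a Lagrangian subspace $L_s$ of $T^*M^{\le 0}_s$ at the origin. Transversality then reduces to choosing a negative-definite quadratic form on $T_{i(s)}M^{\le0}_s$ whose graph is transverse to $L_s$.

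The main obstacle is this step, since a negative-definite form is needed rather than an arbitrary one, and it must be chosen continuously in $s$. I would first try to show that the set of forms whose graph is transverse to $L_s$ is open dense in the negative-definite cone. Its complement is a proper algebraic hypersurface: transversality fails iff $\det(\ldots)=0$, and this determinant is not identically zero on the cone because $-tQ$ is transverse for $t\gg0$ unless $L_s$ contains vertical directions, a case handled by scaling. Openness then gives a local choice. Any compatibility with the attracting Hessian is absorbed by the cleanness assumption, and a partition of unity over $S$ is not available for transversality conditions in general. So the cleanest route is the large-scaling choice $Q=t\cdot\mathrm{std}$ with $t$ a smooth function of $s$ large enough locally, which I would verify works uniformly on compacta and hence globally after a further choice of $t(s)$.
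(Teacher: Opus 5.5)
Your reduction of the first claim to fiberwise transversality and your computation $dF(i(s))=\rho(s)$ via Lemma \ref{liftofsection} are fine. The gap is in the transversality step.

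You only vary the forms on the $y^0,y^{<0}$ directions. This leaves the projection $\pi$ unchanged, since $\pi$ depends only on $y^{>0}_S$. Because $d_0+d_{<0}$ vanishes to second order along $M^{>0}$, the restriction $dF|_{M^{>0}}=d(\operatorname{Re}f\circ\pi)|_{M^{>0}}$ is a section of $df+T^*_{M^{>0}}M$ that does not depend on $Q$. Hence the image $I_s$ of $T_{i(s)}M^{>0}$ under its differential lies in $T_{\rho(s)}\Gamma_{dF}$ for every $Q$.

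Transversality therefore forces (fiberwise) $I_s\cap T_{\rho(s)}\Lambda=T_sS$. If this fails for your initial coordinates, no negative-definite form can repair it. The correct reduction is by the isotropic subspace $I_s\subset T\Gamma_{dF}$: Lagrangians $L_1\supset I$ and $L_2$ are transverse iff $I\cap L_2=0$ and their reductions in $I^\perp/I$ are transverse. You only treat the second condition.

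The first condition can genuinely fail, and it is not ``absorbed by cleanness.'' Take $S=\mathrm{pt}$ and $M=\mathbb{C}^3$ with coordinates $(a_1,a_2,r)$ of weights $(1,2,-1)$, and let $f=a_1$ and $\Lambda=T^*_{\{a_1=0\}}M$.
\begin{itemize}
\item Then $\Lambda\cap(df+T^*_{M^{>0}}M)=\{a_1=r=0,\ \xi=da_1\}$ is clean of dimension $\dim M^{\le 0}$, so all hypotheses hold.
\item In the linear coordinates, $F=\operatorname{Re}a_1-Q(r)$ meets $\Lambda$ along that whole line, for every $Q$.
\item Pass instead to the equivariant coordinates $y^{>0}=(a_1+a_2r,a_2)$, $y^{<0}=r$, which changes $\pi$. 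This gives $F=\operatorname{Re}(a_1+a_2r)-|r|^2$, which meets $\Lambda$ only at $\rho$, and transversally.
\end{itemize}

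This is exactly the role of Lemma \ref{l:extending tangent space} in the paper. One first chooses an equivariant section $\gamma$ of $df+T^*_{M^{>0}}M$ with $I_s\cap T_{\rho(s)}\Lambda=T_sS$. This is possible by the rank count
\[
\dim M^{>0}_s+\dim\bigl(\Lambda_s\cap(df_s+T^*_{M^{>0}_s}M_s)\bigr)\le \dim M_s,
\]
which is precisely what cleanness and the bound (\ref{dimensionassumption}) provide. One then picks $\pi_{M^{>0}}$ with $\pi_{M^{>0}}^*df|_{M^{>0}}=\gamma$. Only afterwards is the negative-definite part along $M^{\le 0}$ chosen, via $\delta$, $D$ and $\pi_{M^{\le 0}}$, transverse to $I_s+T_{\rho(s)}\Lambda$ modulo $T_sS$; that later step is the one your argument addresses.

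By contrast, you spend the dimension bound on showing that the reduced space $L_s$ is Lagrangian. That holds automatically, since the reduction of a Lagrangian is Lagrangian. Also, $df_s+T^*_{M^{>0}_s}M_s$ is Lagrangian, not coisotropic.

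Finally, your choice $Q=t\cdot\mathrm{std}$ with $t\gg 0$ only works when the reduced Lagrangian is transverse to the vertical subspace. Otherwise you need a genuine genericity argument rather than scaling.
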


Let $\mathbb{G}_m$ act on $T^*M$ by a combination of the $\mathbb{G}_m$-action induced from $M$ and the scaling action in the fibers such that $\rho$ is fixed under the combined action. Note that $df+T^*_{M^{>0}}M$ is stable under the $\mathbb{G}_m$-action. We start by choosing the section that is going to become $\Gamma_{dF}|_{M^{>0}}$.

\begin{lem}\label{l:extending tangent space}
    For every point $i(s)\in i(S)$, there exists a neighbourhood $i(s)\in U_s$ and a $\mathbb{G}_m$-equivariant section $\gamma: U_s\to (df+T^*_{M^{>0}}M)\vert_{U_s}$ such that $\gamma\circ i=\rho$ and for any $s\in S$, the image $I_s$ of the induced map $T_{i(s)}M^{>0}\to T_{\rho(s)}T^*M$ satisfies
    \begin{align}\label{transversality}
        I_s\cap T_{\rho(s)}\Lambda=T_{s} S,
    \end{align}
    where $T_sS$ sits inside $T_{\rho(s)}T^*M$ via the section $\rho$.
\end{lem}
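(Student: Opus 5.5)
The plan is to reduce to linear algebra at the point $\rho(s)$ and then spread the resulting choice over $S$ using $\mathbb{G}_m$-equivariance. Work in the coordinates $(y^{>0}_S,y^0_S,y^{<0}_S)$ near $i(s)$. In these coordinates $df+T^*_{M^{>0}}M$ restricted to $M^{>0}$ is an affine bundle over $M^{>0}$ modelled on the conormal bundle $T^*_{M^{>0}}M$. So a section $\gamma$ over $M^{>0}$ with $\gamma\circ i=\rho$ is the same as a section $\eta$ of $T^*_{M^{>0}}M$ satisfying $\eta\circ i=\rho-\widetilde{df}$, where $\widetilde{df}$ is any fixed lift of $df$.

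Equivariance for the combined $\mathbb{G}_m$-action (action on $M$ twisted by fiber scaling, chosen so that $\rho$ is fixed) forces the components of $\eta$ to be weighted-homogeneous functions of $y^{>0}$. On the contracting locus $M^{>0}$ these are polynomials. In particular, $\eta$ is determined to first order at $i(S)$ by a linear map
\[
A_s: T_{i(s)}M^{>0}/T_sS \to (T^*_{M^{>0}}M)_{i(s)}\simeq T^*_{i(s)}M^{\le 0}/T^*_sS,
\]
and we can take $\eta$ to be that linear part, which is automatically equivariant. The image $I_s$ is then the graph of $A_s$, translated by the differential of $\rho$ and of $df$.

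The transversality condition (\ref{transversality}) says that this graph meets $T_{\rho(s)}\Lambda$ only in $T_sS$. The dimension hypothesis (\ref{dimensionassumption}) is what leaves room to achieve this. Consider the clean intersection $\Lambda\cap(df+T^*_{M^{>0}}M)$. By cleanness its tangent space at $\rho(s)$ is $T_{\rho(s)}\Lambda\cap T_{\rho(s)}(df+T^*_{M^{>0}}M)$, and modulo $T_sS$ its fiber dimension is at most $\dim M_s^{\le 0}$.

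The graphs of the maps $A_s$ form a Zariski-open piece of a Grassmannian of subspaces of dimension $\dim M^{>0}_s-\dim S$ inside the tangent space of the affine bundle, modulo $T_sS$. That ambient space has dimension $(\dim M^{>0}_s-\dim S)+(\dim M^{\le0}_s-\dim S)$, up to the relative bookkeeping. A generic such graph meets a fixed subspace of dimension at most $\dim M^{\le 0}_s$ (relative) only in $0$ by a dimension count. For this to go through, that subspace must be complementary in dimension to the graphs, and this is exactly what (\ref{dimensionassumption}) provides. Moreover, the weight constraint imposed by equivariance must not confine $A_s$ to a non-generic locus.

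\emph{Main obstacle.} I expect the key step to be verifying that the equivariance constraint still allows a generic choice. Maps $A_s$ are only allowed between weight spaces of matching (twisted) weight. The first thing I would try is to check that $T_{\rho(s)}\Lambda$ is $\mathbb{G}_m$-stable, since $\Lambda$ is conic and $\mathbb{G}_m$-monodromic. With that, the transversality problem decomposes weight by weight, and the dimension count can be run separately in each weight space.

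Once a good $A_{s_0}$ is found at one point, I would extend it to a smooth family $A_s$ on a neighbourhood of $s_0$, for instance constant in trivialized coordinates. Transversality is an open condition, and $\Lambda\to S$ is smooth near $\rho$, so the family varies continuously; the condition therefore persists on a neighbourhood $U_s$. This produces the equivariant section $\gamma$ on $U_s$.
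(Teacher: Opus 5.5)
Your overall route is the paper's. You choose first-order data for $\gamma$ along $i(S)$ so that $I_s$ meets $K_s:=T_{\rho(s)}\Lambda\cap T_{\rho(s)}(df+T^*_{M^{>0}}M)$ only in $T_sS$, using cleanness and the count from \eqref{dimensionassumption}. You then spread this choice over a neighbourhood in $S$ and realise it by an equivariant section.

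\textbf{The gap.} It sits exactly at the step you flag, and your proposed fix does not work.
\begin{itemize}
\item Since $i(s)$ and $\rho(s)$ are fixed, $d\gamma_{i(s)}$ is weight-preserving for any equivariant $\gamma$. So the only freedom is $\bigoplus_w\operatorname{Hom}\bigl((T_{i(s)}M^{>0})_w,N^*_w\bigr)$, where $N^*:=(T^*_{M^{>0}}M)_{i(s)}$.
\item The graph of a map $P\to Q$ can avoid a subspace $K\subset P\oplus Q$ if and only if $\dim K\le \dim Q$. Hence an equivariant choice works if and only if $\dim K_{s,w}\le \dim N^*_w$ for every $w>0$; weight $0$ is automatic.
\item Under the twisted action, $N^*$ has only weights $k-w'\ge k$, where $w'\le 0$ runs over the tangent weights transverse to $M^{>0}$ and $k>0$ is the weight of $f$. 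So in every weight $w>0$ with $N^*_w=0$ the graph is forced, and you need $K_{s,w}=0$.
\item Hypothesis \eqref{dimensionassumption} only gives the sum over $w$ of these inequalities. So ``run the dimension count separately in each weight space'' is not available.
\end{itemize}

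\textbf{Counterexample.} Take $S=\mathrm{pt}$, $M=\bC^3$ with weights $(1,2,0)$, $f=y_1$ and $\Lambda=T^*_{\{y_1=0\}}M$.
\begin{itemize}
\item Then $M^{>0}=\{y_3=0\}$, $M^{\le 0}$ is the $y_3$-axis, and $\rho=dy_1$.
\item The intersection $\Lambda\cap(df+T^*_{M^{>0}}M)=\{(0,y_2,0;dy_1)\}$ is clean, of dimension $1=\dim M^{\le 0}$. So every hypothesis holds.
\item However, $K=\bC\,\partial_{y_2}$ has weight $2$, while $N^*=\bC\,dy_3$ has weight $1$.
\item The only equivariant sections are $\gamma=(y_1,y_2,0;\,dy_1+c\,y_1\,dy_3)$. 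Each satisfies $d\gamma(\partial_{y_2})=\partial_{y_2}\in T_\rho\Lambda$, so \eqref{transversality} fails.
\end{itemize}

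\textbf{Relation to the paper.} The paper's proof does not supply the missing input either. It performs only the unweighted rank count (rank of $T_SM^{>0}$ at most the rank of $T_S(df+T^*_{M^{>0}}M)$ minus that of the intersection). It then asserts that the resulting bundle map can be taken $\mathbb{G}_m$-equivariant, which the example shows is not implied by the stated hypotheses.

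\textbf{What would fix it.} Without equivariance, your generic-graph argument and the paper's count are both fine. With equivariance, two options remain:
\begin{itemize}
\item add the per-weight bound $\dim K_{s,w}\le\dim N^*_w$ for all $w>0$ as a hypothesis, and verify it in the Whittaker application (where $k=1$ and the tangent weights are root heights) from the explicit description of the intersection;
\item or prove a version of the contraction principle that does not need equivariant local projections.
\end{itemize}
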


\begin{proof} We start by extending the map $TS\to T(df+T^*_{M^{>0}}M)\times_{df+T^*_{M^{>0}}M} S$ of vector bundles over $S$ induced by $\rho$ to a $\mathbb{G}_m$-equivariant section
\[TM^{> 0}\times_M S\to T(df+T^*_{M^{>0}}M)\times_{df+T^*_{M^{>0}}M}  S,\]
such that $I_s\cap T_{\rho(s)}\Lambda=T_sS$ for all $s\in S$.
To construct such an extension it suffices to produce a section of the relative bundles
\[T_{S}M^{> 0}\times_M S\to T_{S}(df+T^*_{M^{>0}}M)\times_{df+T^*_{M^{>0}}M} S\]
such that the image $I_s^\text{rel}$ satisfies $I_s^\text{rel}\cap T_{S,\rho(s)}\Lambda=0$. By the cleanness of the intersection between $\Lambda$ and $df+T^*_{M^{>0}}M$, to produce such a map locally on $S$, it suffices so see that the rank of the source is less than the rank of the target vector bundle minus the rank of 
\[(T_S\Lambda\cap T_S(df+T^*_{M^{>0}}M))\times_{df+T^*_{M^{>0}}M} S=T_S(\Lambda \cap (df+T^*_{M^{>0}}M))\times_{df+T^*_{M^{>0}}M} S.\]
Since $\Lambda$ and $df+T^*_{M^{>0}}M$ map bijectively to their images in the relative cotangent bundle, the dimension of the fiber of $\Lambda \cap (df+T^*_{M^{>0}}M)\to S$ above $s\in S$ is less than or equal to the dimension of $\Lambda_s\cap (df_s+T^*_{M^{>0}}M)$, hence the rank statement follows from the dimension assumption (\ref{dimensionassumption}).

Since $i(S)$ is a closed submanifold of $M^{>0}$, locally around every point of $i(S)$, there exist a $\mathbb{G}_m$-equivariant section $\gamma$ of $ df+T^*_{M^{>0}}M$ such that $\gamma \circ i=\rho$ and the map on tangent spaces $TM^{> 0}\to T(df+T^*_{M^{>0}}M)$ extends the chosen map over $i(S)$.
\end{proof}

Next we fix the behaviour along $M^{\le 0}$.

\begin{lem}
    Locally around every point of $i(S)$, there exists a section
\[\delta:TM^{\le 0}\times_{M^{\le 0}} S\to TT^*M\times_{T^*M} S\]
of the natural projection such that for all $s\in S$ 
\begin{enumerate}
    \item The kernel of the map $T_{i(s)}M^{\le 0}\to T_{\rho(s)}T^*M\to T_{\rho(s)}T^*M/(I_s+T_{\rho(s)}\Lambda)$ is $T_sS$.
    \item The composition
    \[T_{i(s)}M^{\le 0}\to T_{\rho(s)}T^*M\to T_{(i(s),0)}T^*M^{\le 0}\simeq T_{i(s)}M^{\le 0}\oplus T_{i(s)}^*M^{\le 0}\to T_{i(s)}^*M^{\le 0}\]
    defines a negative definite symmetric bilinear form.
\end{enumerate}
\end{lem}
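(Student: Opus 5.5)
The plan is to first build $\delta$ at a single point $s_0\in S$ by linear algebra, then extend it to a neighbourhood using smooth frames. Both conditions (1) and (2) are open conditions in families: (1) is maximal rank of a linear map on a complement of $T_sS$, and (2) is negative definiteness. So once the linear-algebra problem is solved at $s_0$, any smooth extension works on a small neighbourhood.

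\emph{Setup at $s_0$.} On the subbundle $TS\subset TM^{\le 0}\times_{M^{\le 0}}S$ the section is forced to be $d\rho$. Choose a complement $W\subset T_{i(s_0)}M^{\le 0}$ to $T_{s_0}S$, for instance the relative tangent space of $M^{\le 0}\to S$. A lift of $w\in W$ has the form $w+\eta(w)$, where $\eta(w)\in T^*_{i(s_0)}M$ is a vertical vector and $\eta:W\to T^*_{i(s_0)}M$ is linear. The composite in (2) sends $w$ to $\eta(w)|_{T_{i(s_0)}M^{\le 0}}$, because the horizontal part lies in $T_{i(s_0)}M^{\le 0}$ and $\rho$ restricts to $0$ on $M^{\le 0}$ by Lemma \ref{liftofsection}. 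Fix a Hermitian metric $g$ and require
\[
\eta(w)|_{T M^{\le 0}}=-t\,g(w,-)+(\text{a term determined on }T_{s_0}S),
\]
with $t>0$. This gives (2) for every $t>0$. The components of $\eta(w)$ in the conormal directions $T^*_{M^{\le 0}}M$ are left free.

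\emph{Condition (1): dimension count.} Since $\Lambda$ is smooth Lagrangian near $\rho(s_0)$, we have $\dim T_{\rho(s_0)}\Lambda=\dim M$. Lemma \ref{l:extending tangent space} gives $I_{s_0}\cap T\Lambda=T_{s_0}S$. Hence
\[
\dim\big(T_{\rho(s_0)}T^*M/(I_{s_0}+T\Lambda)\big)=\dim M-\dim M^{>0}+\dim S=\dim M^{\le 0},
\]
where the last equality uses that $M^{>0}$ and $M^{\le 0}$ meet cleanly in $i(S)$. So (1) amounts to requiring that $\delta(W)\cap(I_{s_0}+T\Lambda)=0$; by the dimension count there is slack of $\dim S$ in the target. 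I would test this using the symplectic form $\omega$. Since $\Lambda$ is Lagrangian, $(I_{s_0}+T\Lambda)^{\omega}=I_{s_0}^{\omega}\cap T\Lambda$. The condition then becomes nondegeneracy of the pairing between $\delta(W)$ and a suitable subspace of $I_{s_0}^\omega\cap T\Lambda$, modulo the $T_{s_0}S$ directions.

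\emph{Main obstacle.} The hard step is showing that (1) and (2) can hold simultaneously, i.e.\ that the negative-definite choice of the $T^*M^{\le 0}$-component does not force $\delta(W)$ into $I_{s_0}+T\Lambda$. First I would take $t\to\infty$. After rescaling, $\delta(W)$ tends to the purely vertical subspace $\{(0,-g(w,-)+\ast)\}$. The conormal components $\ast$ are then chosen generically. In that limit one has to check injectivity into the quotient. Vertical vectors lying in $I_{s_0}+T\Lambda$ are controlled by $I_{s_0}$, which projects onto $T M^{>0}$, and by the fact that $\Lambda$ maps isomorphically onto the relative cotangent bundle (so $T\Lambda\cap\text{vertical}$ is small). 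This should reduce injectivity to the dimension bound (\ref{dimensionassumption}) together with the cleanness of $\Lambda\cap(df+T^*_{M^{>0}}M)$.

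If the limiting argument degenerates, the fallback is a genericity argument. The set of conormal parts $\ast$ violating (1) is a proper Zariski-closed subset of an affine space, because of the slack of $\dim S$ in the dimension count. One then picks $\ast$ outside it while keeping the $T^*M^{\le 0}$-part fixed, so (2) is unaffected. Finally, extend $\eta$ smoothly over a neighbourhood of $s_0$ via local frames; openness of both conditions finishes the proof.
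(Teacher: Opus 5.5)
\textbf{There is a genuine gap: the pointwise existence step.} Your skeleton matches the paper's: solve a linear-algebra problem at one point $s$, then extend to a neighbourhood because (1) and negative-definiteness are open. The paper also settles the pointwise step only by counting $\dim(I_s+T_{\rho(s)}\Lambda)$. But a dimension count does not produce a $\delta$, and neither of your two arguments does either.

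\emph{The $t\to\infty$ limit.} Take $S$ a point and let $V^*=T^*_{i(s)}M$ be the vertical subspace of $T_{\rho(s)}T^*M$. Use $I_s\cap T_{\rho(s)}\Lambda=0$ and, since $T_{\rho(s)}\Lambda$ is Lagrangian, $T_{\rho(s)}\Lambda\cap V^*=\mathrm{ann}(\mathrm{pr}\,T_{\rho(s)}\Lambda)$. One gets
\[
\dim\bigl(V^*\cap(I_s+T_{\rho(s)}\Lambda)\bigr)=\dim M^{>0}+\dim\bigl(T_{\rho(s)}\Lambda\cap\mathrm{ann}(T_{i(s)}M^{>0})\bigr).
\]
So whenever $\Lambda\cap(df+T^*_{M^{>0}}M)$ has a vertical tangent direction at $\rho(s)$, every vertical subspace of dimension $\dim M^{\le 0}$ meets $I_s+T_{\rho(s)}\Lambda$. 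Your limiting subspace is such a subspace, so openness gives nothing.
\begin{itemize}
\item The hypotheses do not exclude this. For instance $\Lambda=T^*_{i(s)}M$ satisfies all of them and has $T_{\rho(s)}\Lambda=V^*$.
\item It happens in the Whittaker case itself. Let $f$ be the Kostant nilpotent and $x\in H^0(X,\fn_{\check\rho(\Omega)})$ suitable. The Higgs fields $\mathrm{Ad}_{\exp(\epsilon x)}f$ on the same bundle give a vertical curve in $\mathrm{Nilp}^n_u$ through $\rho$, with velocity $d\pi_n^*[x,f]$. Here $[x,f]$ is a nonzero section of $\mathfrak{b}_{\check\rho(\Omega)}\otimes\Omega$, so it pairs to zero with $d\pi_n(T\Bun_N^{\Omega,nx})\subseteq H^1(X,\fn_{\check\rho(\Omega)})$. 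These directions are part of the $n\dim N$ in Proposition \ref{dimensionofintersection}.
\item That $\Lambda$ maps isomorphically to $T^*_SM$ only rules out vertical directions coming from $T^*S$. In a single fibre this hypothesis is empty.
\end{itemize}

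\emph{The fallback.} A dimension count, with or without slack, cannot show that the relevant determinant is not identically zero. Fibrewise there is no slack at all, and in general the slack disappears after symplectic reduction by $d\rho(T_sS)$. Moreover, with the $T^*M^{\le 0}$-block frozen it can happen that no conormal block works, since whether one works depends on that block.

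\textbf{The missing idea} is to use $I_s\cap T_{\rho(s)}\Lambda=d\rho(T_sS)$ from Lemma \ref{l:extending tangent space} directly, rather than any smallness of the vertical part of $T\Lambda$.
\begin{itemize}
\item Take $\delta_t=d\rho$ on $T_sS$ and take the conormal block dictated by symmetry. Then $\Gamma_t:=I_s+\delta_t(T_{i(s)}M^{\le 0})$ is the graph of a symmetric form $H_t$ on $T_{i(s)}M$ whose $W\times W$ block is $-tg$.
\item The form in (2) is then $-tg$ on $W\cong T_{i(s)}M^{\le 0}/T_sS$ and $0$ on $T_sS$. This is the only reading of (2) compatible with (1), and it is how the paper later uses it via $-dD$.
\item Condition (1) becomes $\Gamma_t\cap T_{\rho(s)}\Lambda=d\rho(T_sS)$. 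Equivalently, the images of $\Gamma_t$ and $T_{\rho(s)}\Lambda$ in $d\rho(T_sS)^{\omega}/d\rho(T_sS)$ are transverse.
\item If $(u,H_tu)\in\Gamma_t\cap T_{\rho(s)}\Lambda$ has $u_W=0$, then $u\in T_{i(s)}M^{>0}$. Hence the vector lies in $I_s\cap T_{\rho(s)}\Lambda=d\rho(T_sS)$.
\item Therefore the crossing form $\dot H_t(u,u)=-g(u_W,u_W)$ of the Lagrangian path $t\mapsto\Gamma_t$ against $T_{\rho(s)}\Lambda$ is negative definite at every crossing. Crossings are thus isolated, and all but a discrete set of $t>0$ satisfy (1) and (2).
\end{itemize}
With this pointwise existence, your extension by openness finishes the proof. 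The $\Gamma_t$ obtained is also exactly the Lagrangian needed in Proposition \ref{p:Extending test function}.
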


\begin{proof}
    Consider the affine bundle over $S$ of all sections of $ TT^*M\times_{T^*M} S\to TM^{\le 0}\times_{M^{\le 0}} S$. The condition that the resulting map in $(2)$ is a symmetric bilinear form cuts out an affine subbundle. The condition in $(1)$ and the negative definiteness are open conditions, thus it suffices to see that each fiber over $S$ is non-empty. The fiberwise statement follows since the dimension of $I_s+T_{\rho(s)}\Lambda$ is equal to the dimension of $M$ plus the dimension of $M^{>0}$.
\end{proof}

For a choice of coordinates on $M^{\le0}$ diagonalizing the $\mathbb{G}_m$-action, let $D$ denote the function on $M^{\le 0}$ given by the square of the distance to $i(S)$ with respect to these coordinates. By $(2)$ we can, locally around every point of $i(S)$, choose coordinates on $M^{\le 0}$ such that $\delta$ composed with $TT^*M\times_{T^*M}S\to TT^*M^{\le 0}\times_{T^*M} S$ is the map on tangent spaces induced by $-dD:M^{\le 0}\to T^*M^{\le 0}$ above $i(S)$.

\begin{proof}[Proof of Proposition \ref{p:Extending test function}]
   Choose $\gamma$, $\delta$ and the coordinates on $M^{\le 0}$ as described above. Let $\delta'$ be an equivariant section of $T^*M\times_M M^{\le 0}$ lifting $-dD$ and with value $\rho$ above $i(S)$ and inducing $\delta$ on tangent spaces above $i(S)$. To extend these coordinates to all of $M$, we need to choose the $\mathbb{G}_m$-equivariant local projections $\pi_{M^{\le 0}}:M\to M^{\le 0}$ and $\pi_{M^{> 0}}:M\to M^{>0}$. Since $\gamma$ is an equivariant section of $df+T^*_{M^{>0}}M$ and $\gamma$ vanishes when pulled back to $M^{\le 0}$, we can choose $\pi_{M^{> 0}}$ such that $\gamma=\pi_{M^{>0}}^*df|_{M^{>0}}$. Consider the section
   \[\delta'-\pi_{M^{>0}}^*df|_{M^{\le 0}}\]
   of $T^*M\times_M M^{\le 0}$. This form is equivariant, vanishes above $i(S)$ and lifts $-dD$ in the cotangent space of $M^{\le 0}$, so we can choose $\pi_{M^{\le 0}}:M\to M^{\le 0}$ such that \[-\pi_{M^{\le 0}}^*dD =\delta'-\pi_{M^{>0}}^*df|_{M^{\le 0}}.\]
   We claim that these coordinates produce a function $F:M\to \mathbb{R}$ with the desired properties.

   By construction $F=\on{Re} f\circ \pi_{M^{>0}}-D\circ \pi_{M^{\le 0}}$. Thus 
   \[dF|_{M^{\le 0}}=\pi_{M^{>0}}^*df|_{M^{\le 0}}-\pi_{M^{>0}}^*df|_{M^{\le 0}}+\delta'=\delta'\]
   and since $dD$ vanishes along $i(S)$
   \[dF|_{M^{>0}}=\gamma-\pi_{M^{\le 0}}^*dD|_{M^{>0}}=\gamma.\]
   Since $dF$ takes the value $\rho$ over $i(S)$, we conclude that $\Gamma_\rho$ is contained in the intersection of $\Lambda$ and $\Gamma_{dF}$. At the image of $\rho$ the tangent space of $\Gamma_{dF}$ is given by the image of $TM\times_M S\to TT^*M\times_\rho S$ determined by $\delta$ and the map on tangent spaces induced by $\gamma$. Thus it follows that $\Lambda$ intersects $\Gamma_{dF}$ cleanly in $\Gamma_\rho$. 
\end{proof}

\section{Local Constancy of the Whittaker Functional}

In this section we apply the results of the previous sections to conclude the local constancy of the Whittaker functional. First we have to rigidify the situation in such a way that we can apply the contraction principle for vanishing cycles to obtain a description of the Whittaker functional directly as a vanishing cycles over $\Bun_G(X/S)$. This part of the argument is directly analogous to Nadler and Taylors argument and all we do is upgrading it to the relative setting. 

\subsection{Rigidifying the moduli problems} Recall our convention that, when $S$ is real analytic, moduli problems are interpreted by pulling back the moduli problem in the universal case along the map $S\to (\mathcal{M}_g)_\mathbb{R}$, see Appendix A. Thus in describing the moduli problems below we assume $S$ is algebraic.

Locally on $S$, we may choose a section $x:S\to X$. This section defines a closed subscheme of $X$ given by an ideal sheaf $\mathcal{I}$. For each $n\in \mathbb{Z}_{>0}$, let $D_n(x)$ denote the closed subscheme $V(\mathcal{I}^n)$ of $X$. 

Let $\Bun_{G,N}^{\Omega,nx}(X/S)$ be the stack over $S$ that sends a test object $T/S$ to the space consisting of a $G$-bundle $P_G$ on $X\times_S T$ and a $B$-reduction of $P_G|_{T\times_SD_n(x)}$ together with an identification of the induced $T$-bundle with  $\check{\rho}(\Omega)|_{T\times_SD_n(x)}$.

Similarly, let $\Bun_{G}^{\Omega,nx}(X/S)$ be the moduli space of $G$-bundles on $X/S$ with an identification of its restriction to $D_n(x)$ with the $T$-bundle $\check{\rho}(\Omega)$. These moduli spaces form a diagram
\[
\begin{tikzcd}
\Bun_N^{\Omega,nx}(X/S) \arrow[r, "a_n"] \arrow[d] & \Bun_G^{\Omega,nx}(X/S) \arrow[d] \arrow[rd, "\pi_n"] &        \\
\Bun_N^\Omega(X/S) \arrow[r]                      & {\Bun_{G,N}^{\Omega,nx}}(X/S) \arrow[r]               & \Bun_G(X/S)
\end{tikzcd}\]
where $\Bun_N^{\Omega,nx}(X/S)$ is defined so that the above square is Cartesian. The $\mathbb{G}_m$-action on $G$ through conjugation by $\check{\rho}$ acts trivially on $T$ and fixes the subgroup $B$. This provides $\mathbb{G}_m$-actions and equivariance of all stacks and morphisms in the diagram above.

Let $B^-$ be the Borel subgroup opposite to $B$. For $B^{\pm}\in\lbrace B,B^-\rbrace$, let $\Bun_{B^{\pm}}^{\Omega,nx}(X/S)$ denote the stack of $B^{\pm}$-bundles on $X/S$ with a reduction to the $T$-bundle $\check{\rho}(\Omega)$ over $D_n(x)$.

\begin{prop}
    For $n$ sufficiently large, the maps $\Bun_N^{\Omega,nx}(X/S) \to \Bun_G^{\Omega,nx}(X/S)$ and $\Bun_{B^-}^{\Omega,nx}(X/S)\to \Bun_G^{\Omega,nx}(X/S)$ are locally closed immersions.
\end{prop}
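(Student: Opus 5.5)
We reduce to the fiberwise statement and then use the standard level-structure argument of Nadler--Taylor \cite{nadler2022whittaker} (relative to $S$). A morphism of stacks over $S$ that is a locally closed immersion on each fiber, is a monomorphism, and is locally of finite presentation with the expected flatness is not automatically an immersion. So we instead argue directly on the relative moduli problem, with $S$ algebraic and $x:S\to X$ a section, as in the convention above.

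\textbf{Monomorphism.} First we check that $a_n$ is a monomorphism for $n\gg0$. A point of $\Bun_N^{\Omega,nx}(X/S)$ over $T$ is a $B$-bundle $\sP_B$ with $\sP_B\overset{B}{\times}T\simeq\check{\rho}(\Omega)$, together with a trivialization of the $T$-bundle on $T\times_S D_n(x)$ compatible with this identification. The automorphisms of such a point that fix the image in $\Bun_G^{\Omega,nx}(X/S)$ are sections of $N_{\sP_B}$, i.e. of the twisted unipotent group, that vanish to order $n$ along $x$. Their Lie algebra has a filtration with graded pieces $H^0(X_s,\Omega^{\langle\alpha,\check\rho\rangle}\text{-twists}(-nx))$ for the positive roots $\alpha$, since the root spaces of $\fn$ twisted by $\check\rho(\Omega)$ are $\Omega^{\on{ht}(\alpha)}$. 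These vanish as soon as $n>(2g-2)\on{ht}(\theta)$, where $\theta$ is the highest root. The same computation in families, by cohomology and base change along $S$, shows that the relative tangent complex of $a_n$ has no $H^{-1}$, i.e. no infinitesimal automorphisms. Moreover, two $B$-reductions of the same $G$-bundle that agree on $D_n(x)$ with the rigidification differ by a section of $G/B$-type data. The standard argument identifies them: reductions correspond to sections of $\sP_G/B$, and a deformation is controlled by $H^0(X,(\fg/\fb)_{\sP_B}(-nx))$. This also vanishes for $n$ large, because $(\fg/\fb)_{\sP_B}$ has graded pieces $\Omega^{-\on{ht}\alpha}$, of negative degree. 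The vanishing is uniform in $s$ since the degrees depend only on $g$.

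\textbf{Immersion.} Next we upgrade the monomorphism to a locally closed immersion. By the relative Harder--Narasimhan theory used in the proof of Proposition \ref{poincare non-char}, the map $\Bun_B^{(g-1)2\check\rho}(X/S)\to\Bun_G(X/S)^{((g-1)2\check\rho)}$ is an isomorphism onto a locally closed substack, using \cite[Thm. 7.4.3]{drinfeld2015compact} in the relative form. Adding level structure at $D_n(x)$, we base change this along $\Bun_G^{\Omega,nx}\to\Bun_G$. Fixing the $T$-bundle to be $\check\rho(\Omega)$ together with its trivialization on $D_n(x)$ is then a closed condition: it is the fiber over the $S$-point $\check\rho(\Omega)$, rigidified, of the map to $\Bun_T^{nx}(X/S)$. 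Once $n$ is large, $\Bun_T^{nx}(X/S)$ is an algebraic space, so this fiber is closed. This yields the claim for $\Bun_N^{\Omega,nx}$; this is the genus $\ge2$ case, and genus $1$ is handled directly since $\Omega$ is trivial. We expect the treatment of $B^-$ to be the main obstacle, because $\check\rho(\Omega)$ viewed as a $B^-$-datum is not in the HN-stratum.

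\textbf{The $B^-$ case.} For $B^-$ we argue by deformation theory plus properness of the valuative type. A monomorphism that is formally unramified and satisfies the existence part of the valuative criterion relative to its closure is a locally closed immersion. Formal unramifiedness reduces to the vanishing of $H^0(X_s,(\fg/\fb^-)_{\sP_{B^-}}(-nx))$. This holds for $n$ larger than a bound depending only on the degree of $\sP_{B^-}\overset{B^-}{\times}T=\check\rho(\Omega)$, because the graded pieces are line bundles of fixed degree. Injectivity on points comes from the same vanishing applied to the difference of two reductions. For the immersion property we would present $\Bun_{B^-}^{\Omega,nx}$ as $\Bun_{N^-}$-type data over the point $\check\rho(\Omega)$, and show that its image is the locus in $\Bun_G^{\Omega,nx}$ where a certain section of $\sP_G/B^-$, extended from $D_n(x)$, exists. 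This is cut out by finitely many Plücker conditions $V^{N^-}_{\check\rho(\Omega)}\to V_{\sP_G}$, whose existence is an open-then-closed condition after a cohomology bound uniform in $s$. This is the step we would check most carefully.
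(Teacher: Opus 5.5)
Your treatment of $\Bun_N^{\Omega,nx}$ in genus $\ge 2$ is a valid route, different from the paper's. You base-change the locally closed Harder--Narasimhan identification $\Bun_B^{(g-1)2\check\rho}(X/S)\simeq\Bun_G(X/S)^{((g-1)2\check\rho)}$ along $\Bun_G^{\Omega,nx}(X/S)\to\Bun_G(X/S)$. You then impose two closed conditions: the $B$-reduction agrees with the level structure on $D_n(x)$, and the $T$-bundle with its level structure lies over the $S$-point $\check\rho(\Omega)$ of the separated group space $\Bun_T^{nx}(X/S)$. This gives an honest locally closed immersion without citing Nadler--Taylor. Note that it is this step, not your ``monomorphism'' paragraph, that gives injectivity on points. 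Vanishing of $H^0(X_s,(\fg/\mathfrak{b})_{\sP_B}(-nx))$ only shows the map is unramified, since two distinct $B$-reductions do not differ by a section of that vector bundle. Genus $1$, where $(2g-2)\check\rho=0$ is not regular, is left unargued. The paper argues locally instead. It restricts to a quasi-compact open $U\subset\Bun_G(X/S)$ containing the image of $\Bun_N^{\Omega}(X/S)$ and takes $n$ large so that everything over $U$ is a scheme smooth over $S$. It then reduces to surjectivity of \emph{relative} cotangent maps, which is a fiberwise condition, so your objection to reducing to fibers does not apply to it. Finally it quotes the pointwise statement of \cite{nadler2022whittaker}.

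The genuine gap is the $B^-$ case. In the paper, $\Bun_{B^-}^{\Omega,nx}(X/S)$ imposes the reduction to $\check\rho(\Omega)$ only on $D_n(x)$, so the global $T$-bundle varies. It must vary: this stack is the repelling locus of the whole fixed-point locus and meets $\Bun_N^{\Omega,nx}(X/S)$ transversally in each fiber. Its tangent space at the section must therefore contain the weight-zero directions $H^1(X_s,\mathfrak{t}\otimes\cO(-nx))$. Your bound ``depending only on the degree of $\sP_{B^-}\overset{B^-}{\times}T=\check\rho(\Omega)$'' thus rests on a condition that is not there. On components where the line bundles $\alpha(\sP_T)$, $\alpha>0$, have large degree, $H^0(X_s,(\fg/\mathfrak{b}^-)_{\sP_{B^-}}(-nx))\neq 0$ for every fixed $n$. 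The map even has positive-dimensional fibers, already over the image of the section; for $SL_2$ these are line subbundles of very negative degree with prescribed restriction to $D_n(x)$. So no global argument of your kind can work. The statement can only hold after localizing the source near the section $\check\rho(\Omega)$, which is also how the next proposition uses it, through the neighborhood $V$. This localization is missing from your proposal. Even after localizing, your injectivity claim for $B^-$ rests on the same tangent-space conflation, and no Harder--Narasimhan uniqueness is available to rescue it, since the degree is antidominant for $B^-$. The valuative-criterion and Pl\"ucker step is only a sketch. What is needed is the paper's local reduction. Near the section and for $n\gg0$, source and target are smooth $S$-schemes. Injectivity of tangent maps can then be checked on relative tangent spaces fiber by fiber, where it is Nadler--Taylor's computation.
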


\begin{proof} Since the statement is closed under base change, we may assume $S$ is algebraic. 
    Since $\Bun_N^{\Omega}(X/S)$ is quasi-compact, we can choose a quasi-compact open substack $U\subseteq \Bun_G(X/S)$ containing the image of $\Bun_N^{\Omega}(X/S)$. For $n$ sufficiently large, the stack $\Bun_G^{\Omega,nx}(X/S)|_U$ is representable by a scheme and so is $\Bun_N^{\Omega,nx}(X/S)$.

    Since all the schemes are smooth, it suffices to prove that the maps are surjective on cotangent spaces. Since the schemes are smooth over $S$, it suffices to prove that they are surjective on the cotangent spaces relative to $S$. Thus the family version reduces to the same statement as in [NT] (3.12).
\end{proof}

\begin{prop}
    For $n$ sufficiently large, there exist a $\mathbb{G}_m$-stable neighborhood $V$ of the section $\check{\rho}(\Omega):S\to \Bun_G^{\Omega,nx}(X/S)$ such that the attracting locus of the section $\check{\rho}(\Omega)$ under the $\mathbb{G}_m$-action on $V$ is the closed subspace $\Bun_N^{\Omega,nx}(X/S)\cap V$ and the global repelling locus in $V$ is $\Bun_{B^-}^{\Omega,nx}(X/S)\cap V$.
\end{prop}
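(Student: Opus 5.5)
Following the pattern of the preceding proposition, we reduce to the case $S$ algebraic (all stacks are obtained by base change from the universal situation over $\fM_g$). We then deduce the fiberwise statement from the absolute case in \cite{nadler2022whittaker} (the analogue of [NT, \S 3]). The $\mathbb{G}_m$-action is given by conjugation by $\check{\rho}$. Its fixed locus near the section $\check{\rho}(\Omega)$ is the stack of $T$-reductions, which, after the rigidification at $D_n(x)$, is the single point $\check{\rho}(\Omega)$ in each fiber over $S$.

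\textbf{Step 1: representability and local structure.} Choose $n$ large enough that, over a quasi-compact open $U\subset\Bun_G(X/S)$ containing the images of $\Bun_N^{\Omega}(X/S)$ and of the relevant part of $\Bun_{B^-}^{\Omega}(X/S)$, the stack $\Bun_G^{\Omega,nx}(X/S)|_U$ is a smooth scheme over $S$ with a $\mathbb{G}_m$-action. By the previous proposition, $\Bun_N^{\Omega,nx}$ and $\Bun_{B^-}^{\Omega,nx}$ are locally closed, $\mathbb{G}_m$-stable smooth subschemes over $S$ meeting at $\check{\rho}(\Omega)(S)$.

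\textbf{Step 2: tangent space decomposition.} At $\check{\rho}(\Omega)(s)$, the relative tangent space is $H^1(X_s,\fg_{\check{\rho}(\Omega)}(-nx))$. It splits under $\check{\rho}$ into pieces $\fn$, $\mathfrak{t}$ and $\fn^-$ of weights $>0$, $0$ and $<0$ respectively. Since the level structure kills the $\mathfrak{t}$-part infinitesimally, the weight-$0$ part corresponds to $H^1(X_s,\mathfrak{t}(-nx))$, and this matches the tangent space of $\Bun_{B^-}^{\Omega,nx}$ together with the $\fn^-$ part. The tangent spaces of $\Bun_N^{\Omega,nx}$ and $\Bun_{B^-}^{\Omega,nx}$ are exactly the positive-weight part and the nonpositive-weight part respectively (with the sign convention of the action). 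By dimension count, these subschemes have the dimensions of the attracting and repelling loci.

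\textbf{Step 3: identification of attracting and repelling loci.} Because $\Bun_N^{\Omega,nx}$ is contracted to the section by $\mathbb{G}_m$ (the conjugation by $\check{\rho}(t)$ sends $N$-data to the trivial extension as $t\to 0$), it lies inside the attracting locus. The attracting locus of a smooth scheme with a $\mathbb{G}_m$-action is smooth over $S$ near the section, with tangent space the positive-weight part (Białynicki-Birula, relative version over $S$, valid since the fixed locus is smooth over $S$). Therefore, near the section, the closed immersion $\Bun_N^{\Omega,nx}\cap V\to$ attracting locus is an isomorphism between smooth $S$-schemes of equal relative dimension. We choose $V$ to be a $\mathbb{G}_m$-stable open in which the fixed locus is just the section and in which $\Bun_N^{\Omega,nx}$ is closed; such a $V$ exists by taking the $\mathbb{G}_m$-saturation of a small neighborhood.

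The repelling locus requires a separate argument. Points of $\Bun_{B^-}^{\Omega,nx}$ flow to a fixed point as $t\to\infty$, but the limit is a $T$-bundle with level structure, not necessarily the base section; hence the word \emph{global} repelling locus. This is the main obstacle: one must show that within $V$, a point flows to the section as $t\to\infty$ precisely when it admits a $B^-$-reduction compatible with the level structure. I would handle this as in [NT] by a fiberwise argument, reducing to $S=\on{pt}$ using smoothness over $S$ and the fact that both sides are closed in $V$ and flat over $S$, so that equality can be checked on fibers.
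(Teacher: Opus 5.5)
There is a genuine error in how you identify the fixed locus, and it breaks the repelling-locus half of your argument. The $\mathbb{G}_m$-fixed locus of $\Bun_G^{\Omega,nx}(X/S)$ near the section is not the section. Take any $T$-bundle $\mathcal{L}$ near $\check{\rho}(\Omega)$ with an identification $\mathcal{L}|_{D_n(x)}\simeq\check{\rho}(\Omega)|_{D_n(x)}$. The elements $\check{\rho}(t)\in T$ act by automorphisms of $\mathcal{L}\times^{T}G$ compatibly with the level structure, so $(\mathcal{L}\times^{T}G,\text{level structure})$ is fixed. Hence the fixed locus is $\Bun_T^{\Omega,nx}(X/S)$, of relative dimension $(g-1+n)\dim T>0$.

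Your own Step 2 computes its tangent space $H^1(X_s,\mathfrak{t}(-nx_s))$. This is nonzero: rigidification kills $H^0$, not $H^1$. So your claim that the fixed locus is a point in each fiber contradicts your own tangent-space count. Consequently no $\mathbb{G}_m$-stable $V$ has fixed locus just the section. The assertion you set out to prove on the repelling side is also false: points of $\Bun_{B^-}^{\Omega,nx}$ flow as $t\to\infty$ to arbitrary nearby points of $\Bun_T^{\Omega,nx}$, not to the section. That is precisely why the statement pairs the attracting locus \emph{of the section} with the \emph{global} repelling locus.

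The missing idea is to work with the global attracting locus of the whole fixed locus, as the paper does. First, $\Bun_B^{\Omega,nx}$ maps into it, since conjugation by $\check{\rho}(t)$ contracts $B$ onto $T$. Second, this map is a closed immersion near $\Bun_T^{\Omega,nx}$ by the argument of the preceding proposition. Third, it is an isomorphism on relative tangent spaces at \emph{every} point $L$ of $\Bun_T^{\Omega,nx}$, because $H^1(X_s,\mathfrak{b}_L(-nx_s))$ is the nonnegative-weight part of $H^1(X_s,\fg_L(-nx_s))$.

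Both claims then follow. The attracting locus of the section is the fiber of $\Bun_B^{\Omega,nx}\to\Bun_T^{\Omega,nx}$ over $\check{\rho}(\Omega)$, namely $\Bun_N^{\Omega,nx}$. The global repelling locus is the global attracting locus for the inverse action, i.e.\ for $-\check{\rho}$, which contracts $B^-$ onto $T$. The identical argument with $\mathfrak{b}^-_L$ in place of $\mathfrak{b}_L$ then gives $\Bun_{B^-}^{\Omega,nx}$.

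Your attracting-locus argument is essentially this one restricted to the section, and it is fine. On the repelling side, however, the tangent comparison must be made at all fixed points in $V$, not only at the section. Your fiberwise appeal to \cite{nadler2022whittaker} does not supply this. With the symmetry $t\mapsto t^{-1}$ there is no separate obstacle.
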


\begin{proof} We may assume $S$ algebraic. We will prove that there exists a neighborhood $V$ of $\Bun_T^{\Omega,nx}(X/S)\to \Bun_G^{\Omega,nx}(X/S)$ such that the global attracting locus in $V$ is $V\cap \Bun_B^{\Omega,nx}(X/S)$. The statement of the proposition is immediately deduced from this.

Since the $\mathbb{G}_m$-action on $B$ by conjugation via $\check{\rho}$ contracts $B$ to $T$, $\Bun_{B}^{\Omega,nx}(X/S)$ maps to the attracting locus of the $\mathbb{G}_m$-action on $\Bun_G^{\Omega,nx}(X/S)$. This map is a closed immersion for $V$ sufficiently small by the same argument as in the previous proposition. Hence to see that this closed substack is the full attracting locus in a neighborhood of $\Bun_T^{\Omega,nx}(X/S)$, it suffices to prove that the inclusion into the attracting locus is an isomorphism on tangent spaces at each point of $\Bun_T^{\Omega,nx}(X/S)$.

Since $\Bun_B^{\Omega,nx}(X/S)$ is smooth over $S$, it suffices to prove that the inclusion into the attracting locus is an isomorphism on the relative tangent spaces over $S$, for each point $L$ of $\Bun_T^{\Omega,nx}$. Hence the statement follows as in \cite[Proposition 3.2.2]{nadler2022whittaker} since
\[H^1(X_s,\mathfrak{b}_L(-nx_s))\to H^1(X_s,\mathfrak{g}_L(-nx_s))\]
identifies with the non-negative weight space of the $\mathbb{G}_m$-action on the target.
\end{proof}

\subsection{The nilpotent cones and the Kostant section}

\begin{prop}
    The universal nilpotent cone $\mathrm{Nilp}_u\subset T^* \mathrm{Bun}_G(X/S)$ intersects the shifted conormal bundle $d\varphi +T^*_{\Bun_N^\Omega} \Bun_G(X/S)$ transversally and the intersection maps isomorphically to $S$.
\end{prop}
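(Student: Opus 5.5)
Since both $\mathrm{Nilp}_u$ and $d\varphi+T^*_{\Bun_N^\Omega}\Bun_G(X/S)$ are $S$-non-characteristic and map bijectively to their images in the relative cotangent stack $T^*_S\Bun_G(X/S)$, the plan is to reduce to a fiberwise statement over each $s\in S$, where the result is the theorem of Nadler--Taylor \cite{nadler2022whittaker} that the global nilpotent cone meets the shifted conormal to $\Bun_N^{\Omega}(X_s)$ transversally in a single point, namely the point of the Kostant section lying over the zero of the Hitchin base. This is the same pattern as Lemma \ref{l:pointwisenilpsuffices} and Lemma \ref{l:graphisclean}. Concretely, I would work on the rigidified space $\Bun_G^{\Omega,nx}(X/S)$ for $n\gg0$ (smooth over $S$, with $\Bun_N^{\Omega,nx}$ locally closed by the earlier proposition), so that everything is a statement about smooth schemes over $S$ and their cotangent bundles.

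\emph{Step 1 (set-theoretic intersection).} A point of the intersection over $s$ projects to a relative covector in $\mathrm{Nilp}(X_s)\cap (d\varphi_s+T^*_{\Bun_N^\Omega(X_s)}\Bun_G(X_s))$. Via the identification of $d\varphi_s+T^*_{\Bun_N}$ with Higgs fields whose $\mathfrak{n}^-$-graded part is the principal nilpotent $f=\sum f_\alpha$ (the Kostant slice $f+\mathfrak{b}$, twisted by $\Omega$), nilpotence forces the Hitchin image to be $0$. By Kostant's theorem, $f+\mathfrak{b}$ meets the nilpotent cone in the single $B$-orbit through $f$, and a $B(\mathcal{O})$-gauge argument as in \cite{nadler2022whittaker} shows that the intersection is exactly one point: the oper-type Higgs bundle $(\check\rho(\Omega),f)$. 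Using bijectivity of both Lagrangians onto their relative images, this gives the lifted point $\rho(s)$, and it varies analytically in $s$. So the intersection is the image of a section $\rho:S\to T^*\Bun_G(X/S)$, mapping isomorphically to $S$.

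\emph{Step 2 (transversality).} First I would show that $\rho(S)$ lies in the smooth locus of $\mathrm{Nilp}_u$: fiberwise, $\rho(s)$ lies in the smooth locus of $\mathrm{Nilp}(X_s)$ (it lies on the component that is the conormal to the very stable/Kostant stratum, cf. \cite{nadler2022whittaker}). Because $\mathrm{Nilp}_u$ is $S$-non-characteristic and has fiber $\mathrm{Nilp}(X_s)$, smoothness then propagates to the total space. Here transversality should be understood in the relative sense used in \S7, i.e. $T\mathrm{Nilp}_u\cap T(d\varphi+T^*_{\Bun_N})=T\rho(S)$. Both sides map isomorphically to the corresponding tangent spaces in $T^*_S$, and the latter fiber over $TS$ via submersions. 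A tangent vector in the intersection therefore has a horizontal part determined by its image in $T_sS$, and its vertical part lies in $T\mathrm{Nilp}(X_s)\cap T(d\varphi_s+T^*_{\Bun_N})$, which is $0$ by the transversality of Nadler--Taylor. Dimension count: both Lagrangians have dimension $\dim_S\Bun_G+\dim S$, and this is consistent with an intersection of dimension $\dim S$.

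The main obstacle is Step 1's compatibility between the universal lift $\mathrm{Nilp}_u$, which is only described on field-valued points, and the absolute shifted conormal. One must check that the unique lift of the relative Kostant point to $\mathrm{Nilp}_u$ really lies in $d\varphi+T^*_{\Bun_N}$, as opposed to differing from it by a covector pulled back from $S$. I would argue this via the $B$-reduction description of $\mathrm{Nilp}_u$ (\S\ref{ss:nilpudef}): the point $\check\rho(\Omega)$ carries its canonical $B$-reduction, and $d\varphi$ restricted along $\Bun_N^\Omega\to\Bun_B$ is killed on $\Bun_B$ directions transverse to $\Bun_N$. So both lifts vanish on $T\Bun_B$ and agree in $T^*_S$, and hence they coincide.
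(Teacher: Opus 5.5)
Your reduction of the pointwise and tangent-level statements to the fiberwise Nadler--Taylor result is fine. It uses injectivity of both Lagrangians into $T^*_S\Bun_G(X/S)$. You also correctly locate the real content: showing that the two absolute lifts of the relative Kostant point coincide. But your resolution of that obstacle does not work.

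Take the $B$-reduction through which $a:\Bun_N^{\check\rho(\Omega)}(X/S)\to\Bun_G(X/S)$ factors. Any covector in $d\varphi+T^*_{\Bun_N^\Omega}\Bun_G(X/S)$ pulls back to $d\varphi\neq 0$ on the image of $T\Bun_N^{\check\rho(\Omega)}$ inside $T\Bun_B$. So it does not vanish on $T\Bun_B$, and ``both lifts vanish on $T\Bun_B$'' is false for this reduction.

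That reduction is also not the one witnessing membership in $\mathrm{Nilp}_u$. Under the Killing form, the fiber of the shifted conormal is $f+\mathfrak{b}$ with $f\in\mathfrak{n}^-$. The regular nilpotent $f$ annihilates $\mathfrak{b}^-$, not $\mathfrak{b}$, so the relevant reduction is the opposite one $\check\rho(\Omega)\times^T B^-$ (equivalently a $B$-reduction, as $B^-$ is conjugate to $B$). For that reduction, the vanishing of the shifted-conormal lift on $T\Bun_{B^-}$ is exactly what has to be proved. Your sentence about $d\varphi$ being ``killed on $\Bun_B$ directions transverse to $\Bun_N$'' does not prove it, since $d\varphi$ is only defined on $\Bun_N^{\check\rho(\Omega)}$.

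What you need is a common complement to the relative tangent space on which both lifts vanish. The right choice is $d\check\rho(\Omega)(TS)$, the image of the differential of the section $\check\rho(\Omega):S\to\Bun_G(X/S)$. This is the paper's argument:
\begin{enumerate}
\item The section's differential splits $0\to T^*S\to T^*\Bun_G\times_{\check\rho(\Omega)}S\to T^*_S\Bun_G\times_{\check\rho(\Omega)}S\to 0$. Let $\xi$ be the lift of $\mathrm{Kos}\circ\mathbf{0}$ that annihilates $d\check\rho(\Omega)(TS)$.
\item The section factors through $\Bun_N^{\check\rho(\Omega)}$ and $\varphi\circ\check\rho(\Omega)=0$, so $\xi$ lies in the shifted conormal.
\item The section also factors through the canonical (opposite) Borel reduction, and the splittings for $\Bun_G$ and $\Bun_B$ are compatible. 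Hence $\xi$ pulls back to zero in $T^*\Bun_B$: the relative part vanishes because $f$ is killed by that Borel, and the horizontal part vanishes by construction. So $\xi\in\mathrm{Nilp}_u$.
\item Injectivity of both Lagrangians into $T^*_S$ then identifies the intersection with $\xi(S)$.
\end{enumerate}

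Separately, your claim that smoothness of $\mathrm{Nilp}_u$ along $\rho(S)$ ``propagates'' from the fibers is not justified as stated: smooth fibers do not give a smooth total space without flatness. The cleaner route is again the paper's. The relative Hitchin map is smooth on the regular locus, the relative shifted conormal is the image of the Kostant section, and $\mathrm{Nilp}_{\mathrm{rel}}$ is the preimage of the zero section. Together these give relative transversality and smoothness over $S$ at once.
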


\begin{proof}
First we prove that the relative nilpotent cone $\mathrm{Nilp}$ and the shifted conormal bundle in the relative cotangent space $T_S^*\Bun_G(X/S)$ intersect transversally. This follows by the same argument as in \cite{nadler2022whittaker}, namely, consider the relative Hitchin fibration
\[H_S:T^*_S\Bun_G(X/S)\to \mathrm{Hitch}(X/S).\]
The shifted conormal bundle in the relative cotangent space is image of the Kostant section of $H_S$. The Kostant section takes values in the locus of regular Higgs fields and the Hitchin map is smooth on this locus. The relative nilpotent cone is the preimage of the zero map $\mathbf{0}:S\to \mathrm{Hitch}(X/S)$. Thus the intersection is transverse, since a section of a smooth map intersects the preimage of a submanifold of the base transversely. The intersection is given by $\mathrm{Kos}\circ \mathbf{0}:S\to T^*_S\Bun_G(X/S)$ and hence maps isomorphically to $S$.

    We have to prove that the intersection in the absolute cotangent space is also transverse. Since $\mathrm{Nilp}_u$ and $d\varphi +T^*_{\Bun_N^\omega} \Bun_G$ map bijectively to the relative cotangent space and the intersection there is transverse, it suffices to lift $\mathrm{Kos}\circ \mathbf{0}:S\to T^*_S\Bun_G(X/S)$ to $T^*\Bun_G(X/S)$ such that it factors through the intersection in the absolute cotangent space.

    The bundle $\check{\rho}(\Omega)$ defines a section $S\to \Bun_G(X/S)$. This gives a short exact sequence
\[0\to T^*S\to T^*\Bun_G\times_{\check{\rho}(\Omega)} S\to T_S^*\Bun_G\times_{\check{\rho}(\Omega)} S\to 0\]
    as coherent sheaves on $S$. The differential of the section defined by $\check{\rho}(\Omega)$ gives a splitting of this sequence. In particular we obtain a morphism of stacks
    \[T_S^*\Bun_G\times_{\check{\rho}(\Omega)} S\to T^*\Bun_G(X/S).\]
    Since the compostion $\mathrm{pr}_{\Bun_G}\circ \mathrm{Kos}\circ \mathbf{0}$ is the map $S\to \Bun_G(X/S)$ defined by $\check{\rho}(\Omega)$, we obtain a map
    \[s:S\to T^*\Bun_G(X/S)\]
    lifting $\mathrm{Kos}\circ \mathbf{0}$. It suffices to prove that this map factors through both $\mathrm{Nilp}_u$ and the shifted conormal bundle.

    That $s$ factors through $d\varphi +T^*_{\Bun_N^\Omega} \Bun_G$ follows since $\check{\rho}(\Omega)$ defines an $S$-point of $\Bun_N^\Omega$ and $\varphi\circ \check{\rho}(\Omega)=0$. To see that $S$ factors through the universal nilpotent cone, note that there is a canonical $B$-reduction on $\check{\rho}(\Omega)$ such that $\mathrm{Kos}\circ \mathbf{0}$ composed with the map
    \[T^*_S\Bun_G(X/S)\times_{\check{\rho}(\Omega)} S\to T^*_S\Bun_B(X/S)\times_{\check{\rho}(\Omega)} S\]
    is the zero-section. This $B$-reduction defines a map of short exact sequences
    \[\begin{tikzcd}
T^*S \arrow[r] \arrow[d] & T^*\Bun_G\times_{\check{\rho}(\Omega)}S \arrow[r] \arrow[d] &   T_S^*\Bun_G\times_{\check{\rho}(\Omega)}S  \arrow[d]   \\
T^*S \arrow[r]                      &  T^*\Bun_{B}\times_{\check{\rho}(\Omega)}S \arrow[r]               & T_S^*\Bun_{B}\times_{\check{\rho}(\Omega)}S
\end{tikzcd}\]
This map is compatible with the splittings of the two sequences defined by the differential of the maps determined by $\check{\rho}(\Omega)$. Thus $s$ composed with the middle vertical map is the zero-section and hence $s$ factors through $\mathrm{Nilp}_u$.

\end{proof}

\begin{prop}
    The universal nilpotent cone $\mathrm{Nilp}_u^n$ in $T^*\mathrm{Bun}_G^{\Omega,nx}$ intersects the shifted conormal bundle $d\mathrm{Re}\varphi_n+T^*_{\Bun_N^{\Omega,nx}} \Bun_G^{\Omega,nx}$ cleanly and the intersection is smooth over $S$.
\end{prop}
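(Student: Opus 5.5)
We will deduce this from the preceding proposition (transversality on $\on{Bun}_G(X/S)$) by pulling back along the smooth projection $\pi_n:\on{Bun}_G^{\Omega,nx}(X/S)\to \on{Bun}_G(X/S)$. Here $\on{Nilp}^n_u$ is, by definition, the image of $\on{Nilp}_u$ under the codifferential $d\pi_n$. Likewise, $\varphi_n$ is the function on $\on{Bun}_N^{\Omega,nx}(X/S)$ pulled back from $\varphi$ along the smooth map $\on{Bun}_N^{\Omega,nx}(X/S)\to \on{Bun}_N^{\Omega}(X/S)$, which is a torsor for a unipotent group scheme over $S$.

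\textbf{Step 1: reduce to the relative cotangent bundle.} Both $\on{Nilp}_u^n$ and $d\on{Re}\varphi_n+T^*_{\on{Bun}_N^{\Omega,nx}}\on{Bun}_G^{\Omega,nx}$ map bijectively onto their images in $T^*_S\on{Bun}_G^{\Omega,nx}(X/S)$. For $\on{Nilp}_u^n$ this follows from Theorem \ref{the nilpotent cone} and smoothness of $\pi_n$. For the shifted conormal it follows because $\varphi_n$ vanishes on the section $\check{\rho}(\Omega)$ and its differential is fiberwise nonzero. As in the proof of Lemma \ref{l:graphisclean}, cleanness of the absolute intersection can therefore be checked fiberwise over $S$, provided the intersection maps submersively to $S$.

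\textbf{Step 2: fiberwise computation, as in \cite{nadler2022whittaker}.} For $s\in S$, write $K_s$ for the fiber of $\pi_n$, a torsor for the kernel of $G(\cO_{D_n})\to T$. Work in $T^*\on{Bun}_G^{\Omega,nx}(X_s)$. The codifferential $d\pi_n$ identifies $\on{Nilp}^n_s$ with $\on{Nilp}_s\times_{\on{Bun}_G}\on{Bun}_G^{\Omega,nx}$, realised as covectors killing the vertical directions. The shifted conormal is pulled back from $d\varphi+T^*_{\on{Bun}_N^\Omega}\on{Bun}_G$ over the locus where it meets these annihilators.

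By the previous proposition, downstairs the intersection is transverse and equal to the single Kostant point over $\check{\rho}(\Omega)$. Upstairs, the intersection is therefore the preimage of that point under $\pi_n$, intersected with $\on{Bun}_N^{\Omega,nx}$. This is the orbit of the unipotent group $\ker(N(\cO_{D_n})\to 1)$ inside $K_s$. It is a smooth subvariety, and its dimension is exactly the defect accounted for by (\ref{dimensionassumption}).

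Cleanness then follows from the standard fact that if $\pi$ is smooth and $\Lambda_1,\Lambda_2$ meet transversally downstairs, then their pullbacks meet cleanly along the preimage. Concretely, the tangent spaces of the pullbacks are $d\pi^{-1}(T\Lambda_i)$ plus the vertical directions. The intersection of these tangent spaces is $d\pi^{-1}$ of the transverse intersection together with the vertical part, and this matches the tangent space of the preimage orbit.

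\textbf{Step 3: smoothness over $S$.} The intersection is the relative orbit of the smooth $S$-group scheme $\ker(N(\cO_{D_n(x)})\to 1)$ through the section $\check{\rho}(\Omega)$, so it is smooth over $S$.

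\textbf{Main obstacle.} The main difficulty is Step 2. One must check that the shifted conormal upstairs really is pulled back (up to the vertical conormal directions) from the downstairs one. This requires a careful comparison of $T^*_{\on{Bun}_N^{\Omega,nx}}$ with $T^*_{\on{Bun}_N^{\Omega}}$, using the Cartesian square defining $\on{Bun}_N^{\Omega,nx}$. I expect this to work exactly as in the pointwise argument of \cite{nadler2022whittaker}, with all cohomology computations performed relative to $S$.
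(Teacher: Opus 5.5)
Your reduction to a single curve is the same as the paper's. This is Step 1, together with the observation that $\mathrm{Nilp}^n_u$ is the $d\pi_n$-image of $\mathrm{Nilp}_u$, so the absolute intersection is the preimage of the intersection on $\on{Bun}_G(X/S)$ from the preceding proposition.

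\textbf{The gap: Step 2.} There you try to deduce fiberwise cleanness from transversality on $\on{Bun}_G(X_s)$. The fact you invoke, that pullbacks along a smooth map of transversally meeting Lagrangians meet cleanly, needs \emph{both} Lagrangians to be pullbacks. Only $\mathrm{Nilp}^n_u$ is one: its covectors annihilate the $\pi_n$-vertical ($G(\mathcal{O}_{D_n})$-orbit) directions. The shifted conormal $d\on{Re}\varphi_n+T^*_{\on{Bun}_N^{\Omega,nx}}\on{Bun}_G^{\Omega,nx}$ only constrains covectors on $T\on{Bun}_N^{\Omega,nx}$. Since $\on{Bun}_N^{\Omega,nx}$ is far from $G(\mathcal{O}_{D_n})$-stable, most vertical directions leave it, and the shifted conormal contains covectors that are nonzero on them. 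So it is not a pullback, ``up to vertical conormal directions'' or otherwise. You name this as the main obstacle but do not resolve it. This fiberwise cleanness is exactly the nontrivial content of \cite[Prop.~3.3.3]{nadler2022whittaker}. The paper does not rederive it: after the reduction it cites that result for each $X_s$.

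\textbf{The intersection is also misdescribed.} This breaks the tangent-space comparison you sketch. Downstairs the intersection is a single point only as a stack. The base point $\check\rho(\Omega)\in\on{Bun}_N^{\Omega}(X_s)$ has automorphism group $H^0(X_s,N_{\check\rho(\Omega)})$, which is positive-dimensional, and this group moves the Kostant covector.

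For $G=SL_2$, the nilpotent Higgs fields on $\Omega^{1/2}\oplus\Omega^{-1/2}$ restricting to $d\varphi$ (lower-left entry $1$) are $\left(\begin{smallmatrix} u & -u^2\\ 1 & -u\end{smallmatrix}\right)$ with $u\in H^0(X_s,\Omega)$. This is a $g$-dimensional single orbit. Hence, upstairs, the base points of the intersection form the $N(\mathcal{O}_{D_n})$-orbit, of dimension $n\dim N-\dim H^0(X_s,N_{\check\rho(\Omega)})$; for $SL_2$ this is $n-g$. Over each base point lies a positive-dimensional family of covectors.

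Only the total space has the dimension $n\dim N$ of Proposition~\ref{dimensionofintersection}. That total space is the $N(\mathcal{O}_{D_n})$-torsor $S\times_{\check\rho(\Omega),\on{Bun}_N^{\Omega}}\on{Bun}_N^{\Omega,nx}$ given by the paper's preimage description. Smoothness over $S$ should be read off from this torsor, not from an orbit in $\on{Bun}_N^{\Omega,nx}$.

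\textbf{Minor points in Step 1.} The shifted conormal maps bijectively to $T^*_S$ because $\on{Bun}_N^{\Omega,nx}$ is smooth over $S$, not because $d\varphi_n$ is fiberwise nonzero. Also, bijectivity of each Lagrangian separately does not identify the absolute intersection with the relative one. The paper obtains that identification from the preceding proposition via the pullback description.
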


\begin{proof}
    We start by proving that the following map is an isomorphism
    \[\mathrm{Nilp}_u^n\cap (d\mathrm{Re}\varphi_n+T^*_{\Bun_N^{\Omega,nx}} \Bun_G^{\Omega,nx})\to \mathrm{Nilp}_\mathrm{rel}^n\cap (d\mathrm{Re}\varphi_n+T^*_{\Bun_N^{\Omega,nx}} \Bun_G^{\Omega,nx})_\mathrm{rel},\]
    where the first intersection is inside the absolute cotangent space $T^*\Bun_G^{\Omega,nx}$ and the second intersection is in the relative cotangent space $T^*_S\Bun_G^{\Omega,nx}$. By definition $\mathrm{Nilp}_u^n$ is contained in
    \[T^*\Bun_G\times_{\Bun_G}\Bun_G^{\Omega,nx}\subset T^*\Bun_G^{\Omega,nx},\]
    where it is given by the preimage of $\mathrm{Nilp}_u\subset T^*\Bun_G(X/S)$ under the first coordinate projection. Thus
    \[\mathrm{Nilp}_u^n\cap (d\mathrm{Re}\varphi_n+T^*_{\Bun_N^{\Omega,nx}} \Bun_G^{\Omega,nx})\subseteq T^*\Bun_G\times_{\Bun_G}\Bun_G^{\Omega,nx}\]
    and is given by the preimage under the first coordinate projection of the intersection
    \[\mathrm{Nilp}_u\cap (d\mathrm{Re}\varphi +T^*_{\Bun_N^\Omega} \Bun_G)\subseteq T^*\Bun_G.\]
    The analogous statement in the relative cotangent bundle is also true, thus it suffices to prove that following map is an equivalence
    \[\mathrm{Nilp}_u\cap (d\mathrm{Re}\varphi+T^*_{\Bun_N^\Omega} \Bun_G)\to \mathrm{Nilp}_\mathrm{rel}^n\cap (d\mathrm{Re}\varphi+T^*_{\Bun_N^\Omega} \Bun_G)_\mathrm{rel},\]
    where the intersections are in $T^*\Bun_G$ and $T^*_{S}\Bun_G$, respectively. This statement follows from the previous proposition and we conclude that the intersection in the absolute cotangent space maps isomorphically to the relative cotangent space.

    Given this it suffices to prove that the intersection in the relative cotangent space is clean. Since the intersection is smooth over $S$ and the nilpotent cone is smooth in a neighborhood of the intersection, it suffices to prove that for each point $s$ of $S$ the intersection in the fiber above $s$ is clean. Thus we have reduced the problem for a family of curve to the case of a single curve, which is \cite[Proposition 3.3.3]{nadler2022whittaker}.
\end{proof}

We need one more input from \cite{nadler2022whittaker} in the case when $S$ is a point.

\begin{prop}[Nadler-Taylor]\label{dimensionofintersection}
    For any point $s\in S$, the dimension of the intersection of the shifted conormal bundle
    \[d\mathrm{Re}\varphi_{n,s}+T^*_{\mathrm{Bun}_N^{\Omega,nx_s}(X_s)} \Bun_G^{\Omega,nx_s}(X_s)\]
    and nilpotent cone inside $T^*\Bun_G^{\Omega,nx_s}(X_s)$ is $n\dim N$.
\end{prop}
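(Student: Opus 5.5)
The statement is over a single curve $X_s$, so I would either import it directly from \cite{nadler2022whittaker} or re-derive it by reducing to the non-rigidified statement proved in the proposition above in the case $S=\on{pt}$. The reduction runs as follows. Write $\pi_n:\Bun_G^{\Omega,nx_s}(X_s)\to\Bun_G(X_s)$ for the forgetful map. Over the relevant quasi-compact open it is a torsor for the finite-dimensional group $G(D_n(x_s))$ of automorphisms of the trivialized restriction to $D_n(x_s)$. By definition, $\on{Nilp}^n$ lies inside $T^*\Bun_G\times_{\Bun_G}\Bun_G^{\Omega,nx_s}$, which consists of the covectors killing the vertical directions of $\pi_n$. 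It is the preimage of $\on{Nilp}$ under the projection to $T^*\Bun_G$.

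First I would show that every point of the intersection lies over the single Kostant point. Any covector in $\on{Nilp}^n$ is pulled back from $T^*\Bun_G$. Also, $\varphi_n$ is the pullback of $\varphi$ along $\Bun_N^{\Omega,nx_s}\to\Bun_N^{\Omega}$. Together these mean that a point of the rigidified intersection maps to a point of $\on{Nilp}\cap(d\on{Re}\varphi+T^*_{\Bun_N^\Omega}\Bun_G)$. This is the same compatibility already used in the proof of the previous proposition. By the Kostant-section argument for $S=\on{pt}$, that downstairs intersection is the single transverse point $\mathrm{Kos}(\mathbf{0})$ over $\check{\rho}(\Omega)$. Hence the rigidified intersection lies over this one point, and at each point of the fibre the covector is uniquely determined: it is the pullback of the Kostant covector.

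Second, I would identify which points of the fibre $\pi_n^{-1}(\check{\rho}(\Omega))$ actually lie in $\Bun_N^{\Omega,nx_s}$. At such a point the global $B$-reduction of $\check{\rho}(\Omega)$ is the canonical one. The level structure must then be compatible with the induced $B$-reduction on $D_n(x_s)$ and with the fixed identification of the $T$-bundle. The remaining freedom is therefore a torsor for $N(D_n(x_s))$, the kernel of $B(D_n)\to T(D_n)$, which has dimension $n\dim N$.

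Finally, I would check that the pulled-back Kostant covector lies in $d\on{Re}\varphi_n+T^*_{\Bun_N^{\Omega,nx_s}}\Bun_G^{\Omega,nx_s}$ at every point of this $N(D_n)$-orbit. This holds because both the Kostant covector and $d\varphi_n$ are pulled back along maps through which the $N(D_n)$-action is invisible. It follows that the intersection is exactly this orbit, of dimension $n\dim N$.

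The main obstacle is the second step: making precise the moduli description of $\Bun_N^{\Omega,nx_s}$ as the fibre product $\Bun_N^\Omega\times_{\Bun^{\Omega,nx}_{G,N}}\Bun_G^{\Omega,nx}$, and verifying that its fibre over the canonical point is an $N(D_n)$-torsor rather than a $B(D_n)$-torsor. For this I would follow \cite[\S3]{nadler2022whittaker} verbatim.
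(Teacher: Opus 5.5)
Importing the statement from \cite{nadler2022whittaker} is exactly what the paper does. Your re-derivation reaches the right number, but two of its intermediate claims are false, and their errors happen to cancel.

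\textbf{What goes wrong.} Both mistakes come from ignoring the automorphisms of the Kostant point. The $B$-bundle $q_0=\check\rho(\Omega)\times^T B$, viewed as a point of $\Bun_N^{\Omega}(X_s)$, has automorphism group $H^0(X_s,N_{\check\rho(\Omega)})$. This group is positive dimensional, since $\fn_{\check\rho(\Omega)}\simeq\bigoplus_{\beta>0}\Omega^{\mathrm{ht}(\beta)}$ and $g\geq 1$. So ``the downstairs intersection is a single point'' is a statement about a stack. It does not say there is a single admissible covector at a fixed bundle.
\begin{itemize}
\item \emph{The covector is not unique.} Let $p$ be a point of the scheme $\Bun_N^{\Omega,nx_s}(X_s)$ whose image in $\Bun_N^{\Omega}(X_s)$ is identified with $q_0$. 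The covectors of the intersection at $p$ are all the $\mathrm{Ad}_u(\eta_0)$ with $u\in H^0(X_s,N_{\check\rho(\Omega)})$. By Kostant, these are exactly the nilpotent sections of the twisted affine bundle $(f+\mathfrak{b})\otimes\Omega$, and they are pairwise distinct. So ``the pullback of the Kostant covector'' depends on the chosen identification.
\item \emph{The base locus is not an $N(D_n)$-torsor.} Two level structures differing by the restriction to $D_n(x_s)$ of an element of $H^0(X_s,N_{\check\rho(\Omega)})$ give the same point of $\Bun_N^{\Omega,nx_s}(X_s)$. Hence the base locus is $N(D_n)/H^0(X_s,N_{\check\rho(\Omega)})$, of dimension $n\dim N-\dim H^0(X_s,\fn_{\check\rho(\Omega)})$.
\end{itemize}
Your orbit is too small by exactly the dimension your fibres are missing.

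\textbf{How to fix it.} The correct bookkeeping is a base change of stacks. Since $\on{Nilp}^n$ is the preimage of $\on{Nilp}$ and $\varphi_n$ is pulled back from $\varphi$, the intersection is $\mathcal{I}\times_{\Bun_N^{\Omega}}\Bun_N^{\Omega,nx_s}$. Here $\mathcal{I}$ is the stack of pairs $(q,\eta)$ with $q\in\Bun_N^\Omega(X_s)$ and $\eta$ a nilpotent covector at $a(q)$ restricting to $d\on{Re}\varphi(q)$.

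The input you need, and did not state, is that $\mathcal{I}$ is a point with \emph{trivial stabilizer}. This holds because the stabilizer of $\eta_0$ in $H^0(X_s,N_{\check\rho(\Omega)})$ is trivial, by Kostant's freeness of the $N$-action on $f+\mathfrak{b}$. The intersection is then the stack-theoretic fibre over $q_0$ of the $N(D_n)$-torsor $\Bun_N^{\Omega,nx_s}\to\Bun_N^{\Omega}$. That fibre is isomorphic to $N(D_n)$, of dimension $n\dim N$. Its map to $\Bun_N^{\Omega,nx_s}(X_s)$ is the quotient by $H^0(X_s,N_{\check\rho(\Omega)})$, which is why the pointwise picture misleads.

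A nontrivial stabilizer $K$ would give $n\dim N-\dim K$ instead, so this is the essential point. By contrast, the step you flagged as the main obstacle, that the fibre over $q_0$ is an $N(D_n)$-torsor, is immediate by base change from the $N(D_n)$-torsor $\Bun_G^{\Omega,nx}\to\Bun_{G,N}^{\Omega,nx}$.
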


\begin{proof}
    This is the dimension claim in [NT] Proposition 3.3.3.
\end{proof}

With this final piece at hand, we can now prove the local constancy of the Whittaker functional.

\begin{proof}[Proof of Proposition \ref{Localconstancyofwhittaker}]
    By Proposition \ref{whittakerisvanishing} the Whittaker functional is given by vanishing cycles along $\chi$ at the section $\check{\rho}(\Omega):$
    \[\mathrm{Whit}(\mathcal{F})\simeq \Phi_{\varphi,\check{\rho}(\Omega)}a^!\mathcal{F}.\]
    By smooth base change for vanishing cycles, this is identified with
    \[\Phi_{\varphi_n,\check{\rho}(\Omega)}a_n^!\pi_n^!\mathcal{F},\]
    for any $n\ge 1$. By choosing $n$ sufficiently large, we may choose a neighborhood $U$ of the section $\check{\rho}(\Omega):S\to \Bun_G^{\Omega,nx}(X/S)$ such that the following are satisfied: 
    \begin{enumerate}
        \item $U$ is representable by an analytic space.
        \item  Inside $U$, the contracting locus of $\check{\rho}(\Omega)$ under the $\mathbb{G}_m$-action is $\Bun_N^{\Omega,nx}(X/S)$.
        \item The global repelling locus is $\Bun_{B^-}^{\Omega,nx}(X/S)$.
    \end{enumerate}
    Note that  $\Bun_N^{\Omega,nx}(X/S)$ and $\Bun_{B^-}^{\Omega,nx}(X/S)$ are smooth over $S$ and intersect in the section $\check{\rho}(\Omega)$. Fiberwise over $S$, the intersection is transverse. By Proposition \ref{dimensionofintersection}, the dimension assumption \ref{dimensionassumption} is satisfied. Thus, after potentially shrinking $U$, we choose coordinates such that $F:U\to \mathbb{R}$ satisfying the conclusion of Proposition \ref{p:Extending test function}.

    Since the map $\Bun_G^{\Omega,nx}(X/S)\to \Bun_G(X/S)$ is $\mathbb{G}_m$-equivariant for the trivial action on the target, the sheaf $\pi_n^!a^!\mathcal{F}$ has a natural $\mathbb{G}_m$-equivariance structure. Thus we are in the situation of Proposition \ref{contractionprinciplevanishing}, which implies that
    \[\mathrm{Whit}(\mathcal{F})\simeq \Phi_{F,\check{\rho}(\Omega)}\mathcal{F}.\]
    Since $\Gamma_{dF}$ intersects $\mathrm{Nilp}_u^n$ in the locus of everywhere regular Higgs fields, the intersection is contained in the smooth locus of $\mathrm{Nilp}_u^n\to S$. Thus from the local constancy of microstalks in families, see Proposition \ref{localcontancyofmicro}, we conclude that $\Psi_{F,\check{\rho}(\Omega)}\mathcal{F}$ is a local system on $S$. The identification of the stalks follows from the same proposition.
\end{proof}

\part{}

\section{Beilinson's Spectral Projector in Families}
In this section we set up Beilinson's spectral projector for families of automorphic sheaves. The spectral projector provides a formula for the left adjoint of the inclusion $\mathrm{Sh}_\mathrm{Nilp_{rel}}(\Bun_G)\to \mathrm{Sh}(\Bun_G)$ in terms of the Hecke action. We observe that this formula is given by non-characteristic kernels and use this to prove that Beilinson's spectral projector sends ind-non-characteristic ind-constructible sheaves to sheaves with universal nilpotent singular support.

\subsubsection{} Due to the relative nature of the objects we consider, to formulate the results without assuming $S$ to be contractible, we will consider many objects as (pre)sheaves on $S_\mathrm{et}$ with values in the $(\infty,1)$-category $\mathrm{Pr}_e^L$ of presentable $e$-linear stable categories. By this we will mean a (pre)sheaf on the site $S_{et}$, spanned by etale morphisms $T\to S$ from a contractible analytic variety and covers generated by the jointly surjective families of morphisms. 

To us, the most important example of such a sheaf of categories is the sheaf which on objects is given by
\[(T\to S)\mapsto \mathrm{Sh}(\Bun_G(X\times_S T/T))\]
with $*$-pullback functoriality. 

\begin{rem}
To avoid clunky notation, we will sometimes use the same notation for a presheaf of categories and its category of global sections.  Thus we will simply denote the above sheaf by $\mathrm{Sh}(\Bun_G(X/S))$.
\end{rem}

We denote the category of all presheaves on $S_\mathrm{et}$ with values in a category $\mathcal{C}$ by $\mathrm{PSh}(S_\mathrm{et},\mathcal{C})$.

\begin{defin}
    For $\mathcal{C}$ an $(\infty,1)$-category, a presheaf $\mathcal{F}:S_{et}\to \mathcal{C}$ is said to be locally constant if, for all $T'\to T$ over $S$ with $T'$ and $T$ contractible analytic varieties, the restriction map
    \[\mathcal{F}(T)\to \mathcal{F}(T')\]
    is an equivalence. Let $\mathrm{PSh}_{lc}(S_\mathrm{et},\mathcal{C})$ denote the full subcategory spanned by locally constant presheaves.
\end{defin}

For more details see Appendix B.

\subsection{Hecke action in families} Let us start by recalling the Hecke action in the relative setting. For a finite set $I$, consider the correspondence of stacks 
\[
\begin{tikzcd}
	& {\mathrm{Hecke}_I(X/S)} \arrow[ld, "\overset{\leftarrow}{h}"'] \arrow[rd, "\overset{\rightarrow}{h}\times \mathfrak{s}"]\\
	{\mathrm{Bun}_G(X/S)} && {\mathrm{Bun}_G(X/S)\times_S X^{I/S}}
\end{tikzcd}\]
where $\mathrm{Hecke}_I(X/S)$ denotes the stack with $T$-valued points given by an an $I$-indexed set of sections of $X\times_S T/T$, a pair of $G$-bundles over $X\times_S T$ and an identification away from the union of the graphs of the sections.

The stack $\mathrm{Hecke}_I(X/S)$ admits a map $\mathfrak{r}:\mathrm{Hecke}_I(X/S)\to \mathrm{Hecke}_I^\mathrm{loc}(X/S)$, where $\mathrm{Hecke}_I^\mathrm{loc}(X/S)$ denotes the similarly defined local Hecke stack. Let $\mathrm{Coord}(X/S)$ denote étale sheafification of the prestack over $X$ which to a map $x:T\to X$ associates the space of isomorphisms between $T[[z]]$ and the formal completion of $X\times_S T$ along the section over $T$ defined by $x$, which preserves the origin. The map $\mathrm{Coord}(X/S)\to X$ is a torsor for the group scheme $\mathrm{Aut}^0(D)$ of automorphisms of $D=\Spec \mathbb{C}[[z]]$ preserving the point given by the maximal ideal $(z)$.

The local Hecke stack sits in a correspondence
\[\mathrm{Hecke}_I^\mathrm{loc}(X/S)\leftarrow \mathrm{Hecke}_I^\mathrm{loc}(X/S)\times_{X^{I/S}} \mathrm{Coord}(X/S)^{I/S} \to ( L^+G \backslash \mathrm{Gr}_{G})^{I/S}.\]
The geometric Satake equivalence associates to each object of $\mathrm{Rep}(\check{G})$ an $\mathrm{Aut}^0(D)$-equivariant sheaf on $L^+G \backslash \mathrm{Gr}_{G}$. Pulling back along the above correspondence and using the $\mathrm{Aut}^0(D)$-equivariance to descent along the left morphism, we obtain a functor
\[\mathrm{Sat}_I: (\mathrm{Rep}(\check{G})^{\otimes I})\to \mathrm{Sh}(\mathrm{Hecke}_I^\mathrm{loc}(X/S)).\]

\begin{defin}
    For any $V\in \mathrm{Rep}(\check{G})^{\otimes I}$, the Hecke operator with kernel $V$ is the functor
    \[H(V,-):\mathrm{Sh}(\Bun_G(X/S))\to \mathrm{Sh}(\Bun_G(X/S)\times_S X^{I/S})\]
    given by sending $\mathcal{F}$ to the sheaf
    \[(\overset{\rightarrow}{h}\times \mathfrak{s})_!((\overset{\leftarrow}{h})^*\mathcal{F}\otimes \mathfrak{r}^*\mathrm{Sat}_I(V))\]
\end{defin}

\subsubsection{} As usual we integrate all the Hecke operators to an action of a single monoidal category. Consider the functor $R:\mathrm{TwArr}(\mathrm{Fin})\to \mathrm{Pr}_e^L$, which on objects is given by
\[(I\to J)\mapsto \mathrm{Rep}(\check{G})^{\otimes I}\otimes \mathrm{Sh}(X^{J/S})\]
and sends a morphism $f:(I_1\to J_1)\to (I_2\to J_2)$ given by a diagram
\[
\begin{tikzcd}
I_1 \arrow[r] \arrow[d, "f_I"] & J_1                   \\
I_2 \arrow[r]                  & J_2 \arrow[u, "f_J"']
\end{tikzcd}\]
to the functor 
\[\mathrm{mult}^{f_I}\otimes \Delta_{f_J!}:\mathrm{Rep}(\check{G})^{\otimes I_1}\otimes \mathrm{Sh}(X^{J_1/S})\to \mathrm{Rep}(\check{G})^{\otimes I_2}\otimes \mathrm{Sh}(X^{J_2/S}),\]
where $\mathrm{mult}^{f_I}$ is the multiplication map along the fibers of $f_I$ given by the monoidal structure on $\mathrm{Rep}(\check{G})$ and $\Delta_{f_J}$ is the the diagonal map determined by $f_J$.

\begin{defin}
    Let $\mathrm{Rep}(\check{G})_{\mathrm{Ran}(X/S)}\in \mathrm{Pr}_e^L$ denote the colimit of $R:\mathrm{TwArr}(\mathrm{Fin})\to \mathrm{Pr}_e^L$.
\end{defin}

The functor can be promoted to a lax symmetric monoidal functor. This gives a symmetric monoidal structure on $\mathrm{Rep}(\check{G})_{\mathrm{Ran}(X/S)}$. The combined action of the Hecke operators integrates to an action of $\mathrm{Rep}(\check{G})_{\mathrm{Ran}(X/S)}$ on $\mathrm{Sh}(\Bun_G(X/S))$ linear over $\on{Sh}(S)$. This action is defined by letting the image of $V\otimes \mathcal{S}$ in $\mathrm{Rep}(\check{G})_{\mathrm{Ran}(X/S)}$, for $V\in \mathrm{Rep}(\check{G})^I$, $\mathcal{S}\in \mathrm{Sh}(X^{J/S})$ and $\phi:I\to J$, act by the following composition:
\begin{align*}
    \mathrm{Sh}(\Bun_G)\overset{H(V,-)}{\to} &\mathrm{Sh}(\Bun_G\times_S X^{I/S})\overset{(\mathrm{
id}\times \Delta_\phi)^*}{\to }\mathrm{Sh}(\Bun_G\times_S X^{J/S})\\
&\overset{\otimes\mathrm{pr}_2^*\mathcal{S}}{\to }\mathrm{Sh}(\Bun_G\times_S X^{J/S})\overset{\mathrm{pr}_{\Bun_G!}}{\to} \mathrm{Sh}(\Bun_G),
\end{align*}
where $H(V,-)$ is the Hecke operator with kernel $V$.

We can consider $\mathrm{Rep}(\check{G})_{\mathrm{Ran}(X/S)}$ as a presheaf of categories on $S_{et}$ with values in $\mathrm{Alg}(\mathrm{Pr}^L_{st})$, which on objects is defined by 
\[(T\to S)\mapsto \mathrm{Rep}(\check{G})_{\mathrm{Ran}(X\times_S T/T)}\in \mathrm{Alg}(\mathrm{Pr}^L_e).\]
The Hecke operators define an action of the presheaf of algebras $\mathrm{Rep}(\check{G})_{\mathrm{Ran}(X/S)}$ on the sheaf $\mathrm{Sh}(\Bun_G(X/S))$. 
 
\subsubsection{Non-characteristic Hecke kernels} Since the family of curves $X/S$ is assumed to be proper, the functors
\[ \Delta_{f_J!}: \mathrm{Sh}(X^{J_1/S})\to \mathrm{Sh}(X^{J_2/S})\]
preserves the full subcategories of sheaves that are non-characteristic over $S$. Let $\mathrm{Sh}(X^{J/S})^\mathrm{nc}$ denote the full subcategory of non-characteristic sheaves. Thus we obtain a functor
\[R^\mathrm{nc}:\mathrm{TwArr}(\mathrm{Fin})\to \mathrm{Pr}^L_e\]
given by 
    \[(I\overset{\phi}{\to} J)\mapsto \mathrm{Rep}(\check{G})^{\otimes I}\otimes \mathrm{Sh}(X^{J/S})^\mathrm{nc}\]

\begin{defin}
    Let $\mathrm{Rep}(\check{G})_{\mathrm{Ran}(X/S)}^\mathrm{nc}$ denote the category given by the colimit of $R^\mathrm{nc}:\mathrm{TwArr}(\mathrm{Fin})\to \mathrm{Pr}^L_e$.
\end{defin}
We refer to $\mathrm{Rep}(\check{G})_{\mathrm{Ran}(X/S)}^\mathrm{nc}$ as the category of non-characteristic kernels relative to $S$. It admits a natural map
\[\mathrm{Rep}(\check{G})_{\mathrm{Ran}(X/S)}^\mathrm{nc}\to \mathrm{Rep}(\check{G})_{\mathrm{Ran}(X/S)},\]
which upgrades to a map of presheaves of categories on $S_{et}$.

\subsection{Kernel of the spectral projector}\label{ss:10.2} Let $\mathrm{Sh}_{\mathrm{Nilp}_\mathrm{fib}}(\Bun_G(X/S))$ denote the subcategory of $\mathrm{Sh}(\Bun_G(X/S))$ of sheaves $\mathcal{F}$ such that for all $s\in S$ the sheaf $i_s^*\mathcal{F}\in \mathrm{Sh}_\mathrm{Nilp}(\Bun_G(X_s))$.

Recall the moduli space of local systems for a family of curves, see Appendix \ref{LocSys in families}. The goal in this subsection is to construct a kernel $R_S\in \mathrm{QCoh}(\mathrm{LS}_{\check{G}}(X/S))\otimes  \mathrm{Rep}(\check{G})_{\mathrm{Ran}(X/S)}^\mathrm{nc}$ such that the composition
\[\mathrm{Sh}(\Bun_G(X/S))\overset{R_S\star -}{\to} \mathrm{QCoh}(\mathrm{LS}_{\check{G}}(X/S))\otimes \mathrm{Sh}(\Bun_G(X/S))\overset{\Gamma_*\otimes \mathrm{id}}{\to}\mathrm{Sh}(\Bun_G(X/S))\]
factors through $\mathrm{Sh}_{\mathrm{Nilp}_\mathrm{fib}}(\Bun_G(X/S))$ and provides a left adjoint to the inclusion 
\[
\mathrm{Sh}_{\mathrm{Nilp}_\mathrm{fib}}(\Bun_G(X/S))\to \mathrm{Sh}(\Bun_G(X/S)).
\]

\begin{rem}
    Here $\Gamma_*:\mathrm{QCoh}(\mathrm{LS}_{\check{G}}(X/S))\to D(e)$ is considered as a map of presheaves of categories on $S_{et}$, where the target is the constant sheaf with value $D(e)$.
\end{rem}

Recall the following result from \cite{arinkin2020stack}:

\begin{thm}\label{kernel over a point}
    For each geometric point $s$ of $S$, there exist an object $\mathrm{R}_s\in \mathrm{QCoh}(\mathrm{LS}_{\check{G}}(X_s)) \otimes\mathrm{Rep}(\check{G})_{\mathrm{Ran}(X_s)}$ such that the functor
    \[\mathrm{Sh}(\Bun_G(X_s))\overset{\mathrm{R}_s\star-}{\to} \mathrm{QCoh}(\mathrm{LS}_{\check{G}}(X_s))\otimes \mathrm{Sh}(\Bun_G(X_s))\overset{\Gamma\otimes \mathrm{id}}{\to} \mathrm{Sh}(\Bun_G(X_s))\]
    factors through the subcategory $\mathrm{Sh}_\mathrm{Nilp}(\Bun_G(X_s))$ and provides a left adjoint to the inclusion into $\mathrm{Sh}(\Bun_G(X_s))$. 
\end{thm}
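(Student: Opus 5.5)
This is Beilinson's spectral projector over a single curve, recalled from \cite{arinkin2020stack} (\S15 there). I would reconstruct it in four steps.

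\textbf{Step 1: the kernel.} Write $\mathrm{LS}:=\mathrm{LS}_{\check{G}}(X_s)$. The category $\mathrm{QCoh}(\mathrm{LS})$ is a symmetric monoidal localization of $\mathrm{Rep}(\check{G})_{\mathrm{Ran}(X_s)}$, via the functor $\mathrm{Loc}$ sending $V\otimes\mathcal{S}$ on $X^I$ to $\Gamma_c(X^I,\mathcal{E}_V\otimes\mathcal{S})$, where $\mathcal{E}$ is the tautological local system on $\mathrm{LS}\times X$. This functor has a fully faithful right adjoint $\mathrm{coLoc}$. I would take
\[
\mathrm{R}_s:=(\mathrm{id}\otimes\mathrm{coLoc})(\mathcal{O}_{\Delta}) \in \mathrm{QCoh}(\mathrm{LS})\otimes\mathrm{Rep}(\check{G})_{\mathrm{Ran}(X_s)},
\]
the image of the diagonal $\mathcal{O}_\Delta\in \mathrm{QCoh}(\mathrm{LS})\otimes\mathrm{QCoh}(\mathrm{LS})$. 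Concretely this is a colimit over $\mathrm{TwArr}(\mathrm{Fin})$ of objects $\mathcal{O}(\mathrm{LS})$-twisted $V^\vee\boxtimes V\otimes \mathbf{1}_{X^J}$, i.e.\ Beilinson's formula. Applying $\Gamma\otimes\mathrm{id}$ turns $\mathrm{R}_s\star-$ into the endofunctor $\mathrm{P}$ of $\mathrm{Sh}(\mathrm{Bun}_G(X_s))$ given by the action of $\mathrm{coLoc}(\mathcal{O}_{\mathrm{LS}})$.

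\textbf{Step 2: landing in nilpotent sheaves.} Next I show that the action of $\mathrm{Rep}(\check{G})_{\mathrm{Ran}}$ on $\mathrm{P}(\mathcal{F})$ factors through $\mathrm{Loc}$. Since $\mathrm{coLoc}(\mathcal{O})$ is $\mathrm{Loc}$-local, tensoring by it absorbs the kernel of $\mathrm{Loc}$. By Ran-space factorization, this means the Hecke functors on $\mathrm{P}(\mathcal{F})$ are lisse along $X$. Theorem \ref{t:Heckechar}, in the case $S=\mathrm{pt}$ (\cite[Thm.\ 14.4.4]{arinkin2020stack}), then gives $\mathrm{P}(\mathcal{F})\in\mathrm{Sh}_{\mathrm{Nilp}}$.

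\textbf{Step 3: the adjunction.} Conversely, for $\mathcal{G}\in\mathrm{Sh}_{\mathrm{Nilp}}$, the spectral action theorem \cite{arinkin2020stack} makes the $\mathrm{Rep}(\check{G})_{\mathrm{Ran}}$-action factor through $\mathrm{QCoh}(\mathrm{LS})$. The unit $\mathbf{1}\to\mathrm{coLoc}\,\mathrm{Loc}(\mathbf{1})$ then induces a map $\mathcal{F}\to \mathrm{P}(\mathcal{F})$. I would show
\[
\mathrm{Hom}(\mathrm{P}\mathcal{F},\mathcal{G})\to\mathrm{Hom}(\mathcal{F},\mathcal{G})
\]
is an isomorphism, using that $\mathrm{coLoc}$ is fully faithful and that $\mathcal{G}$ is a $\mathrm{QCoh}(\mathrm{LS})$-module.

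\textbf{Main obstacle.} The hardest point is Step 2, specifically the Hecke-lisse characterization of nilpotence. That result is precisely the cited theorem, and I would quote it rather than reprove it.
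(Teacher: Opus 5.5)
The paper does not prove this statement. It quotes it from \cite{arinkin2020stack} (the Betti part; the paper later invokes \cite[Cor.~18.2.9]{arinkin2020stack} for the unit) and recalls $\mathrm{R}_s$ as the explicit $\mathrm{TwArr}(\mathrm{Fin})$-colimit of \cite[\S 13.1]{arinkin2020stack}. Your reconstruction has a genuine gap in Step 2. The criterion you invoke, Theorem~\ref{t:Heckechar} for $S=\mathrm{pt}$ (i.e.\ \cite[Thm.~14.4.4]{arinkin2020stack}), is a statement about \emph{weakly analytically constructible} sheaves. But $\mathcal{F}$ is an arbitrary Betti sheaf. Moreover, $\mathrm{P}(\mathcal{F})$ is an infinite colimit over $\mathrm{TwArr}(\mathrm{Fin})$ of Hecke translates of $\mathcal{F}$, tensored with global sections of tautological bundles on $\mathrm{LS}_{\check G}(X_s)$, so it is not constructible even when $\mathcal{F}$ is. Passing to nice generators does not help: $\mathrm{P}$ of a constructible sheaf is still such a colimit, and the individual terms of the colimit are not Hecke-lisse, so the criterion cannot be applied termwise. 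What Step 2 really requires is ``Hecke-lisse $\Rightarrow$ nilpotent singular support'' for arbitrary Betti sheaves. The cited result does not give this, and it is essentially as strong as what you are proving. This is exactly where the work lies. Compare the paper's treatment of the family analogue (Propositions~\ref{non-char ind-con projector} and~\ref{nilpontece of the spectral projector}). It passes to the ind-constructible context and $\mathrm{LS}^{\mathrm{restr}}_{\check G}$, where nilpotence is tested on constructible subobjects of perverse cohomology sheaves, and applies the ind-constructible results of \cite{arinkin2020stack}. It then returns to the Betti projector via $i_{e'}^*$ and joint conservativity over field extensions (Lemma~\ref{faithful flatness of compact substacks}). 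Even so, it only reaches ind-constructible inputs (see the remark following Proposition~\ref{nilpontece of the spectral projector}).

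A second, smaller problem sits in Steps 1--2. The sentence ``since $\mathrm{coLoc}(\mathcal{O})$ is $\mathrm{Loc}$-local, tensoring by it absorbs the kernel of $\mathrm{Loc}$'' is a non sequitur. Locality only gives $\mathrm{Hom}(K,\mathrm{coLoc}(\mathcal{O}))=0$ for $K\in\ker\mathrm{Loc}$. To get $K\otimes\mathrm{coLoc}(\mathcal{O})=0$ you need a projection formula $K\otimes\mathrm{coLoc}(M)\simeq\mathrm{coLoc}(\mathrm{Loc}(K)\otimes M)$, i.e.\ that the localization is smashing. That calls for an input such as rigidity of $\mathrm{Rep}(\check G)_{\mathrm{Ran}(X_s)}$, which you have not supplied; even the fact that $\mathrm{Loc}$ has a fully faithful right adjoint in the Betti setting must itself be cited. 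Likewise, you assert without argument that $(\mathrm{id}\otimes\mathrm{coLoc})(\mathcal{O}_\Delta)$ agrees with Beilinson's colimit formula. The bare existence statement does not need this, but the paper does: Proposition~\ref{bundleing the kernels} matches $i_s^*\mathrm{R}_S$ with $\mathrm{R}_s$ term by term in that formula. Once $\mathrm{P}(\mathcal{F})\in\mathrm{Sh}_{\mathrm{Nilp}}(\Bun_G(X_s))$ is actually established, your Step 3 is fine. The spectral action gives $\mathrm{P}\simeq\mathrm{id}$ on nilpotent objects compatibly with the unit, and the standard idempotent-monad argument yields the adjunction.
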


For every $s\in S$ we refer to the object $\mathrm{R}_s\in \mathrm{Sh}(\Bun_G(X_s))\otimes \mathrm{Rep}(\check{G})_{\mathrm{Ran}(X_s)}$ as the kernel of Beilinson's spectral projector for $X_s$.

Let us recall the explicit formula for $\mathrm{R}_s$ as described in \cite[\S 13.1]{arinkin2020stack}. Let $F:\mathrm{Rep}(\check{G})\to \mathrm{Sh}_{\mathrm{lc}}(X_s)\otimes \mathrm{QCoh}(\mathrm{LS}_{\check{G}}(X_s))$ be the map given by the universal $\check{G}$-local system on $X_s$ and let
\[F^J:\mathrm{Rep}(\check{G})^{\otimes J}\to \mathrm{Sh}_{\mathrm{lc}}(X_s^J)\otimes \mathrm{QCoh}(\mathrm{LS}_{\check{G}}(X_s))\]
be the map given by the $J$-fold tensor product of $F$ and the monoidal structure on $\mathrm{QCoh}(\mathrm{LS}_{\check{G}}(X_s))$. There is an equivalence
\[R_{X_s}\simeq \underset{(I\overset{\phi}{\to} J)\in \mathrm{TwArr}(\mathrm{Fin})}{\colim} \mathrm{ins}_{I\to J} (\mathrm{id}\otimes ( F^J\circ \mathrm{mult}^\phi)(R^{\boxtimes I}_{\mathrm{Rep}(\check{G})}))\in \mathrm{QCoh}(\mathrm{LS}_{\check{G}}(X_s)) \otimes\mathrm{Rep}(\check{G})_{\mathrm{Ran}(X_s)},\]
where $R^{\boxtimes I}\in (\mathrm{Rep}(\check{G})\otimes \mathrm{Rep}(\check{G}))^{\otimes I}$ is the $I$-fold tensor product of the unit of self-duality in the rigid category $\mathrm{Rep}(\check{G})$.

We now bundle these kernels together to a single kernel over $S$, which is non-characteristic relative to $S$.

\begin{prop}\label{bundleing the kernels}
    There exist an object $\mathrm{R}_S\in \mathrm{QCoh}(\mathrm{LS}_{\check{G}}(X/S))\otimes \mathrm{Rep}(\check{G})_{\mathrm{Ran}(X/S)}^\mathrm{nc}$ equipped with an equivalence between the image of $\mathrm{R}_S$ under the map
    \[i_s^*:\mathrm{QCoh}(\mathrm{LS}_{\check{G}}(X/S))\otimes \mathrm{Rep}(\check{G})_{\mathrm{Ran}(X/S)}^\mathrm{nc}\to \mathrm{QCoh}(\mathrm{LS}_{\check{G}}(X_s))\otimes  \mathrm{Rep}(\check{G})_{\mathrm{Ran}(X_s)}\]
    and $\mathrm{R}_s$, for all points $s$ of $S$. 
\end{prop}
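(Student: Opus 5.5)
Since the explicit formula for $\mathrm{R}_s$ recalled above is built from the universal local system, the unit of self-duality in $\mathrm{Rep}(\check{G})$, and the functors $\mathrm{mult}^\phi$ and $\mathrm{ins}_{I\to J}$, the plan is to write down the same colimit in the relative setting and check that every ingredient lies in the non-characteristic subcategory. Concretely, the universal $\check{G}$-local system on $X\times_S \mathrm{LS}_{\check{G}}(X/S)$ (see Appendix \ref{LocSys in families}) gives a symmetric monoidal functor
\[
F_S:\mathrm{Rep}(\check{G})\to \mathrm{Sh}_{\mathrm{lc}}(X)\otimes_{\mathrm{Sh}(S)}\mathrm{QCoh}(\mathrm{LS}_{\check{G}}(X/S)),
\]
where $\mathrm{Sh}_{\mathrm{lc}}(X)$ denotes sheaves on $X$ that are locally constant along the fibers of $X\to S$; more precisely, I would work with sheaves locally constant on the total space $X$ relative to $S$-local systems. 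Its $J$-fold external tensor product $F^J_S$ then lands in sheaves on $X^{J/S}$ locally constant relative to $S$. The first step is to observe that such sheaves (locally constant on $X^{J/S}$, tensored with anything pulled back from $S$ via $\mathrm{Sh}(S)$-linearity) have singular support in the zero section after base change along contractible $T\to S$, and hence are non-characteristic over $S$. Therefore $F^J_S\circ\mathrm{mult}^\phi(R^{\boxtimes I}_{\mathrm{Rep}(\check{G})})$ defines an object of $\mathrm{QCoh}(\mathrm{LS}_{\check{G}}(X/S))\otimes\mathrm{Rep}(\check{G})^{\otimes I}\otimes\mathrm{Sh}(X^{J/S})^{\mathrm{nc}}$.

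The second step is functoriality along $\mathrm{TwArr}(\mathrm{Fin})$: for a morphism $(I_1\to J_1)\to(I_2\to J_2)$ one needs compatibility isomorphisms between $\mathrm{mult}^{f_I}\otimes\Delta_{f_J!}$ applied to the $(I_1\to J_1)$-term and the structure maps into the $(I_2\to J_2)$-term. These are produced exactly as in \cite[\S 13.1]{arinkin2020stack}, using that $F_S$ is symmetric monoidal, so that $\Delta_{f_J}^*F^{J_1}_S\simeq F^{J_2}_S\circ\mathrm{mult}^{f_J}$; here properness of $X/S$ gives $\Delta_{f_J!}=\Delta_{f_J*}$, and the needed units and counits are the same coherent data as in the absolute case, now over $S$. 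Taking the colimit in $\mathrm{QCoh}(\mathrm{LS}_{\check{G}}(X/S))\otimes\mathrm{Rep}(\check{G})^{\mathrm{nc}}_{\mathrm{Ran}(X/S)}$ defines $\mathrm{R}_S$.

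The third step is the comparison with $\mathrm{R}_s$. The restriction $i_s^*$ is a colimit-preserving functor between the colimit presentations, compatible termwise with the diagram $R^{\mathrm{nc}}\to R$ at $s$. It sends $F_S$ to $F$ because the universal local system base changes to the universal local system on $X_s$, and it commutes with $\Delta_{f_J!}$ by proper base change. This yields the identification $i_s^*\mathrm{R}_S\simeq\mathrm{R}_s$.

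The main obstacle I expect is making sense of $F_S$ and the tensor product $\mathrm{QCoh}(\mathrm{LS}_{\check{G}}(X/S))\otimes\mathrm{Sh}(X^{J/S})$ when $\mathrm{LS}_{\check{G}}(X/S)$ is only a locally constant family over $S$ and $S$ is real analytic, and verifying that the result genuinely lands in the non-characteristic part rather than in a completion of it. My first attempt would be to work locally on $S_{\mathrm{et}}$ with contractible $T$, where $X_T\simeq X_{s}\times T$ topologically and $\mathrm{LS}_{\check{G}}(X_T/T)$ is constant. In that setting $F_T$ is the pullback of $F$ along the projection, so non-characteristicness is evident. I would then glue using local constancy, as in Appendix B.
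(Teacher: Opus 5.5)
Your proposal follows the paper's route: over contractible $T\to S$ you build $F^J$ from the universal local system on $X\times_S T$, send it into $\mathrm{Sh}^{\mathrm{nc}}(X^{J/S})$ via the inclusion of locally constant sheaves, define $\mathrm{R}_S$ by the same $\mathrm{TwArr}(\mathrm{Fin})$-indexed colimit as for $\mathrm{R}_s$, and obtain $i_s^*\mathrm{R}_S\simeq\mathrm{R}_s$ by termwise comparison (the paper treats your Steps 2 and 3 as evident). One correction: drop the $\otimes_{\mathrm{Sh}(S)}$ formulation with sheaves merely locally constant along the fibers, because tensoring with an arbitrary sheaf pulled back from $S$ does not preserve non-characteristicness (a skyscraper on $S$ pulls back to a sheaf whose singular support is the conormal of a fiber). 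Instead, work only with genuine local systems on the total space $X\times_S T$, as the paper and your own fallback do.
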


\begin{proof}
    For each contractible $S'\to S$, the universal $\check{G}$-local system on $\mathrm{LS}_{\check{G}}(X\times_S S')$ defines a natural transformation
\[\mathrm{Rep}(\check{G})\to \mathrm{Sh}_{\mathrm{lc}}(X\times_S S')\otimes \mathrm{QCoh}(\mathrm{LS}_{\check{G}}(X\times_S S')).\]
     These maps are compatible with the restriction map in $S'$ and thus define a map of presheaves of categories
     \[F:\mathrm{Rep}(\check{G})\to \mathrm{Sh}_{\mathrm{lc}}(X/S)\otimes \mathrm{QCoh}(\mathrm{LS}_{\check{G}}(X/S)),\]
      where $\mathrm{Rep}(\check{G})$ is considered as a constant presheaf. Taking the $J$-fold tensor product of $F$ and using the monoidal structure on $\mathrm{QCoh}(\mathrm{LS}_{\check{G}}(X/S))$, we obtain a map of presheaves
    \[\mathrm{Rep}(\check{G})^{\otimes J}\to (\mathrm{Sh}_{\mathrm{lc}}(X/S))^{\otimes J}\otimes \mathrm{QCoh}(\mathrm{LS}_{\check{G}}(X/S)).\]
    The exterior product defines a map of sheaves $ (\mathrm{Sh}_{\mathrm{lc}}(X/S))^{\otimes J}\to  \mathrm{Sh}_{\mathrm{lc}}(X^{J/S}/S)$. Thus we obtain a map of sheaves
    \[\mathrm{Rep}(G)^{\otimes J}\to \mathrm{Sh}_{\mathrm{lc}}(X^{J/S}/S)\otimes \mathrm{QCoh}(\mathrm{LS}_{\check{G}}(X/S))\]
    Composing with the inclusion $\mathrm{Sh}_{\mathrm{lc}}(X^{J/S}/S)\to \mathrm{Sh}^\mathrm{nc}(X^{J/S}/S)$, we obtain a map
    \[F^J: \mathrm{Rep}(\check{G})^{\otimes J}\to \mathrm{Sh}^\mathrm{nc}(X^{J/S}/S)\otimes \mathrm{QCoh}(\mathrm{LS}_{\check{G}}(X/S)).\]
    We then define the object $R_S$ by the following formula
    \[R_S:= \underset{(I\overset{\phi}{\to} J)\in \mathrm{TwArr}(\mathrm{Fin})}{\colim}\mathrm{ins}_{I\to J} (\mathrm{id}\otimes (F^J\circ \mathrm{mult}^\phi)(R^{\boxtimes I}_{\mathrm{Rep}(\check{G})})).\]
    By comparing with the formula for $R_s$, we obtain an obvious equivalence $i_s^*R_S\simeq R_s$ for all points $s$ of $S$. 
\end{proof}

\begin{prop}
The composition
    \[\mathrm{Sh}(\Bun_G(X/S))\overset{R_S\star -}{\to} \mathrm{QCoh}(\mathrm{LS}_{\check{G}}(X/S))\otimes \mathrm{Sh}(\Bun_G(X/S))\overset{\Gamma_*\otimes \mathrm{id}}{\to}\mathrm{Sh}(\Bun_G(X/S))\]
factors through $\mathrm{Sh}_{\mathrm{Nilp}_\mathrm{fib}}(\Bun_G(X/S))$ and provides a left adjoint to the inclusion $\mathrm{Sh}_{\mathrm{Nilp}_\mathrm{fib}}(\Bun_G(X/S))\to \mathrm{Sh}(\Bun_G(X/S))$.
\end{prop}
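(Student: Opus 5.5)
Write $P_S:=(\Gamma_*\otimes\mathrm{id})\circ(R_S\star -)$. I will reduce both assertions, that $P_S$ lands in $\mathrm{Sh}_{\mathrm{Nilp}_\mathrm{fib}}$ and that it is left adjoint to the inclusion, to Theorem \ref{kernel over a point} fiber by fiber. The tool is Proposition \ref{bundleing the kernels}, combined with the fact that $i_s^*$ intertwines the relative Hecke action with the absolute one at $X_s$.

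\textbf{Step 1: base change.} The first step is a base change identity: for each point $s\in S$,
\[
i_s^*\circ P_S\simeq P_s\circ i_s^*,
\qquad P_s:=(\Gamma\otimes\mathrm{id})\circ(R_s\star-).
\]
Every ingredient of the action of $\mathrm{Rep}(\check{G})_{\mathrm{Ran}(X/S)}$ is a $!$-pushforward along proper maps over $S$, a $*$-pullback, or a tensor product. These are $\on{Hecke}_I(X/S)\to\on{Bun}_G(X/S)\times_S X^{I/S}$, $\mathrm{pr}_{\on{Bun}_G!}$ along the proper $X^{J/S}\to S$, and the diagonals. Each of these commutes with $*$-restriction to the fiber over $s$ by proper base change. $\Gamma_*$ over $\mathrm{LS}_{\check{G}}(X/S)$ is a map of presheaves of categories, so it also commutes with restriction to $s$. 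Finally, the colimit over $\mathrm{TwArr}(\mathrm{Fin})$ defining $R_S$ commutes with the colimit-preserving functor $i_s^*$, and Proposition \ref{bundleing the kernels} gives $i_s^*R_S\simeq R_s$. Together these yield the identity. Given it, Theorem \ref{kernel over a point} shows that $i_s^*P_S(\cF)\in\mathrm{Sh}_{\mathrm{Nilp}}(\on{Bun}_G(X_s))$ for every $s$, so $P_S$ factors through $\mathrm{Sh}_{\mathrm{Nilp}_\mathrm{fib}}$.

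\textbf{Step 2: the adjunction.} For the adjunction I will not compare Hom spaces directly. Instead I will show that $P_S$ is an idempotent localization with the right essential image, using the unit structure on the kernel. In \cite[\S 13--15]{arinkin2020stack} there is a natural map $\eta:\mathrm{id}\to P_S$, coming from the map of the unit of $\mathrm{Rep}(\check{G})_{\mathrm{Ran}}$ into $\Gamma(R)$. The same construction works relatively because it is built from the same colimit formula. The defining property of a left adjoint to a fully faithful inclusion then reduces to two claims.
\begin{enumerate}
\item[(a)] For $\cF\in\mathrm{Sh}_{\mathrm{Nilp}_\mathrm{fib}}$, the map $\eta_\cF:\cF\to P_S\cF$ is an isomorphism.
\item[(b)] For arbitrary $\cF$ and any $\cG\in\mathrm{Sh}_{\mathrm{Nilp}_\mathrm{fib}}$, the cofiber $C$ of $\eta_\cF$ satisfies $\mathrm{Hom}(C,\cG)=0$.
\end{enumerate}

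Claim (a) can be checked after applying all $i_s^*$, since the family $\{i_s^*\}_{s\in S}$ is jointly conservative on sheaves. By Step 1 it becomes the pointwise statement that the unit $i_s^*\cF\to P_s i_s^*\cF$ is an isomorphism for nilpotent sheaves, which holds by Theorem \ref{kernel over a point}. This already shows that $P_S$ is idempotent with essential image exactly $\mathrm{Sh}_{\mathrm{Nilp}_\mathrm{fib}}$.

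\textbf{Step 3: orthogonality, the main obstacle.} Claim (b) is the step I expect to be hardest. The fiber restrictions $i_s^*$ do not control Hom spaces in $\mathrm{Sh}(\on{Bun}_G(X/S))$, so (b) cannot be deduced pointwise. My first attempt is to imitate the argument of \cite[\S 15]{arinkin2020stack} directly in the relative setting, which requires showing $P_S(C)=0$ and that $P_S$ is self-adjoint up to duality of kernels. Concretely, $R_S$ is self-dual under the symmetric monoidal structure, being built from the unit of self-duality $R_{\mathrm{Rep}(\check{G})}$. Hence the right adjoint of $R_S\star-$ is again given by the dual kernel $R_S^\vee\simeq R_S$, up to the twist by $\Gamma$, which is compatible with Serre duality on $\mathrm{QCoh}(\mathrm{LS})$. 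With that in hand:
\begin{itemize}
\item $\mathrm{Hom}(C,\cG)\simeq\mathrm{Hom}(C,P_S\cG)$ by (a).
\item $\mathrm{Hom}(C,P_S\cG)\simeq\mathrm{Hom}(P_SC,\cG)$ by the self-adjointness of $P_S$.
\item $P_SC=0$ by idempotence of $P_S$ together with (a).
\end{itemize}
So the Hom vanishes. The delicate point is whether the relative Ran-space formalism with $\mathrm{Sh}(X^{J/S})$-coefficients (duality over $S$, properness of $X^{J/S}\to S$) still makes $R_S$ self-dual as an $\mathrm{Sh}(S)$-linear kernel. I would verify this termwise in the colimit presentation.

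If the self-duality argument fails, I would fall back on a second route: exhibit $\mathrm{Sh}_{\mathrm{Nilp}_\mathrm{fib}}$ as the category of sheaves on which the Ran Hecke action factors through $\mathrm{QCoh}(\mathrm{LS}_{\check{G}}(X/S))$, and argue as in \cite[Thm.~14.4.4]{arinkin2020stack} that $R_S\star-$ computes the corresponding colocalization.
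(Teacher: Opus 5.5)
Your Steps 1 and 2(a) are exactly the paper's proof. The paper proves the base change $i_s^*P_S\simeq P_si_s^*$, builds $\eta$ from the $(\emptyset\to\emptyset)$ term and $e\to\Gamma(\mathcal{O})$, and checks invertibility of $\eta$ on $\mathrm{Sh}_{\mathrm{Nilp}_\mathrm{fib}}$ fiberwise via \cite[Corollary 18.2.9]{arinkin2020stack}. The paper stops there, and you are right that (a) alone does not formally give the adjunction. But your Step 3 does not supply what is missing.

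\emph{The last bullet is a non sequitur.} Claim (a) tells you that $\eta_{P_S\cF}\colon P_S\cF\to P_S^2\cF$ is invertible. But $P_SC=\mathrm{cofib}(P_S(\eta_\cF))$, and $P_S(\eta_\cF)$ is a \emph{different} map $P_S\cF\to P_S^2\cF$, about which (a) says nothing.

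\emph{The self-adjointness you invoke is false already for $S=\mathrm{pt}$.} Write $P=\iota\circ L$ with $L\dashv\iota\dashv R$. Then $P^R\simeq\iota\circ R$, so $P\simeq P^R$ would force $L\simeq R$. For $G=T$ the inclusion is that of locally constant sheaves. Its left and right adjoints already differ on a skyscraper sheaf on the Jacobian factor: one gives an induced module shifted by the real dimension, the other a coinduced module in degree $0$.

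The missing idea is the other half of the criterion \cite[Prop.~5.2.7.4]{lurie2009higher}: invertibility of $P_S(\eta_\cF)$ for every $\cF$. This is a statement about a morphism of sheaves, so unlike (b) it \emph{can} be checked after all $i_s^*$.
\begin{enumerate}
\item Your base change is compatible with $\eta$, since both ingredients of $\eta$ restrict to their pointwise counterparts. Hence $i_s^*P_S(\eta_\cF)\simeq P_s(\eta^s_{i_s^*\cF})$.
\item This map is invertible. By Theorem \ref{kernel over a point}, $P_s$ is a localization, and $\eta^s$ is invertible on its image. A transformation $\mathrm{id}\to\iota L$ that is invertible on the image of a localization $L$ differs from the unit by an automorphism of $L$.
\item Then (b) is formal. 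For $\cG\in\mathrm{Sh}_{\mathrm{Nilp}_\mathrm{fib}}$, consider $f\mapsto f\circ\eta_\cF$ from $\mathrm{Map}(P_S\cF,\cG)$ to $\mathrm{Map}(\cF,\cG)$, and $g\mapsto\eta_\cG^{-1}\circ P_S(g)$ back. By naturality of $\eta$, they compose to the identity in one order. In the other order they compose to precomposition with the automorphism $\eta_{P_S\cF}^{-1}\circ P_S(\eta_\cF)$.
\end{enumerate}
No self-duality of $R_S$ and no comparison of Hom spaces over $S$ is needed.
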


\begin{proof}
    Let $L:\mathrm{Sh}(\Bun_G(X/S))\to \mathrm{Sh}(\Bun_G(X/S))$ denote the composition. We start by proving that $L$ factors through the subsheaf $\mathrm{Sh}_{\mathrm{Nilp}_\mathrm{fib}}(\Bun_G(X/S))$. Since the sheaves are defined on the category of contractible test object, we may assume $S$ is contractible and consider the category of global sections. Since $S$ is contractible, on global sections the functor $\Gamma_*:\mathrm{QCoh}(\mathrm{LS}_{\check{G}}(X/S))\to D(e)$ factors as
    \[\mathrm{QCoh}(\mathrm{LS}_{\check{G}}(X/S))\overset{\simeq}{\to}\mathrm{QCoh}(\mathrm{LS}_{\check{G}}(X_s))\overset{\Gamma_{s*}}{\to} D(e).\]
    Since pullback $i^*_s$ intertwines the Hecke action, it follows that the map of categories
    \[\mathrm{Sh}(\Bun_G(X/S))\overset{R_S\star -}{\to} \mathrm{QCoh}(\mathrm{LS}_{\check{G}}(X/S))\otimes \mathrm{Sh}(\Bun_G(X/S))\overset{\Gamma_*\otimes \mathrm{id}}{\to}\mathrm{Sh}(\Bun_G(X/S))\overset{i^*_s}{\to} \mathrm{Sh}(\Bun_G(X_s))\]
    is equivalent to
    \[\mathrm{Sh}(\Bun_G(X/S))\to \mathrm{Sh}(\Bun_G(X_s))\overset{R_s\star -}{\to} \mathrm{QCoh}(\mathrm{LS}_{\check{G}}(X_s))\otimes \mathrm{Sh}(\Bun_G(X_s))\overset{\Gamma_{s*}\otimes \mathrm{id}}{\to}\mathrm{Sh}(\Bun_G(X_s)).\]
    By Theorem \ref{kernel over a point}, this composition factors through $\mathrm{Sh}_\mathrm{Nilp}(\Bun_G)(X_s)$.

    To see that $L$ provides a left adjoint, we start by producing the unit of the adjunction. By the colimit formula for $R_S$ we obtain a map from the term indexed by $(\emptyset\to \emptyset)\in \mathrm{TwArr}(\mathrm{Fin})$, this gives a map
    \[\mathcal{O}_{\mathrm{LS}_{\check{G}}(X/S)} \otimes \mathbf{1}_{\mathrm{Rep}(\check{G})_{\mathrm{Ran}(X/S)}^\mathrm{nc}}\to R_S\]
    in the category $\mathrm{QCoh}(\mathrm{LS}_{\check{G}}(X/S))\otimes \mathrm{Rep}(\check{G})_{\mathrm{Ran}(X/S)}^\mathrm{nc}$. Thus we obtain a natural transformation to $L$ from the following composition
    \[\mathrm{Sh}(\Bun_G(X/S))\overset{\mathcal{O}_{\mathrm{LS}_{\check{G}}(X/S)} \otimes \mathrm{id}}{\to} \mathrm{QCoh}(\mathrm{LS}_{\check{G}}(X/S))\otimes \mathrm{Sh}(\Bun_G(X/S))\overset{\Gamma_*\otimes \mathrm{id}}{\to}\mathrm{Sh}(\Bun_G(X/S)).\]
    The unit map $e\to \Gamma(\mathrm{LS}_{\check{G}}(X/S),\mathcal{O}_{\mathrm{LS}_{\check{G}}(X/S)})$ provides a natural transformation from $\mathrm{id}_{\mathrm{Sh}(\Bun_G(X/S))}$ to this composition above. This provides the candidate for the unit 
    \[\mathrm{id}_{\mathrm{Sh}(\Bun_G(X/S))}\to L.\]
    To prove that it is the unit in an adjunction, we have to prove that it is an equivalence when evaluated on objects of $\mathrm{Sh}_{\mathrm{Nilp}_\mathrm{fib}}(\Bun_G(X/S))$. It suffices to prove this after pullback to the fiber above point $s\in S$, thus the statement follows from the pointwise result of \cite{arinkin2020stack} Corollary 18.2.9.
\end{proof}

\subsubsection{The kernel in the ind-constructible setting} The same constructions can be carried out in the setting of ind-constructible sheaves. In this case we replace the sheaf $\mathrm{Sh}_{\mathrm{lc}}(X/S)$ by $\mathrm{QLisse}(X/S)$. The role of $\mathrm{LS}_{\check{G}}$ is replaced by $\mathrm{LS}_{\check{G}}^\mathrm{restr}$, the moduli space of $\check{G}$-local systems with restricted variation as defined in Appendix \ref{LocSys in families}.

In the ind-constructible sheaf context we have two notions of non-characteristicness (see Appendix \ref{A:non-char} for the two notions). The sheaf of categories  $\mathrm{Rep}(\check{G})_{\mathrm{Ran}(X/S)}^\mathrm{con,nc}$ will refer to the ind-non-characteristic variant. That is $\mathrm{Rep}(\check{G})_{\mathrm{Ran}(X/S)}^\mathrm{con,nc}$ denotes the colimit of the functor $\mathrm{TwArr}(\mathrm{Fin})\to \mathrm{Pr}^L_e$ given by
\[(I\overset{\phi}{\to} J)\mapsto \mathrm{Rep}(\check{G})^{\otimes I}\otimes \mathrm{Sh}^\mathrm{con,nc}(X^{J/S})\]
considered as a presheaf of categories on $S_{et}$, where $\mathrm{Sh}^\mathrm{con}(X^{J/S})^\mathrm{nc}$ is the category of ind-non-characteristic sheaves.

Thus by the analogous formula as described above, we obtain a kernel
\[\mathrm{R}_S^\mathrm{con}\in \mathrm{QCoh}(\mathrm{LS}^\mathrm{restr}_{\check{G}}(X/S))\otimes \mathrm{Rep}(\check{G})_{\mathrm{Ran}(X/S)}^\mathrm{con,nc}.\]

Here we use that assumption that $X$ has genus at least $1$ and the result that $\mathrm{QLisse}(X/S)\simeq \mathrm{IndLisse}(X/S)$, see \cite[Prop. E.2.8]{arinkin2020stack}.

\subsection{The ind-constructible spectral projector and the universal nilpotent cone} We will prove that the ind-constructible spectral projector sends ind-non-characteristic sheaves to sheaves with universal nilpotent singular support.

\subsubsection{}
In this section we take $S$ to be a complex analytic manifold. We will only apply the results of this section in the case $S=\mathcal{M}_g$, however. See Appendix B for the definition of the ind-constructible sheaf context.

\begin{prop}\label{non-char ind-con projector}
The action map
    \[\mathrm{R}_{S}^\mathrm{con}\star-:\mathrm{Sh}^\mathrm{con}(\Bun_G(X/S)) \to \mathrm{QCoh}(\mathrm{LS}_{\check{G}}^\mathrm{restr}(X/S))\otimes \mathrm{Sh}^\mathrm{con}(\Bun_G(X/S))\]
restricts to a map
\[\mathrm{R}_{S}^\mathrm{con}\star-:\mathrm{Sh}^{\mathrm{con},\mathrm{nc}}(\Bun_G(X/S)) \to \mathrm{QCoh}(\mathrm{LS}_{\check{G}}^\mathrm{restr}(X/S))\otimes \mathrm{Sh}_{\mathrm{Nilp}_u}^\mathrm{con}(\Bun_G(X/S)).\]
\end{prop}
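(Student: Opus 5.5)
We reduce to Lemma \ref{criterion for universal nilpotence ind-version}: it suffices to show that for $\cF\in \mathrm{Sh}^{\mathrm{con},\mathrm{nc}}(\Bun_G(X/S))$, the object $\mathrm{R}_S^{\mathrm{con}}\star\cF$, viewed after applying any functor $\mathrm{QCoh}(\mathrm{LS}^{\mathrm{restr}}_{\check{G}}(X/S))\to \mathrm{Vect}$ (or more precisely its components in a presentation of $\mathrm{QCoh}(\mathrm{LS}^{\mathrm{restr}}_{\check{G}}(X/S))\otimes \mathrm{Sh}^{\mathrm{con}}$ as a colimit of copies of $\mathrm{Sh}^{\mathrm{con}}$), is (i) ind-constructible and ind-$S$-non-characteristic, and (ii) fiberwise nilpotent at each $s\in S$. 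Since $\mathrm{QCoh}(\mathrm{LS}^{\mathrm{restr}}_{\check{G}}(X/S))$ is, locally on $S_{\mathrm{et}}$, a dualizable (indeed compactly generated by a product of formal completions of affine pieces) category, the tensor product with $\mathrm{Sh}^{\mathrm{con}}_{\mathrm{Nilp}_u}$ is the full subcategory cut out by the condition that pairing with every compact object of $\mathrm{QCoh}(\mathrm{LS}^{\mathrm{restr}}_{\check{G}}(X/S))^\vee$ lands in $\mathrm{Sh}^{\mathrm{con}}_{\mathrm{Nilp}_u}$; so we may test the components one at a time.

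For (i): by the colimit formula in Proposition \ref{bundleing the kernels} (ind-constructible version), $\mathrm{R}_S^{\mathrm{con}}\star\cF$ is a filtered-ish colimit over $\mathrm{TwArr}(\mathrm{Fin})$ of objects of the form $\mathrm{pr}_{\Bun_G!}\big((\mathrm{id}\times\Delta_\phi)^*H(V,\cF)\otimes \mathrm{pr}_2^*\mathcal{S}\big)$ with $V\in\mathrm{Rep}(\check{G})^{\otimes I}$ and $\mathcal{S}$ ind-lisse (hence ind-non-characteristic) on $X^{J/S}$, tensored with quasi-coherent data. Each Hecke functor $H(V,-)$ is given by a proper pushforward along $\overset{\rightarrow}{h}\times\mathfrak{s}$ of a pullback twisted by the Satake kernel; the kernel $\mathfrak{r}^*\mathrm{Sat}_I(V)$ is non-characteristic over $X^{I/S}$ (it is pulled back from the $\mathrm{Aut}$-quotient, as in Step 1 of Lemma \ref{l:unitmap}), and $\overset{\leftarrow}{h}$ is smooth, so $H(V,\cF)$ is non-characteristic over $X^{I/S}$ when $\cF$ is non-characteristic over $S$; properness preserves this. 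Tensoring with an $S$-non-characteristic $\mathcal{S}$ pulled back along $\mathrm{pr}_2$ and then $!$-pushing forward along the proper map $X^{J/S}\to S$ gives again $S$-non-characteristic constructible sheaves. Ind-constructibility and the colimit then give ind-$S$-non-characteristicness, using that these conditions are stable under colimits by definition.

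For (ii): pullback $i_s^*$ commutes with the Hecke action and, by Proposition \ref{bundleing the kernels}, $i_s^*\mathrm{R}_S^{\mathrm{con}}\simeq \mathrm{R}_s^{\mathrm{con}}$, while the restriction of $\mathrm{LS}^{\mathrm{restr}}_{\check{G}}(X/S)$ to a contractible neighbourhood is identified with $\mathrm{LS}^{\mathrm{restr}}_{\check{G}}(X_s)$. Thus $i_s^*(\mathrm{R}_S^{\mathrm{con}}\star\cF)\simeq \mathrm{R}_s^{\mathrm{con}}\star i_s^*\cF$, which lies in $\mathrm{QCoh}(\mathrm{LS}^{\mathrm{restr}}_{\check{G}}(X_s))\otimes\mathrm{Sh}_{\mathrm{Nilp}}(\Bun_G(X_s))$ by the pointwise result of \cite{arinkin2020stack} (the ind-constructible form of Theorem \ref{kernel over a point}, i.e. that the kernel acts through the nilpotent, Hecke-lisse part). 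Combining with Lemma \ref{criterion for universal nilpotence ind-version} finishes.

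The main obstacle I expect is the bookkeeping in (ii) and the reduction step: making precise that membership in $\mathrm{QCoh}(\mathrm{LS}^{\mathrm{restr}})\otimes\mathrm{Sh}^{\mathrm{con}}_{\mathrm{Nilp}_u}$ can be tested componentwise and that $i_s^*$ commutes with these components (since $i_s^*$ is not continuous-compatible with $!$-pushforwards in general, one needs the non-characteristicness from (i) to pass $i_s^*$ through $\mathrm{pr}_{\Bun_G!}$ via base change along the proper map). I would handle this by first proving (i) term by term and then using proper base change for each term before taking colimits.
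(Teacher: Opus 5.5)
Your architecture matches the paper's. You reduce, via dualizability of the quasi-coherent factor and Lemma \ref{criterion for universal nilpotence ind-version}, to (i) ind-$S$-non-characteristicness and (ii) fiberwise nilpotence. Part (ii) comes from \cite{arinkin2020stack} via $i_s^*\mathrm{R}^{\mathrm{con}}_S\simeq\mathrm{R}^{\mathrm{con}}_s$. The paper implements the reduction differently: it first treats $\mathrm{R}^{\mathrm{con}}_{S,\mathcal{Z}}$ for $\mathcal{Z}$ with $\mathrm{QCoh}(\mathcal{Z})$ dualizable and $\mathcal{O}_{\mathcal{Z}}$ compact, then writes $\mathrm{LS}^{\mathrm{restr}}_{\check{G}}(X/S)$ as a colimit of such. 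Your componentwise test is equivalent.

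The gap is in (i), at its only non-formal step. You deduce that $\overset{\leftarrow}{h}^*\mathcal{F}\otimes\mathfrak{r}^*\mathrm{Sat}_I(V)$, hence $H(V,\mathcal{F})$, is non-characteristic over $X^{I/S}$ from three facts: the kernel is non-characteristic over $X^{I/S}$, $\overset{\leftarrow}{h}$ is smooth, and $\mathcal{F}$ is non-characteristic over $S$. That inference is invalid, because non-characteristicness is not stable under $\otimes$: a sum $d\overset{\leftarrow}{h}(\xi')+d\mathfrak{r}(\xi_H)$ can be pulled back from $X$ although neither summand is. Already on $\mathbb{R}^2$ with $p(x,y)=x$, the sheaves $\mathbf{1}_{\{x+y\ge 0\}}$ and $\mathbf{1}_{\{y\le 0\}}$ are each $p$-non-characteristic. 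Their tensor product $\mathbf{1}_{\{-x\le y\le 0\}}$, however, has $(0;dx)$ in its singular support.

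What is required is an actual statement about the Hecke correspondence. The paper isolates it as the lemma preceding Proposition \ref{noncharHecke}: for arbitrary $\mathcal{F}_s$ on $\Bun_G(X_s)$, $H(V,\mathcal{F}_s)$ is non-characteristic relative to $X_s$. The proof follows \cite{gaitsgory2022toy}. A covector $(0,\xi_X)$ in the singular support produces $\xi'\in SS(\mathcal{F}_s)$ and $\xi_H\in SS(\mathrm{Sat}(V))$ with $d\overset{\leftarrow}{h}(\xi')+d\mathfrak{r}(\xi_H)=d\mathfrak{s}(\xi_X)$. Since the covector on the target $\Bun_G$ is zero, $\xi'$ vanishes on the punctured disc, hence identically, and the local analysis on the Hecke stack then forces $\xi_X=0$. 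The paper also reduces to one point, by restricting along $\Delta_\phi$ and factoring $H(W,-)$ into one-point Hecke functors. It then upgrades the fiberwise statement using that $H(W,\mathcal{F})$ is already $S$-non-characteristic (smooth pullback followed by proper pushforward).

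Alternatively, you could pull back to $\widehat{\Bun}\times\mathrm{Gr}_G$. Here $\widehat{\Bun}\to\Bun_G\times_S X$ is the $\mathcal{L}^+G\rtimes\mathrm{Aut}^0(D)$-torsor of coordinates and trivializations on the disc, so that $\mathrm{Hecke}\simeq\widehat{\Bun}\times^{\mathcal{L}^+G\rtimes\mathrm{Aut}^0(D)}\mathrm{Gr}_G$. On $\widehat{\Bun}\times\mathrm{Gr}_G$ the sheaf becomes the exterior product $q^*\mathcal{F}\boxtimes\mathrm{Sat}(V)$, and its singular support splits. Either way, some such argument has to be supplied.

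Finally, the obstacle you anticipate is not real: $i_s^*$ commutes with $!$-pushforward by base change, with no non-characteristicness hypothesis.
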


Fix a locally constant cosheaf $\mathcal{Z}$ of derived stacks on $S_{et}$, such that, for contractible test objects $T\to S$, the category $\mathrm{QCoh}(\mathcal{Z}(T/S))$ is dualizable and $\mathcal{O}_{\mathcal{Z}(T/S)}$ is a compact object. Fix a map of cosheaves of stacks $f:\mathcal{Z}\to \mathrm{LS}^\mathrm{restr}_{\check{G}}(X/S)$, and let $R_{S,\mathcal{Z}}^\mathrm{con}$ denote the image of $R_S^\mathrm{con}$ under the map
\[f^*\otimes \mathrm{id}:\mathrm{QCoh}(\mathrm{LS}^\mathrm{restr}_{\check{G}}(X/S))\otimes \mathrm{Rep}(\check{G})_{\mathrm{Ran}(X/S)}^\mathrm{con,nc}\to \mathrm{QCoh}(\mathcal{Z})\otimes \mathrm{Rep}(\check{G})_{\mathrm{Ran}(X/S)}^\mathrm{con,nc}.\]
We prove the following result.

\begin{prop}
The action map
    \[\mathrm{R}_{S,\mathcal{Z}}^\mathrm{con}\star-:\mathrm{Sh}^\mathrm{con}(\Bun_G(X/S)) \to \mathrm{QCoh}(\mathcal{Z})\otimes \mathrm{Sh}^\mathrm{con}(\Bun_G(X/S))\]
restricts to a map
\[\mathrm{R}_{S,\mathcal{Z}}^\mathrm{con}\star-:\mathrm{Sh}^{\mathrm{con},\mathrm{nc}}(\Bun_G(X/S)) \to \mathrm{QCoh}(\mathcal{Z})\otimes \mathrm{Sh}_{\mathrm{Nilp}_u}^\mathrm{con}(\Bun_G(X/S)).\]
\end{prop}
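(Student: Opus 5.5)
The plan is to reduce to Lemma \ref{criterion for universal nilpotence ind-version}, after first tensoring the target over $\mathrm{QCoh}(\mathcal{Z})$ down to honest sheaves on $\Bun_G(X/S)$. Since $\mathrm{QCoh}(\mathcal{Z}(T/S))$ is dualizable and $\mathcal{O}_{\mathcal{Z}}$ is compact, an object $\mathcal{K}\in \mathrm{QCoh}(\mathcal{Z})\otimes \mathrm{Sh}^\mathrm{con}(\Bun_G(X/S))$ lies in $\mathrm{QCoh}(\mathcal{Z})\otimes \mathrm{Sh}^\mathrm{con}_{\mathrm{Nilp}_u}(\Bun_G(X/S))$ as soon as $(\mathrm{Hom}(\mathcal{E},-)\otimes \mathrm{id})(\mathcal{K})$ has singular support in $\mathrm{Nilp}_u$ for every compact $\mathcal{E}$ in $\mathrm{QCoh}(\mathcal{Z})$. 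This uses that $\mathrm{Sh}^\mathrm{con}_{\mathrm{Nilp}_u}$ is closed under colimits and that it is a full subcategory cut out by a condition on the image under such functionals. Everything is local on $S_{et}$, so I work over a contractible $S$.

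\textbf{Step 1: ind-non-characteristicness is preserved.} Each term $\mathrm{ins}_{I\to J}(\mathrm{id}\otimes(F^J\circ\mathrm{mult}^\phi)(R^{\boxtimes I}))$ acts by $H(V,-)$ followed by the $J$-diagonal pullback, tensoring with an ind-non-characteristic sheaf on $X^{J/S}$ (built from $\mathrm{QLisse}$), and $!$-pushforward along the proper map $\Bun_G\times_S X^{J/S}\to\Bun_G$. Hecke functors are built from the smooth map $\overset{\leftarrow}{h}$ and the proper map $\overset{\rightarrow}{h}\times\mathfrak{s}$, with ULA Satake kernels that are non-characteristic over $X^{I/S}$. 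So they send $S$-non-characteristic constructible sheaves to $X^{I/S}$-non-characteristic ones. The external tensor with a sheaf non-characteristic over $S$ stays non-characteristic over $S$, and proper pushforward preserves non-characteristicness. All of these functors commute with filtered colimits, so the colimit $R^\mathrm{con}_{S,\mathcal{Z}}\star\mathcal{F}$, once paired with a compact $\mathcal{E}$, is a colimit of ind-non-characteristic ind-constructible sheaves. Hence it is itself ind-$S$-non-characteristic.

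\textbf{Step 2: fiberwise nilpotence.} Restriction $i_s^*$ intertwines the relative Hecke action with the one at $X_s$ and commutes with colimits. By Proposition \ref{bundleing the kernels} (ind-constructible variant), $i_s^* R^\mathrm{con}_{S}\simeq R^\mathrm{con}_s$, and $\mathcal{Z}$ is locally constant. So $i_s^*$ of the paired object identifies with the pointwise projector kernel, pulled back to $\mathcal{Z}_s$ and paired with $\mathcal{E}$, applied to $i_s^*\mathcal{F}$. By the pointwise theory of \cite{arinkin2020stack} (Theorem \ref{kernel over a point} in its ind-constructible, restricted-variation form), $R^\mathrm{con}_s\star-$ lands in $\mathrm{QCoh}(\mathrm{LS}^\mathrm{restr}_{\check{G}}(X_s))\otimes\mathrm{Sh}^\mathrm{con}_\mathrm{Nilp}(\Bun_G(X_s))$. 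This is preserved by $f_s^*\otimes\mathrm{id}$ and by pairing with $\mathcal{E}$, so every fiber has nilpotent singular support. Lemma \ref{criterion for universal nilpotence ind-version} then gives $\mathrm{SS}\subset\mathrm{Nilp}_u$.

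\textbf{Main obstacle.} I expect the hard part to be Step 1, specifically the claim that Hecke functors preserve ind-$S$-non-characteristicness in the ind-constructible context. The difficulty is the definition via constructible subobjects of perverse cohomology, since perverse truncation need not commute with the relevant operations. I would first establish it for constructible non-characteristic inputs using the microlocal estimates for smooth pullback and proper pushforward, together with the ULA property of Satake sheaves over $X^{I/S}$. I would then pass to colimits by noting that ind-non-characteristic objects are closed under filtered colimits and under the shifts and cones appearing in the $\mathrm{TwArr}$-colimit. Where extensions cause trouble, I would use the compactness of $\mathcal{O}_\mathcal{Z}$ to present the colimit as a filtered one.
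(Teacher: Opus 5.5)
Your architecture is the paper's. You pass to contractible $S$, get fiberwise nilpotence from $i_s^*R^{\mathrm{con}}_S\simeq R^{\mathrm{con}}_s$ and the pointwise theory, show the output is ind-$S$-non-characteristic, and conclude by Lemma~\ref{criterion for universal nilpotence ind-version}. The gap is in Step~1, at the one point with real geometric content. Your claim that $H(V,-)$ sends $S$-non-characteristic sheaves to $X^{I/S}$-non-characteristic ones does not follow from what you cite.

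The singular support of $\overleftarrow{h}^*\mathcal{F}\otimes\mathfrak{r}^*\mathrm{Sat}_I(V)$ lies in sums $d\overleftarrow{h}(\xi')+d\mathfrak{r}(\xi_H)$, with $\xi'\in\mathrm{SS}(\mathcal{F})$ and $\xi_H\in\mathrm{SS}(\mathrm{Sat}_I(V))$. Suppose each factor is separately non-characteristic over $X^{I/S}$ (the kernel is ULA over $X^{I/S}$ and $\overleftarrow{h}$ is smooth). This still does not prevent such a sum from being pulled back from $X^{I/S}$, because non-characteristicness over a base is not stable under $\otimes$. For example, on $\mathbb{R}^2\to\mathbb{R}_x$, both $\mathbf{1}_{\{y\geq x\}}$ and $\mathbf{1}_{\{y\leq -x\}}$ are non-characteristic, but their tensor product is not. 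What you need is the transversality statement: if $d\overleftarrow{h}(\xi')+d\mathfrak{r}(\xi_H)=d\mathfrak{s}(\xi_X)$, then $\xi_X=0$. This is a property of the Hecke correspondence itself, not a formal consequence of smooth pullback, proper pushforward and ULA-ness of the kernel over $X^{I/S}$.

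The paper supplies this in Proposition~\ref{noncharHecke} and the lemma preceding it, in three steps:
\begin{enumerate}
\item It reduces to $|J|=1$: the diagonal restriction of $H(V,-)$ is $H(\mathrm{mult}_\phi V,-)$, which is a composite of single-point Hecke functors.
\item It observes that $H(W,\mathcal{F})$ is $S$-non-characteristic, so $X$-non-characteristicness can be checked on each fiber $X_s$.
\item It proves the fiberwise statement by the argument of \cite[Appendix B]{gaitsgory2022toy}. Because the covector on the target $\Bun_G$ is zero, $\xi'$ vanishes on the punctured disc $D(x)^\circ$, hence $\xi'=0$, and then $\xi_X=0$ follows as in loc.\ cit.
\end{enumerate}
At a single point you could instead argue as follows. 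After choosing formal coordinates, $(\overleftarrow{h}\times\mathfrak{s},\mathfrak{r})$ is pro-smooth onto $\Bun_G\times_S\mathrm{Coord}(X/S)\times L^+G\backslash\mathrm{Gr}_G$. This makes the kernel non-characteristic relative to $\overleftarrow{h}\times\mathfrak{s}$, which is the property your argument actually requires. That product structure is unavailable along the diagonals of $X^{I/S}$, so a reduction to one point is needed either way.

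Relatedly, the obstacle you single out (perverse truncations and extensions in the colimit) is the formal part. It is handled by the closure of ind-non-characteristic sheaves under colimits. The transversality above is where your proposal is missing an idea.
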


This is a statement about maps of presheaves of categories on $S_\mathrm{et}$ and it suffices to check that the values on each contractible test object $S'\to S$ factors through the desired category. Thus we now assume that $S$ is contractible and consider the categories of global sections over $S$ of the corresponding presheaves of categories. By Proposition \ref{kernel over a point} and Proposition \ref{bundleing the kernels}, for each point $s\in S$, the composition
\[\mathrm{Sh}^\mathrm{con}(\Bun_G(X/S)) \overset{\mathrm{R}_{S,\mathcal{Z}}^\mathrm{con}\star-}{\to} \mathrm{QCoh}(\mathcal{Z})\otimes \mathrm{Sh}^\mathrm{con}(\Bun_G(X/S))\overset{\mathrm{id}\otimes i_s^*}{\to} \mathrm{QCoh}(\mathcal{Z})\otimes \mathrm{Sh}^\mathrm{con}(\Bun_G(X_s))\]
factors through the subcategory $\mathrm{QCoh}(\mathcal{Z})\otimes \mathrm{Sh}^\mathrm{con}_\mathrm{Nilp}(\Bun_G(X_s))\to \mathrm{QCoh}(\mathcal{Z})\otimes \mathrm{Sh}^\mathrm{con}(\Bun_G(X_s))$. Thus, since $\mathrm{QCoh}(\mathcal{Z})$ is dualizable, by Lemma \ref{criterion for universal nilpotence ind-version} it suffices to prove that $\mathrm{R}_{S,\mathcal{Z}}^\mathrm{con}\star-$ factors through the subcategory $\mathrm{QCoh}(\mathcal{Z})\otimes \mathrm{Sh}^{\mathrm{con},\mathrm{nc}}(\Bun_G(X/S))$.

Since $R^\mathrm{con}_{S,\mathcal{Z}}\in \mathrm{QCoh}(\mathcal{Z})\otimes  \mathrm{Rep}(\check{G})_{\mathrm{Ran}(X/S)}^{\mathrm{con},\mathrm{nc}}$ and the subcategory of ind-non-characteristic sheaves is closed under colimit, it suffices to prove that the Hecke action by any object of the category $\mathrm{Rep}(\check{G})_{\mathrm{Ran}(X/S)}^{\mathrm{con},\mathrm{nc}}$ on an object of $\mathrm{Sh}^{\mathrm{con},\mathrm{nc}}(\Bun_G(X/S))$ is again ind-non-characteristic. 

\begin{lem}
    For any point $s\in S$, $\mathcal{F}_s\in \mathrm{Sh}(\Bun_G(X_s))$ and $V\in \mathrm{Rep}(\check{G})$, the sheaf $H(V,\mathcal{F}_s)\in \mathrm{Sh}(\Bun_G(X_s)\times X_s)$ is non-characteristic with respect to $X_s$.
\end{lem}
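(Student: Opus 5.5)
The plan is to exploit the factorization/equivariance of the Hecke correspondence with respect to moving the point of modification. Concretely, $H(V,\mathcal{F}_s)=(\overset{\rightarrow}{h}\times\mathfrak{s})_!\big((\overset{\leftarrow}{h})^*\mathcal{F}_s\otimes \mathfrak{r}^*\mathrm{Sat}(V)\big)$ on $\Bun_G(X_s)\times X_s$. The first reduction is to the universal-local-coordinate picture. Over $\mathrm{Coord}(X_s)\to X_s$, which is an $\mathrm{Aut}^0(D)$-torsor, the Hecke stack becomes $\Bun_G(X_s)\times\mathrm{Coord}(X_s)\times_{L^+G}\mathrm{Gr}_G$-type data. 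The key point is that the kernel $\mathrm{Sat}(V)$ is pulled back along the smooth map $\mathrm{Hecke}^{\mathrm{loc}}\to L^+G\backslash\mathrm{Gr}_G/\mathrm{Aut}^0(D)$, with no dependence on $x\in X_s$.

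Next, I use that $\mathrm{Aut}(D)$, the full automorphism group including translations, acts infinitesimally transitively on $X_s$ along the Hecke stack. The Lie algebra of vector fields on $X_s$ near any point $x$ lifts to the formal neighborhood and hence acts on $\Bun_G(X_s)$ via gluing. Equivalently, I will use the Beilinson–Drinfeld "Harish-Chandra" structure: the pair $(\Bun_G\times X_s, \mathrm{Hecke})$ is equivariant for the groupoid of formal coordinate changes. At the level of covectors, this means the following. A covector on $\Bun_G(X_s)\times X_s$ of the form $(0,\eta)$ with $\eta\neq0$ in $T^*_xX_s$ pulls back under $(\overset{\rightarrow}{h}\times\mathfrak{s})$ to a covector on $\mathrm{Hecke}$. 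That covector pairs nontrivially with the vector field generated by a tangent vector at $x$ which moves the modification point together with the bundle data. Said differently, the map $\overset{\rightarrow}{h}\times\mathfrak{s}$, restricted to each Schubert stratum of $\mathrm{Hecke}$, has differential whose image contains the lift of $T_xX_s$ compatibly with $\overset{\leftarrow}{h}$.

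I then estimate the singular support directly. The sheaf $(\overset{\leftarrow}{h})^*\mathcal{F}_s\otimes\mathfrak{r}^*\mathrm{Sat}(V)$ has singular support contained in $d\overset{\leftarrow}{h}(\mathrm{SS}(\mathcal{F}_s)) + d\mathfrak{r}(\mathrm{SS}(\mathrm{Sat}(V)))$. This holds because $\overset{\leftarrow}{h}$ is smooth and $\mathfrak{r}$ is smooth over $\Bun_G$, so the two are transverse. Since $\overset{\rightarrow}{h}\times\mathfrak{s}$ is proper, a covector $(\xi,\eta)$ in $\mathrm{SS}(H(V,\mathcal{F}_s))$ pulls back to a point of this sum. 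I then evaluate this pullback on the horizontal vector field $\tilde v$ lifting $v\in T_xX_s$. The field $\tilde v$ is tangent to the strata of $\mathfrak{r}$, since $\mathrm{Sat}(V)$ is $\mathrm{Aut}(D)$-equivariant, so it kills the second summand. It also maps to $0$ under $d\overset{\leftarrow}{h}$ if chosen as the lift fixing the left bundle, so it kills the first summand. Meanwhile $d(\overset{\rightarrow}{h}\times\mathfrak{s})(\tilde v)=(w,v)$ for some $w$. If $\xi=0$, this forces $\eta(v)=0$ for all $v$, so $\eta=0$. This is exactly non-characteristicness with respect to the projection $\Bun_G(X_s)\times X_s\to X_s$.

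The main obstacle is constructing this horizontal lift $\tilde v$ rigorously. It should be a vector field on (a smooth cover of) $\mathrm{Hecke}$ that keeps the left $G$-bundle fixed, moves the point $x$, and preserves the relative position stratification. I would construct it by modifying a fixed bundle along a moving point using Beauville–Laszlo gluing with a varying coordinate. This works locally: the family of modifications $\mathcal{P}\mapsto$ (glue $\mathcal{P}|_{X\setminus x_t}$ with a fixed lattice transported along the coordinate) gives such a curve. An alternative I would try if this is messy is to invoke the ULA property of Satake sheaves over $X$ together with properness, and deduce non-characteristicness fiberwise. The risk there is that ULA is weaker than non-characteristic, which is why the direct vector-field argument is preferred.
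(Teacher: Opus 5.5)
Your argument is correct, but it handles the $\Bun_G$-component differently from the paper.

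\emph{The paper's route.} Following Appendix B of the toy-model reference, the paper writes the pulled-back covector as $d\overset{\leftarrow}{h}(\xi')+d\mathfrak{r}(\xi_H)=d\mathfrak{s}(\xi_X)$. It first shows $\xi'=0$ by a global argument. The $\overset{\rightarrow}{h}$-component $\xi''$ is zero, and $\xi'$ and $\xi''$ agree as Higgs fields on the punctured disc $D(x)^\circ$ under the identification of the two bundles away from $x$. So $\xi'$ vanishes generically and hence everywhere. Only then does it use the local fact that $\mathrm{SS}(\mathrm{Sat}(V))$ lies in the conormals to the relative-position strata, which are smooth over $X_s$, to deduce $\xi_X=0$ from $d\mathfrak{r}(\xi_H)=d\mathfrak{s}(\xi_X)$.

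\emph{Your route.} You never compute $\xi'$. You test the identity against a tangent vector lying in the fibre of $\overset{\leftarrow}{h}$, tangent to the stratum, and lifting $v\in T_xX_s$; this kills both summands at once. So your proof uses only the local input and avoids the Higgs-field comparison entirely. The paper's route, in exchange, yields the stronger intermediate statement $\xi'=0$, which fits the general Hecke singular-support analysis it cites.

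\emph{Remarks on your write-up.} Your ``main obstacle'' is not really one. You need a single tangent vector at the point, not a vector field or any infinitesimal action on $\Bun_G$. It exists because $(\overset{\leftarrow}{h})^{-1}(\mathcal{P})\cap\mathfrak{r}^{-1}(\text{stratum }\lambda)$ is the $\lambda$-stratum of the Beilinson--Drinfeld Grassmannian of the fixed bundle $\mathcal{P}$, which is a locally trivial fibration over $X_s$ with fibre $\mathrm{Gr}^\lambda$. You should also state your two inputs precisely.
\begin{itemize}
\item The sum estimate holds because $(\overset{\leftarrow}{h},\mathfrak{r})\colon\mathrm{Hecke}\to\Bun_G\times\mathrm{Hecke}^{\mathrm{loc}}$ is (pro-)smooth, not because ``the two are transverse''.
\item The conormal bound on $\mathrm{SS}(\mathrm{Sat}(V))$ comes from constructibility along $L^+G$-orbits. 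The $\mathrm{Aut}(D)$-equivariance only serves to descend the kernel to the twisted local Hecke stack over $X_s$.
\end{itemize}
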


\begin{proof}

    The argument follows the logic of \cite[Appendix B.]{gaitsgory2022toy}, and we will refer \emph{loc.cit} for the notation.

    Suppose $v=(0,\xi_X)$ is a cotangent vector of $\Bun_G(X_s)\times X_s$ pulled back from $X_s$, which is in the singular support of $H(V,\mathcal{F}_s)$. Since $v$ is in the singular support of $H(V,\mathcal{F}_s)$, it follows that there exist  a cotangent vector $\xi'$ of $\Bun_G(X_s)$ and a cotangent vector $\xi_H$ of $\mathrm{Hecke}_{X_s}^\mathrm{loc}$ in the singular support of $\mathrm{Sat}(V)\in \mathrm{Sh}(\mathrm{Hecke}_{X_s}^\mathrm{loc})$ such that
\[d\overset{\leftarrow}{h}(\xi')+d\mathfrak{r}(\xi_H)=d\mathfrak{s}(\xi_X),\]
    where we use the notation as in \cite{gaitsgory2022toy} Appendix B. In the notation of \cite{gaitsgory2022toy} we have $\xi''=0$. Let $x$ be a point of $X_s$ and $D(x)^\circ$ be the punctured formal disk around $x$. By the same argument as in loc. cit. it follows that $\xi'|_{D(x)^\circ}=\xi''|_{D(x)^\circ}=0$, hence $\xi'=0$. Thus we are in the situation described below B.19 in loc. cit. and hence conclude that $\xi_X=0$. So $v=0$ and $H(V,\mathcal{F}_s)$ is non-characteristic with respect to $X_s$.
\end{proof}

\begin{prop}\label{noncharHecke}
    Let $K\in \mathrm{Rep}(\check{G})_{\mathrm{Ran}(X/S)}^{\mathrm{con},\mathrm{nc}}$ be an ind-non-characteristic Hecke kernel. If $\mathcal{F}\in \mathrm{Sh}^{\mathrm{con}}(\Bun_G(X/S))^\mathrm{nc}$, then $K\star \mathcal{F}\in \mathrm{Sh}^{\mathrm{con,nc}}(\Bun_G(X/S))$.
\end{prop}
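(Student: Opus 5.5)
By construction, $\mathrm{Rep}(\check{G})_{\mathrm{Ran}(X/S)}^{\mathrm{con},\mathrm{nc}}$ is generated under colimits by objects $V\otimes\mathcal{S}$ with $V\in\mathrm{Rep}(\check{G})^{\otimes I}$, $\phi:I\to J$ and $\mathcal{S}\in\mathrm{Sh}^{\mathrm{con},\mathrm{nc}}(X^{J/S})$. The subcategory $\mathrm{Sh}^{\mathrm{con},\mathrm{nc}}(\Bun_G(X/S))$ is closed under colimits, so it suffices to treat such generators. We may also reduce to $\mathcal{F}$ and $\mathcal{S}$ constructible and $S$-non-characteristic, since the action is continuous in both variables and ind-non-characteristic objects are filtered colimits of these. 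The action of $V\otimes\mathcal{S}$ is the composite
\[
\mathcal{F}\mapsto \mathrm{pr}_{\Bun_G!}\big((\mathrm{id}\times\Delta_\phi)^*H(V,\mathcal{F})\otimes \mathrm{pr}_2^*\mathcal{S}\big).
\]
We will check non-characteristicness step by step.

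\textbf{Step 1.} The main step is to show that $H(V,\mathcal{F})\in\mathrm{Sh}(\Bun_G(X/S)\times_S X^{I/S})$ is non-characteristic over $X^{I/S}$, and not just over $S$. The preceding lemma gives exactly this fiberwise: for every $s$, $H(V,\mathcal{F}_s)$ is non-characteristic over $X_s$ (and for several points, the same argument via \cite[App.~B]{gaitsgory2022toy} works over $X_s^I$). To globalize, we estimate $\mathrm{SS}(H(V,\mathcal{F}))$ by pull--tensor--push. We use that $\overset{\leftarrow}{h}$ is smooth, that $\mathrm{Sat}_I(V)$ is pulled back along $\mathfrak{r}$ from a sheaf that is locally constant along $\mathrm{Coord}$, hence non-characteristic over $S$, and that $\overset{\rightarrow}{h}\times\mathfrak{s}$ is proper on supports. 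Let $v$ be a covector pulled back from $X^{I/S}$ lying in the singular support. Restricting to the fiber over $s$ (legitimate because all kernels are $S$-non-characteristic), the relative component of $v$ is a covector of $X_s^I$ lying in $\mathrm{SS}(H(V,\mathcal{F}_s))$, so it vanishes by the lemma. Hence $v$ comes from $T^*S$. It then vanishes because $H(V,\mathcal{F})$ is $S$-non-characteristic, being built from $S$-non-characteristic inputs by non-characteristic pullback, tensor products and proper pushforward. I expect this to be the hardest step. What needs care is making ``pulled back from $X^{I/S}$, and hence fiberwise zero, hence zero'' rigorous. I would first try to reduce to the statement that a covector pulled back from $X^{I/S}$ and lying in $\mathrm{SS}$ has image in $T^*_S$ equal to the fiberwise singular support, using Lemma \ref{l:pointwisenilpsuffices}-style arguments on the ind-constructible pieces.

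\textbf{Step 2.} Given Step 1, pullback along $\mathrm{id}\times\Delta_\phi$ is non-characteristic, since the sheaf is non-characteristic over $X^{I/S}$. The result is non-characteristic over $X^{J/S}$, and its singular support has zero component along $X^{J/S}$ directions. Tensoring with $\mathrm{pr}_2^*\mathcal{S}$ is therefore non-characteristic, and the singular support lies in $\mathrm{SS}(H)+d\,\mathrm{pr}_2\,\mathrm{SS}(\mathcal{S})$. A covector from $S$ in this sum decomposes into a relative part, which forces the $\mathcal{S}$-component to be pulled back from $S$ and hence zero, leaving a part of $\mathrm{SS}(H)$ pulled back from $S$, which is zero.

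\textbf{Step 3.} Finally, $\mathrm{pr}_{\Bun_G}$ is proper because $X/S$ is proper. Proper pushforward satisfies $\mathrm{SS}(\mathrm{pr}_!\mathcal{G})\subset \mathrm{pr}_\pi(d\mathrm{pr}^{-1}\mathrm{SS}(\mathcal{G}))$, and a covector of $\Bun_G(X/S)$ pulled back from $S$ pulls back to a covector pulled back from $S$. So $S$-non-characteristicness is preserved. Passing back to colimits, $K\star\mathcal{F}$ is ind-non-characteristic.
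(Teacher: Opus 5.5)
Your architecture is the paper's. You reduce to generators $\mathrm{ins}_{I\to J}(V\otimes\mathcal{S})$ and to constructible $\mathcal{F}$. You observe that once the Hecke output is non-characteristic in the curve directions, the remaining operations (restriction to the diagonal, $\otimes\,\mathrm{pr}_2^*\mathcal{S}$, proper $\mathrm{pr}_{\Bun_G!}$) preserve $S$-non-characteristicness. You prove the curve-direction statement by combining $S$-non-characteristicness with the fiberwise lemma. Your Steps 2 and 3 are correct. Your non-characteristic restriction along $\mathrm{id}\times\Delta_\phi$ is a valid substitute for the paper's identification $(\mathrm{id}\times\Delta_\phi)^*H(V,-)\simeq H(\mathrm{mult}_\phi V,-)$.

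\textbf{The gap is the fiberwise input of Step 1 when $|I|>1$.} The preceding lemma is a single-point statement: $V\in\mathrm{Rep}(\check{G})$, non-characteristic over $X_s$. Your claim that ``the same argument via \cite[App.~B]{gaitsgory2022toy} works over $X_s^I$'' is not justified. As used in that lemma, the argument exploits the single-point structure: one disc $D(x)$, and $\mathrm{Sat}(V)$ descended from $L^+G\backslash\mathrm{Gr}_G$ via $\mathrm{Coord}$. Along the diagonals of $X_s^I$ the local Hecke stack degenerates, since the Beilinson--Drinfeld fibers jump. So non-characteristicness over $X_s^I$ is a genuinely new claim, and you give no proof of it.

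The missing idea is the one the paper uses to avoid multi-point statements altogether:
\begin{itemize}
\item Take $V=\otimes_i V_i$ a simple tensor of finite-dimensional representations. Then $W=\mathrm{mult}_\phi V=\otimes_{j\in J}W_j$, and $H(W,-)$ is an iterate of single-point Hecke functors.
\item The $k$-th of these is the single-point Hecke functor for the family $X\times_S X^{k/S}\to X^{k/S}$.
\item Each step is therefore the $|J|=1$ case over the base $X^{k/S}$, which is exactly your own Step 1 globalization. The output is non-characteristic over $X^{k/S}$ because the input is (smooth pullback, then proper pushforward). It is then non-characteristic over $X^{(k+1)/S}$ by the single-point lemma in each fiber.
\end{itemize}
You can apply this either to $W$ over $X^{J/S}$, as the paper does, or to $V$ over $X^{I/S}$ followed by your Step 2. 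Either way the argument closes.

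On the point you flagged in Step 1: the globalization needs the inclusion $di_s\bigl(\mathrm{SS}(\mathcal{G})|_{Y_s}\bigr)\subset\mathrm{SS}(i_s^*\mathcal{G})$. This is the reverse of the standard estimate for non-characteristic restriction, and it is not formal. For example, $\mathbf{1}_W$ with $W=\{-s\le t<s\}\subset\mathbb{R}^2_{s,t}$ is non-characteristic for $(s,t)\mapsto s$ and vanishes on $\{s=0\}$, yet has $dt\in\mathrm{SS}$ at the origin. The inclusion does hold here, because $S$ is complex and $\mathcal{G}$ is $\mathbb{C}$-constructible and $S$-non-characteristic. Pass to perverse cohomology sheaves, which commute with $i_s^*$ up to shift, and use $CC(i_s^*\mathcal{P})=i_s^*CC(\mathcal{P})$ for the effective cycles of perverse sheaves. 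The paper's ``check fiberwise'' uses the same fact tacitly.
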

\begin{proof}
The category $\mathrm{Rep}(\check{G})_{\mathrm{Ran}(X/S)}^\mathrm{nc}$ is generated under colimits by objects of the form $\text{ins}_{I\to J}V\otimes \mathcal{S}$, where $V\in \mathrm{Rep}(\check{G})^{\otimes I}$ is a simple tensor, say $V\simeq \otimes_{i\in I} V_i$ with $V_i\in \mathrm{Rep}(\check{G})$, and $\mathcal{S}\in \mathrm{Sh}(X^{J/S})^\mathrm{nc}$. Thus we may assume $K$ is of this form. We may also assume that $\mathcal{F}$ is constructible. The action of $K$ is given by the following composition
\begin{align*}
    \mathrm{Sh}(\Bun_G)\overset{H(V,-)}{\to} &\mathrm{Sh}(\Bun_G\times_S X^{I/S})\overset{(\mathrm{
id}\times \Delta_\phi)^*}{\to }\mathrm{Sh}(\Bun_G\times_S X^{J/S})\\
&\overset{\otimes\mathrm{pr}_2^*\mathcal{S}}{\to }\mathrm{Sh}(\Bun_G\times_S X^{J/S})\overset{\mathrm{pr}_{\Bun_G!}}{\to} \mathrm{Sh}(\Bun_G).
\end{align*}
To prove that this sends $\mathcal{F}$ to a sheaf non-characteristic over $S$, it suffices to prove that $\mathcal{F}$ is sent to a sheaf that is non-characteristic relative to $X^{J/S}$ under the composition
\[ \mathrm{Sh}(\Bun_G)\overset{H(V,-)}{\to}\mathrm{Sh}(\Bun_G\times_S X^{I/S})\overset{(\mathrm{id}\times \Delta_\phi)^*}{\to }\mathrm{Sh}(\Bun_G\times_S X^{J/S}).\]
This composition is equivalent to $H(W,-)$, where $W\simeq \mathrm{mult}_\phi V$. So it suffices to prove that $H(W,\mathcal{F})$ is $X^{J/S}$-non-characteristic. Since $W\simeq \otimes_{j\in J} W_j$ is a simple tensor, we may write $H(W,-)$ as a composition of $H(W_j,-)$ and thus reduce to proving the statement when $|J|=1$.

In this case we see that $H(W,\mathcal{F})$ is non-characteristic relative to $S$, since it is given by a smooth pullback followed by a proper pushforward applied to an $S$-non-characteristic sheaf. Thus to see that $H(W,\mathcal{F})$ is $X$-non-characteristic it suffices to check the statement fiberwise on $S$, but this is exactly the statement of the previous lemma.
\end{proof}

From this proposition, we conclude that Beilinson's spectral projector sends ind-non-characteristic sheaves to ind-non-characteristic sheaves. In particular we have the following corollary:

\begin{cor}\label{creating nilpotent ind-constructible sheaves}
    The action of $R^\mathrm{con}_{S,\mathcal{Z}}$ provides a left adjoint to the inclusion
    \[i:\mathrm{QCoh}(\mathcal{Z})\otimes \mathrm{Sh}_{\mathrm{Nilp}_u}^\mathrm{con}(\Bun_G(X/S))\to \mathrm{QCoh}(\mathcal{Z})\otimes \mathrm{Sh}^{\mathrm{con},\mathrm{nc}}(\Bun_G(X/S)).\]
\end{cor}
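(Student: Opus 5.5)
The plan is to build the adjunction out of three ingredients: the previous proposition (the action of $R^\mathrm{con}_{S,\mathcal{Z}}$ lands in $\mathrm{QCoh}(\mathcal{Z})\otimes \mathrm{Sh}_{\mathrm{Nilp}_u}^\mathrm{con}(\Bun_G(X/S))$), the unit map obtained from the $(\emptyset\to\emptyset)$ term of the colimit formula, and the pointwise results of \cite{arinkin2020stack}. Write $L:=R^\mathrm{con}_{S,\mathcal{Z}}\star-$, regarded as an endofunctor of $\mathcal{C}:=\mathrm{QCoh}(\mathcal{Z})\otimes \mathrm{Sh}^{\mathrm{con},\mathrm{nc}}(\Bun_G(X/S))$. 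On the first factor it acts through the $\mathrm{QCoh}(\mathcal{Z})$-module structure, i.e.\ $\mathcal{O}$-linearly tensored with the kernel. By the previous proposition, extended $\mathrm{QCoh}(\mathcal{Z})$-linearly using dualizability of $\mathrm{QCoh}(\mathcal{Z})$, the functor $L$ lands in the full subcategory $\mathcal{C}_{\mathrm{Nilp}}:=\mathrm{QCoh}(\mathcal{Z})\otimes \mathrm{Sh}_{\mathrm{Nilp}_u}^\mathrm{con}(\Bun_G(X/S))$. Note that $\mathcal{C}_{\mathrm{Nilp}}$ is indeed a subcategory of $\mathcal{C}$, since universally nilpotent sheaves are non-characteristic by Theorem \ref{the nilpotent cone}.

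As in the proof for $\mathrm{Sh}_{\mathrm{Nilp}_\mathrm{fib}}$, the candidate unit $\eta:\mathrm{id}\to i\circ L$ is the composite
\[
\mathrm{id}\to \mathcal{O}_{\mathcal{Z}}\otimes \mathbf{1}\star-\to R^\mathrm{con}_{S,\mathcal{Z}}\star-.
\]
Here the second arrow comes from the insertion of the $(\emptyset\to\emptyset)$ term into the colimit defining $R^\mathrm{con}_S$, pulled back along $f^*$. By the standard criterion for reflective subcategories, it suffices to show two things:
\begin{enumerate}
\item $\eta_c$ is an isomorphism for every $c\in\mathcal{C}_{\mathrm{Nilp}}$;
\item $L(\eta_c)$ and $\eta_{Lc}$ agree, or more simply, $L$ is idempotent, which follows once (1) holds, because $Lc\in\mathcal{C}_{\mathrm{Nilp}}$.
\end{enumerate}
Given (1), for $c\in\mathcal{C}$ and $d\in\mathcal{C}_{\mathrm{Nilp}}$ the map $\mathrm{Hom}(Lc,d)\to\mathrm{Hom}(c,d)$ has an inverse: send $g$ to $\eta_d^{-1}\circ L(g)$.

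The key step, and the main obstacle, is (1). I would reduce to $S$ contractible and check that $\eta_c$ is an isomorphism after applying $\mathrm{id}\otimes i_s^*$ for every point $s$. This detects isomorphisms because, on ind-non-characteristic sheaves over contractible $S$, the family of $*$-restrictions to fibers is conservative: both sides are ind-non-characteristic by Proposition \ref{noncharHecke}, hence colimits of non-characteristic constructible sheaves, and a non-characteristic sheaf with zero fibers is zero. After $i_s^*$, and using $i_s^*R^\mathrm{con}_S\simeq R_s$ from Proposition \ref{bundleing the kernels}, the map becomes the unit $c_s\to R_{s,\mathcal{Z}}\star c_s$ for $c_s\in \mathrm{QCoh}(\mathcal{Z})\otimes\mathrm{Sh}_{\mathrm{Nilp}}(\Bun_G(X_s))$. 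I expect this to be an isomorphism by the pointwise result \cite[Cor.~18.2.9]{arinkin2020stack} in its restricted-variation, $\mathrm{QCoh}(\mathcal{Z})$-linear form. The reason is that nilpotent sheaves form a $\mathrm{QCoh}(\mathrm{LS}^\mathrm{restr}_{\check{G}})$-module on which the kernel acts via the diagonal, i.e.\ as the identity after pullback along $f$.

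The delicate points are, first, the conservativity of the fiber restrictions in the ind-constructible context, which I would argue as in Lemma \ref{criterion for universal nilpotence ind-version} via perverse cohomology commuting with $i_s^*$. Second, the compatibility of the unit with the $\mathcal{Z}$-base change, where compactness of $\mathcal{O}_{\mathcal{Z}}$ ensures that the tensoring commutes with the relevant colimits.
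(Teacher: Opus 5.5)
Your proposal is essentially the paper's proof. The paper also takes the unit from the $(\emptyset\to\emptyset)$ term of the colimit formula, reduces the adjunction to $\eta$ being an equivalence on $\mathrm{QCoh}(\mathcal{Z})\otimes \mathrm{Sh}^{\mathrm{con}}_{\mathrm{Nilp}_u}(\Bun_G(X/S))$, and checks this fiberwise on $S$ via the pointwise restricted-variation result \cite[Cor.~15.5.3]{arinkin2020stack} (not Cor.~18.2.9); the factorization through $\mathrm{Nilp}_u$ comes from Lemma \ref{criterion for universal nilpotence ind-version}, as in the preceding proposition.

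One correction to your step (2): invertibility of $\eta_{Lc}$ does not by itself give $L(\eta_c)\simeq\eta_{Lc}$, and your inverse formula needs this. The fix is that $\eta$ is induced by a map $\mathbf{1}\to R^{\mathrm{con}}_{S,\mathcal{Z}}$ in a symmetric monoidal category of kernels, so $L(\eta_c)$ and $\eta_{Lc}$ differ by the swap on $R^{\mathrm{con}}_{S,\mathcal{Z}}\otimes R^{\mathrm{con}}_{S,\mathcal{Z}}$. Hence both are equivalences and \cite[Prop.~5.2.7.4]{lurie2009higher} applies; the paper asserts this reduction without comment.
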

\begin{proof}
    First we prove that the spectral projector factors through the category of sheaves with universal nilpotent singular support. Since the spectral projector sends ind-non-characteristic sheaves to ind-non-characteristic sheaves, by Lemma \ref{criterion for universal nilpotence ind-version} it suffices to prove the statement fiberwise on $S$. Over each fiber, this is \cite[Corollary 15.5.4]{arinkin2020stack}.

    The colimit formula for $R_{S,\mathcal{Z}}$ provides a map from the unit object in $\mathrm{QCoh}(\mathcal{Z})\otimes  \mathrm{Rep}(\check{G})_{\mathrm{Ran}(X/S)}^{\mathrm{con},\mathrm{nc}}$. Thus we obtain a natural transformation $\mathrm{id}\to R_{S,\mathcal{Z}}$. To prove that functor is a left adjoint, it suffices to prove that the natural transformation is an equivalence for objects in $\mathrm{QCoh}(\mathcal{Z})\otimes \mathrm{Sh}_{\mathrm{Nilp}_u}^\mathrm{con}(\Bun_G(X/S))$. Since equivalences can be checked fiberwise on $S$, we are reduced to proving the statement when $S$ is a point. In this case, the statement follows from \cite[Corollary 15.5.3]{arinkin2020stack}.
\end{proof}

\begin{proof}[Proof of Proposition \ref{non-char ind-con projector}]
    We can assume $S$ contractible. In this case it follows that $\mathrm{LS}_{\check{G}}^\mathrm{restr}(X/S)$ is a colimit of stacks $i_{\cZ}:\mathcal{Z}\to \mathrm{LS}_{\check{G}}^\mathrm{restr}(X/S)$ such that $\mathcal{O}_\mathcal{Z}$ is compact. It follows that $R_S^{\on{con}}$ is a colimit over $\cZ$ of functors
    \[
    \mathrm{R}_{S,\cZ}^\mathrm{con}\star-:\mathrm{Sh}^{\mathrm{con},\mathrm{nc}}(\Bun_G(X/S)) \to \mathrm{QCoh}(\cZ)\otimes \mathrm{Sh}_{\mathrm{Nilp}_u}^\mathrm{con}(\Bun_G(X/S))\xrightarrow{i_{\cZ,*}\otimes\on{id}}
    \]
    \[
    \to \mathrm{QCoh}(\mathrm{LS}_{\check{G}}^\mathrm{restr}(X/S))\otimes \mathrm{Sh}_{\mathrm{Nilp}_u}^\mathrm{con}(\Bun_G(X/S)).
    \]
\end{proof}

\subsection{Back to the Betti spectral projector} We continue to assume that $S$ is a complex manifold. We will now return to studying Beilinson's spectral projector $R_S$ introduced in \S \ref{ss:10.2}. The goal of this subsection is to prove that $R_S$ sends ind-non-characteristic sheaves on $\on{Bun}_G(X/S)$ to sheaves with universal nilpotent singular support. That is:

\begin{prop} \label{nilpontece of the spectral projector}
    The composition
    \[\mathrm{Sh}^{\mathrm{con},\mathrm{nc}}(\Bun_G(X/S))\to \mathrm{Sh}^{\mathrm{nc}}(\Bun_G(X/S))\overset{R_S\star-}{\to} \mathrm{QCoh}(\mathrm{LS}_{\check{G}}(X/S))\otimes  \mathrm{Sh}(\Bun_G(X/S))\]
    factors through $\mathrm{QCoh}(\mathrm{LS}_{\check{G}}(X/S))\otimes  \mathrm{Sh}_{\mathrm{Nilp}_u}(\Bun_G(X/S))$. In other words, the Betti spectral projector sends ind-constructible, ind-non-characteristic sheaves to sheaves with universal nilpotent singular support.
\end{prop}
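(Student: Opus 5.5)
The plan is to reduce the Betti statement to the ind-constructible statement already established in Proposition \ref{non-char ind-con projector}. To do this I will compare the kernels $R_S$ and $R_S^{\mathrm{con}}$ along the natural map
\[
\iota:\mathrm{LS}^{\mathrm{restr}}_{\check{G}}(X/S)\to \mathrm{LS}_{\check{G}}(X/S).
\]
Since the statement concerns a map of presheaves of categories on $S_{\mathrm{et}}$, I may assume $S$ is contractible. I then work with global sections, where $\mathrm{LS}_{\check{G}}(X/S)\simeq \mathrm{LS}_{\check{G}}(X_s)$ and $\mathrm{LS}^{\mathrm{restr}}_{\check{G}}(X/S)\simeq \mathrm{LS}^{\mathrm{restr}}_{\check{G}}(X_s)$ for any $s\in S$.

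\textbf{Step 1: compatibility of kernels.} Under the functor
\[
\mathrm{QCoh}(\mathrm{LS}_{\check{G}}(X/S))\otimes \mathrm{Rep}(\check{G})^{\mathrm{nc}}_{\mathrm{Ran}(X/S)}\to \mathrm{QCoh}(\mathrm{LS}^{\mathrm{restr}}_{\check{G}}(X/S))\otimes \mathrm{Rep}(\check{G})^{\mathrm{con,nc}}_{\mathrm{Ran}(X/S)},
\]
given by $\iota^*$ together with the inclusion $\mathrm{Sh}_{\mathrm{lc}}\subset \mathrm{QLisse}$, the kernel $R_S$ is sent to $R_S^{\mathrm{con}}$. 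This should follow directly from the colimit formulas in Proposition \ref{bundleing the kernels}: the universal local system on $\mathrm{LS}_{\check{G}}$ pulls back along $\iota$ to the universal object on $\mathrm{LS}^{\mathrm{restr}}_{\check{G}}$. Consequently, for $\mathcal{F}\in \mathrm{Sh}^{\mathrm{con},\mathrm{nc}}(\mathrm{Bun}_G(X/S))$,
\[
(\iota^*\otimes \mathrm{id})(R_S\star\mathcal{F})\simeq R_S^{\mathrm{con}}\star \mathcal{F}.
\]

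\textbf{Step 2: support on the restricted locus.} Next I show that for ind-constructible $\mathcal{F}$ the unit map
\[
R_S\star\mathcal{F}\to (\iota_*\iota^*\otimes \mathrm{id})(R_S\star \mathcal{F})
\]
is an equivalence. Equivalently, $R_S\star\mathcal{F}$ is set-theoretically supported on $\mathrm{LS}^{\mathrm{restr}}_{\check{G}}$, which is a disjoint union of formal completions. Over a point this is the content of \cite{arinkin2020stack}: the spectral action on ind-constructible sheaves factors through $\mathrm{QCoh}(\mathrm{LS}^{\mathrm{restr}}_{\check{G}})$, compatibly with the Betti action. To pass to families, I check the equivalence after applying $\mathrm{id}\otimes i_s^*$ for each $s\in S$. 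This works because $i_s^*$ intertwines the Hecke actions and the kernels (Proposition \ref{bundleing the kernels}), the identifications of $\mathrm{LS}$ with its fiber over $s$ are compatible with $\iota$, and equivalences in $\mathrm{QCoh}(\mathrm{LS})\otimes\mathrm{Sh}(\mathrm{Bun}_G(X/S))$ can be detected fiberwise. The last point uses that $\mathrm{QCoh}(\mathrm{LS})$ is dualizable and that each $i_s^*$ commutes with colimits.

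\textbf{Step 3: conclusion.} Combining the two steps gives
\[
R_S\star\mathcal{F}\simeq (\iota_*\otimes \mathrm{id})(R_S^{\mathrm{con}}\star\mathcal{F}).
\]
By Proposition \ref{non-char ind-con projector}, $R_S^{\mathrm{con}}\star\mathcal{F}$ lies in $\mathrm{QCoh}(\mathrm{LS}^{\mathrm{restr}}_{\check{G}}(X/S))\otimes \mathrm{Sh}^{\mathrm{con}}_{\mathrm{Nilp}_u}(\mathrm{Bun}_G(X/S))$. The realization functor $\mathrm{Sh}^{\mathrm{con}}_{\mathrm{Nilp}_u}\to \mathrm{Sh}$ lands in $\mathrm{Sh}_{\mathrm{Nilp}_u}$, because its objects are colimits of constructible sheaves with singular support in $\mathrm{Nilp}_u$ and $\mathrm{Sh}_{\mathrm{Nilp}_u}$ is closed under colimits. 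Applying $\iota_*\otimes\mathrm{id}$ then shows that $R_S\star\mathcal{F}$ lies in $\mathrm{QCoh}(\mathrm{LS}_{\check{G}}(X/S))\otimes \mathrm{Sh}_{\mathrm{Nilp}_u}(\mathrm{Bun}_G(X/S))$, as claimed.

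I expect Step 2 to be the main obstacle, specifically making the pointwise comparison of \cite{arinkin2020stack} compatible with the relative constructions. One must check that the relative $\iota$ and the relative identifications $\mathrm{QLisse}(X/S)\simeq\mathrm{IndLisse}(X/S)$ specialize correctly at each $s$. One must also make sure the fiberwise detection of equivalences is legitimate for objects of the tensor product category, rather than only for objects of $\mathrm{Sh}(\mathrm{Bun}_G(X/S))$.
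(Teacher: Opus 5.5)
Your Step 1 matches the paper's starting observation, but Step 2 is false, and Step 3 depends on it. The Betti projector applied to an ind-constructible sheaf, even a constructible one, is in general not concentrated on $\mathrm{LS}^{\mathrm{restr}}_{\check G}$.

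Already for $S$ a point and $G=T$ a torus, where nilpotent means locally constant, take $\mathcal{F}$ a skyscraper at a point of $\Bun_T$. Then $(\Gamma_*\otimes\mathrm{id})(R\star\mathcal{F})=L(\mathcal{F})$ corepresents the stalk functor on local systems. So it is the free module over chains on the based loop space, which under the spectral action corresponds (up to shift) to the structure sheaf of $\mathrm{LS}_{\check T}$. That is not an object supported on, or complete along, the formal neighbourhoods of the $e$-points. Likewise, $L(\mathrm{Poinc}_!)$ corresponds to $\mathcal{O}_{\mathrm{LS}_{\check G}}$ in general.

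What is available pointwise is a compatibility of spectral actions on ind-constructible \emph{nilpotent} sheaves. It is not a support statement for the projector applied to arbitrary ind-constructible sheaves, so the input you attribute to \cite{arinkin2020stack} is not there.

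Your ``equivalently'' in Step 2 is also wrong. The unit $M\to\iota_*\iota^*M$ being an isomorphism means $M$ is complete along the restricted locus. Set-theoretic support means $M$ is torsion. Colimits of constructible objects are typically torsion but not complete; an infinite unipotent Jordan block fails the unit even among nilpotent ind-constructible sheaves. Moreover, $\mathrm{LS}^{\mathrm{restr}}_{\check G}$ has infinitely many components, so $\iota_*$ is a product of completions and does not preserve colimits. Hence $\iota_*\otimes\mathrm{id}$ is not even directly defined on the tensor product.

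The underlying problem is that $\iota^*:\mathrm{QCoh}(\mathrm{LS}_{\check G}(X/S))\to\mathrm{QCoh}(\mathrm{LS}^{\mathrm{restr}}_{\check G}(X/S))$ is not conservative. It only sees formal neighbourhoods of fibers over $e$-rational points of the coarse moduli, and it kills, for example, anything supported at the generic point. So Proposition \ref{non-char ind-con projector} tells you that $(\iota^*\otimes\mathrm{id})(R_S\star\mathcal{F})$ is universally nilpotent, but that alone cannot control $R_S\star\mathcal{F}$. No identification of $R_S\star\mathcal{F}$ with an object on the restricted locus is available.

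The idea you are missing is the paper's use of coefficient extensions:
\begin{enumerate}
\item Rerun Proposition \ref{non-char ind-con projector} with coefficients in every algebraically closed extension $e'/e$.
\item Use that the pullbacks $i_{e'}^*:\mathrm{QCoh}(\mathrm{LS}_{\check G}(X/S))\to\mathrm{QCoh}(\mathrm{LS}^{\mathrm{restr},e'}_{\check G}(X/S))$ are jointly conservative. This holds because every field-valued point of $\mathrm{LS}_{\check G}$ factors through some $\mathrm{LS}^{\mathrm{restr},e'}_{\check G}$ (Lemma \ref{faithful flatness of compact substacks}).
\item Conclude without ever identifying $R_S\star\mathcal{F}$. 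Let $L$ be the formal left adjoint of $\mathrm{Sh}_{\mathrm{Nilp}_u}\subset\mathrm{Sh}$. Each $i^*_{e'}\otimes\mathrm{id}$ commutes with $\mathrm{id}\otimes L$ and already lands in the nilpotent part, so it kills the cofiber of $R_S\star\mathcal{F}\to(\mathrm{id}\otimes L)(R_S\star\mathcal{F})$. That cofiber therefore vanishes, using dualizability of $\mathrm{Sh}(\Bun_G(X/S))$ to transfer joint conservativity to the tensor product.
\end{enumerate}
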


\begin{rem}
    We expect that the Betti spectral projector applied to any non-characteristic Betti sheaf (not necessarily ind-constructible) has universal nilpotent singular support, and hence defines a left adjoint to the inclusion $\on{Sh}_{\on{Nilp}_u}(\on{Bun}_G(X/S))\into \on{Sh}^{\on{nc}}(\on{Bun}_G(X/S))$. However, we are only able to prove that it sends ind-non-characteristic constructible sheaves to Betti sheaves with universal nilpotent singular support.
\end{rem}

The idea is that the ind-constructible spectral projector applied to an ind-constructible sheaf and realized as a Betti sheaf is essentially an extension of coefficients from the Betti spectral projector applied to the same sheaf. Then the point is that extending coefficients along a surjective map does not change the singular support. 

More precisely, note that by unwinding the formulas, the following two compositions
\[
    \mathrm{Sh}^\mathrm{con}(\Bun_G(X/S)) \overset{R_S^\mathrm{con}\star-}{\to }  \mathrm{QCoh}(\mathrm{LS}^\mathrm{restr}_{\check{G}}(X/S))\otimes  \mathrm{Sh}^\mathrm{con}(\Bun_G(X/S))
    \]
    \[
    \to \mathrm{QCoh}(\mathrm{LS}^\mathrm{restr}_{\check{G}}(X/S))\otimes  \mathrm{Sh}(\Bun_G(X/S))
\]
and 
\begin{align*}
    \mathrm{Sh}^\mathrm{con}(\Bun_G(X/S)) \to  & \mathrm{Sh}(\Bun_G(X/S)) \overset{R_S\star-}{\to}  \mathrm{QCoh}(\mathrm{LS}_{\check{G}}(X/S))\otimes  \mathrm{Sh}(\Bun_G(X/S)) \\
    \overset{i^*\otimes \mathrm{id}}{\to} & \mathrm{QCoh}(\mathrm{LS}^\mathrm{restr}_{\check{G}}(X/S))\otimes  \mathrm{Sh}(\Bun_G(X/S)) 
\end{align*}
are equivalent, where $i: \mathrm{LS}^\mathrm{restr}_{\check{G}}(X/S)\to \mathrm{LS}_{\check{G}}(X/S)$ is the natural map.

Thus by Proposition \ref{non-char ind-con projector}, the composition 
\begin{equation}\label{eq:comp}
\mathrm{Sh}^\mathrm{con}(\Bun_G(X/S)) \to \mathrm{Sh}(\Bun_G(X/S))\overset{R_S\star -}{\to}\mathrm{QCoh}(\mathrm{LS}_{\check{G}}(X/S))\otimes  \mathrm{Sh}(\Bun_G(X/S)) \overset{i^*\otimes \mathrm{id}}{\to}
\end{equation}
\[
\to \mathrm{QCoh}(\mathrm{LS}_{\check{G}}^\mathrm{restr}(X/S))\otimes  \mathrm{Sh}(\Bun_G(X/S))
\]
factors through $\mathrm{QCoh}(\mathrm{LS}_{\check{G}}^\mathrm{restr}(X/S))\otimes  \mathrm{Sh}_{\mathrm{Nilp}_u}(\Bun_G(X/S))$.

Let $e'/e$ be an extension of algebraically closed fields and $\mathrm{LS}_{\check{G}}^\mathrm{restr,e'}(X/S))$ denote the base change of $\mathrm{LS}_{\check{G}}^\mathrm{restr}(X/S))$ to $e'$. The composition (\ref{eq:comp}) with $e$ replaced by $e'$ similarly factors through $\mathrm{QCoh}(\mathrm{LS}_{\check{G}}^\mathrm{restr,e'}(X/S))\otimes  \mathrm{Sh}_{\mathrm{Nilp}_u}(\Bun_G(X/S))$.

\begin{lem} \label{faithful flatness of compact substacks}
    Suppose $S$ is contractible. The family of functors 
    \[i_{e'}^*:\mathrm{QCoh}(\mathrm{LS}_{\check{G}}(X/S))\to \mathrm{QCoh}(\mathrm{LS}_{\check{G}}^\mathrm{restr,e'}(X/S)),\] 
    for $e'$ ranging over the algebraically closed field extensions of $e$, is jointly conservative.
\end{lem}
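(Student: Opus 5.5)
Since $S$ is contractible, $\mathrm{LS}_{\check{G}}(X/S)\simeq \mathrm{LS}_{\check{G}}(X_s)$ for any $s\in S$, and likewise for the restricted variant. We are therefore reduced to the case of a single curve $X_s$. There $\mathrm{LS}_{\check{G}}(X_s)$ is a quasi-smooth affine scheme $Y$ quotiented by $\check{G}$, and $\mathrm{LS}^{\mathrm{restr}}_{\check{G}}(X_s)$ is the disjoint union of the formal completions of $\mathrm{LS}_{\check{G}}(X_s)$ at the semisimple $e$-points, i.e. a disjoint union of formal stacks $\mathcal{Z}_\sigma$, one per closed $\check{G}$-orbit. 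After base change to $e'$, the restricted stack becomes the union of the formal completions at the $e'$-points of the closed orbits in $\mathrm{LS}_{\check{G}}(X_s)_{e'}$. The plan is to show: if $\cF\in\mathrm{QCoh}(\mathrm{LS}_{\check{G}})$ has $i_{e'}^*\cF=0$ for all $e'$, then $\cF=0$.

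\textbf{Step 1: reduce to points.} By the compact generation of $\mathrm{QCoh}$ on the quotient stack (quasi-smooth, affine mod reductive, characteristic zero), it suffices to show that the support of $\cF$ is empty. Let $\mathcal{Z}$ be a closed $\check{G}$-stable subset containing the support, and suppose it is nonempty. Every nonempty closed $\check{G}$-stable subset of $Y$ contains a closed $\check{G}$-orbit, because orbit closures contain closed orbits. Choose a point $y$ of the support whose residue field has transcendence degree $d$ over $e$, and let $e'$ be an algebraically closed field containing that residue field. The orbit $\check{G}\cdot y$ has closure containing a closed orbit defined over $e'$, corresponding to a semisimple $e'$-local system $\sigma$.

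\textbf{Step 2: nonvanishing at the completion.} Restricting to the formal completion at $\sigma$ is a completion functor. For a quasi-coherent sheaf on an affine Noetherian scheme mod reductive group, whose support meets the closed orbit $\sigma$ (as it does, since the support is closed and stable), the derived completion along that orbit is nonzero. The key input here is a derived Nakayama lemma: $\mathcal{H}om(\mathcal{O}_\sigma,\cF)$ or the $*$-fiber $\mathcal{O}_\sigma\otimes\cF$ is nonzero when $\sigma$ lies in the support. After flat base change to $e'$, the point of the support persists, because field extension is faithfully flat. Hence $i_{e'}^*\cF\neq 0$, a contradiction.

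The main obstacle is making Step 2 rigorous in the derived, unbounded setting. Derived completion can kill non-torsion but nonzero unbounded objects only if one uses the wrong notion, so one must work with the support defined via the $*$-fibers at all field-valued points, including non-closed ones; this is where the extensions $e'$ are essential. I would first try to reduce to compact $\cF$ (perfect complexes, which generate) by the following argument. If $\mathrm{Hom}(P,\cF)\neq 0$ for some compact $P$, then $\cF$ restricted to an affine chart has nonzero local cohomology at some prime $\fp$. The closure of $\fp$ meets a closed orbit, and the completion there is nonzero by Nakayama applied to the finitely generated homotopy of the local cohomology. Finally, use the results of \cite[Appendix E]{arinkin2020stack} identifying $\mathrm{QCoh}(\mathrm{LS}^{\mathrm{restr}})$ with the completions at closed orbits.
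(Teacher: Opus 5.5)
Step~2 has a genuine gap, and it comes from a misdescription of the restricted stack. The claim that the support of $\mathcal{F}$ is closed, so that it contains the closed orbit in $\overline{\check{G}\cdot y}$, is false for non-coherent (let alone unbounded) quasi-coherent sheaves. Worse, if you take $\mathrm{LS}^{\mathrm{restr},e'}_{\check G}$ to be the union of formal completions along closed orbits, the lemma itself becomes false.

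Here is a counterexample. Take $\check G=GL_2$, and let $\rho$ send one generator of $\pi_1(X_s)$ to $\begin{pmatrix}1&1\\0&1\end{pmatrix}$ and the others to $1$. Let $x:\Spec e\to\mathrm{LS}_{\check G}$ be the corresponding point; it is an affine morphism.
\begin{itemize}
\item $x_*e\neq 0$.
\item By base change, $z^*x_*e=0$ for every field-valued point $z$ not geometrically isomorphic to $\rho$. This includes every semisimple $z$ over every $e'$, since $\rho$ is not semisimple.
\item Its pullback to each closed orbit therefore vanishes, hence so does its pullback to every nilpotent thickening of that orbit, hence to the formal completion.
\end{itemize}
So no derived Nakayama argument at closed orbits can work. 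Your fallback of reducing to compact objects does not address this either. Joint conservativity concerns arbitrary $\mathcal{F}$, and the local cohomology of a non-coherent sheaf need not have finitely generated homotopy.

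What rescues the argument is the correct geometry. The component of $\mathrm{LS}^{\mathrm{restr}}_{\check G}$ indexed by a semisimple $\sigma$ is the formal completion of $\mathrm{LS}_{\check G}$ along the closed substack of \emph{all} local systems with semisimplification $\sigma$. This is the fibre over $\sigma$ of the map to the coarse moduli space, and it contains non-closed orbits such as that of $\rho$. In particular, $\mathrm{LS}^{\mathrm{restr}}_{\check G}\to\mathrm{LS}_{\check G}$ is a bijection on points with values in the algebraically closed coefficient field. Hence every point of $\mathrm{LS}_{\check G}$ with values in an algebraically closed $e'$ factors through $i_{e'}$.

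With this, your Step~1 finishes the proof on its own. The point $y$ of the support lifts to $\mathrm{LS}^{\mathrm{restr},e'}_{\check G}$, so $y^*\mathcal{F}\neq0$ forces $i_{e'}^*\mathcal{F}\neq0$. This is exactly the paper's argument, with no detour through closed orbits or derived completion.

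Note, though, that the step you attribute to compact generation (empty support implies $\mathcal{F}=0$) is not formal. It is the nontrivial fact that field-valued points are jointly conservative on $\mathrm{QCoh}$ of an eventually coconnective stack whose classical truncation is Noetherian. The paper cites this from Lurie; for ordinary Noetherian rings it goes back to Neeman. That is the key input, not completion or Nakayama.
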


\begin{proof}
    Any field valued point of $\mathrm{LS}_{\check{G}}(X/S)$ factors through $\mathrm{LS}_{\check{G}}^{\mathrm{restr},e'}(X/S)$ for some algebraically closed field extension $e'/e$. Since the collection of all field valued points on a derived eventually coconnective stack whose underlying classical stack is Noetherian is jointly conservative (cf. \cite[Lemma 2.6.1.3]{Lurie}), this implies the lemma.
\end{proof}

Using this, we can now prove Proposition \ref{nilpontece of the spectral projector}:

\begin{proof}[Proof of Proposition \ref{nilpontece of the spectral projector}]
We have have already seen that for any $i_{e'}:\mathrm{LS}_{\check{G}}^{\mathrm{restr},e'}(X/S)\to \mathrm{LS}_{\check{G}}(X/S)$, the composition of $R_S\star-$ with $\mathrm{QCoh}(\mathrm{LS}_{\check{G}}(X/S))\otimes  \mathrm{Sh}(\Bun_G(X/S))\overset{i_{e'}^*\otimes\on{id}}{\to } \mathrm{QCoh}(\mathrm{LS}_{\check{G}}^{\mathrm{restr},e'}(X/S))\otimes  \mathrm{Sh}(\Bun_G(X/S))$ factors through $\mathrm{QCoh}(\mathrm{LS}_{\check{G}}^{\mathrm{restr},e'}(X/S))\otimes  \mathrm{Sh}_{\mathrm{Nilp}_u}(\Bun_G(X/S))$. Thus to prove the proposition, we will use Lemma \ref{faithful flatness of compact substacks} to prove that if an object has universally nilpotent singular support after pullback to all $\mathrm{LS}_{\check{G}}^{\mathrm{restr},e'}(X/S)$, then the object itself has universally nilpotent singular support. Consider the following diagram
\[
\begin{tikzcd}
\mathrm{Sh}_{\mathrm{Nilp}_u}(\Bun_G(X/S))\otimes \mathrm{QCoh}(\mathrm{LS}_{\check{G}}(X/S)) \arrow[d, hook] \arrow[r, "\mathrm{id}\otimes i^*_{e'}"]                                           & \mathrm{Sh}_{\mathrm{Nilp}_u}(\Bun_G(X/S))\otimes \mathrm{QCoh}(\mathrm{LS}_{\check{G}}^{\mathrm{restr},e'}(X/S)) \arrow[d, hook]                                                           \\
\mathrm{Sh}(\Bun_G(X/S))\otimes \mathrm{QCoh}(\mathrm{LS}_{\check{G}}(X/S)) \arrow[u, "L\otimes \mathrm{id}" description, bend right=49] \arrow[r, "\mathrm{id}\otimes i^*_{e'}"] & \mathrm{Sh}(\Bun_G(X/S))\otimes \mathrm{QCoh}(\mathrm{LS}_{\check{G}}^{\mathrm{restr},e'}(X/S)) \arrow[u, "{L\otimes \mathrm{id}}" description, bend right=49, shift right]
\end{tikzcd}\]
    where $L:\mathrm{Sh}(\Bun_G(X/S))\to \mathrm{Sh}_{\mathrm{Nilp}_u}(\Bun_G(X/S))$ is the formally defined left adjoint to the inclusion. Note that the vertical downwards arrows are fully faithful, since these categories of quasi-coherent sheaves are dualizable. 

    To see the statement suppose $\mathcal{F}$ is an object of $\mathrm{Sh}(\Bun_G(X/S))\otimes \mathrm{QCoh}(\mathrm{LS}_{\check{G}}(X/S))$ such that $(\mathrm{id}\otimes i_{e'}^*)\mathcal{F}$ lies in the subcategory of universally nilpotent sheaves for all $\mathrm{LS}_{\check{G}}^{\mathrm{restr},e'}(X/S)$. Let $K$ denote the cofiber of the map $\mathcal{F}\to (L\otimes \mathrm{id}) \mathcal{F}$. It follows that $(\mathrm{id}\otimes i_{e'}^*)K\simeq 0$. Since $\mathrm{Sh}(\Bun_G(X/S))$ is dualizable and the maps $i^*_{e'}$ are jointly conservative, it follows that $K\simeq 0$, hence proving that $\mathcal{F}$ has universally nilpotent singular support. 
\end{proof}

\section{Local Constancy of the Automorphic Category}\label{s:prfmainthm}

\subsection{Proof of main theorem}

In this section we prove the local constancy of the automorphic category. The main result is the following theorem, which was first conjectured in this form in \cite{nadler2021automorphic} and before that a weaker form was stated as Conjecture 4.14 in \cite{ben2018betti}.

\begin{thm}\label{localconstancyofautomorphic}
    Let $S$ be a contractible real analytic manifold, $s$ be a point of $S$ and $X/S$ be a smooth proper family of curves. The restriction map
    \[i_s^*:\mathrm{Sh}_{\mathrm{Nilp}_u}(\Bun_G(X/S))\to \mathrm{Sh}_{\mathrm{Nilp}}(\Bun_G(X_s)) \]
    is an equivalence of categories.
\end{thm}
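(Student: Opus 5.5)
We follow the strategy of \S\ref{S:1.3}: show $i_s^*$ is fully faithful on a generating set, then show the images generate. Throughout, $S$ is contractible, so any locally constant sheaf on $S$ is constant, with global sections equal to its stalk at $s$.

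\textbf{Step 1: the generators $\on{Eis}_P$.} For a proper parabolic $P$, the category $\on{Sh}_{\on{Nilp}_u}(\on{Bun}_M(X/S))$ is handled by induction on the semisimple rank. The base case is a torus, where $\on{Bun}_T(X/S)\to S$ is a family of tori and nilpotent sheaves are lisse; over contractible $S$ these are pulled back from the fiber. For $\cF\in\on{Sh}_{\on{Nilp}_u}(\on{Bun}_M(X/S))$ and $\cG\in\on{Sh}_{\on{Nilp}_u}(\on{Bun}_G(X/S))$ we have
\[
\underline{\on{Hom}}(\on{Eis}_P\cF,\cG)\simeq\underline{\on{Hom}}(\cF,\on{CT}_{P,*}\cG).
\]
By Remark \ref{r:CT*}, $\on{CT}_{P,*}\cG$ again has universal nilpotent singular support. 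Inductively, the inner Hom sheaf on $S$ between universally nilpotent sheaves on $\on{Bun}_M$ is locally constant, with stalk the fiberwise Hom. This uses compatibility of $\on{Eis}$ and $\on{CT}$ with $i_s^*$, which follows from base change and non-characteristicness. By Proposition \ref{eisnilpotence} the objects $\on{Eis}_P\cF$ lie in the source category, and Hom out of them is computed fiberwise.

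\textbf{Step 2: the generators $P_S(\on{Poinc})$.} Locally on $S$ (globally, since $S$ is contractible) choose a section $x$ and $\Omega^{1/2}$. Let $\on{Poinc}_{\on{Nilp},S}$ be the spectral projector applied to $\sH_V\star\on{Poinc}_!$ at $x$, for $V\in\on{Rep}(\check G)$. The Poincaré sheaf is non-characteristic by Proposition \ref{poincare non-char}, Hecke functors preserve non-characteristicness by Proposition \ref{noncharHecke}, and $\on{Poinc}_!$ is constructible. Hence Proposition \ref{nilpontece of the spectral projector} places these objects in $\on{Sh}_{\on{Nilp}_u}$.

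For the inner Hom we use that the projector is left adjoint to the fiberwise-nilpotent inclusion, together with adjunction for Hecke functors. This gives
\[
\underline{\on{Hom}}(P_S(V\star\on{Poinc}),\cG)\simeq\on{Whit}_{X/S}(V^\vee\star\cG).
\]
The right side is a local system with the correct stalks by Proposition \ref{Localconstancyofwhittaker}, since $V^\vee\star\cG$ is universally nilpotent. The stalk identification at $s$ needs $i_s^*$ to intertwine the projectors, which holds by Proposition \ref{bundleing the kernels}. Hence $i_s^*$ is fully faithful on the subcategory generated by both families, and in particular on all of the source once generation is known.

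\textbf{Step 3: generation and essential surjectivity.} Let $\cC\subset\on{Sh}_{\on{Nilp}_u}(\on{Bun}_G(X/S))$ be the cocomplete subcategory generated by the objects of Steps 1 and 2. On $\cC$ the functor $i_s^*$ is fully faithful and commutes with colimits. By the $S=\on{pt}$ results, namely non-vanishing of Whittaker coefficients on cuspidal nilpotent sheaves \cite{faergeman2022non} together with the Eisenstein/cuspidal decomposition, the images generate $\on{Sh}_{\on{Nilp}}(\on{Bun}_G(X_s))$. Essential surjectivity then follows from the induction on rank: Eisenstein generators on $\on{Bun}_M$ surject fiberwise.

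It remains to show $\cC$ is everything. Take $\cG$ right orthogonal to $\cC$. Orthogonality to the Eisenstein generators makes $\on{CT}_{P,*}\cG=0$, so $\cG$ is cuspidal. Orthogonality to the Poincaré generators gives $\on{coeff}_S(\sH_V\cG)=0$ for all $V$, which by the relative version of \cite{faergeman2022non} forces $\cG=0$.

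\textbf{Main obstacle.} We expect Step 3 to be the hardest, specifically the relative non-vanishing theorem and cuspidality. The plan is to apply $i_s^*$, use locality of $\on{CT}$ and Whittaker coefficients on stalks (Proposition \ref{Localconstancyofwhittaker}), and reduce to \cite{faergeman2022non} fiberwise. A nonzero $\cG$ has some fiber $i_{s'}^*\cG\neq 0$, since $\cG$ is non-characteristic. That fiber is cuspidal nilpotent, so it has a nonzero Hecke-translated Whittaker coefficient, contradicting the vanishing obtained above.
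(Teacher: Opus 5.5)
Your proposal is the paper's argument. It uses the same two families of generators: $\on{Eis}_{P,!}$ of universally nilpotent sheaves on the Levi, and Hecke translates of the spectrally projected Poincaré sheaf. It uses the same adjunctions, reducing their inner Homs to $\on{CT}_{P,*}$ on $\on{Bun}_M$ (by induction on semisimple rank) and to $\on{Whit}_{X/S}$ (Proposition \ref{Localconstancyofwhittaker}). And it relies on the same fiberwise input from \cite{faergeman2022non}. Your analysis of the right orthogonal of $\cC$ is just an unwinding of the paper's generation step. Making the inductive essential surjectivity for the Levi explicit is a useful precision that the paper leaves implicit.

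The one step that does not go through as written is in Step 2. Proposition \ref{nilpontece of the spectral projector} is proved only for $S$ a complex manifold. It passes through the ind-constructible sheaf context (Proposition \ref{non-char ind-con projector} and Lemma \ref{criterion for universal nilpotence ind-version}), which is not available over a real analytic base. So you cannot apply it directly to the contractible real analytic $S$ of the theorem.

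The paper handles this in two steps:
\begin{enumerate}
\item It proves that $R_S\star\on{Poinc}_!$ has universal nilpotent singular support in the universal case over the complex-analytic moduli of curves. It then pulls back along $S\to(\fM_g)_{\bR}$, locally on $S$. This uses that $*$-pullback preserves $\on{Nilp}_u$ by non-characteristicness, and is compatible with the formation of $\on{Poinc}_!$ and of the projector.
\item Only afterwards does it apply the Hecke functor $V\star-$ at the section $x$. This preserves $\on{Nilp}_u$ and commutes with the projector, so it recovers your generators $P_S(V\star\on{Poinc}_!)$.
\end{enumerate}
With that reduction inserted, your argument goes through.
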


To prove Theorem \ref{localconstancyofautomorphic} we first proving the following result. The proof of this result will be an induction on the semi-simple rank of $G$. Let $\Psi:\Bun_G(X/S)\to S$ denote the structure map.

\begin{thm}\label{localconstancyofhom}
    Let $S$ be a real analytic manifold and $X/S$ a smooth proper family of curves. For any pair of automorphic sheaves $\mathcal{F},\mathcal{G}\in \mathrm{Sh}_{\mathrm{Nilp}_u}(\Bun_G(X/S))$, the mapping sheaf $\Psi_*\mathcal{H}om(\mathcal{F},\mathcal{G})$ is a locally constant sheaf on $S$. Furthermore, for any point $s\in S$, there is an identification of the stalk
    \[\Psi_*\mathcal{H}om(\mathcal{F},\mathcal{G})_s\simeq \mathrm{Map}(i_s^*\mathcal{F},i_s^*\mathcal{G}).\]
\end{thm}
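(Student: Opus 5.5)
We argue by induction on the semisimple rank of $G$, following the strategy of \S\ref{S:1.3}. The statement is local on $S$, so we may assume $S$ is contractible and that $X\to S$ admits a section $x$ and a square root $\Omega^{1/2}$. Fix $\cG$ and let $\mathcal{C}\subset \on{Sh}_{\on{Nilp}_u}(\Bun_G(X/S))$ be the full subcategory of those $\cF$ for which $\Psi_*\mathcal{H}om(\cF,\cG)$ is locally constant with the stated stalks. Since $\Psi_*\mathcal{H}om(-,\cG)$ turns colimits into limits, and $\on{Map}(i_s^*(-),i_s^*\cG)$ does too, $\mathcal{C}$ is closed under finite colimits and retracts. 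Here we use that the stalk of $\Psi_*\mathcal{H}om$ is computed by restricting to a small ball, which is allowed because $\cF,\cG$ are non-characteristic over $S$. For infinite colimits we need more care; see below. It therefore suffices to exhibit enough objects of $\mathcal{C}$.

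\emph{Eisenstein part.} Let $\cF=\on{Eis}_{P,!}(\cF_M)$ with $P$ proper and $\cF_M\in\on{Sh}_{\on{Nilp}_{u,M}}(\Bun_M(X/S))$. Adjunction, performed relative to $S$ by base change for $!$-pushforward and $*$-pullback, gives
\[
\Psi_*\mathcal{H}om(\on{Eis}_{P,!}\cF_M,\cG)\simeq \Psi_{M,*}\mathcal{H}om(\cF_M,\on{CT}_{P,*}\cG).
\]
By Remark \ref{r:CT*}, $\on{CT}_{P,*}\cG$ has universal nilpotent singular support. The inductive hypothesis for $M$ then shows the right-hand side is locally constant with the correct stalks, since $\on{CT}_{P,*}$ commutes with $i_s^*$ by base change and non-characteristicity. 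By Proposition \ref{eisnilpotence}, all such Eisenstein objects lie in $\on{Sh}_{\on{Nilp}_u}$.

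\emph{Cuspidal/Whittaker part.} Set $\on{Poinc}_{\on{Nilp},S}=P_S(\on{Poinc}_S)$ together with its Hecke translates $\sH_{V,x}\star\on{Poinc}_{\on{Nilp},S}$. The Poincaré sheaf is constructible and non-characteristic (Proposition \ref{poincare non-char}), so Proposition \ref{nilpontece of the spectral projector} places these objects in $\on{Sh}_{\on{Nilp}_u}$. Work first where $P_S$ genuinely gives a left adjoint relative to the non-characteristic subcategory, in the sense of Corollary \ref{creating nilpotent ind-constructible sheaves}. Then
\[
\Psi_*\mathcal{H}om(P_S(\on{Poinc}_S),\cG)\simeq \Psi_*\mathcal{H}om(\on{Poinc}_S,\cG)=\on{Whit}_{X/S}(\cG),
\]
and the right-hand side is a local system with stalks $\on{Whit}_{X_s}(i_s^*\cG)$ by Proposition \ref{Localconstancyofwhittaker}. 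Comparing these with $\on{Map}(i_s^*P_S\on{Poinc}_S,i_s^*\cG)$ uses the pointwise projector (Theorem \ref{kernel over a point}) together with the equivalence $i_s^*R_S\simeq R_s$. The same argument applies to Hecke translates, because Hecke functors at $x$ have adjoints given by the dual representation.

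\emph{Generation and conclusion.} Pointwise, $\on{Sh}_{\on{Nilp}}(\Bun_G(X_s))$ is generated by the images of these two classes. Indeed, an object right-orthogonal to all Eisenstein objects is cuspidal, and one also orthogonal to all Whittaker translates vanishes by \cite{faergeman2022non}. This pointwise generation statement gives full faithfulness of $i_s^*$ on the subcategory generated by our objects. To extend to all $\cF$, argue as follows. Let $\cF'$ be the colimit-closure approximation of $\cF$ built from the generators, and let $K$ be the cofiber of $\cF'\to\cF$. Each $i_s^*K$ is right-orthogonal to the generators, hence $i_s^*K=0$ for every $s$, hence $K=0$. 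Thus every $\cF$ is a colimit of generators. Colimits are then handled by computing $\Psi_*\mathcal{H}om$ against compact generators.

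The main obstacle is this last step. Infinite colimits do not commute with $\Psi_*\mathcal{H}om(-,\cG)$ in a way that preserves local constancy of stalks. We will try to prove that the generators are compact in $\on{Sh}_{\on{Nilp}_u}$, reducing to quasi-compact opens via Theorem \ref{t:relagkrrv}, so that each $\cF$ is a filtered colimit and $\Psi_*\mathcal{H}om$ becomes a limit of local systems with the right stalks. We also need to check that stalks commute with such limits, using non-characteristicity.
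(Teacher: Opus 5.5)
Up to the last step your argument is the paper's. It inducts on semisimple rank, treats the Eisenstein objects via $\mathrm{Eis}_{P,!}\dashv\mathrm{CT}_{P,*}$ and Remark~\ref{r:CT*}, and treats the projected Poincar\'e sheaf and its Hecke translates via adjunction and Proposition~\ref{Localconstancyofwhittaker}. It takes pointwise generation from \cite{faergeman2022non} and proves generation of $\mathrm{Sh}_{\mathrm{Nilp}_u}(\Bun_G(X/S))$ by the cofiber argument. In that cofiber argument, you should say why $i_s^*K$ is right-orthogonal to the $i_s^*\mathcal{F}_i$. It is your Hom-identification for generators applied with $\mathcal{G}=K$, together with contractibility of $S$: $\mathrm{Hom}(\mathcal{F}_i[n],K)\simeq\Gamma(S,\Psi_*\mathcal{H}om(\mathcal{F}_i[n],K))\simeq\mathrm{Map}(i_s^*\mathcal{F}_i[n],i_s^*K)$.

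The gap is that you leave the passage from generators to arbitrary $\mathcal{F}$ open as the ``main obstacle'' and propose to close it by proving the generators are compact. The obstacle is misdiagnosed, and compactness is not needed. Contravariant Hom turns arbitrary colimits into limits. So $\mathcal{F}\simeq\colim_j\mathcal{F}_j$ gives $\Psi_*\mathcal{H}om(\mathcal{F},\mathcal{G})\simeq\lim_j\Psi_*\mathcal{H}om(\mathcal{F}_j,\mathcal{G})$ with no hypothesis.

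The only question is whether local constancy and the stalk identification survive the limit. On the contractible $S$ you have already reduced to, this is formal:
\begin{enumerate}
\item The constant-sheaf functor $\mathrm{Mod}_e\to\mathrm{Sh}(S)$ is fully faithful with essential image the local systems.
\item It preserves limits: the sections of a limit over any contractible open are the limit of the values, and restriction maps between contractible opens are equivalences.
\item The stalk of a constant sheaf on contractible $S$ equals its global sections.
\end{enumerate}
Hence the limit is a local system with stalk $\lim_j\mathrm{Map}(i_s^*\mathcal{F}_j,i_s^*\mathcal{G})\simeq\mathrm{Map}(i_s^*\mathcal{F},i_s^*\mathcal{G})$, the last step because $i_s^*$ preserves colimits. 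This is how the paper concludes.

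In particular, your category $\mathcal{C}$ is closed under all colimits, and non-characteristicity plays no role in this step. You can drop the compactness detour. It would also have forced you to shrink your generating set, since $\mathrm{Eis}_{P,!}$ of an infinite direct sum of nonzero objects is not compact.
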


\begin{proof}

Since the statement is local on $S$, we may assume that $X/S$ admits a section $x:S\to X$ and that $S$.

\step For the base case, suppose $G$ has semi-simple rank $0$. In this case $G=T$ is a torus and hence $\mathrm{Nilp}_u$ is simply the zero-section of $T^*\Bun_T(X/S)$. Thus $\mathcal{H}om(\mathcal{F},\mathcal{G})\in \mathrm{Sh}_{\mathbf{0}}(\Bun_T(X/S))$. To see that the pushforward of $\mathcal{H}om(\mathcal{F},\mathcal{G})$ to $S$ is a locally constant sheaf, let $\Bun_T(X/S)^\mathrm{rig}$ denote moduli stack of $T$-bundles on $X/S$ together with a trivialization along the section $x:S\to X$. The stack $\Bun_T(X/S)^\mathrm{rig}$ is representable by a smooth proper family of varieties over $S$. The $m$-fold intersection of $\Bun_T(X/S)$ over $\Bun_T(X/S)^\mathrm{rig}$ is given by $\Bun_T(X/S)^\mathrm{rig}\times T^m$. Since the Jacobian of $X/S$ is smooth and proper over $S$, $\Bun_T(X/S)^\mathrm{rig}\times T^m$ is topologically a locally trivial fibration over $S$ for each $m$. Since $\Bun_T(X/S)^\mathrm{rig}\to \Bun_T(X/S)$ is smooth, it follows by descent that the pushforward of a locally constant sheaf along $\Psi:\Bun_T(X/S)\to S$ is a locally constant sheaf on $S$. The identification of stalks follows since the inclusion of the fiber above a point $s\in S$ is an equivalence on homotopy types.

\step Let $G$ be a reductive group of semi-simple rank $r>0$ and suppose the statement is known for all reductive groups of semi-simple rank strictly less than $r$. We start by using the induction hypothesis to find a collection of objects $\mathcal{F}_i$ of $\mathrm{Sh}_{\mathrm{Nilp}_u}(\on{Bun}_G(X/S))$, for $i\in I$, such that:

\begin{enumerate}
    \item For each $s\in S$, the sheaves $i_s^*\mathcal{F}_i\in \mathrm{Sh}_{\mathrm{Nilp}}(\on{Bun}_G(X_s))$ generate the category under colimits.

    \item For any $\mathcal{G}\in \mathrm{Sh}_{\mathrm{Nilp}_u}(\Bun_G(X/S))$, the sheaf $\Psi_*\mathcal{H}om(\mathcal{F}_i,\mathcal{G})$ is a local system on $S$ with an identification $\Psi_*\mathcal{H}om(\mathcal{F}_i,\mathcal{G})_s\simeq \mathrm{Hom}_{\mathrm{Sh}_{\mathrm{Nilp}}(\on{Bun}_G(X_s))}(i_s^*\mathcal{F}_i,i_s^*\mathcal{G})$.
\end{enumerate}

The collection of sheaves $\mathcal{F}_i$ will consist of two type of sheaves. First the Eisenstein type; that is, the sheaves of the form $\mathrm{Eis}_{P,!}(\mathcal{F})$, where $P$ is a proper parabolic subgroup of $G$ with Levi quotient $M$, and $\mathcal{F}\in \mathrm{Sh}_{\mathrm{Nilp}_u}(\Bun_M(X/S))$. By Proposition \ref{eisnilpotence}, we have $\mathrm{Eis}_{P,!}\mathcal{F}\in \mathrm{Sh}_{\mathrm{Nilp}_u}(\Bun_G(X/S))$.

The second type is the sheaves of the form $V\star \mathrm{Poinc}_{!,\mathrm{Nilp}_u}$, where $\mathrm{Poinc}_{!,\mathrm{Nilp}_u}=R_S\star \mathrm{Poinc}_!$ denotes the Beilinson spectral projector applied to the Poincaré sheaf, and $V\in \operatorname{Rep} \check{G}$ is a dualizable object which acts as a Hecke operator along $x:S\to X$.

We claim that $\mathrm{Poinc}_{!,\mathrm{Nilp}_u}$ and hence $V\star \mathrm{Poinc}_{!,\mathrm{Nilp}_u}$ has universal nilpotent singular support. It suffices to prove this in the universal case $S=\fM_g$. In this case, since the Poincaré sheaf itself is ind-constructible and non-characteristic, see Proposition \ref{poincare non-char}, it follows by Propositions \ref{nilpontece of the spectral projector} and \ref{noncharHecke} that $\mathrm{Poinc}_{!,\mathrm{Nilp}_u}$ lies in $\mathrm{Sh}_{\mathrm{Nilp}_u}(\Bun_G(X/\fM_g))$.

By \cite[Thm A]{faergeman2022non}, the images of these sheaves in $\mathrm{Sh}_{\mathrm{Nilp}}(\on{Bun}_G(X_s))$ generate the target under colimits.

Thus we have to show that for any $\mathcal{G}\in \mathrm{Sh}_{\mathrm{Nilp}_u}(\Bun_G(X/S))$, the sheaves
\[\Psi_*\mathcal{H}om(\mathrm{Eis}_{P,!}\mathcal{F},\mathcal{G})\]
and
\[\Psi_*\mathcal{H}om(V\star \mathrm{Poinc}_{!,\mathrm{Nilp}_u},\mathcal{G})\]
are local systems $S$ and identify their stalks. 

To see locally constancy in the first case, we use the induction hypothesis and note that 
\[ \Psi_*\mathcal{H}om(\mathrm{Eis}_{P,!}\mathcal{F},\mathcal{G})\simeq \Psi_*\mathcal{H}om(\mathcal{F},\mathrm{CT}_{P,*}(\mathcal{G}))\]
Since $\on{CT}_{P,*}$ preserves universal nilpotent singular support, see Remark \ref{r:CT*} and $M$ has strictly smaller semi-simple rank than $G$, it follows by the induction hypothesis that $\Psi_*\mathcal{H}om(\mathcal{F},\mathrm{CT}_*^P( \mathcal{G}))$ is locally constant on $S$. The identification of the stalk at $s$ also follows from the induction hypothesis.

In the second case we note that
\[\Psi_*\mathcal{H}om(V\star \mathrm{Poinc}_{!,\mathrm{Nilp}_u},\mathcal{G})\simeq \Psi_*\mathcal{H}om(\mathrm{Poinc}_!,V^\vee \star \mathcal{G})\simeq \mathrm{Whit}(V^\vee \star \mathcal{G}).\]
In this case the local constancy follows from the local constancy of the Whittaker functional, Proposition \ref{Localconstancyofwhittaker}. The identification of the stalk at $s$ follows from the same proposition.

\step 
Note that the sheaves $\mathcal{F}_i$, for $i\in I$, generate $\mathrm{Sh}_{\mathrm{Nilp}_u}(\Bun_G(X/S))$ under colimits. To see this it suffices to show that for any $\mathcal{G}\in \mathrm{Sh}_{\mathrm{Nilp}_u}(\Bun_G(X/S))$ there exist an $i\in I$ and a non-zero map $\mathcal{F}_i[n_i]\to \mathcal{G}$ for some $n_i\in \mathbb{Z}$. This follows since the contractibility of $S$ and property (2) above imply that $\mathrm{Hom}(\mathcal{F}_i,\mathcal{G})\simeq \mathrm{Hom}(\mathcal{F}_{i,s},\mathcal{G}_s)$ and $\mathcal{F}_{i,s}$, for $i\in I$, generate $\mathrm{Sh}_{\mathrm{Nilp}}(\on{Bun}_G(X_s))$.

Consider any $\mathcal{F},\mathcal{G}\in \mathrm{Sh}_{\mathrm{Nilp}_u}(\Bun_G(X/S))$. Since $\mathcal{F}_i$, $i\in I$, generate $\mathrm{Sh}_{\mathrm{Nilp}_u}(\Bun_G(X/S))$, we can write $\mathcal{F}\simeq \colim \mathcal{F}_i$ for some index category. Thus 
\[\Psi_*\mathcal{H}om(\mathcal{F},\mathcal{G})\simeq \lim \Psi_*\mathcal{H}om(\mathcal{F}_i,\mathcal{G}).\]
Since $\Psi_*\mathcal{H}om(\mathcal{F}_i,\mathcal{G})$ is a locally constant sheaf on $S$ for each $i$, it follows that $\Psi_*\mathcal{H}om(\mathcal{F},\mathcal{G})$ is a locally constant sheaf on $S$. Since $S$ is contractible, for any locally constant sheaf, the stalk at $s$ is equivalent to the global sections over $S$ and hence the stalk of the limit is the limit of the stalks. Thus, the identification of the stalk of $\Psi_*\mathcal{H}om(\mathcal{F},\mathcal{G})$ follows from the identification for each $\Psi_*\mathcal{H}om(\mathcal{F}_i,\mathcal{G})$.

\end{proof}

Given Proposition \ref{localconstancyofhom}, we conclude Theorem \ref{localconstancyofautomorphic} as follows:

\begin{proof}[Proof of Theorem \ref{localconstancyofautomorphic}]
    Since $S$ is contractible, it follows from Theorem \ref{localconstancyofhom} that the global sections of $\Psi_*\mathcal{H}om(\mathcal{F},\mathcal{G})$ is equivalent to the stalk at $s$. Hence from the identification of the stalks of $\Psi_*\mathcal{H}om(\mathcal{F},\mathcal{G})$ we conclude that $i_s^*$ is fully faithful. 

    That $i^*_s$ is essentially surjective then follows, since the images of the sheaves $\mathcal{F}_i$ constructed in the proof of Theorem \ref{localconstancyofhom} under $i_s^*$ generate the target under colimits.
\end{proof}

\section{Additional results}

\subsection{Constancy of the Kostant coefficient}

In this section we prove two additional independent results

First, we prove that the coefficient of the irreducible components of $\on{Nilp}_u$ containing an everywhere regular nilpotent Higgs field on each fiber is locally constant along $S$ in the characteristic cycle of an automorphic sheaf.

\begin{defin}
   Let $\mathrm{Nilp}^\mathrm{reg}_{u}$ denote the open substack of $\mathrm{Nilp}_{u}$ consisting of elements such that on each fiber $s\in S$, they define an everywhere regular nilpotent Higgs field. Note that this stack is smooth over $S$.
\end{defin}

The result is the following.

\begin{prop}\label{kostantconstant}
    Let $K$ be an irreducible component of $\mathrm{Nilp}_u$ and $s$ be a geometric point on $S$. If $K_s$ is an irreducible component of the fiber $i_s^{-1}K$ containing an everywhere regular nilpotent Higgs bundle, then for any automorphic sheaf $\mathcal{F}\in \mathrm{Sh}_\mathrm{Nilp_u}(\mathrm{Bun}_G(X/S))$
    \[\mathrm{mult}(K_s,CC(\mathcal{F}_s))=\mathrm{mult}(K,CC(\mathcal{F})).\]
\end{prop}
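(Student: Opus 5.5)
The plan is to interpret both multiplicities as ranks of microstalks and then invoke the local constancy of relative microstalks, Proposition \ref{localcontancyofmicro}. Let $\xi\in K_s$ be a generic smooth point lying in $\mathrm{Nilp}^{\mathrm{reg}}_u$, so that in the fiber $\mathrm{Nilp}_{\mathrm{rel},s}$ it avoids all other irreducible components. The multiplicity $\mathrm{mult}(K_s,CC(\mathcal{F}_s))$ is then (up to sign) the Euler characteristic of the microstalk of $\mathcal{F}_s$ at $\xi$. Likewise, $\mathrm{mult}(K,CC(\mathcal{F}))$ is the Euler characteristic of the microstalk of $\mathcal{F}$ at a generic point of $K$. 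Everything is local near the point $\xi$, and it suffices to treat a small contractible neighborhood of $s$ in $S$.

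\textbf{Step 1: a relative test function.} Since $\mathrm{Nilp}^{\mathrm{reg}}_u$ is smooth over $S$, and since by Theorem \ref{the nilpotent cone} $\mathrm{Nilp}_u$ maps bijectively to $\mathrm{Nilp}_{\mathrm{rel}}$, near $\xi$ the set $\mathrm{Nilp}_u$ is a smooth Lagrangian mapping isomorphically onto its image in $T^*_S$. I would pick a local section $\rho:S'\to K\cap \mathrm{Nilp}^{\mathrm{reg}}_u$ through $\xi$, after passing to a chart of $\Bun_G(X/S)$, for instance a smooth atlas of the kind used for $\Bun_G^{\Omega,nx}$. I would then construct a function $\varphi$ with $\varphi\circ\pi\circ\rho=0$ and $\Gamma_{d\varphi}\cap\Lambda=\Gamma_\rho$ near $\xi$, such that $\Gamma_{d\varphi_s}$ is transverse to $\Lambda_s$ for each fiber. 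This is a standard construction: take a fiberwise quadratic function whose graph is transverse to the smooth Lagrangian $\Lambda$, with linear term $\rho$. The lifting to the absolute cotangent bundle is done as in Lemma \ref{liftofsection} or in the proof of Proposition \ref{p:Extending test function}, using that $\Lambda$ is $S$-non-characteristic. Since $\Lambda$ is smooth over $S$ near $\xi$, the hypotheses preceding Proposition \ref{localcontancyofmicro} hold, after shrinking the ambient manifold so that the only component of $\Lambda$ met is $K$.

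\textbf{Step 2: local constancy and comparison.} Proposition \ref{localcontancyofmicro} shows that $\Phi_{\varphi,i}\mathcal{F}$ is a local system on $S'$ whose stalk at $s'$ is the shifted microstalk of $\mathcal{F}_{s'}$ at $\rho(s')$. Hence $\chi$ of these stalks is constant in $s'$. For $s'$ generic, $\rho(s')$ is a generic point of $K$. The remaining task is to compare the multiplicity of $K$ in $CC(\mathcal{F})$ with the multiplicity of $K_{s'}$ in $CC(\mathcal{F}_{s'})$ at generic points. I expect this comparison to be the main obstacle. I would use that restriction to the fiber is non-characteristic, so that $CC(i_{s'}^*\mathcal{F})=i_{s'}^{*}CC(\mathcal{F})$, the pullback of Lagrangian cycles (Kashiwara–Schapira, non-characteristic pullback of characteristic cycles). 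Since $K$ maps isomorphically to its image in the relative cotangent bundle near $\xi$, the image of $K$ is exactly the component $K_{s'}$ with multiplicity one there. Alternatively, one can compare microstalks directly: for the codirection $d\varphi$ versus $d\varphi_{s'}$, the shifted microstalk of $\mathcal{F}$ at $\rho(s')$ computed in the total space agrees, up to a shift by $\dim S$, with the fiberwise one. This is exactly the stalk identification in Proposition \ref{localcontancyofmicro}, obtained via the non-characteristic base change $g_s^*\Gamma_{\{\varphi\ge0\}}\mathcal{F}\simeq\Gamma_{\{\varphi_s\ge0\}}\mathcal{F}_s$.

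\textbf{Step 3: conclusion.} Combining the two steps, $\mathrm{mult}(K_s,CC(\mathcal{F}_s))$ equals the Euler characteristic of the stalk at $s$ of the local system $\Phi_{\varphi,i}\mathcal{F}$. This equals the value at generic $s'$, which equals $\mathrm{mult}(K,CC(\mathcal{F}))$; the sign conventions match on both sides because the shifts are uniform. One subtlety to check is that $K_s$ is genuinely a component of $i_s^{-1}K$ reached by such a section. This follows from the smoothness of $\mathrm{Nilp}_u^{\mathrm{reg}}\to S$, which makes the regular part of $K$ a submersion onto $S$, so that every regular point of $K_s$ lies on a local section.
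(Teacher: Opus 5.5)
Your argument is correct, but it reaches the conclusion by a detour and ends up depending on the same ingredient as the paper's proof.

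The comparison you flag in Step 2 as the ``main obstacle'' is the identity $CC(i_{s'}^*\mathcal{F})=i_{s'}^*CC(\mathcal{F})$ (Kashiwara--Schapira, Prop.~9.4.3). It holds because $i_{s'}$ is non-characteristic for the $S$-non-characteristic cone $\mathrm{Nilp}_u$. It holds at every point of $S$, in particular at $s$ itself. Once you apply it at $s$, the relative test function of Step 1 and the local constancy of relative microstalks (Proposition~\ref{localcontancyofmicro}) contribute nothing. There is no need to move to a generic $s'$, since your $\xi$ is already a smooth point of $\mathrm{Nilp}_u$ lying on $K$ alone.

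The paper's proof is exactly this shortcut. It writes $CC(\mathcal{F})=n_K[K]+\sum_i n_i[Z_i]$ and pulls back along $i_s$. It then shows no $Z_i$ contributes along $K_s$: otherwise $Z_i\cap\mathrm{Nilp}^{\mathrm{reg}}_u$ and $K\cap\mathrm{Nilp}^{\mathrm{reg}}_u$ would be intersecting irreducible components of the smooth space $\mathrm{Nilp}^{\mathrm{reg}}_u$. They would therefore be equal, forcing $Z_i=K$. Your observation that $\mathrm{Nilp}_u$ is smooth near $\xi$, so only $K$ passes through it, is the pointwise form of the same argument.

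Each route has one advantage. You explicitly check that $K_s$ occurs with multiplicity one in $i_s^*[K]$, using smoothness of $\mathrm{Nilp}^{\mathrm{reg}}_u\to S$ and injectivity onto the relative cotangent bundle; the paper leaves this implicit. The paper needs no charts, test functions, or Euler-characteristic sign bookkeeping.

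One correction. Your alternative claim, that the total-space microstalk at $\rho(s')$ matches the fiberwise one ``exactly by the stalk identification in Proposition~\ref{localcontancyofmicro}'', is not accurate. That proposition identifies stalks of the sheaf $\Phi_{\varphi,i}\mathcal{F}$ on $S$. Since $\Gamma_{d\varphi}$ meets $\Lambda$ along the $\dim S$-dimensional locus $\Gamma_\rho$, this sheaf is not a microstalk of $\mathcal{F}$ on the total space. Recovering one would require an extra Morse-type function on $S$. Your first option, the characteristic-cycle pullback, suffices on its own, so this does not affect correctness.
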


To prove the proposition we start by noting the following lemma, which allows us to control the characteristic cycle of the pullback along $i_s$.

\begin{lem}
    For any geometric point $s$ on $S$, the inclusion map $i_s:\mathrm{Bun}_G(X_s)\to \mathrm{Bun}_G(X/S)$ is non-characteristic relative to the universal nilpotent cone. 
\end{lem}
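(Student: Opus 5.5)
The plan is to unwind the definition of non-characteristicness for the closed embedding $i_s:\mathrm{Bun}_G(X_s)\to \mathrm{Bun}_G(X/S)$ and reduce it to the $S$-non-characteristicness of $\mathrm{Nilp}_u$ from Theorem \ref{the nilpotent cone}. Concretely, $i_s$ is non-characteristic for $\mathrm{Nilp}_u$ if the intersection of $\mathrm{Nilp}_u$ with the conormal bundle $T^*_{\mathrm{Bun}_G(X_s)}\mathrm{Bun}_G(X/S)$ is contained in the zero section.

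The key observation is that $\mathrm{Bun}_G(X_s)$ is the fiber of the smooth map $\Psi:\mathrm{Bun}_G(X/S)\to S$ over $s$. Its conormal bundle is therefore exactly the image of the codifferential $T^*_sS\times_S \mathrm{Bun}_G(X_s)\to T^*\mathrm{Bun}_G(X/S)$, i.e. the covectors pulled back from $S$. To verify this, I would first pass to a smooth atlas, where $\Psi$ becomes a submersion of real analytic manifolds. There it is the standard exact sequence $0\to T^*_sS\to T^*_x M\to T^*_xM_s\to 0$, whose kernel of restriction to the fiber is precisely the pullback from $S$. The definitions of conormal bundles and singular-support conditions for stacks are smooth-local (Appendix \ref{S:analyticbases}), so this suffices.

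Now suppose $\xi\in \mathrm{Nilp}_u$ lies in the conormal bundle of $\mathrm{Bun}_G(X_s)$. Then $\xi=d\Psi(\eta)$ for some $\eta\in T^*_sS$. By Theorem \ref{the nilpotent cone}, $\mathrm{Nilp}_u$ contains no nonzero covector pulled back from $S$, so $\xi=0$. Hence the intersection lies in the zero section, which is the claim.

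The only step needing care is the identification of the conormal to the fiber with the pulled-back covectors in the stacky setting, which rests on smoothness of $\mathrm{Bun}_G(X/S)\to S$. I expect it to be routine. As a consequence, for $\mathcal{F}$ with $SS(\mathcal{F})\subset\mathrm{Nilp}_u$, the standard non-characteristic pullback results give $SS(i_s^*\mathcal{F})\subset di_s(\mathrm{Nilp}_u)$ and the compatibility $CC(i_s^*\mathcal{F})=i_s^*CC(\mathcal{F})$ needed in Proposition \ref{kostantconstant}.
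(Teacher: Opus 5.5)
Your proposal is correct and is essentially the paper's argument. A covector in the kernel of $di_s$ is pulled back from $T^*_sS$, by smoothness of $\mathrm{Bun}_G(X/S)\to S$, and the $S$-non-characteristicness of $\mathrm{Nilp}_u$ from Theorem \ref{the nilpotent cone} then forces it to vanish; your short-exact-sequence justification on a smooth atlas just spells out the first step, which the paper asserts without comment.
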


\begin{proof}
    This follows from the non-characteristicness of the universal nilpotent cone relaitve to $S$. If a cotangent vector is in the kernel of the differential of $i_s$, then it is pulled back from $S$. Then the non-characteristicness of the univeral nilpotent cone implies that this vector must be $0$.
\end{proof}

It follows from the above lemma and \cite[Proposition 9.4.3]{kashiwara1990sheaves} that the characteristic cycle of the pullback of a sheaf along $i_s$ is the pullback of the characteristic cycle.

\begin{proof}[Proof of Proposition \ref{kostantconstant}.]
    Write
    \[CC(\mathcal{F})=n_K [K]+\sum_{i\in I} n_i[Z_i]\]
    where $Z_i$ varies over the set of irreducible components of $\mathrm{Nilp}_u$ different from $K$. By the above:
    \[CC(i_s^*\mathcal{F})=n_K [i_s^{-1}K]+\sum_{i\in I} n_i[i_s^{-1} Z_i].\]
    To prove the proposition it suffices to prove that $i_s^{-1} Z_i$ does not contain $K_s$ for any $i\in I$. 
    
    Let $Z$ be an irreducible component of $\mathrm{Nilp}_u$ such that $i_s^{-1}Z$ contains $K_s$. By assumption $K_s\cap \mathrm{Nilp}^\mathrm{reg}_{X_s}$ is non-empty. Hence $Z\cap \mathrm{Nilp}^\mathrm{reg}_{u}$ is an irreducible component of $\mathrm{Nilp}^\mathrm{reg}_{u}$. Similarly $K\cap \mathrm{Nilp}^\mathrm{reg}_{u}$ is an irreducible component of $\mathrm{Nilp}^\mathrm{reg}_{u}$. Since $\mathrm{Nilp}^\mathrm{reg}_{X/S}$ is smooth and these two irreducible components have a non-trivial intersection, the components must be equal. Hence
    \[Z\cap\mathrm{Nilp}^\mathrm{reg}_{u}=K\cap \mathrm{Nilp}^\mathrm{reg}_{u}.\]
    Since $\mathrm{Nilp}^\mathrm{reg}_{u}$ is open in $\mathrm{Nilp}_{u}$, this set is a non-empty open subset of both $Z$ and $K$. Since $Z$ and $K$ are irreducible, it follows that $Z=K$. 
\end{proof}

\subsection{Wobbly bundles are pure of codimension 1}\label{s:codim1} In this subsection, we prove a claim of Drinfeld on the codimension of the complement of very stable bundles in the moduli stack of $G$-bundles.

\subsubsection{} Fix a smooth projective curve $X$ of genus $g\geq 2$. Denote by 
\[
\on{Bun}_G^{\on{vs}}(X)\subset \on{Bun}_G(X)
\]

\noindent the open subset consisting of very stable $G$-bundles; i.e., bundles $\sP_G$ satisfying that any nilpotent Higgs field $\phi\in \Gamma(X,\fg^*_{\sP_G}\otimes\Omega^1_X)$ is zero. Alternatively, the very stable locus is the complement of the image of the projection
\[
\on{Nilp}_X\backslash 0_{\on{Bun}_G(X)}\to \on{Bun}_G(X).
\]

\noindent Since $\on{Nilp}_X\subset T^*(\on{Bun}_G(X))$ is a conic closed Lagrangian, it follows that $\on{Bun}_G^{\on{vs}}(X)$ is a dense open subset. Moreover, it is known (for $g\geq 2$) that very stable bundles are stable.

\subsubsection{} We let $\on{Wob}(X):=\on{Bun}_G(X)\backslash \on{Bun}_G(X)^{\on{vs}}$ denote the complement of the locus of very stable bundles.

In \cite[Remarque 3.6 (ii)]{laumon1988analogue} (which concerns $G=GL_n$), Laumon states that Drinfeld announced in a letter to Deligne that $\on{Wob}(X)$ is of pure codimension one. Since then, a proof has been slow to appear. For $G=GL_2$, a proof appeared in \cite{pal2021wobbly}, and for $G=GL_n$, a proof appeared in \cite{pal2022conjecture}. Both proofs rely on explicit algebro-geometric methods.

\subsubsection{} In this subsection, we give a short proof of this assertion combining non-trivial results of Beilinson-Drinfeld \cite{beilinson1991quantization} and \cite{faergeman2022non}.

\begin{thm}\label{t:pureofcodim1}

$\on{Wob}(X)$ is of pure codimension one.
    
\end{thm}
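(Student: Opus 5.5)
We use the remark in the introduction: take the Beilinson--Drinfeld sheaf on $\on{Bun}_G(X)$ whose singular support is all of $\on{Nilp}_X$, and read off the codimension of $\on{Wob}(X)$ from where it fails to be a local system.

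\textbf{Step 1: the sheaf.} By \cite{beilinson1991quantization}, quantization of the Hitchin system produces, for an oper $\chi$, a D-module $\Delta_\chi$ on $\on{Bun}_G(X)$. It is the quotient of the twisted differential operators by the ideal generated by the quantum Hitchin Hamiltonians specialized at $\chi$. Its characteristic variety is contained in $h^{-1}(0)=\on{Nilp}_X$, and equals it, since the symbols of the Hamiltonians generate the ideal of $\on{Nilp}_X$ (Hitchin's complete integrability, plus flatness of $h$). Choose $\chi$ with regular singularities, or pass to a Betti realization through Riemann--Hilbert, to get a sheaf $\cF\in\on{Sh}_{\on{Nilp}}(\on{Bun}_G(X))$ with $\on{SS}(\cF)=\on{Nilp}_X$. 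By \cite{faergeman2022non}, $\cF$ lies in the heart of the perverse t-structure and is irreducible on each connected component. Hence on a component $\on{Bun}_G^\alpha$ we have $\cF=\on{IC}(Y,\sL)$ for some irreducible locally closed $Y$ and irreducible local system $\sL$. Since $\on{SS}(\cF)$ contains the zero section, $Y$ is open dense, so $\cF|_{\on{Bun}^{\alpha}_G}=\on{IC}(U,\sL)$ with $U$ open.

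\textbf{Step 2: identify the non-lisse locus with $\on{Wob}$.} Over $\on{Bun}^{\on{vs}}_G$ the singular support meets only the zero section, so $\cF$ is lisse there. Conversely, at any $\sP\in\on{Wob}(X)$ there is a nonzero covector in $\on{Nilp}_X=\on{SS}(\cF)$, so $\cF$ is not lisse at $\sP$. Thus $\on{Wob}(X)\cap\on{Bun}_G^\alpha$ is exactly the complement of the maximal open set on which $\cF$ is lisse, and we may take $U=\on{Bun}^{\on{vs},\alpha}_G$.

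\textbf{Step 3: purity.} Suppose some irreducible component $W$ of $\on{Wob}(X)$ has codimension $\ge 2$. Choose a generic point $w$ of $W$ lying on no other component of $\on{Wob}$, and a neighborhood $V$ of $w$ with $V\cap\on{Wob}=V\cap W$. Then $\on{Bun}_G$ is smooth at $w$ and $V\setminus W$ has complement of codimension $\ge2$, so the local fundamental group is unchanged: $\pi_1$ of the link of $W$ near $w$ surjects in the right way. Hence $\sL|_{V\setminus W}$ extends to a local system $\widetilde{\sL}$ on $V$, by the Zariski--Nagata / Grauert--Remmert purity statement for local systems in codimension $\ge2$. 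The intermediate extension of a local system across codimension $\ge 2$ on a smooth space is that extended local system, so $\on{IC}(\sL)|_V=\widetilde{\sL}[\dim]$. This is lisse at $w$, contradicting Step 2. Therefore every component of $\on{Wob}(X)$ has codimension exactly one; codimension $0$ is excluded because $\on{Bun}^{\on{vs}}_G$ is dense.

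\textbf{Main obstacle.} The extension step needs $\on{Bun}_G$ smooth near $w$, which holds because the stack is smooth. It also needs local simple connectivity of the link, which holds for smooth germs minus a codimension $\ge 2$ analytic subset. The stack issue is handled by pulling back along a smooth atlas. The real work is Step 1: making sure the Betti-realized sheaf genuinely has full singular support $\on{Nilp}_X$ on every component, rather than only characteristic variety inside it. This requires the multiplicity of each component in $CC$ to be nonzero, which follows from $\Delta_\chi$ being locally free over the Hitchin base $\cO$-quotient. It also requires that \cite{faergeman2022non}'s irreducibility applies to it.
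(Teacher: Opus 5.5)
Your argument is the paper's: take the Beilinson--Drinfeld D-module with singular support exactly $\on{Nilp}_X$, use \cite{faergeman2022non} to write it as $j_{!*}$ of an irreducible flat bundle on the very stable locus, and get a contradiction at a generic point of a codimension $\geq 2$ component of $\on{Wob}(X)$ by extending that bundle across the component. The paper extends over $\on{Bun}_G(X)\setminus\cW'$ rather than a small neighborhood $V$, which is only cosmetic, and the one correction is in Step 1: regular singularity of $\Delta_\chi$ is not something you can choose (it is a theorem of \cite{arinkin2020stack}), and you need no Riemann--Hilbert at all, since the whole argument runs in holonomic D-modules with local systems replaced by vector bundles with flat connection, which is how the paper does it.
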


\noindent In fact, the proof is an immediate consequence of the existence (due to Beilinson-Drinfeld) of a D-module on $\on{Bun}_G(X)$ whose singular support is the entire global nilpotent cone.\\

\emph{Proof of Theorem \ref{t:pureofcodim1}}

By \cite{beilinson1991quantization}, there exists a D-module $\cF\in\on{D-mod}(\on{Bun}_G(X))$ with $\on{SS}(\cF)=\on{Nilp}_X$. Indeed, this is the Hecke eigensheaf associated to a $\check{G}$-local system with an oper structure, see \cite[Prop. 5.1.2]{beilinson1991quantization} (see also \cite[Thm. 6.2.3]{GLC1}\footnote{A priori, we do not know that $\cF$ has regular singularities, so we cannot immediately apply the Riemann-Hilbert correspondence. This fact was proven in \cite{arinkin2020stack}, but there is no need to apply it here.} In fact, $\cF$ is in the heart of the t-structure of $\on{D-mod}(\on{Bun}_G(X))$ and irreducible on each connected component of $\on{Bun}_G(X)$. Indeed, this follows from \cite[Thm. 11.1.41 + Remark 11.1.2.3]{faergeman2022non}.

Assume for contradiction that there exists an irreducible component $\cW\subset \on{Wob}(X)$ of codimension $>1$ in $\on{Bun}_G(X)$. Let $\cW'\subset \on{Bun}_G(X)$ be the union of the irreducible components not equal to $\cW$. Let $(\sP_G,\phi)\in \on{Nilp}_X$ be a nilpotent Higgs bundle with $\phi\neq 0$ and with $\sP_G\in \cW^{\on{sm}}(\bC)$ a point in the smooth locus of $\cW$ (such a Higgs bundle exists since $\cW\subset\on{Wob}(X)$).

To get a contradiction, it suffices to show that $(\sP_G,\phi)\notin \on{SS}(\cF)$. Since $j: \on{Bun}_G(X)^{\on{vs}}\into \on{Bun}_G(X)$ is open dense, it follows that
\[
\cF\simeq j_{!*}(\sL)
\]

\noindent for some irreducible vector bundle $\sL$ on $\on{Bun}_G(X)^{\on{vs}}$ with a integrable connection. Then $\sL$ extends across the embedding
\[
\on{Bun}_G(X)^{\on{vs}}\subset \on{Bun}_G(X)\backslash \cW',
\]

\noindent since the complement of the left-hand side has codimension $>1$. It follows that $(\sP_G,\phi)\notin \on{SS}(\cF)$.

\qed

\begin{rem}\label{r:generalization}
It follows from our main theorem that the natural generalization of Theorem \ref{t:pureofcodim1} holds for $\on{Bun}_G(X/S)$ for a family of smooth projective curves $X\to S$. Indeed, it would suffice to prove this for $S$ contractible. Then, if one knew the existence of a sheaf on $\on{Bun}_G(X/S)$ with singular support equal to $\on{Nilp}_u$, one could run the same argument as above.

By the above argument (and using that $\cF$ has regular singularities cf. \cite{arinkin2020stack}), one finds a sheaf on $\on{Bun}_G(X_s)$ with full nilpotent singular support. By Theorem \ref{localconstancyofautomorphic}, we may extend this to a sheaf on $\on{Bun}_G(X/S)$ with full universal nilpotent singular support.
\end{rem}

\appendix

\section{Real analytic bases}\label{S:analyticbases}

Nadler and Yun's conjecture is formulated in the setting of a holomorphic family of curves over a real analytic base, in this section we review the basic definitions over real analytic bases. The reader only interested in the case where $S$ is holomorphic can safely ignore this section and assume $S$ holomorphic throughout the paper.

\subsection{Moduli problems over real bases} Let $\mathrm{An}_\mathbb{R}$ denote the category of real analytic spaces. A real analytic stack will refer to a sheaf of anima on $\mathrm{An}_\mathbb{R}$ with the etale topology, we denote this category by $\mathrm{Stk}_\mathbb{R}$. Similarly we define $\mathrm{An}_\mathbb{C}$ to be the category of complex analytic spaces and $\mathrm{Stk}_\mathbb{C}$ the category of complex analytic stacks. Sending a complex analytic space to its underlying real analytic spaces defines a functor $(-)_\mathbb{R}:\mathrm{Stk}_\mathbb{C}\to \mathrm{Stk}_\mathbb{R}$.

\begin{defin}
    Let $S$ be a real analytic stack. A family of proper holomorphic curves over $S$ is a map of real analytic stacks $S\to (\mathcal{M}_g)_\mathbb{R}$. We denote by $X\to S$ the pullback of the underlying real analytic stack of the universal genus g curve $\mathcal{C}_g$.
\end{defin}

\subsubsection{}\label{ss:B.101} Given a family of smooth proper holomorphic curves over $S$, we define the real analytic stack $\mathrm{Bun}_G(X/S)$ as the pullback
\[\Bun_G(X/S)\simeq \Bun_G(\mathcal{C}_g/\mathcal{M}_g)_\mathbb{R}\times_{(\mathcal{M}_g)_\mathbb{R}} S.\]
When $S$ is a real analytic manifold, this is a smooth Artin stack.
Note that the functor that sends a complex analytic stack to its category of Betti sheaves factors through the functor $(-)_\mathbb{R}$, thus when $S$ is complex analytic it does not matter if we instead consider the underlying real analytic stack.

Similarly, we define all the usual stacks living over $\mathrm{Bun}_G(X/S)$ by pulling back from $\mathrm{Bun}_G(\mathcal{C}_g/\mathcal{M}_g)$. For example we define the Hecke stack over $\Bun_G(X/S)\times_S X^{I/S}$ to be
\[\mathrm{Hecke}_I(X/S)\simeq \mathrm{Hecke}_I(\mathcal{C}_g/\mathcal{M}_g)_\mathbb{R}\times_{(\Bun_G \times_{\mathcal{M}_g} \mathcal{C}_g^{I/S})_\mathbb{R}} \Bun_G(X/S)\times_S X^{I/S}.\]

In general we will follow the convention that all moduli problems are considered in the holomorphic setting,  the real analytic variant is then obtained by pulling back the underlying real stack of the universal case along the given map $S\to \mathcal{M}_{g,\mathbb{R}}$. 

\section{Sheaf Contexts}

Two different sheaf contexts will appear in this paper. Our main theorem will be formulated in the context of all Betti sheaves, which we will simply denote by $\mathrm{Sh}(-)$. However, at a technical point of the proof we will need to make use of a family version of the ind-constructible sheaf context.

In the family setting for stacks over a fixed base $S$, the ind-constructible sheaf context is most naturally considered as a presheaf of categories on the category of contractible open subsets of $S$.

\subsection{Presheaves of presentable categories}

For a real analytic smooth Deligne-Mumford stack $S$, let $S_{et}$ denote the category of contactible real analytic manifolds $T$ with an etale morphism $T\to S$. By a presheaf of presentable categories on $S$, we mean a functor from $S_{et}^\mathrm{op}$ with values in the $(\infty,1)$-category $\mathrm{Pr}^L_e$. Denote this category by $\mathrm{PSh}(S_{et},\mathrm{Pr}^L_e)$.

For an $(\infty,1)$-category $\mathcal{C}$, we denote the category of all presheaves on $S_\mathrm{et}$ with values in $\mathcal{C}$ by $\mathrm{PSh}(S_\mathrm{et},\mathcal{C})$.

\begin{defin}
    For $\mathcal{C}$ an $(\infty,1)$-category, a presheaf $\mathcal{F}:S_{et}\to \mathcal{C}$ is said to be locally constant if, for all $T'\to T$ over $S$ with $T'$ and $T$ contractible analytic varieties, the restriction map
    \[\mathcal{F}(T)\to \mathcal{F}(T')\]
    is an equivalence. Let $\mathrm{PSh}_{lc}(S_\mathrm{et},\mathcal{C})$ denote the full subcategory spanned by locally constant presheaves.
\end{defin}

Suppose $\mathcal{C}$ admits limits. To comfort the reader, we point out that such a presheaf is in fact automatically a sheaf and that we did not loose information by only considering contractible test objects. The reader who does not need comfort can safely skip this as the mathematical content is irrelevant for the paper.

\begin{defin}
    Let $\mathrm{Sh}(S,\mathcal{C})$ denote the category of all sheaves on the etale site of $S$ with values on $\mathcal{C}$. Let $\mathrm{Sh}_{lc}(S,\mathcal{C})$ denote the subcategory of locally constant sheaves, ie sheaves which locally on $S$ are equivalent to a constant sheaf.
\end{defin}

 \begin{lem}
     The restriction functor from $\mathrm{Sh}_{lc}(S,\mathcal{C})$ to $\mathrm{PSh}(S_\mathrm{et},\mathcal{C})$ factors through $\mathrm{Sh}_{lc}(S,\mathcal{C})$ and defines an equivalence of categories
     \[\mathrm{Sh}_{lc}(S,\mathcal{C})\simeq \mathrm{PSh}_{lc}(S_\mathrm{et},\mathcal{C}).\]
 \end{lem}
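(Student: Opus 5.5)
The plan is first to show that the restriction of a sheaf in $\mathrm{Sh}_{lc}(S,\mathcal{C})$ to contractible étale test objects is a locally constant presheaf. Then I will show that restriction is fully faithful and essentially surjective onto $\mathrm{PSh}_{lc}(S_\mathrm{et},\mathcal{C})$.

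For the first point, let $\mathcal{F}$ be locally constant and $T'\to T$ a map of contractible objects over $S$. Pulling back to $T$, $\mathcal{F}|_T$ is a locally constant sheaf on a contractible manifold. By the standard equivalence between locally constant sheaves with values in $\mathcal{C}$ and functors out of the fundamental $\infty$-groupoid (Lurie, HA A.1), $\mathcal{F}|_T$ is constant with value $\mathcal{F}(T)$. The same holds on $T'$, and the restriction $\mathcal{F}(T)\to\mathcal{F}(T')$ is the map of global sections of constant sheaves along a map of contractible spaces, hence an equivalence.

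For full faithfulness, I use that contractible étale objects form a basis of the étale site of $S$: every étale $U\to S$ is covered by contractible charts, since $S$ is a smooth Deligne--Mumford stack and manifolds are locally contractible. Because $\mathcal{C}$ admits limits, sheaves on a site are determined by their restriction to a basis, via right Kan extension along $S_{et}\subset$ (étale site). Thus $\mathcal{F}(U)\simeq \lim_{T\to U,\,T\in S_{et}}\mathcal{F}(T)$, and maps of sheaves are determined by maps of the restricted presheaves.

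For essential surjectivity, given a locally constant presheaf $\mathcal{G}$ on $S_{et}$, I would define $\mathcal{F}$ as its right Kan extension to all étale $U$. Two things then need checking.
\begin{itemize}
\item $\mathcal{F}$ is a sheaf. This should follow because $\mathcal{G}$ satisfies descent for covers of contractible $T$ by contractible opens. A locally constant presheaf, viewed as a functor inverting all maps, factors through the localization of $S_{et}$ at all morphisms. That localization is the homotopy type (shape) of $S$, by the Seifert--van Kampen type theorem in Lurie HA A.3.1. So $\mathcal{G}$ corresponds to a functor $\Pi_\infty(S)\to\mathcal{C}$.
\item The associated sheaf is locally constant and restricts back to $\mathcal{G}$. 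This uses the monodromy equivalence $\mathrm{Sh}_{lc}(S,\mathcal{C})\simeq \mathrm{Fun}(\Pi_\infty(S),\mathcal{C})$.
\end{itemize}

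The cleanest route is therefore to identify both sides with $\mathrm{Fun}(\Pi_\infty(S),\mathcal{C})$ and check that the restriction functor is compatible with these identifications. The main obstacle is justifying that $S_{et}[\text{all maps}^{-1}]\simeq\Pi_\infty(S)$ for an étale site of contractible charts of a DM stack, as opposed to a plain manifold. I would first reduce to a manifold atlas $U\to S$, using étale descent on both sides along the Čech nerve of $U$. Then I would apply Lurie's theorem that for a basis of contractible opens closed under enough intersections, the localization computes the shape.
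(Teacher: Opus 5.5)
Your proposal is correct in outline. Its first two steps agree with the paper: restriction lands in locally constant presheaves because a locally constant sheaf on a contractible chart is constant, and restriction is fully faithful because contractible charts form a basis.

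Where you differ is essential surjectivity. The paper never leaves contractible charts. It checks directly that a locally constant presheaf already satisfies descent for every covering sieve $\mathcal{S}$ of a contractible $T$: a diagram $\mathcal{S}^{\mathrm{op}}\to\mathcal{C}$ that inverts all maps has a limit depending only on $|\mathcal{S}|$, and $|\mathcal{S}|\simeq\Pi_\infty(T)\simeq *$. The basis comparison then produces a sheaf on $S$, which is constant on every chart. You instead globalize, identifying both sides with $\mathrm{Fun}(\Pi_\infty(S),\mathcal{C})$ via $|S_{\mathrm{et}}|\simeq\Pi_\infty(S)$ and the monodromy equivalence on $S$.

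What your route buys:
\begin{itemize}
\item It yields the monodromy description of $\mathrm{PSh}_{lc}(S_{\mathrm{et}},\mathcal{C})$ as a bonus.
\item Once the two identifications are shown compatible, restriction is just precomposition along an equivalence, so the separate full-faithfulness step becomes unnecessary.
\end{itemize}
What it costs is exactly the step you flag: a Seifert--van Kampen statement for the \'etale category of a Deligne--Mumford stack, together with the compatibility check.

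The paper's local version needs the same kind of input only for sieves over a contractible manifold. So the stack issue and your \v{C}ech reduction to an atlas disappear entirely. Your own remark that $\mathcal{G}$ ``satisfies descent for covers of contractible $T$'' is precisely the paper's argument, and it can be completed directly.

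One caveat applies to either version. HA A.3.1 is stated for posets of open subsets, whereas $S_{\mathrm{et}}$ and its sieves consist of \'etale charts, not opens. A short cofinality argument bridges this: apply Quillen's Theorem A to the inclusion of contractible open subsets. This works because the images of local sections of an \'etale chart over small contractible opens form a basis of the chart, so each comma category is weakly contractible by A.3.1.
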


 \begin{proof}
We may assume that $S$ is connected. Let $\mathcal{F}$ be an object of $\mathrm{Sh}_{lc}(S,\mathcal{C})$ and suppose it is locally equivalent to the constant sheaf associated to an object $c\in \mathcal{C}$. Let $T\to S$ be an etale morphism of analytic varieties. By covering $T$ with by a cover on which $\mathcal{F}$ is constant, it follows from the sheaf condition for this cover that
\[\mathcal{F}(T)\simeq \lim_{\Pi_\infty T} F\]
where $\Pi_\infty T$ denotes the underlying homotopy type of $T$ and $F:\Pi_\infty T\to \mathcal{C}$ is some functor. In particular, if $T'\to T$ is a map between contractible varieties over $S$, then the restriction map
\[\mathcal{F}(T)\to \mathcal{F}(T')\]
is an equivalence in $\mathcal{C}$. Thus the restriction functor factors through $\mathrm{PSh}_{lc}(S_\mathrm{et}, \mathcal{C})$.

To see that the restriction functor induces an equivalence of categories, note that covers by contractible varieties is a basis for the analytic etale topology. Thus the restriction functor is an equivalence on sheaf categories with values in $\mathcal{C}$. It remains to see that the sheaf condition is automatically satisfied for a locally constant presheaf on the site of contractible test objects.

Let $\mathcal{F}$ be a locally constant presheaf on $S_\mathrm{et}$. Let $T_i\to T$ be a cover of $T$ by contractible analytic varieties and let $\mathcal{S}$ denote the sieve generated by this cover, then we want to prove that the map
\[\mathcal{F}(T)\to \lim_{(T'\to T)\in \mathcal{S}^\mathrm{op}} \mathcal{F}(T')\]
is an equivalence. By assumption all restriction maps in the diagram on the right are equivalences, thus the limit is determined by the geometric realization of the $\mathcal{S}^\mathrm{op}$. This realization is equivalent to the homotopy type of $T$ and hence contractible, thus the map above is an equivalence.
 \end{proof}

\subsubsection{All Betti sheaves} Given a morphism of stacks $\mathcal{X}\to S$, consider the presheaf of categories on $S$ defined by the assignment
\[(T\to S)\mapsto \mathrm{Sh}(\mathcal{X}\times_ST)\]
with $*$-pullback functoriality. This defines an object of $\mathrm{PSh}(S_{et},\mathrm{Pr}^L_e)$, which we will denote by $\mathrm{Sh}(\mathcal{X}/S)$. For a morphism $f:\mathcal{Y}\to \mathcal{X}$ over $S$, we obtain a morphism of sheaves of categories $f^*:\mathrm{Sh}(\mathcal{X}/S)\to \mathrm{Sh} (\mathcal{Y}/S)$. By base change between $!$-pushforward and $*$-pullback, we also obtain a map $f_!:\mathrm{Sh}(\mathcal{Y}/S)\to \mathrm{Sh}(\mathcal{X}/S)$.

The natural transformation $\mathrm{Sh}^\mathrm{con}(-)\to \mathrm{Sh}(-)$ induces a map $\mathrm{Sh}^{\mathrm{con}}(\mathcal{X}/S)\to \mathrm{Sh}(\mathcal{X}/S)$, which intertwines with the $*$-pullback and $!$-pushforward functors for all Betti sheaves. 

\subsubsection{Lisse sheaves} When $X\to S$ is a representable morphism, we define the presheaf of categories $\mathrm{QLisse}(X/S)$ of lisse sheaves on $X$ relative to $S$ as follows. Consider the presheaf $S_{et}^\mathrm{op}\to \mathrm{Pr}^L$ given by sending $T\to S$ to the category $\mathrm{QLisse}(X\times_S T)$, where $\mathrm{QLisse}(-)$ is defined as in \cite{arinkin2020stack} in the ind-constructible Betti sheaf context.

\subsubsection{Ind-constructible Betti sheaves} Assume that $S$ is complex analytic. In this case, let $\mathrm{Sh}^\mathrm{con}(-)$ denote the sheaf context of ind-constructible Betti sheaves, where constructible means along a locally finite stratification by complex analytic manifolds. Define a functor $S_{et}^\mathrm{op}\to \mathrm{Pr}^L_e$ by the assignment
\[(T\to S)\mapsto \mathrm{Sh}^\mathrm{con}(\mathcal{X}\times_ST).\]
with $*$-pullback functoriality. This is again a presheaf of presentable categories. For a morphism of stacks $f:\mathcal{Y}\to \mathcal{X}$ over $S$, we obtain morphisms of presheaves $f^*:\mathrm{Sh}^\mathrm{con}(\mathcal{X}/S)\to \mathrm{Sh}^\mathrm{con} (\mathcal{Y}/S)$ and $f_!:\mathrm{Sh}^\mathrm{con}(\mathcal{Y}/S)\to \mathrm{Sh}^\mathrm{con}(\mathcal{X}/S)$.

In the context of all Betti sheaves, we similarly define the presheaf of categories $\mathrm{Sh}_\mathrm{lc}(X/S)$ given by sending $T\to S$ to the category of locally constant sheaves on $X\times_S T$. There are two maps of sheaves of categories $\mathrm{QLisse}(X/S)\to \mathrm{Sh}^\mathrm{con}(X/S)$ and $\mathrm{QLisse}(X/S)\to \mathrm{Sh}_{\mathrm{lc}}(X/S)$. These two functors intertwine the relevant functoriality in $X$ and $S$.

\subsection{Moduli spaces of local systems in families}\label{LocSys in families} In this subsection, we will define the family version of the moduli space of $\check{G}$-local systems $\mathrm{LS}_{\check{G}}(X/S)$, which is a cosheaf on $S_{et}$ with values in derived stacks. This cosheaf will play the role of the spectral side in the family version of the Betti geometric Langlands correspondence.

\subsubsection{The Betti context}
    Let $\mathrm{LS}_{\check{G}}(Y)$ be the derived stack over $e$ with $T$-points given by the space of right t-exact symmetric monoidal functors
\[\mathrm{Rep}(\check{G})\to \mathrm{Sh}_{\mathrm{lc}}(Y) \otimes \mathrm{QCoh}(T).\]

This definition also applies in the context of ind-constructible sheaves, in which case we replace the category $\mathrm{Sh}_\mathrm{lc}(Y)$ with $\mathrm{QLisse}(Y)$. The resulting stack in the ind-constructible setting is denoted $\mathrm{LS}^\mathrm{restr}_{\check{G}}(Y)$.

\begin{construction}
    Define a cosheaf of stacks $\mathrm{LS}_{\check{G}}(X/S)$ on $S_{et}$ by defining it on the basis of contractible analytic space over $S$. To each contractible space $S'\to S$ the cosheaf $\mathrm{LS}_{\check{G}}$ associates the stack $\mathrm{LS}_{\check{G}}(X\times_S S')$ of $\check{G}$-local systems on the total space $X\times_S S'$. 
    Since $X\to S$ is smooth and proper, for any map between contractible space $i:S''\to S'$ over $S$, the pullback map $\mathrm{Sh}_{\mathrm{lc}}(X\times _SS')\to \mathrm{Sh}_{\mathrm{lc}}(X\times_S S'')$ is an equivalence of categories. Thus it induces an equivalence of stacks $i^*:\mathrm{LS}_{\check{G}}(X\times_S S')\to \mathrm{LS}_{\check{G}}(X\times_SS'')$. The cosheaf $\mathrm{LS}_{\check{G}}(X/S)$ associates to $i:S''\to S'$ the restriction map given by the equivalence $(i^*)^{-1}:\mathrm{LS}_{\check{G}}(X\times_SS'')\to \mathrm{LS}_{\check{G}}(X\times_SS')$.
\end{construction}
    
The analogous construction makes sense in the context of ind-constructible sheaves, we denote the resulting cosheaf of stacks by $\mathrm{LS}_{\check{G}}^\mathrm{restr}(X/S)$.

For each contractible analytic space $S'$ over $S$, the universal $\check{G}$-local system on $\mathrm{LS}_{\check{G}}(X\times_SS')$ defines a functor
\[\mathrm{Rep}(\check{G})\to \mathrm{Sh}_{\mathrm{lc}}(X\times_SS') \otimes \mathrm{QCoh}(\mathrm{LS}_{\check{G}}(X\times_SS')).\]
For any map of contractible spaces $S''\to S'$ over $S$, we obtain a commutative square
\[
\begin{tikzcd}
\mathrm{Rep}(\check{G}) \arrow[d, "\mathrm{id}"] \arrow[r] & \mathrm{Sh}_{\mathrm{lc}}(X\times_SS') \otimes \mathrm{QCoh}(\mathrm{LS}_{\check{G}}(X\times_SS')) \arrow[d] \\
\mathrm{Rep}(\check{G}) \arrow[r]                          & \mathrm{Sh}_{\mathrm{lc}}(X\times_SS'') \otimes \mathrm{QCoh}(\mathrm{LS}_{\check{G}}(X\times_SS''))            
\end{tikzcd}\]
Thus the universal $\check{G}$-local systems on contractible test objects define a map of sheaves of categories on $S_\mathrm{et}$
\[\mathrm{Rep}(\check{G})\to \mathrm{Sh}_{\mathrm{lc}}(X/S) \otimes \mathrm{QCoh}(\mathrm{LS}_{\check{G}}(X/S)),\]
where $\mathrm{Rep}(\check{G})$ is considered as a constant sheaf and the tensor product is as sheaves of categories.

\subsection{Non-characteristicness}\label{A:non-char} We will use a few different notions of non-characteristicness throughout the paper. In this section we give the definitions and relevant relations between them.

We denote the full subcategory of non-characteristic sheaves by $\mathrm{Sh}^\mathrm{nc}(\mathcal{X})$.

\subsubsection{Betti non-characteristicness} For a morphism $f:\mathcal{X}\to S$ of smooth Artin stacks, a sheaf $\mathcal{F}\in \mathrm{Sh}(\mathcal{X})$ is said to be $f$-non-characteristic if the singular support $\mathrm{SS}(\mathcal{F})\subset T^*\mathcal{X}$ is non-characteristic relative to $f$.

\subsubsection{Ind-constructible non-characteristicness} Suppose $S$ is a complex ananlytic manifold. In the ind-constructible sheaf context we have two different notions of non-characteristicness.

\begin{defin}
    Let $f:\mathcal{X}\to S$ be a morphism of smooth Artin stacks. An ind-constructible sheaf $\mathcal{F}\in \mathrm{Sh}^\mathrm{con}(\mathcal{X})$ is said to be non-characteristic if every perverse cohomology sheaf $\mathcal{H}^p\mathcal{F}$ is a filtered colimit of non-characteristic constructible sheaves.
\end{defin}

In this definition, by a non-characteristic constructible sheaf we mean a constructible sheaf such that $SS(\mathcal{F})$ is non-characteristic. The stronger notion of non-characteristicness in the ind-constructible setting is the notion of ind-non-characteristicness. We denote the category of ind-non-characteristic sheaves by $\mathrm{Sh}^{\mathrm{con,nc}}(\mathcal{X})$.

\begin{defin}
   Let $f:\mathcal{X}\to S$ be a morphism of smooth Artin stacks. An ind-constructible sheaf $\mathcal{F}\in \mathrm{Sh}^\mathrm{con}(\mathcal{X})$ is said to be ind-non-characteristic, if $\mathcal{F}$ is a filtered colimit of non-characteristic constructible sheaves.
\end{defin}

It is clear that ind-non-characteristicness implies non-characteristicness. The difference between the two happens at cohomological degree negative infinity: any non-characteristic ind-constructible sheaf that is bounded from below is ind-non-characteristic.

The notion of non-characteristic sheaves is closed under all colimits.

\begin{lem}\label{colimits of ind-nonchar}
    Let $\mathcal{X}\to S$ be a smooth Artin stack over $S$. The full subcategory of $S$-non-characteristic ind-constructible sheaves on $\mathcal{X}$ is closed under colimits. 
\end{lem}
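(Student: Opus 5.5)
We check closure under (finite) direct sums, cofibers and filtered colimits separately, since together these generate all small colimits in a stable category. Throughout, by "non-characteristic" we mean the definition above: every perverse cohomology sheaf ${}^p\mathcal{H}^n\mathcal{F}$ is a filtered colimit of $S$-non-characteristic constructible sheaves. The basic input is that for constructible sheaves $S$-non-characteristicness is inherited by perverse cohomology, subobjects, quotients and extensions. For perverse cohomology this follows from the bound $\on{SS}({}^p\mathcal{H}^n\mathcal{F})\subset \on{SS}(\mathcal{F})$. For subquotients and extensions in the heart it follows from the triangle inequality for singular support in fiber sequences, together with the fact that for perverse sheaves the singular support of a subquotient is contained in that of the ambient object (use the characteristic cycle, or that $\on{SS}$ of a perverse sheaf is the union of $\on{SS}$ of its composition factors).

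\textbf{Filtered colimits.} Perverse cohomology commutes with filtered colimits in the ind-constructible context, since the perverse t-structure is compatible with filtered colimits. So if $\mathcal{F}=\colim_\alpha \mathcal{F}_\alpha$ is filtered, then ${}^p\mathcal{H}^n\mathcal{F}\simeq\colim_\alpha {}^p\mathcal{H}^n\mathcal{F}_\alpha$. Each term is a filtered colimit of non-characteristic constructible sheaves, and a filtered colimit of filtered colimits of objects of a class is again a filtered colimit of objects of that class. To see this, take the full subcategory of objects over ${}^p\mathcal{H}^n\mathcal{F}$ coming from non-characteristic constructible perverse sheaves. Any constructible perverse subsheaf mapping to the colimit factors through some finite stage, and its image there is non-characteristic. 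So the indexing category is filtered, because the image of a finite sum of non-characteristic constructible perverse sheaves is again such a sheaf.

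\textbf{Cofibers and direct sums.} This is the step I expect to be the main obstacle, because the long exact sequence only gives ${}^p\mathcal{H}^n$ of a cofiber as an extension of a subobject of one perverse cohomology by a quotient of another. The key claim is therefore that, in the heart $\mathrm{Sh}^{\mathrm{con}}(\mathcal{X})^\heartsuit=\mathrm{Ind}(\mathrm{Perv})$, the class $\mathcal{N}$ of ind-objects of non-characteristic perverse sheaves is closed under subobjects, quotients and extensions. Since $\mathrm{Perv}$ is Noetherian and Artinian, $\mathcal{N}$ is exactly the set of ind-perverse sheaves all of whose constructible perverse subobjects are non-characteristic. Closure under subobjects and quotients then follows from the constructible case, by writing everything as a filtered union of constructible subobjects.

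For an extension $0\to A\to E\to B\to 0$ with $A,B\in\mathcal{N}$, take a constructible subobject $E_0\subset E$. Then $E_0\cap A$ is a constructible subobject of $A$, and $E_0/(E_0\cap A)$ is a constructible subobject of $B$. Both are non-characteristic, hence so is $E_0$ by the constructible extension property. This settles extensions, and with it cofibers: $\mathcal{N}$ is a Serre subcategory, so the extension in the long exact sequence stays in $\mathcal{N}$. Direct sums are a special case of filtered colimits together with finite sums, and finite sums are extensions.

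Combining the three cases proves the lemma.
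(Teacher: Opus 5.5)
Your proof is correct, and it proves more than the paper's argument does.

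\textbf{Filtered colimits.} Here you and the paper use the same mechanism. A constructible perverse subsheaf $\mathcal{G}\subset{}^p\mathcal{H}_n(\colim\mathcal{F}_i)$ is compact, so it factors through ${}^p\mathcal{H}_n\mathcal{F}_i$ for some $i$. The factored map is again a monomorphism, so $\mathcal{G}$ inherits non-characteristicness.

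\textbf{Arbitrary colimits.} The paper stops at the filtered case. Both of its steps require the colimit to be filtered: the formula $\mathcal{H}_n\mathcal{F}\simeq\tau_{\ge n}\colim\tau_{\le n}\mathcal{F}_i[-n]$, and the compactness factorization. So as written it only proves closure under filtered colimits. Your decomposition into filtered colimits, finite sums and cofibers supplies exactly what is missing. The key observations are that ${}^p\mathcal{H}^n$ of a cofiber is an extension of a subobject of ${}^p\mathcal{H}^{n+1}A$ by a quotient of ${}^p\mathcal{H}^nB$, and that the class $\mathcal{N}$ is a Serre subcategory of the heart. The price is an extra input: monotonicity of singular support on \emph{all} subquotients of perverse sheaves (effectivity of characteristic cycles). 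The filtered argument needs this only for subobjects.

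\textbf{Caveat.} The following point is shared with the paper. You describe $\mathcal{N}$ as the ind-perverse sheaves all of whose constructible perverse subobjects are non-characteristic. This uses that every ind-perverse sheaf is the union of its constructible subobjects, i.e.\ Noetherianity of $\mathrm{Perv}$. That holds for algebraically constructible sheaves on a quasi-compact chart. It fails for the locally finite analytic stratifications allowed in the paper's ind-constructible context. For example, take $P=\bigoplus_{n\in\mathbb{Z}}Q_n$ with the $Q_n$ simple, characteristic for $S$, and disjointly supported. Then $\colim_N P/\bigl(\bigoplus_{|n|\le N}Q_n\bigr)$ is nonzero, has no nonzero constructible subobject, and is not in $\mathcal{N}$.

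In that generality, argue instead directly that the ind-completion of the Serre subcategory $\mathrm{Perv}^{\mathrm{nc}}\subset\mathrm{Perv}$ is closed under subobjects, quotients and extensions in $\mathrm{Ind}(\mathrm{Perv})$. This is a standard fact about locally coherent Grothendieck categories and needs no finiteness assumption. With it, your cofiber step goes through unchanged.
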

\begin{proof}
    The statement is smooth-local on $\mathcal{X}$ and thus we may assume $\mathcal{X}$ is a smooth variety over $S$. Let $\mathcal{F}$ be an ind-constructible sheaf on $\mathcal{X}$ and suppose $\mathcal{F}\simeq \colim\mathcal{F}_i$ for $S$-non-characteristic ind-constructible sheaves $\mathcal{F}_i$. Let $\mathcal{H}_n^p\mathcal{F}$ denote the $n^{th}$ perverse homology sheaf of $\mathcal{F}$. It suffices to prove that each perverse subsheaf $\mathcal{G}$ of $\mathcal{H}_n^p\mathcal{F}$ is $S$-non-characteristic. Since
    \[\mathcal{H}_n\mathcal{F}\simeq \tau_{\ge n} \colim \tau_{\le n}\mathcal{F}_i [-n]\]
    and $\mathcal{G}$ is compact, it follows that $\mathcal{G}\to \mathcal{F}$ factors through $\mathcal{G}\to \tau_{\le n} \mathcal{F}_i[-n]$, for some $i\in I$. Since $\mathcal{G}$ is $0$-connected, the map $\mathcal{G}\to \tau_{\le n} \mathcal{F}_i[-n]$ factors through a map $\mathcal{G}\to \tau_{\ge n}\tau_{\le n} \mathcal{F}_i[-n]\simeq \mathcal{H}_n\mathcal{F}_i$. Since the composition $\mathcal{G}\to \mathcal{H}_n\mathcal{F}_i\to \mathcal{H}_n\mathcal{F}$ is the inclusion of a perverse subsheaf, it follows that $\mathcal{G}\to \mathcal{H}_n\mathcal{F}_i$ is also the inclusion of a perverse subsheaf. It follows that $SS(\mathcal{G})\subseteq SS(\mathcal{F}_i)$ and hence $\mathcal{G}$ is $S$-non-characteristic since $\mathcal{F}_i$ is. 
\end{proof}

\newpage

\bibliographystyle{alpha}
\bibliography{Bip}

\end{document}